\newcommand*\circled[1]{\tikz[baseline=(char.base)]{
            \node[shape=circle,draw,inner sep=2pt] (char) {#1};}}
\newcommand\csname r@tocindent4\endcsname{4in}
\numberwithin{equation}{section}
\newtheorem{princ}{Fact}
\newtheorem{remark}{Remark}
\newtheorem{example}{Example}
\definecolor{airforceblue}{rgb}{0.36, 0.54, 0.66}
\definecolor{darkgreen}{rgb}{0.0, 0.2, 0.13}
\definecolor{darkslategray}{rgb}{0.18, 0.31, 0.31}
\definecolor{mediumjunglegreen}{rgb}{0.11, 0.21, 0.18}
\definecolor{prussianblue}{rgb}{0.0, 0.19, 0.33}
\definecolor{warmblack}{rgb}{0.0, 0.26, 0.26}
\definecolor{cadmiumgreen}{rgb}{0.0, 0.42, 0.24}
\definecolor{mauvetaupe}{rgb}{0.57, 0.37, 0.43}
\definecolor{maroon(x11)}{rgb}{0.69, 0.19, 0.38}
\def\eps{\varepsilon }
\def\({\left(}
\def\){\right)}
\def\<{\left\langle}
\def\>{\right\rangle}
\newcommand\br{\begin{remark}}
\newcommand\er{\end{remark}}
\newcommand\bp{\begin{pmatrix}}
\newcommand\ep{\end{pmatrix}}
\newcommand{\be}{\begin{equation}}
\newcommand{\ee}{\end{equation}}
\newcommand{\ba}[1]{\begin{array}{#1}}
\newcommand{\ea}{\end{array}}
\newcommand{\beg}{\begin{example}}
\newcommand{\eeg}{\end{exaplem}}
\newcommand{\bpr}{\begin{proposition}}
\newcommand{\epr}{\end{proposition}}
\newcommand{\bt}{\begin{theorem}}
\newcommand{\et}{\end{theorem}}
\newcommand{\bc}{\begin{corollary}}
\newcommand{\ec}{\end{corollary}}
\newcommand{\bl}{\begin{lemma}}
\newcommand{\el}{\end{lemma}}
\newcommand{\bd}{\begin{definition}}
\newcommand{\ed}{\end{definition}}
\newcommand{\brs}{\begin{remarks}}
\newcommand{\ers}{\end{remarks}}
\newtheorem{theo}{Theorem}[section]
\newtheorem{lem}[theo]{Lemma}
\newtheorem{cor}[theo]{Corollary}
\newtheorem{prop}[theo]{Proposition}
\newtheorem{defi}[theo]{Definition}
\newtheorem{ass}[theo]{Assumption}
\newtheorem{rem}[theo]{Remark}
\numberwithin{equation}{section}
\newtheorem{defi-pro}[theo]{Definition-Proposition}
\newtheorem{exam}[theo]{Example}
\def\eps {\varepsilon}
\def\part{\partial}
\newcommand{\aaa}{\underline a}
\newcommand{\CC}{{\mathbb C}}
\newcommand{\NN}{{\mathbb N}}
\newcommand{\RR}{{\mathbb R}}
\newcommand{\TT}{{\mathbb T}}
\newcommand{\ZZ}{{\mathbb Z}}
\newcommand\cA{{\mathcal A}}
\newcommand\cB{{\mathcal B}}
\newcommand\cC{{\mathcal C}}
\newcommand\cD{{\mathcal D}}
\newcommand\cE{{\mathcal E}}
\newcommand\cF{{\mathcal F}}
\newcommand\cG{{\mathcal G}}
\newcommand\cI{{\mathcal I}}
\newcommand\cJ{{\mathcal J}}
\newcommand\cK{{\mathcal K}}
\newcommand\cL{{\mathcal L}}
\newcommand\cM{{\mathcal M}}
\newcommand\cN{{\mathcal N}}
\newcommand\cO{{\mathcal O}}
\newcommand\cP{{\mathcal P}}
\newcommand\cQ{{\mathcal Q}}
\newcommand\cR{{\mathcal R}}
\newcommand\cS{{\mathcal S}}
\newcommand\cT{{\mathcal T}}
\newcommand\cU{{\mathcal U}}
\newcommand\cV{{\mathcal V}}
\newcommand\cW{{\mathcal W}}
\begin{document}
\title[Quasi-rectification]{Turbulent effects 
through quasi-rectification}

\author[R. Carles]{R\'emi Carles}
\email{Remi.Carles@math.cnrs.fr}
 \urladdr{http://carles.perso.math.cnrs.fr/}
\author[Ch. Cheverry]{Christophe Cheverry}
\email{christophe.cheverry@univ-rennes1.fr}
\urladdr{https://perso.univ-rennes1.fr/christophe.cheverry}
\address{Univ Rennes, CNRS\\ IRMAR - UMR 6625\\ F-35000 Rennes\\ France}

\begin{abstract}  This article introduces a physically realistic model for explaining how electromagnetic 
waves can be internally generated, propagate and interact in strongly magnetized plasmas or in nuclear 
magnetic resonance experiments. It studies high frequency solutions of nonlinear hyperbolic equations 
for time scales at which dispersive and nonlinear effects can be present in the leading term of the 
solutions. It explains how the produced waves can accumulate during long times to produce constructive 
and destructive interferences which, in the above contexts, are part of turbulent effects. 

\bigskip

\noindent \textbf{Keywords.} Nonlinear geometrical optics; oscillatory integrals; dispersion; interferences;
turbulence; magnetized plasmas; nuclear magnetic resonance. \end{abstract}

\maketitle

{\small \parskip=1pt
\tableofcontents
}

 \break

 
 \section{Introduction}  \label{sec:preamble} 
 In this introduction, we present the main aspects of our text.  In Subsection~\ref{subsec:toymodel}, we introduce a 
 simple ODE model that is intended to  serve as a guideline.  In Subsection~\ref{subsec:morerealistic}, we extend this model 
 to better incorporate important specificities of two realistic situations which are related to strongly magnetized plasmas 
 (SMP) and nuclear magnetic resonance (NMR). In Subsection~\ref{subsec:statemainresult}, we state under simplified 
 assumptions our two main results, Theorems~\ref{theo:resumeNL} and \ref{theo:resumeNLbis}. We also give an overview 
 of our article.


 \subsection{A toy model}  \label{subsec:toymodel} Introduce the {\it phase} $ \varphi : \RR \rightarrow \RR $ given by
  \begin{equation}\label{third-sipourphipha}
\varphi(t) := t + \gamma (\cos t -1) , \qquad \gamma \in \, ]0,1/4[ .
\end{equation} 
Let $ \eps \in \, ]0,1] $ be a small parameter, and $ \lambda \in \CC $. Fix numbers $ (j_1,j_2,\nu) \in \NN^2 \times \RR $ 
such that $ j_1+j_2\ge 2$. Select $ n \in \ZZ $ and $ \omega \in \RR $. Then, define
\begin{equation}\label{ajoutFLNL}
  F_{\! L} (\eps,t) := \eps^{3/2} e^{in \varphi(t) / \eps} , \qquad F_{\! N \! L} (\eps,t,u) := \lambda \eps^\nu 
e^{i \omega t/ \eps} u^{j_1} \bar u^{j_2} . 
\end{equation} 

\begin{defi} \label{gaugeparadefi}
The number $ \textfrak {g} := \omega+j_1-j_2 \in \RR $ is called the {\it gauge parameter} associated with $ F_{\! N \! L} $. 
\end{defi}

\noindent Consider the ordinary differential equation on the complex plane $ \CC $ given by
\begin{equation}\label{third-si}
\qquad \frac{d}{dt} u - \frac{i}{\eps} u =   F (\eps,t,u) := F_{\! L} (\eps,t) + F_{\! N \! L} (\eps,t,u) , \qquad 
u_{\mid t=0} = 0 .
\end{equation} 
We can study the equation (\ref{third-si}) on three different time scales: 

\smallskip

\noindent $ \bullet $ {\it Fast}, when $ t \sim \eps $, that is when $ F $ undergoes a few number of oscillations;

\smallskip

\noindent $ \bullet $ {\it Normal}, when $ t \sim 1 $, that is when $ F $ generates $ \cO (\eps^{-1} ) $ oscillations, 
whereas the periodic part ($ \cos t $) inside $ \varphi $ sees a few number of oscillations;

\smallskip

\noindent $ \bullet $ {\it Slow}, when $ t \sim \eps^{-1} $ or $ T := \eps t  \sim 1 $, that is when $ F $ involves 
$ \cO ( \eps^{-2} ) $ oscillations. 

\medskip

\noindent In this subsection, we analyze (\ref{third-si}) during long times $ t \sim \eps^{-1} $ or $ T \sim 1 $. 
With this in mind, we can change $ u $ according to
\begin{equation}\label{changeofuu}
  u(t) = \eps e^{it/\eps} \cU(\eps t) , \qquad \cU (T) := \eps^{-1} e^{-iT/\eps^2} u (\eps^{-1} T) . 
\end{equation}

\noindent Expressed 
in terms of $ \cU  $, the equation (\ref{third-si}) becomes
\begin{equation}\label{eq:EDONLajout}
\quad \frac{d}{dT} \cU =  \frac{1}{\sqrt \eps} e^{i (n-1) T / \eps^2 + i n \gamma (\cos (T/\eps) -1)/ \eps} + \lambda 
\eps^{\nu+j_1+j_2-2} e^{i (\textfrak {g} -1) T/ \eps^2} \cU^{j_1} \bar \cU^{j_2} .
\end{equation}
The initial data is still zero. Denote by $\cU_{\rm lin}$ the solution corresponding to the linear evolution, that is the solution 
obtained from \eqref{eq:EDONLajout} when $\lambda=0$. When $\lambda \not = 0 $ and when $\nu +j_1+j_2 > 2$, the 
solution to \eqref{eq:EDONLajout} looks like $\cU_{\rm lin}$. Our aim is to first study the expression $\cU_{\rm lin}$. Then,
we incorporate nonlinear effects by looking at a critical size for the nonlinearity, corresponding to the special case 
$ \lambda \not = 0 $ and $ \nu+j_1+j_2=2 $. This means to single out the following equation
 \begin{equation}\label{eq:EDONL}
\ \frac{d}{dT} \cU =  \frac{1}{\sqrt \eps} e^{i (n-1) T / \eps^2+ i n \gamma (\cos (T/\eps) -1)/ \eps} + \lambda 
e^{i (\textfrak {g} -1) T/ \eps^2} \cU^{j_1} \bar \cU^{j_2} , \quad \cU_{\mid T=0} = 0 .
\end{equation}
The integral formulation of (\ref{eq:EDONL}) 
 reads
\begin{equation}\label{integralformu}
  \cU (T) = \cU_{\rm lin}(T) + \lambda \int_0^T e^{i (\textfrak {g} -1) s / \eps^2}  \cU(s)^{j_1}  \bar \cU(s)^{j_2} d s .
\end{equation}
In Paragraph \ref{linintrocase}, we first show that $\cU_{\rm lin} (T) =\cO(1)$, an estimate which is sharp when $n=1$. 
As a consequence, the nonlinear contribution brought by the integral term inside (\ref{integralformu}) is likely to be of 
the same order of magnitude as the linear one. It can be expected that $ \cU (T) \not \equiv  \cU_{\rm lin}(T) + o(1) $.
In Paragraph \ref{nonlinintrocase}, we prove that this is indeed the case if and only if $\textfrak {g}=1$. 


\subsubsection{The linear case}\label{linintrocase} By construction, we have
\begin{equation}\label{third-expliu}
u_{\rm lin} (t) := \eps e^{it/\eps}\cU_{\rm lin}(\eps t) = \eps^{3/2} e^{i t / \eps} \int_0^t e^{i \, \lbrack n \varphi (s) -s \rbrack / \eps } ds . 
\end{equation}
We start the analysis of (\ref{integralformu}) by looking at the part $ \cU_{\rm lin} $ through the expression $ u_{\rm lin} $ of 
(\ref{third-expliu}). Examine the right hand side of (\ref{third-expliu}). For harmonics $ n \in \ZZ $ with $ n \not = 1 $, since 
$ 0 < \gamma < 1/4 $, remark that
 \begin{equation}\label{nonstaphi}
\forall s \in \RR \, , \qquad 1/2 \leq \vert n \, \varphi'(s) -1 \vert = \vert n -1 - \gamma \, n \, \sin s \vert . 
\end{equation}
 Exploiting (\ref{nonstaphi}), a single integration by parts yields
 \begin{equation*}
\forall t \ge0 \, , \qquad u_{\rm lin} (t) = \cO \bigl( \eps^{5/2} (1+t) \bigr) . 
\end{equation*} 
In other words, assuming that $ n \not = 1 $, we find
\begin{equation}\label{devharmmnotmlong}
\forall T \ge 0 , \qquad \cU_{\rm lin}(T) = \cO (\eps^{3/2} + \sqrt\eps T)  . 
\end{equation} 
The situation is completely different when $ n = 1 $. Fix an integer $ K\ge 1 $. The solution $ u_{\rm lin}$  computed at the 
time $ t = 2 K \pi $ can be viewed as a sum of contributions produced over time by the source term, namely
 \begin{equation}\label{third-si-devprime}
\qquad u_{\rm lin} (2 K \pi) = \sum_{k=0}^{K-1} u_k , \qquad u_k := \eps^{3/2} e^{i 2 K \pi / \eps} \int_{2 k \pi}^{2 (k+1) \pi} 
e^{i \lbrack \varphi (s) -s \rbrack / \eps} ds .
\end{equation} 
Since the function $ s \mapsto \varphi(s)-s = \gamma (\cos s-1) $ is periodic of period $ 2\pi $, the wave packets $ u_k $ can be 
interpreted according to $ u_k = \eps^{3/2} e^{i 2 K \pi / \eps} {\rm v}_k $ with
 \begin{equation}\label{third-si-devprimeinter}
{\rm v}_k = \int_{2 k \pi - \pi / 2}^{2 k \pi+ 3 \pi / 2} e^{i \gamma  (\cos s -1) / \eps} ds = {\rm v} := \int_{- \pi / 2}^{3 \, \pi / 2} 
e^{i \gamma  (\cos s -1) / \eps} ds . 
  \end{equation} 
The function $ s \mapsto \gamma \, (\cos s -1) $ has exactly two non-degenerate stationary points  
in  the interval $ [ 2 k \pi - \pi/2, 2 k \pi + 3 \, \pi / 2 ] $, at the positions $ s = 2 k \pi $ and $ s = 2 k \pi + \pi $. Using the 
periodicity to get rid of the boundary terms and applying stationary phase formula, it follows that
 \begin{equation}\label{third-si-dev}
{\rm v} = \sqrt{\frac{2 \pi\eps }{\gamma}} e^{- i \frac{\gamma}{\eps}} \( e^{i ( \frac{\gamma}{\eps} - \frac{\pi}{4})} + e^{- i (\frac{\gamma}{\eps} 
- \frac{\pi}{4})} \)  + \cO \bigl( \eps^{3/2} \bigr) .  
\end{equation} 
Let $ A_\eps \in \CC $ be such that
\begin{equation}\label{defcoefaeps}
A_\eps^2 = \sqrt{\frac{2}{\pi \gamma}} e^{- i \frac{\gamma}{\eps}} \cos \Bigl( \frac{\gamma}{\eps} - \frac{\pi}{4} \Bigr) ,
\qquad \limsup_{\eps \rightarrow 0} \vert A_\eps^2 \vert = \sqrt{\frac{2}{\pi \gamma}} \not = 0 .
\end{equation}
Observe that
\begin{equation}\label{observevandu}
{\rm v} = 2 \pi A_\eps^2 \sqrt \eps +  \cO \bigl( \eps^{3/2} \bigr) , \qquad \vert u_k \vert = 2 \pi \vert A_\eps^2 \vert \eps^2 +  
\cO \bigl( \eps^3 \bigr) . 
\end{equation} 
The combination of (\ref{third-si-devprime}), (\ref{defcoefaeps}) and (\ref{observevandu}) indicates that, when $ n=1 $, 
wave packets $ u_k $ of amplitude $ \eps^2 $ are repeatedly created over time when solving (\ref{third-si}) in the case 
$ \lambda = 0 $. 

\smallskip

\noindent Look at (\ref{third-si-devprime}). The emitted signals $ u_k $ (one per period $ 2\pi $) have cumulative effects
up to the stopping time $ 2 K \pi $. They give rise to a growth rate with respect to the time variable $ t $. For long times 
$ T \sim 1 $, assuming that $ n = 1 $, we can assert that
 \begin{equation}\label{third-si-devtt}
\qquad \cU_{\rm lin}(T) = A^2_\eps T + \cO ( \eps ) = A^2_\eps \int_0^{+\infty} \mathds{1}_{[0,T]} (s) ds + \cO ( \eps ) = \cO (1) . 
\end{equation}
This short discussion about the linear situation ($ \lambda = 0 $) highlights a difference between the cases $ n \not = 1 $
- see \eqref{devharmmnotmlong} - and $ n = 1 $ - see \eqref{third-si-devtt}. This observation is important in the perspective of nonlinear 
effects. As a matter of fact, it allows a first selection between the different modes $ n \in \ZZ $. 


\subsubsection{Nonlinear effects} \label{nonlinintrocase} Here, we consider the nonlinear framework, when $ \lambda \not = 0 $
and $ \nu+j_1+j_2=2 $.
The difference $ \cW := \cU - \cU_{\rm lin}$ is subject to
 \begin{equation}\label{eq:EDONLD}
\cW (T) = \lambda \int_0^T e^{i (\textfrak {g} -1) s / \eps^2}  (\cU_{\rm lin} + \cW)(s)^{j_1} (\bar  \cU_{\rm lin}  + \bar \cW)(s)^{j_2}  d s .
\end{equation}
Using a Picard scheme, it is easy to infer that the life span of the solution $ \cW $ to the integral equation (\ref{eq:EDONLD}), 
and therefore of the solution $ \cU $ to (\ref{eq:EDONL}), can be bounded below by a positive constant not depending on 
$ \eps \in \, ]0,1] $. Knowing (\ref{devharmmnotmlong}) and (\ref{third-si-devtt}), it is also possible to deduce that $ \cW (T) $ 
is of size $ \cO(\eps^{(j_1+j_2)/2}) = \cO(\eps) $ when $ n \not = 1 $, and of size $ \cO(1) $ when $ n = 1 $. This means that 
the preceding dichotomy between the two cases $ n \not = 1 $ and $ n = 1 $ remains when $ \lambda \not = 0 $.

\begin{princ} \label{fact1} When solving \eqref{eq:EDONL}, the harmonic $ n = 1 $ stands out from the others. Given $ T >0$, 
we find $ \cU (T) = \cO( \sqrt \eps ) $ when $ n \not = 1 $, and $ \cU (T) = \cO( 1) $ when $ n= 1 $.
\end{princ}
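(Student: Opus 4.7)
The plan is to reduce Fact~\ref{fact1} to a Banach fixed-point argument applied to the integral formulation \eqref{eq:EDONLD} for $\cW := \cU - \cU_{\rm lin}$, feeding in the two linear estimates \eqref{devharmmnotmlong} and \eqref{third-si-devtt}. For any given $T > 0$ these estimates yield a uniform bound
\begin{equation*}
M_\eps := \|\cU_{\rm lin}\|_{L^\infty([0,T])} \le
\begin{cases} C \sqrt\eps & \text{if } n \ne 1, \\ C & \text{if } n = 1, \end{cases}
\end{equation*}
with $C$ independent of $\eps \in\,]0,1]$. I would then study the fixed points of
\begin{equation*}
\Phi(\cW)(T) := \lambda \int_0^T e^{i(\textfrak{g}-1)s/\eps^2}\bigl(\cU_{\rm lin}(s) + \cW(s)\bigr)^{j_1}\bigl(\overline{\cU_{\rm lin}(s)} + \bar\cW(s)\bigr)^{j_2}\,ds
\end{equation*}
inside a closed ball $B_{R_\eps} \subset C([0,T^*];\CC)$, noting the trivial pointwise estimates $\|\Phi(\cW)\|_\infty \le |\lambda| T^* (M_\eps+R_\eps)^{j_1+j_2}$ and $\|\Phi(\cW_1)-\Phi(\cW_2)\|_\infty \le |\lambda| T^* (j_1+j_2)(M_\eps+R_\eps)^{j_1+j_2-1}\|\cW_1-\cW_2\|_\infty$.

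First, I would treat the case $n \ne 1$. Choosing $R_\eps = \sqrt\eps$, the hypothesis $j_1+j_2 \ge 2$ makes $(M_\eps + R_\eps)^{j_1+j_2} = \cO(\eps)$ and $(M_\eps+R_\eps)^{j_1+j_2-1} = \cO(\sqrt\eps)$, so for $T^* = T$ fixed and $\eps$ small enough both the stability and contraction conditions are met. The resulting $\cW$ in fact satisfies the sharper a posteriori bound $\cW = \cO(\eps)$, whence $\cU = \cU_{\rm lin}+\cW = \cO(\sqrt\eps)$.

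Next, I would handle the case $n = 1$, where $M_\eps$ is merely bounded. Take $R_\eps = 1$ and pick $T^*>0$ small enough, depending on $|\lambda|$, $j_1$, $j_2$ and $C$ but \emph{not} on $\eps$, to make $\Phi$ a contraction of $B_1$ on $C([0,T^*];\CC)$. This produces $\cU \in L^\infty([0,T^*])$ with the uniform bound $\|\cU\|_\infty \le C+1$. Since $\|\cU_{\rm lin}\|_\infty \le C$ on all of $[0,T]$, the same fixed-point construction can then be restarted on successive intervals $[kT^*,(k+1)T^*]$, using the final value of $\cU$ as a new initial datum; the step-length $T^*$ depends only on the uniform bound carried from one step to the next, so finitely many iterations cover $[0,T]$ and yield $\cU(T) = \cO(1)$.

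The main obstacle is precisely this $n=1$ regime: one is no longer dealing with a small perturbation of $\cU_{\rm lin}$, so the only way to turn a short-time contraction into a statement on the fixed interval $[0,T]$ is to close a uniform-in-$\eps$ a priori bound on $|\cU|$ at each iteration. The polynomial nonlinearity $\cU^{j_1}\bar\cU^{j_2}$ makes this essentially automatic here through the bootstrap above, but it is the step where, for more general nonlinear right-hand sides, a genuine energy or Gronwall-type argument would be required.
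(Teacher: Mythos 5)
Your treatment of the case $n \ne 1$ is correct and follows essentially the same fixed-point route the paper sketches: with $M_\eps, R_\eps \lesssim \sqrt\eps$ and $j_1+j_2 \ge 2$, both the stability and contraction constants degenerate as $\eps \to 0$, so the scheme closes on a fixed $[0,T]$ once $\eps$ is small enough, and the a posteriori bound $\cW = \cO(\eps^{(j_1+j_2)/2}) = \cO(\eps)$, hence $\cU = \cO(\sqrt\eps)$, is exactly what the paper records.

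The continuation argument for $n=1$ is where there is a genuine gap. The short-time Picard step is fine — and the paper, tellingly, claims only a life span bounded below by a positive constant independent of $\eps$, not global control. Your iteration cannot upgrade this to an arbitrary $[0,T]$: each step propagates a bound $|\cU(t_k)| \le C_k$ with $C_{k+1} \ge C_k + 1$, and the admissible step length at that stage is at best $T^*_k \lesssim C_k^{-(j_1+j_2)}$ from the stability constraint; since $j_1+j_2 \ge 2$, the series $\sum_k T^*_k$ converges, so the iteration stalls before a prescribed $T$ is reached. There is no uniform a priori bound to rescue it: the paper's own example below \eqref{integralformuspecialcase}, namely $\cU(T) = A_\eps \tan(A_\eps T) + \cO(\eps)$, blows up at a finite, $\eps$-dependent time, so ``$\cU(T) = \cO(1)$ for every fixed $T>0$'' is simply false. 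The correct reading of Fact~\ref{fact1}, and what the short-time argument actually proves, is that the stated estimates hold for $T$ up to a threshold $T_0 > 0$ independent of $\eps$. Your final paragraph acknowledges the missing a priori bound but mischaracterizes it as ``essentially automatic'' for this polynomial nonlinearity — here it does not exist, and the time-stepping should be dropped in favor of the uniform lower bound on the life span.
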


\noindent Assume that $ \textfrak {g} \not = 1 $. The identity (\ref{integralformu}) becomes after an integration by parts
\begin{align}
\cU (T) = \cU_{\rm lin} (T) & - \frac{i \lambda \eps^2}{\textfrak {g}
                              -1} e^{i (\textfrak {g} -1) T / \eps^2}
                              \cU(T)^{j_1} \bar \cU(T)^{j_2}  \label{sisiilfaut} \\ 
\ & + \frac{i \lambda \eps^2}{\textfrak {g} -1} \int_0^T e^{i (\textfrak {g} -1) s / \eps^2}  \part_s \bigl( \cU(s)^{j_1} \bar \cU(s)^{j_2} 
\bigr) d s .  \nonumber
    \end{align}
From the equation \eqref{eq:EDONL}, since we have seen that the solution $ \cU $ is (at least) bounded, we know that $ \part_s
\cU (s) = \cO(\eps^{-1/2}) $. From (\ref{sisiilfaut}), it follows that
 \begin{equation*}
 \forall T \in \RR , \qquad \cU (T) = \cU_{\rm lin}(T) + \cO( \eps^{3/2} ) . 
 \end{equation*} 

\noindent Now, assume that $ n = 1 $ and moreover that $ \textfrak {g} = 1 $. To show that, in this situation, nonlinear effects actually 
occur, it suffices to produce an example. To this end, take $ (j_1,j_2,\nu)= (2,0,0) $ and $ \omega = -1 $, so that $ \textfrak {g} = 1 $. 
Choose $ \lambda = 1 $. Then, using (\ref{third-si-devtt}), the identity (\ref{integralformu}) becomes
\begin{equation}\label{integralformuspecialcase}
  \cU (T) =  A^2_\eps T  + \cO (\eps) + \int_0^T \cU(s)^2 d s .
\end{equation}
This implies that $ \cU(T) = A_\eps \tan (A_\eps T) + \cO(\eps) $, and therefore
\begin{equation*}
\cU(T) - \cU_{\rm lin} (T) = A_\eps \tan (A_\eps T) - A^2_\eps T + \cO(\eps) \not = o(1) .
\end{equation*}
In view of the above formula, the asymptotic behavior of the nonlinear solution $ \cU $ can strongly differ from the one of the 
linear solution $ \cU_{\rm lin} $.

\begin{princ} \label{fact2} When solving \eqref{eq:EDONL}, the gauge
  parameter $ \textfrak {g} = 1 $ stands out from the others. When  
$ \textfrak {g} \not = 1 $, the asymptotic behaviors of $ \cU $ and $ \cU_{\rm lin} $ when $ \eps $ goes to $ 0 $ are the same.
On the contrary, when $ \textfrak {g} = 1 $, nonlinear effects can be expected at leading order.  
\end{princ}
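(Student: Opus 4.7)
My plan is to separate the two cases $\textfrak{g}\neq 1$ and $\textfrak{g}=1$, using a non-stationary phase / integration by parts argument in the first case and an explicit solvable model in the second. In both cases, the starting point is the integral formulation \eqref{integralformu} together with the linear estimates \eqref{devharmmnotmlong} and \eqref{third-si-devtt} obtained in the previous paragraph.

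\textbf{The case $\textfrak{g}\neq 1$.} I would integrate by parts in \eqref{integralformu} against the oscillatory factor $e^{i(\textfrak{g}-1)s/\eps^2}$, gaining a prefactor $\eps^2/(\textfrak{g}-1)$; this is precisely the identity \eqref{sisiilfaut}. To close the estimate I would proceed in two steps. First, a standard Picard iteration on \eqref{integralformu}, combined with the uniform linear bound $\cU_{\rm lin}=\cO(1)$ from \eqref{devharmmnotmlong}-\eqref{third-si-devtt}, yields a time $T_\star>0$ independent of $\eps\in\,]0,1]$ on which the solution $\cU$ exists and is uniformly bounded. Second, with this $L^\infty$ control, I would read off from the differential form of \eqref{eq:EDONL} the pointwise estimate $\partial_s\cU(s)=\cO(\eps^{-1/2})$, the worst term being the $\eps^{-1/2}$ linear source. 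Substituting these bounds into \eqref{sisiilfaut} gives a boundary contribution of order $\cO(\eps^2)$ and an integral remainder of order $\cO(\eps^{3/2})$, whence $\cU(T)=\cU_{\rm lin}(T)+\cO(\eps^{3/2})$ uniformly on $[0,T_\star]$.

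\textbf{The case $\textfrak{g}=1$.} To establish that nonlinear effects actually occur at leading order, a single example suffices. I would pick $n=1$, so that by \eqref{third-si-devtt} the linear part contributes $\cU_{\rm lin}(T)=A_\eps^2 T+\cO(\eps)$, and choose $(j_1,j_2,\nu)=(2,0,0)$ with $\omega=-1$ (hence $\textfrak{g}=1$) and $\lambda=1$. Plugging these values into \eqref{integralformu} produces the closed scalar equation \eqref{integralformuspecialcase}, which is equivalent to a Riccati equation $\dot{\cU}=A_\eps^2+\cU^2$ up to an $\cO(\eps)$ perturbation, with $\cU(0)=0$. Its explicit solution $\cU(T)=A_\eps\tan(A_\eps T)+\cO(\eps)$ differs from $A_\eps^2 T+\cO(\eps)$ by a quantity which is bounded away from $0$ as $\eps\to 0$ (e.g.\ evaluate at any fixed $T$ for which $|A_\eps^2|$ is bounded below, using \eqref{defcoefaeps}), which is exactly the announced nonlinear leading-order discrepancy.

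\textbf{Main obstacle.} The delicate point is the case $\textfrak{g}\neq 1$: the derivative bound $\partial_s\cU=\cO(\eps^{-1/2})$ that feeds the integration by parts requires a uniform $L^\infty$ bound on $\cU$, which in turn comes from a fixed-point argument that relies on the linear estimates \eqref{devharmmnotmlong}-\eqref{third-si-devtt}. The logical order is therefore essential: first close the boundedness of $\cU$ on a fixed time window, then differentiate the equation to control $\partial_s\cU$, and only afterwards perform the integration by parts producing the $\eps^{3/2}$ gain. Avoiding circularity at this step is the main technical issue; once it is handled, the dichotomy between $\textfrak{g}=1$ and $\textfrak{g}\neq 1$ is essentially a clean consequence of non-stationary phase.
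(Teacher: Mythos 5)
Your proposal is correct and follows essentially the same route as the paper: the integration by parts \eqref{sisiilfaut} against the factor $e^{i(\textfrak{g}-1)s/\eps^2}$ for the non-resonant case (using a Picard argument for a uniform $L^\infty$ bound, then reading $\partial_s\cU=\cO(\eps^{-1/2})$ off the equation), and the explicit Riccati model $(j_1,j_2,\nu)=(2,0,0)$, $\omega=-1$, $\lambda=1$, $n=1$ leading to $\cU(T)=A_\eps\tan(A_\eps T)+\cO(\eps)$ for the resonant case. You have also correctly made explicit the logical ordering (boundedness before derivative bound before IBP) that the paper presents more tersely, which is a useful clarification rather than a deviation.
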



 \subsection{A more realistic model}  \label{subsec:morerealistic} The preceding features, Facts 1 and 2, which have been 
 emphasized in the case of ODEs, are still present when dealing with partial differential equations arising in strongly magnetized 
 plasmas (SMP) or in  nuclear magnetic resonance experiments (NMR). But, there are two emerging issues: the first is due to
 dispersive effects which are completely absent in the ODE case; the second comes from the occurrence of non-trivial spatial 
 variations when dealing with the phase $ \varphi  $. At all events, the discussion becomes much more subtle, and new 
 important phenomena can and do occur. 
 
 \smallskip
 
 \noindent In order to investigate SMP or NMR, we must consider the PDE counterpart of \eqref{third-si}, which is
 \begin{equation}\label{third-si-gene}
 \part_t u - \frac{i}{\eps} p (\eps D_x) \, u =  F = F_{\! L} + F_{\! N \! L} , \qquad u_{\mid t=0} = 0 ,
 \qquad 0 < \eps \ll 1 ,  
\end{equation} 
where $ t \in \RR $ and $ x \in \RR $. The state variable is $ u \in \RR $ and $ D_x := - i \, \part_x $. The action of the  
pseudo-differential operator $ p (\eps D_x) $ is given on the Fourier side by the multiplier $ p (\eps \xi) $. 

\smallskip

\noindent In what follows, we will focus on the scalar wave equation (\ref{third-si-gene}). The origin of equation (\ref{third-si-gene}), 
its physical significances and the reasons why it may be seen as a universal problem (when dealing with systems of hyperbolic 
equations) will be clearly explained in Sections \ref{sec:model} and \ref{sec:setting}. We will work in space dimension one. The 
possible multidimensional effects will not be investigated here.

\smallskip

\noindent We now fix some notations and we introduce simplified assumptions intended to facilitate the presentation of our main 
results. We suppose that the symbol $ p  $ is smooth, say $p\in \cC^\infty (\RR) $. The function $ p  $ is even. It is such that 
$ p_{\mid [-\xi_c,\xi_c]}\equiv 0 $ for some $\xi_c\ge 0$. It is strictly increasing on $(\xi_c,\infty)$. Moreover,  for large values of 
$ \xi $, it is subject to
\begin{equation}\label{hyppreliintrod}
\qquad \lim_{\xi \rightarrow + \infty} \ p (\xi) = 1 , \quad \ \lim_{\xi \rightarrow + \infty} \ p' (\xi) = 0 , \quad \ \exists \ell<0, \quad 
\lim_{\xi \rightarrow + \infty} \ \xi^{4} \ p'' (\xi) = \ell , 
\end{equation}
as well as
\begin{equation}\label{hyppreliintrodetoui}
\exists D\ge 4 ; \qquad \forall \, n \in \{ 2, \cdots, D \} \, , \quad \limsup_{\xi \rightarrow + \infty} \ \frac{\vert p^{(n)}(\xi) \vert}{p'(\xi)} < + \infty .
\end{equation}
Fix some $M \in \NN^* $. The source term $ F_L $ is defined by 
\begin{equation}\label{sssooour}
  F_L (\eps,t,x) =\, - \, \eps^{3/2} \! \sum_{m\in [-M,M]\setminus\{0\}}\ a_m(\eps \, t,t,x) \ e^{i \,m \varphi(t,x) / \eps} .
\end{equation}
In (\ref{sssooour}), the amplitudes $ a_m (T,t,x) $ are chosen in the set $ \cC_b^\infty (\RR^3) $ of smooth
functions whose all derivatives are bounded. They are selected in such a way that, for some $\cT >0$ and some 
$ r \in \RR_+^* $ with $ r<\gamma/2 $, we have
\begin{equation}\label{asymptoticallypietoui}
\forall m \in [-M,M]\setminus\{0\} \, , \qquad \mathrm{supp} \,  a_m \subset \, ]-\infty,\cT] \times [1,+\infty[ \, \times [-r,r] .
\end{equation}
The amplitude $ a_1 (T,t,x) $ is chosen periodic for large times in the second variable. In other words, 
there exists $ {\rm t}_s  \in \RR_+^* $ and a smooth function $ \aaa(T,t,x) $ such that 
\begin{equation} \label{asymptoticallypi}
 \forall t \geq {\rm t}_s , \quad \forall n \in \NN , \quad a_1 (\cdot,t+ n \pi,\cdot) \equiv \aaa (\cdot,t+  n \pi,\cdot) \equiv \aaa
(\cdot,t,\cdot) .  
\end{equation}
The phase $ \varphi $ arising in (\ref{sssooour}) is more general than in (\ref{third-sipourphipha}). It does depend on the spatial 
variable $ x \in \RR $. It is the sum of a quadratic part (in $ t $ and $ x $) and a periodic part (in $ t $).

\begin{ass}[Selection of a relevant phase $ \varphi $] \label{choiofa}The function $ \varphi $ is
\begin{equation} \label{pharetain}
 \varphi(t,x) =  t - x \ t + \gamma \ ( \cos \, t -1) \, , \qquad 0<\gamma <1/4 .  
\end{equation}
\end{ass}

\noindent In Section~\ref{sec:model}, the above assumptions on $ p $ and $ \varphi $ will be motivated by the study of two 
 realistic situations which are related to strongly magnetized plasmas (SMP) and nuclear magnetic resonance (NMR).  In 
 Section \ref{sec:setting}, to better incorporate important specificities of SMP and NMR, they will be somewhat generalized. 
 
 \smallskip
 
 \noindent In the right hand side of (\ref{third-si-gene}), the nonlinear part $F_{\! N\! L}$ is, up to some localization in time and space, 
 of the same form as in the previous subsection. Select a nonnegative cut-off function $\chi$ which is equal to $1$ in a neighborhood 
 of the origin and which is such that  $\operatorname{supp}\chi\subset [-1,1]$. Fix some parameter $\iota\in [0,1]$ which is aimed to 
 measure the strength of the spatial localization. We impose
\begin{equation}\label{eq:NLedp}
  F_{\! N\! L}  (\eps,t,x,u)=\lambda \eps^{\nu}\chi \Bigl( 3-2\frac{\eps t}{\cT}\Bigr) \chi\(\frac{x}{r\eps^\iota}\) e^{i\omega t/\eps} u^{j_1}\bar u^{j_2} .
  \end{equation}
Taking into account the conditions on the support of the $a_m$'s and  $\chi$, the term $F_{\! N\! L}$ becomes effective only 
for $t\ge \cT/\eps$, that is after the term $F_{\! L}$ has played its part. So we observe successively two distinct  phenomena: 
a possible linear amplification, and then nonlinear interactions.

 \smallskip
 
 \noindent The solution $ u $ to (\ref{third-si-gene}) exists on a time interval $ [0,\tilde \cT / \eps] $ with $ \cT < \tilde \cT $. The 
 argument is similar to the one given for the toy model. Through the change (\ref{changeofuu}), we can reformulate the equation
 (\ref{third-si-gene}) in terms of $ \cW = \cU - \cU_{\rm lin} $, see (\ref{eq:detercD}) and (\ref{eq:defcG}). Since  $ \nu+j_1+j_2 > 2 $, 
 the lifespan expressed in terms of $ T = \eps t $ does not shrink to $ \cT $ when $ \eps $ goes to zero. Note however that, due to 
 the quadratic nonlinearity, the global-in-time existence is not at all guaranteed concerning (\ref{third-si-gene}).
 
 \smallskip
 
 \noindent We still denote by $ u_{\rm lin} $ the linear solution 
obtained from (\ref{third-si-gene}) when $\lambda=0$. One point should be underlined here. Our discussion of the linear situation 
is based on the analysis in $L^\infty$ of oscillatory integrals appearing in a suitable wave packet decomposition of $ u_{\rm lin} $. 
The precise structure of these wave packets is lost under the influence of nonlinearities. It follows that our key argument cannot 
be iterated to obtain the existence and the asymptotic behavior of the solution to the full nonlinear equation \eqref{third-si-gene}. 
For this reason, we do not work with \eqref{third-si-gene}. Instead, we look at the first two iterates of an associated Picard iterative 
scheme, which are
\begin{subequations}\label{eq:Picard} 
\begin{eqnarray} 
& &  \displaystyle \part_t u^{(0)} - \frac{i}{\eps} p (\eps D_x) \, u^{(0)} = F_{\! L} , \qquad u_{\mid t=0}^{(0)} = 0 , \label{eq:Picard0} \\
& & \part_t u^{(1)} - \frac{i}{\eps} p (\eps D_x) \, u^{(1)} = F_{\! L}+F_{\! N\! L} \bigl(u^{(0)}\bigr)  , \qquad u_{\mid t=0}^{(1)} = 0 . 
\label{eq:Picard1NL} 
\end{eqnarray}
\end{subequations}
Generalizing (\ref{changeofuu}), we can define 
 \begin{equation} \label{eq:u/U}
\qquad \ \cU^{(j)}(T,z) := \frac{1}{\eps} e^{-iT/\eps^2} u^{(j)} \Bigl( \frac{T}{\eps},\eps z \Bigr) , \quad \ \ u^{(j)} (t,x) := \eps  
e^{i t/\eps} \, \cU^{(j)} \Bigl( \eps t,\frac{x}{\eps} \Bigr) .  
 \end{equation}
 The expression $ \cU^{(0)} $ is the solution to the linear equation ($ \lambda = 0 $). Thus, we have
 $$ \cU^{(0)} (T,z) = \cU_{\rm lin} (T,z) := \frac{1}{\eps} e^{-i T/ \eps^2} u_{\rm lin} \Bigl( \frac{T}{\eps},\eps z \Bigr) . $$
 Symbols like $ p $ appear when looking at special branches $ \cV $ of {\it characteristic varieties} describing the propagation 
 of electromagnetic waves
\begin{equation}\label{graphs-gradV}
\cV := \bigl \lbrace ( t,x, \tau , \xi ) \, ; \, \tau = p(\xi) \, , \, (t,x,\xi) \in \RR^3 \bigr \rbrace \subset T^*(\RR^2) \equiv \RR^2 \times \RR^2 .
\end{equation} 
On the other hand, the phase $ \varphi $ may reflect the transport properties of particles. The graph $ \cG $ of the gradient of 
$ \varphi $ is associated with the {\it Lagrangian manifold}
 \begin{equation}\label{graphs-grad}
\qquad \cG := \bigl \lbrace \bigl( t , x,\part_t \varphi (t,x) , \part_x \varphi (t,x) \bigr) \, ; \, (t,x) \in \RR^2 \bigr \rbrace \subset 
T^*(\RR^2) \equiv \RR^2 \times \RR^2 .
\end{equation} 
In the ODE framework of Paragraph \ref{subsec:toymodel}, we simply find
$$ \quad \cV_{\rm ode} = \bigl \lbrace ( t,x, 1 , \xi ) \, ; \, (t,x,\xi) \in \RR^3 \bigr \rbrace  , \qquad
 \cG_{\rm ode}  = \bigl \lbrace (t,x,1 - \gamma \sin t, 0) \, ; \, (t,x) \in \RR^2 \bigr \rbrace ,$$
so that
 \begin{equation}\label{intersecentrevg}
\cV_{\rm ode} \cap \cG_{\rm ode} = \bigl \lbrace (k \pi ,x,1 , 0) \, ; \, (k,x) \in \ZZ \times \RR \bigr \rbrace . 
\end{equation} 
Thus, the production at the successive times $ k \pi $ with $ k \in \NN $ of the wave packets $ u_k $ which appear at the level 
of (\ref{third-si-devprime}) can be interpreted as coming from positions which are inside $ \cV_{\rm ode} \cap \cG_{\rm ode} $. 
This principle is illustrated in Figure~\ref{figurecross} below, given at $ x $ fixed and $ \xi = 0 $, with $ t $ in abscissa and 
the time frequency $ \tau $ in ordinate. 
\begin{figure}[h]\label{figurecross}
\begin{minipage}[b]{.95\linewidth}
\includegraphics[scale=0.235,width=12.1cm,height=3cm]{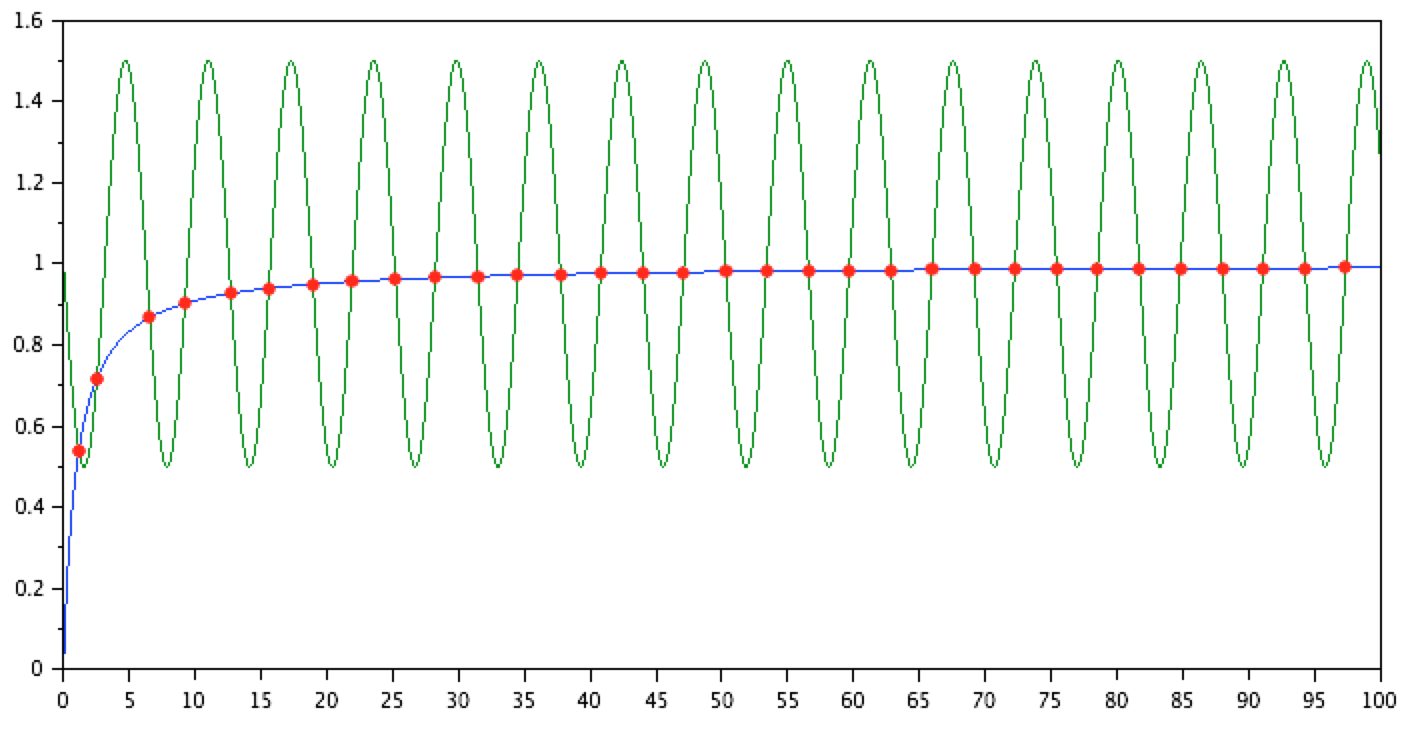} 
\caption{
Intersection (in {\color{red}{red}}) of $ \cV_{\rm ode} $ (in {\color{airforceblue}{blue}}) and 
$ \cG_{\rm ode}  $ (in {\color{green}{green}})
}
\end{minipage}
\end{figure}

\noindent Similarly, in the general framework (\ref{third-si-gene}), \underline{two-dimensional} oscillating waves $ u_k $ can 
emanate from the more complicated intersection
$$ \cV \cap \cG = \bigl \lbrace \bigl( t,x,p(-t),-t \bigr) \, ; \, (t,x) \in \RR^2 \ \text{and} \ p(-t)=p(t) =1-x - \gamma \sin t \bigr \rbrace . $$
In view of (\ref{hyppreliintrod}), for large values of $ \vert \xi \vert $, the dispersion relation $ p(\xi) = \tau $ mimics the choice 
$ p \equiv 1 $ of (\ref{third-si}). As in (\ref{intersecentrevg}), the set $ \cV \cap \cG $ contains (near $ x = 0 $ and for $ t $  large
enough) an infinite number of curve portions (in $ \RR^2 $) which appear repeatedly in time, and from which oscillating waves 
$ u_k $ may be triggered. 

\smallskip
 
\noindent In the framework of SMP and NMR, the symbol $ p $ and the phase $ \varphi $ are issued from different physical laws. 
They are originally unrelated, see Section \ref{sec:model}. But they are connected when solving the equation (\ref{third-si-gene}). 
The interactions between``waves" (associated with $ p $) and ``particles" (described by $ \varphi $) may be revealed through the 
intersection between the two geometrical objects $ \cV $ and $ \cG $, from which waves $ u_k $ can be emitted. 

\smallskip
 
\noindent The amplification mechanism that may arise after summing the $ u_k $'s can be viewed as a {\it resonance}. But now, 
the waves $ u_k $ are no more sure to overlap. In contrast to the toy model, since $ \part_x  \varphi \not \equiv 0 $ and $ p' \not 
\equiv 0 $, the waves $ {\rm u}_k $ do propagate in $ \RR^2 $. They propagate in different directions and with various group 
velocities. They can mix before reaching the long times $ t \sim \eps^{-1} $. 

 \begin{princ}\label{fact3} In the the PDE framework of equation \eqref{eq:Picard}, the accumulation  of the emitted oscillating waves 
 $ u_k $ can produce during long times $ T \sim 1 $ both
\href{https://en.wikipedia.org/wiki/Wave_interference}{constructive and destructive interferences}.
 \end{princ}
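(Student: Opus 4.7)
The plan is to make the heuristic of Fact~\ref{fact3} quantitative by refining, in a two-dimensional setting, the wave packet decomposition used in \eqref{third-si-devprime}--\eqref{third-si-dev} for the toy model. First I would write the Duhamel formula for $u_{\rm lin}$ solving \eqref{eq:Picard0}, insert the Fourier multiplier form of $p(\eps D_x)$, and obtain, for each harmonic $m$ of the source $F_L$ in \eqref{sssooour}, an oscillatory integral with phase
\begin{equation*}
\Phi_m(s,y,\xi;t,x) = (t-s)\, p(\xi) - (x-y)\xi + m\, \varphi(s,y) ,
\end{equation*}
divided by $\eps$. The stationarity equations in $(y,\xi)$ select $\xi = m\, \part_y\varphi(s,y)$ together with the group velocity rule $x-y = (t-s)\, p'(\xi)$; the additional stationarity in $s$ forces $p(\xi) = m\, \part_s\varphi(s,y)$, placing the emission point on the resonant set $\cV \cap \cG$ described at \eqref{graphs-gradV}--\eqref{graphs-grad}.

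The second step is to cut the $s$-integration into successive periods $[2k\pi,2(k+1)\pi]$, exploiting the $2\pi$-periodicity built into $\varphi$ by Assumption~\ref{choiofa} and into $a_1$ by \eqref{asymptoticallypi}. On each slice, a stationary phase expansion in the spirit of \eqref{third-si-devprimeinter}--\eqref{third-si-dev} produces a wave packet
\begin{equation*}
u_k(t,x) \simeq \sqrt{\eps}\ B_{k,\eps}\bigl(x - x_0 - v_g\, (t - 2k\pi)\bigr)\, e^{i\Psi_k(t,x)/\eps} ,
\end{equation*}
where $v_g = p'(\xi_\star)$ is the group velocity read off the active branch of $\cV$, and $\Psi_k$ is an explicit phase whose $\eps$-sensitive amplitude factor, analogous to the coefficient $A_\eps^2$ of \eqref{defcoefaeps}, depends through $\gamma/\eps$ on the slow modulation introduced by $\cos t$ in \eqref{pharetain}.

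The third step is to evaluate $\sum_k u_k$ after the rescaling \eqref{eq:u/U} at a macroscopic point $(T,z)$ with $T\sim 1$. Only the wave packets whose spatial support contains the base point contribute, that is those $k$ with $\eps z - v_g(T/\eps - 2k\pi)$ in $\mathrm{supp}\, B_{k,\eps}$; this yields a finite sum $\cU_{\rm lin}(T,z) \approx \sum_{k \in \cK(T,z)} c_k\, e^{i\theta_k(T,z)}$ in which the phases $\theta_k$ vary with $k$ both through a linear-in-$k$ contribution produced by the change \eqref{eq:u/U} and through the slow modulation of $\Psi_k$. Tuning $(T,z)$ so that the $\theta_k$'s align modulo $2\pi$ yields constructive reinforcement with a sum of size of order $|\cK(T,z)|$; shifting $z$ by an amount of order $1$ (or $T$ by an amount of order $\eps/\gamma$) flips the sign of consecutive terms and produces destructive near-cancellation.

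The main obstacle is uniform control of the stationary phase remainders in $k$ up to $k \sim \eps^{-1}$, so that the slow-time regime $T\sim 1$ is reached before the accumulated errors swamp the leading interference pattern. A secondary subtlety comes from the interplay between the group velocities $v_g = p'(\xi_\star)$ attached to different harmonics $m$ and the spatial cutoff $[-r,r]$ of the $a_m$'s imposed in \eqref{asymptoticallypietoui}: the constraint $r<\gamma/2$ must be exploited to ensure that wave packets emitted over long times can still meet in a common spatial region where the two interference regimes actually manifest themselves.
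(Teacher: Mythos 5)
Your proposal follows the same broad architecture as the paper's Section~4: express $u_{\rm lin}$ via Duhamel as an oscillatory integral, identify the resonant set $\cV\cap\cG$ through stationarity, slice in time to isolate one wave packet per slice, apply stationary phase, and sum. However several of the mechanisms you describe are not quite what carries the argument. First, the time slicing must be of width $\pi$, not $2\pi$: the periodic part $\gamma\cos t$ of $\varphi$ has two non-degenerate stationary points per $2\pi$-period (at $0$ and $\pi$), so cutting along $[2k\pi,2(k+1)\pi]$ leaves two critical points in each slice and breaks the clean one-packet-per-slice bookkeeping; the paper uses a partition of unity $\sum_k\chi_{2\pi/3}(s-k\pi)=1$ and then handles even and odd $k$ separately, with the $(-1)^k\gamma$ factor tracking the parity. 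Second, your description of the constructive/destructive dichotomy as ``tuning $(T,z)$ so the $\theta_k$'s align modulo $2\pi$'' and ``shifting $T$ by order $\eps/\gamma$ flips signs'' mislocates what controls the dichotomy. In fact, after the change of variables \eqref{eq:u/U} the combination $e^{-ik\pi x/\eps}$ becomes $e^{-ik\pi z}$; it is this factor alone that discriminates $z\in 2\ZZ$ (where it equals $1$ and drops out) from $z\notin 2\ZZ$ (where it causes cancellation). Shifting $T$ does not convert one regime into the other — it only modulates the profile in the limiting integral.

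The more consequential gap is that even at a constructive point $z=2j$ the residual phase $\Psi_k^0(T/\eps)/\eps$ is not constant modulo $2\pi$ in $k$; it varies smoothly with $\eps k\pi$ through $[1-p(k\pi)](k\pi-t)/\eps\simeq -\tfrac{\ell}{q(q+1)}(\tfrac{1}{(\eps k\pi)^{q-1}}-\tfrac{T}{(\eps k\pi)^{q}})\eps^{q-2}$. The sum over $k$ is therefore not an in-phase pileup but a Riemann sum converging to the oscillatory integral appearing in \eqref{consintertheoprin}. Conversely, the destructive case requires an Abel summation (summation by parts) to exploit the geometric cancellation of $e^{-ik\pi z}$ against a slowly varying envelope; neither step is visible in your sketch. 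Finally, the condition $r<\gamma/2$ is used not to ensure spatial overlap of wave packets, but to force $|\sin s_k|\le 1/2$ at the temporal critical point, guaranteeing its localization in $(-\pi/3,\pi/3)$ and uniqueness in each slice. Your heuristic of wave packets leaving the ball $|x|\le r$ after a time $\sim\eps^{-1}$ is essentially correct as the origin of the dispersive regime $k\lesssim\eps^{-1/(q+1)}$, and this is indeed where the estimate $p'(k\pi)\sim k^{-q-1}$ enters; that part of your intuition matches the paper's Lemma~\ref{localizations}.
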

 

 \subsection{Statement of main results}  \label{subsec:statemainresult} 
 
\noindent The analysis of the creation, the propagation, the linear superposition, and the nonlinear interaction of the $ u_k $'s is a 
manner to approach some kind of {\it turbulence}. We start with
situations where the linear aspects are predominant. A standard Picard
scheme can be used to approximate the nonlinear equation
\eqref{third-si-gene}. The corresponding first two iterates yield the 
Cauchy problems \eqref{eq:Picard0} and \eqref{eq:Picard1NL}. 

 \begin{theo}[Linear behavior]\label{theo:resumeNL} Select a source term $ F_L $ as indicated in \eqref{sssooour} with a phase $ \varphi $ depending 
 on $ \gamma $ according to \eqref{pharetain}. Take profiles $ a_m $ satisfying both \eqref{asymptoticallypietoui} and \eqref{asymptoticallypi}.

 
 \smallskip
 
\noindent Consider the equation \eqref{third-si-gene} with a symbol $ p $ subject to \eqref{hyppreliintrod} and \eqref{hyppreliintrodetoui}.
Let $ \cU^{(j)}$, with $ j \in \{0,1\} $, issued from \eqref{eq:u/U} 
after solving \eqref{eq:Picard}. Select some $ \cT >0 $. 
\begin{enumerate} 
\item \emph{Linear case.} Concerning $ \cU^{(0)} \equiv \cU_{\rm lin} $, we can produce the following distinct 
asymptotic behaviors when $\eps $ goes to zero.
\begin{itemize}
\item \emph{Constructive interferences.} For all $j\in \ZZ$ and $T\in [\cT,2\cT]$,
  \begin{align}
 \cU_{\rm lin} (T,2j) = \cO(1) = A^2_\eps \int_0^{+\infty} e^{-i\frac{\ell}{6}\(\frac{1}{s}-\frac{T}{s^2}\)} \aaa(s,0,0)ds+o(1) ,
 \label{consintertheoprin}
  \end{align}
  where $ \displaystyle A_\eps^2= \sqrt{\frac{2}{\pi \gamma}} e^{- i \frac{\gamma}{\eps}} \cos \( \frac{\gamma}{\eps} - \frac{\pi}{4} \) $ is as 
  in \eqref{defcoefaeps}. 
\item \emph{Destructive interferences.}   By contrast, for all $ z \in \RR\setminus 2\ZZ$ and for all $T\in [\cT,2\cT]$, we find that 
  \begin{equation}
    |\cU_{\rm lin} (T,z )| =o(1) . \label{desnonlinintertheoprin}
  \end{equation}
  \end{itemize}
\item \emph{Nonlinear case.}
   Adjust the nonlinearity $ F_{\! N\! L}  $  as in \eqref{eq:NLedp}, with real parameters $ \nu $, $ j_1 $, $ j_2 $ $ \omega $ and $ \iota $. 
 Assume that either $\nu+j_1+j_2>2$,  or $\nu+j_1+j_2=2$ with $\omega+j_1-j_2\not = 1$. 
 Fix some $\iota\in [0,1]$. In the case
 $\nu+j_1+j_2-2=\omega+j_1-j_2=0$, set $\iota=1$. Then the nonlinearity plays no role at leading order in the sense that
  \begin{equation} \label{comparethetwo}
  \forall (T,z) \in [0, 2\cT] \times \RR , \qquad \cU^{(1)} (T,z) = \cU_{\rm lin}(T,z) + o(1) .  
   \end{equation}
   \end{enumerate}
 \end{theo}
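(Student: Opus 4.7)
The plan is to reduce $\cU_{\rm lin}$ to an explicit oscillatory integral whose stationary-phase analysis reproduces, and propagates in $z$, the wave packets identified at \eqref{third-si-devprime}. Applying the Duhamel formula to \eqref{eq:Picard0}, taking the Fourier transform in $x$, and rescaling via \eqref{eq:u/U}, I would obtain a representation of the form
\begin{equation*}
\cU_{\rm lin}(T,z) = -\eps^{1/2} \! \sum_{m \neq 0} \int_0^{T/\eps} \! \iint a_m(\eps s, s, x)\, e^{i\Psi_m(T,z,s,x,\xi)/\eps^2}\, d\xi\, dx\, ds + (\text{lower order}),
\end{equation*}
with $\Psi_m$ collecting the propagator phase $(T-\eps s)\,p(\eps\xi)$, the source oscillation $m\eps\varphi(s,x)$, the $T$-trivialisation $-T$, and the inverse-Fourier factor $-\eps(z-x)\xi$. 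For $m \neq 1$, the inequality \eqref{nonstaphi} remains valid thanks to $\gamma < 1/4$, so $\partial_s \Psi_m$ is bounded below; a single integration by parts in $s$ then yields a contribution of size $\cO(\eps^{1/2})$ uniformly in $(T,z)$ on bounded sets, handling all non-resonant harmonics in line with \eqref{devharmmnotmlong}.

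\textbf{Resonant harmonic and selection by interference.} For $m = 1$, the phase $\Psi_1$ has stationary points in $s$ concentrated at $s = k\pi$, $k \in \NN$, with the $\xi$-stationarity pinned down by the dispersion relation $p'(\eps\xi) = \eps \cdot (\text{affine in } s,x,z)$, solved on the large-frequency branch $\xi \sim -s/\eps$. The precise asymptotic $\xi^{4} p''(\xi) \to \ell$ from \eqref{hyppreliintrod} fixes the Hessian of $\Psi_1$, and a Gaussian evaluation of the $\xi$-integral delivers exactly the dispersive prefactor $e^{-i(\ell/6)(1/s - T/s^2)}$ visible in \eqref{consintertheoprin}. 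Applying the refined stationary-phase formula \eqref{third-si-dev} period by period then produces, on each slab $s \in [2k\pi - \pi/2,\, 2k\pi + 3\pi/2]$, a wave packet of amplitude $A_\eps^{2}\,\aaa(\eps k\pi, 0, 0)$ multiplied by a $z$-dependent phase coming from the quadratic part $-xt$ of $\varphi$. Summing the $\lfloor T/(\eps\pi)\rfloor$ packets and viewing the result as a Riemann sum of step $\eps\pi$ in the slow variable $\tau = \eps k\pi$ delivers \eqref{consintertheoprin} precisely when the interphase factors align, which is the arithmetic condition $z \in 2\ZZ$ inherited from the $2\pi$-periodicity in \eqref{asymptoticallypi}; for any other $z$, an Abel summation forces the oscillatory sum to $o(1)$, yielding \eqref{desnonlinintertheoprin}.

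\textbf{Nonlinear correction, critical cases, and main obstacle.} Writing $\cW := \cU^{(1)} - \cU_{\rm lin}$, Duhamel applied to \eqref{eq:Picard1NL} in the variables of \eqref{eq:u/U} gives
\begin{equation*}
\cW(T,z) = \lambda\, \eps^{\nu + j_1 + j_2 - 2} \int_0^T e^{i(T-s)(p(D_z) - 1)/\eps^2} \bigl[\, e^{i(\textfrak{g}-1)s/\eps^2}\, \chi_{\rm slow}(s)\,\chi_{\rm fast}(z)\, \cU_{\rm lin}^{j_1}\, \bar\cU_{\rm lin}^{j_2}\bigr](s,z)\, ds.
\end{equation*}
When $\nu + j_1 + j_2 > 2$, the prefactor alone yields $o(1)$. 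When $\nu + j_1 + j_2 = 2$ and $\textfrak{g} \neq 1$, a single integration by parts in $s$ against $e^{i(\textfrak{g}-1)s/\eps^2}$ gains the extra $\eps^2$ required to dominate the derivatives of the bracket, closing the estimate exactly as in \eqref{sisiilfaut}. In the doubly critical case $\nu + j_1 + j_2 = 2 = \textfrak{g}$, the hypothesis $\iota = 1$ confines $\chi_{\rm fast}$ to $|z| \le r < \gamma/2 < 1$; the only point of $2\ZZ$ inside this interval is $0$, so by \eqref{desnonlinintertheoprin} the integrand is uniformly $o(1)$ away from $z = 0$, and at $z = 0$ the integral is absorbed after one integration by parts against the fast oscillation generated by $(p(D_z) - 1)/\eps^2 \equiv -1/\eps^2$ on smooth symbols below $\xi_c$. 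The principal obstacle throughout will be the passage from a sum of $\cO(1/\eps)$ wave packets of individual size $\cO(\eps)$ to a genuine Riemann integral with $o(1)$ error, which requires controlling how the $\cO(\eps^{3/2})$ corrections in \eqref{third-si-dev} remain summable in $k$, and how the group velocity $p'(\eps\xi)$ couples to the particle transport $\partial_x\varphi = -t$ — a coupling for which the sharp asymptotic $\xi^4 p''(\xi) \to \ell$ is indispensable and which has no counterpart in the ODE toy model.
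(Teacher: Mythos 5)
Your linear-case outline follows the paper's strategy (wave-packet decomposition, stationary phase, Riemann sum for constructive interference, Abel summation for destructive), but it has a quantitative flaw: a \emph{single} integration by parts in $s$ does not control the non-resonant harmonics $m\neq 1$. In the oscillatory integral for $u^m$, the $s$-domain has length $\sim\eps^{-1}$ and the $\xi$-integral is over all of $\RR$; after one application of the operator $Q^*$ from the proof of Proposition~\ref{prop-vanoscint}, the integrand decays only like $1/|\xi|$, which is not integrable. The paper needs $n\ge 2$ integrations by parts for $\xi$-integrability (see the term $\int\inf(2^{j/2}/\eta^j,\,|\xi|^{-j})\,d\xi$ at the end of that proof, convergent only for $j\ge 2$), and iterates to arbitrary $n$ to obtain $\cO(\eps^\infty)$. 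Similarly, the dispersive exponential $e^{-i(\ell/6)(1/s-T/s^2)}$ does not come out of the Gaussian evaluation of the $\xi$-integral (which produces the factor $|\det S_k|^{-1/2}$ and the local phase $\Phi_k(\dots)$); it emerges later from expanding $\Psi_k^0$ via \eqref{reso2} when the sum over $k$ is recognised as a Riemann sum in $\eps k\pi$ with the degenerate integrand handled by a secondary threshold $\eta$.

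The nonlinear part has a more serious gap. In the PDE case, after Duhamel's formula the $s$-derivative of the full phase is $(\textfrak g-p(\xi))/\eps^2$, not $(\textfrak g-1)/\eps^2$. Your proposed single integration by parts in $s$, modelled on the ODE computation \eqref{sisiilfaut}, requires $|\textfrak g-p(\xi)|\ge c>0$ uniformly in $\xi$, i.e.\ $\textfrak g\notin[0,1]$. For $\textfrak g\in(0,1)$ the infimum $c_{\textfrak g}$ vanishes at $\pm\xi_{\textfrak g}$, and for $\textfrak g=0$ it vanishes on the entire interval $[-\xi_c,\xi_c]$; the paper introduces a frequency cut-off of width $\eps^\eta$ near $\pm\xi_{\textfrak g}$ in Proposition~\ref{sortnonlinearstrictgauge} and a splitting around $[-2\xi_c,2\xi_c]$ followed by dominated convergence via Lemma~\ref{vanishingprop} in Proposition~\ref{sortnonlinaergaugeenzero}. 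You also misread which case forces $\iota=1$: the hypothesis $\nu+j_1+j_2-2=\omega+j_1-j_2=0$ is $\textfrak g=0$, the \emph{transitionally resonant} case, not $\textfrak g=2$ (which is non-resonant and covered by Proposition~\ref{sortnonlinaer} for any $\iota\in[0,1]$). Your argument at $z=0$ against the operator $(p(D_z)-1)/\eps^2\equiv-1/\eps^2$ "on smooth symbols below $\xi_c$" is circular in exactly this case: with $\textfrak g=0$ the relevant $s$-derivative on those frequencies is $\textfrak g-p(\xi)=0$, and the combined phase $e^{i(\textfrak g-1)s/\eps^2}e^{i(T-s)(p(\xi)-1)/\eps^2}=e^{i(\textfrak g-p(\xi))s/\eps^2}e^{i T(p(\xi)-1)/\eps^2}$ is $s$-independent there.

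Finally, even where the IBP in $s$ \emph{does} apply, your sketch does not explain how to close it. In the ODE case $\partial_s\cU=\cO(\eps^{-1/2})$ trivially, but in the PDE case the global energy bound gives $\partial_T\cU^{(0)}=\cO(\eps^{-7/2})$ in $L^\infty$ (Lemma~\ref{GlobalcontrolintheL2}), which kills the $\eps^2$ gain. The estimate closes only because of the sharper \emph{local} bound $\partial_T\cU^{(0)}=\cO(\eps^{-2/3})$ for $|z|\le r/\eps$ (Lemma~\ref{lem:fine-estimates}, \eqref{majounidederiofU0}), which in turn requires returning to the wave-packet decomposition; and one also needs the $L^\infty$ analysis of the singular convolution kernel $K_\tau$ of the Duhamel propagator (Lemma~\ref{Lpestikernel} and Corollary~\ref{corollairefac}). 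These intermediate ingredients, which occupy much of Section~\ref{sec:settingNL}, are exactly the points at which the PDE case diverges from the toy model and should be identified explicitly.
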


 \noindent Interpreted in the setting of SMP, Theorem \ref{theo:resumeNL} shows, as forecast in \cite{editorial}, that small plasma waves 
 (the $ u_k $'s) driven by microscopic instabilities can accumulate over long times to furnish nontrivial effects. In turn, this phenomenon 
 participate in some anomalous transport \cite{MR3582249} and can trigger instabilities which may act as obstructions to the confinement 
 of magnetized plasmas \cite{MR4084146}. Applied in the context of NMR, our result investigates the processes whereby human tissues could 
 be heated during \href{https://en.wikipedia.org/wiki/Magnetic_resonance_imaging}{magnetic resonance imaging} \cite{MR2127862}. 
 
 \smallskip
 
 \noindent It is worth noting that the turbulent aspects which are revealed by Theorem \ref{theo:resumeNL} are inherently linked to spatial heterogeneity. 
 They are caused by the impact of the inhomogeneous source term $ F_{\! L} $, which involves special oscillating wave front sets. Both in SMP 
 and NMR, the input of energy is due to a strong external magnetic field $ \bf B  $, whose directions vary with the spatial positions, see 
 Section \ref{sec:model}.
 
 \smallskip
 
 \noindent Theorem \ref{theo:resumeNL} indicates that Facts \ref{fact1}, \ref{fact2} and \ref{fact3} indeed prevail. We still have two notions 
 of criticality as far as nonlinear effects are concerned: the size of the nonlinearity (through the choice of $ \nu+j_1+j_2 $) and the 
 nature of oscillations (involving the gauge parameter $ \textfrak {g} = \omega+j_1-j_2 $). 
 
\smallskip
 
\noindent The case $\nu+j_1+j_2>2$ corresponds 
 to a nonlinearity whose amplitude is too weak to have effects at leading order, regardless of the gauge. The case $ \nu+j_1+j_2=2 $ 
 corresponds to a nonlinearity with a critical size, for which we have to further investigate the content of the oscillations. 
 
 \smallskip
 
 \noindent It turns out that,  for $ \textfrak {g} \not = 1$, that is for $ \omega+j_1-j_2\not = 1 $, the oscillations in the nonlinear term are 
 not \emph{resonant}. They prevent the nonlinearity from having a leading order contribution. This is why we have \eqref{comparethetwo}.
 
 \smallskip
 
 \noindent In practice, the expression \eqref{consintertheoprin} is built as a sum of wave packets, which may be viewed as corresponding 
 to the terms $ u_k $ of \eqref{third-si-devprimeinter}. But now, the wave packets accumulate only at special positions which, in the space  
 variable $ x $, are located on a moving lattice of size $ \eps $. The complete statement is Proposition~\ref{supnormamplifi}, which takes 
 into account the general choices of $ p$ and $F_{\! L}$ introduced in Section~\ref{sec:model}.

 \smallskip
 
 \noindent By contrast, at all other positions, as indicated in \eqref{desnonlinintertheoprin}, the wave  packets $u_k $ compensate 
 to furnish asymptotic disappearance. This is due to mixing properties induced by the variations of the phase ($ \part_x \varphi \not 
 \equiv 0 $) and dispersive effects ($ p' \not \equiv 0 $), mixing properties which are recorded in the arithmetic properties of a phase 
 shift. This is a feature of the PDE \eqref{third-si-gene}, which is completely absent from the ODE \eqref{third-si}. 
 The full statement can be found in Proposition~\ref{supnormnonamplifi2}. 

\smallskip

\noindent Compare \eqref{third-si-devtt} and \eqref{consintertheoprin}. The characteristic function $ \mathds{1}_{[0,T]} (s) $ 
of \eqref{third-si-devtt} plays the role of $ \aaa(s,0,0) $ inside \eqref{consintertheoprin}. Observe however that the formula 
\eqref{consintertheoprin} differs from \eqref{third-si-devtt}, due to the factor $\exp \bigl( {-i\frac{\ell}{6} (\frac{1}{s}-
\frac{T}{s^2} )} \bigr) $ in front of $ \aaa $. This additional factor is induced by the rate of convergence of $ p'' (\xi) $ 
towards $ 1 $, which appears at the end of line \eqref{hyppreliintrod}. It is absent when $ p \equiv 1 $. In comparison to 
\eqref{third-si-devtt}, due to the presence of an oscillating factor, it can reduce the amplification phenomenon which is 
revealed by \eqref{consintertheoprin}. It reflects some microlocal effect, which is encoded in the behavior of $ p  $, 
on the asymptotic behavior of the solution $  \cU_{\rm lin}  $.

\smallskip

\noindent Remark that the constructive interferences \eqref{consintertheoprin} would be very difficult to detect in Lebesgue 
norms other than $L^\infty$, like $ L^2 $. This is because the asymptotic profile of $ \cU_{\rm lin} $ is nontrivial only on a 
set of Lebesgue measure zero (the lattice $ \ZZ $). To some extent, we can say that the underlying mechanisms rely on 
the recombination of small scales (rapid oscillations) into larger scales, which produces (asymptotically) a very weak solution.

\smallskip

\noindent As already explained, the linear part (1) of Theorem~\ref{theo:resumeNL} is a direct consequence of Propositions~\ref{supnormamplifi}
and~\ref{supnormnonamplifi2}. The proof relies basically on classical stationary and non-stationary phase arguments to precisely describe 
the infinite number ($ k \in \NN $) of emitted signals $ u_k $. But the linear superposition of the $ u_k $ is a quite complicated mechanism. This 
requires to sort between dispersive and almost stationary waves, and this means to carefully examine the phase compensation phenomena 
that occur in the summation process. The integral inside \eqref{consintertheoprin} appears ultimately as the limit of a Riemann sum indexed by $ k $.

\smallskip

\noindent The comparison between the linear solution $\cU^{(0)} \equiv \cU_{\rm lin} $ and the expression $\cU^{(1)}$ is 
a nontrivial test to measure whether or not nonlinear effects can alter the solution at leading order. The content of 
\eqref{comparethetwo} is proved in Section~\ref{sec:sorting-gauge}. According to the choice of $ \textfrak {g} $ or $ \iota \in [0,1] $,
the size of the $ o(1) $ inside \eqref{comparethetwo} may be improved, see Propositions~\ref{sortnonlinaer}, 
\ref{sortnonlinearstrictgauge} and \ref{sortnonlinaergaugeenzero}. In view of Theorem \ref{theo:resumeNL}, nonlinear phenomena 
can be expected only under critical nonlinearities ($ \nu+j_1+j_2=2 $) and resonant oscillations ($ \textfrak {g} = 1 $). 

\smallskip

\noindent General nonlinear source terms will be investigated in Subsections~\ref{sec:settingNL} and \ref{sec:sorting-gauge}. 
But, because it is simpler and already quite illustrative, in Subsection~\ref{sec:nonlineareffectg=1}, we only examine the 
case of $ u^2 $. Other quadratic nonlinearities may be more difficult to resolve. Retain also that, higher-order nonlinearities, 
like the cubic choice $ \vert u \vert^2 u $, appear to be not directly manageable through our approach, see Remark~\ref{criticalcubicrem}.

\smallskip

\noindent Recall that $ F_{\! L} $ has been defined at the level of \eqref{sssooour}. The implementation of $ u^2 $ corresponds at the level of \eqref{eq:NLedp} 
to the selection of $ \lambda = 1 $ and $ (\nu, j_1,j_2 ) = (0,2,0) $, so that $ \omega = - 1 $ (since we want to impose $ \textfrak {g} = 1$). 
Thus, we consider the solution $ u^{(0)} =u_{\rm lin}$ to \eqref{eq:Picard0}, as well as the solution $ u^{(1)} $ to $ u^{(1)}_{\mid t=0} =0 $ together with 
\begin{equation}\label{eq:Picard1}
\part_t u^{(1)} - \frac{i}{\eps} \, p ( - i \eps \part_x) u^{(1)} = F_{\! L} +\chi\Bigl( 3-2\frac{\eps t}{\cT}\Bigr) \chi\(\frac{x}{r\eps^\iota}\)
e^{-it/\eps} \bigl( u^{(0)} \bigr)^2 .
\end{equation}

\begin{theo}[Nontrivial nonlinear effects in the presence of resonances]\label{theo:resumeNLbis} The general context is as in Theorem 
\ref{theo:resumeNL}. We fix $ \nu=1$, $ j_1 =2 $,  $ j_2 = 0 $ and $ \omega = -1 $ to deal with the quadratic source term $ u^2 $ of 
\eqref{eq:Picard1}. It follows that the gauge parameter $ \textfrak {g} = \omega + j_1- j_2 = 1 $ is resonant.
Select some $\iota \in ]\iota_-,1[ $ with $ \iota_- := (13 - \sqrt{89}) / 8 $. Then, for all time $T\in [\cT,2\cT]$ and for all position $ z \in \RR $, 
the expressions $ \cU^{(0)} (T,\cdot) $ and $ \cU^{(1)} (T,\cdot) $ which are issued from \eqref{eq:u/U} after solving \eqref{eq:Picard0} and 
\eqref{eq:Picard1}  have the following asymptotic behaviors when $\eps $ goes to zero. 

\smallskip

\begin{itemize}
\item \emph{Constructive interferences}. When $ z = 2 j $ for some $ j\in \ZZ $, the nonlinear interactions have some effect at 
leading order. As a matter of fact, we find 
\vskip -6mm
\renewcommand\arraystretch{2.5}
\begin{equation} \label{desintertheoprinbis}
\quad \  \begin{array}{ll}
\displaystyle \cW^{(1)}(T,2j) \! \! \! & \displaystyle := \cU^{(1)}(T,2j) - \cU^{(0)}(T,2j) \\
\ & \displaystyle \, = o(1) + A_\eps^4 \int_0^T \chi \bigl( 3-2\frac{s}{\cT} \bigr) \\
& \displaystyle \quad \ \times \( \int_0^{+\infty}\!\!\!\! \int_0^{+\infty} e^{-i\frac{\ell}{6} \frac{T-s}{(\sigma_1+\sigma_2)^2}} 
b(\sigma_1,s) b(\sigma_2,s) d\sigma_1 d\sigma_2 \) ds , 
  \end{array}
  \end{equation}
  \renewcommand\arraystretch{1}
where $ A_\eps^2 $ is as in \eqref{defcoefaeps} and $ b(\sigma,s) := e^{-i\frac{\ell}{6}\(\frac{1}{\sigma}-\frac{s}{\sigma^2}\)}
\aaa(\sigma,0,0)$. 
\vskip 1mm
 \item \emph{Destructive interferences.} By contrast, when $ z \in \RR\setminus 2\ZZ $, the nonlinear interactions are still 
 negligible at leading order in the sense that
  \begin{equation}\label{destrasyana}
\forall \, z \in \RR\setminus 2\ZZ , \qquad |\cW^{(1)}(T,z)| =o(1) . 
  \end{equation}
\end{itemize}
 \end{theo}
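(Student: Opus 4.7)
The plan is to represent $\cW^{(1)}=\cU^{(1)}-\cU^{(0)}$ by Duhamel's formula and then inject the sharp wave-packet description of $\cU^{(0)}=\cU_{\rm lin}$ already obtained in the linear part of Theorem~\ref{theo:resumeNL}. Subtracting \eqref{eq:Picard0} from \eqref{eq:Picard1} and rescaling via \eqref{eq:u/U}, the difference satisfies an integral identity of the form
\begin{equation*}
\cW^{(1)}(T,z) \, = \, \int_0^T \!\! \int_\RR K_\eps(T-S,z-w)\, \chi\!\Bigl(3-\tfrac{2S}{\cT}\Bigr)\, \chi\!\Bigl(\tfrac{w\,\eps^{1-\iota}}{r}\Bigr)\, \bigl(\cU^{(0)}(S,w)\bigr)^2 \, dw \, dS ,
\end{equation*}
where $K_\eps$ is the kernel of the free propagator $e^{i(T-S)(p(\eps D_x)-1)/\eps^2}$. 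Here the resonance condition $\textfrak{g}=\omega+j_1-j_2=1$ is exactly what allows the factor $e^{-it/\eps}$ in $F_{\!N\!L}$ to cancel the factor $e^{2it/\eps}$ generated by $(u^{(0)})^2$, so that, after extraction of the global phase $e^{iT/\eps^2}$, no residual fast oscillation in $\eps^{-2}$ pollutes the integrand outside $K_\eps$.

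Next I substitute inside $(\cU^{(0)}(S,w))^2$ the wave-packet expansion provided by Propositions~\ref{supnormamplifi} and \ref{supnormnonamplifi2}, which represents $\cU^{(0)}(S,w)$ essentially as a sum over emission periods $k\in\NN$ of wave packets of common amplitude $A_\eps^2$, each concentrated near the lattice site $w=2k$ and carrying the profile $b(\eps k,S)=e^{-i\frac{\ell}{6}(1/(\eps k)-S/(\eps k)^2)}\,\aaa(\eps k,0,0)$. Squaring produces a double sum over pairs $(k_1,k_2)$ of amplitude $A_\eps^4$, localized near the pairs $(2k_1,2k_2)$ of lattice sites, and the spatial cutoff $\chi(\cdot/(r\eps^\iota))$ restricts the admissible sites to $|2\eps k_j|\leq r$; setting $\sigma_j:=\eps k_j$ turns the double sum into a Riemann sum over $(\sigma_1,\sigma_2)\in[0,r]^2$. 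Applying $K_\eps$ to such a product packet and using \eqref{hyppreliintrod} in the form $p(\eta)-1\sim \ell/(6\eta^2)$ as $\eta\to\infty$, a stationary-phase computation identifies the additional dispersive factor $e^{-i\frac{\ell}{6}(T-S)/(\sigma_1+\sigma_2)^2}$, in which the combined memory $\sigma_1+\sigma_2$ appears because the product of two high-frequency packets at effective frequencies $\sim 1/\sigma_j$ creates a packet at effective frequency $\sim 1/(\sigma_1+\sigma_2)$. Passing to the limit $\eps\to 0$ replaces the double Riemann sum by the double integral in $(\sigma_1,\sigma_2)$ displayed in \eqref{desintertheoprinbis} at $z=2j$. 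For $z\in\RR\setminus 2\ZZ$, each summand acquires a residual phase factor $e^{i\vartheta_\eps(k_1,k_2,z)/\eps}$ that is non-resonant in the pair $(k_1,k_2)$: the arithmetic compensation mechanism already used to establish \eqref{desnonlinintertheoprin} in Proposition~\ref{supnormnonamplifi2} then extends to the doubled index and yields \eqref{destrasyana}.

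The main technical obstacle, and the origin of the explicit threshold $\iota_-=(13-\sqrt{89})/8$, is a delicate compromise on the size of the spatial window $r\eps^\iota$. On the one hand, one needs $\iota<1$ so that the cutoff keeps $\sim \eps^{\iota-1}$ lattice sites $2\eps k$, which is what ensures a non-degenerate limit for the Riemann sum; on the other hand, $\iota$ must be large enough so that: (i) pairs $(\sigma_1,\sigma_2)$ with $\sigma_1+\sigma_2\to 0$ do not make the dispersive phase $e^{-i\frac{\ell}{6}(T-S)/(\sigma_1+\sigma_2)^2}$ blow up uncontrollably in the error estimates; (ii) near-diagonal wave-packet self-interactions, whose remainder terms are products of positive powers of $\eps^\iota$ and $\eps^{1-\iota}$, are truly $o(1)$; and (iii) the non-stationary part of the product phase, which contributes through a Picard-type bound relying on the $L^\infty$ control of $\cU^{(0)}$, does not saturate the Duhamel estimate. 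Requiring all these remainder terms to vanish in the limit yields a quadratic inequality in $\iota$ whose admissible range reads exactly $\iota\in(\iota_-,1)$. Once this balance is enforced, both the constructive asymptotics \eqref{desintertheoprinbis} and the destructive bound \eqref{destrasyana} follow from a uniform-in-$z$ version of the oscillatory-integral arguments already used in the linear analysis.
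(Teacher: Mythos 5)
Your overall architecture—Duhamel's formula, injection of the wave-packet description of $\cU^{(0)}$, a double sum over emission indices $(k_1,k_2)$, passage to a double Riemann sum, and Abel-type cancellation for the destructive case—matches the paper's proof (Propositions~\ref{reduceprop}, \ref{simplyfprop}, \ref{passtothelimitWbeta}), and your identification of the resonance $\textfrak{g}=1$ as what cancels the residual $\eps^{-2}$-oscillation in $s$ is correct. However, there is a genuine conceptual error in the middle step that would make the argument fail if carried out as written.

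You claim that the wave packets composing $\cU^{(0)}$ are ``concentrated near the lattice site $w=2k$,'' and that the spatial cutoff $\chi(\cdot/(r\eps^{\iota}))$ ``restricts the admissible sites to $|2\eps k_j|\le r$,'' turning the double sum into a Riemann sum over $(\sigma_1,\sigma_2)\in[0,r]^2$. Neither statement is correct. The index $k$ labels an \emph{emission period in time} (the $k$-th passage of the source through the resonance, near $t\simeq k\pi$), not a spatial position, and the $k$-th wave packet contributes to $\cU^{(0)}(T,z)$ as an essentially plane wave $e^{-ik\pi z}$ times a slowly varying profile $b(\eps k\pi,T)$; there is no spatial concentration at $w=2k$. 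The lattice $z\in 2\ZZ$ appears because $e^{-ik\pi z}\equiv 1$ there for every $k$, so the plane waves add coherently, while off the lattice they cancel via the Abel argument. Consequently the Riemann variables $\sigma_j=\eps k_j\pi$ are \emph{rescaled emission times}, and the integration range in \eqref{desintertheoprinbis} is the $T$-support of $\aaa(\cdot,0,0)$ coming from \eqref{suppdea}, not $[0,r]$. The cutoff $\chi(w/(r\eps^{\iota-1}))$ plays a completely different role: it limits the width (and hence the $L^2$-mass) of $\cG^\eps(s,\cdot)$, which is what feeds into the kernel estimates (see Corollary~\ref{corollairefac}) and into the delicate balance between the ``local'' and ``nonlocal'' contributions of the propagator. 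Starting from your geometric picture you would not obtain the correct integrand, nor see why $\iota$ must lie strictly below $1$ without being too small.

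On the threshold $\iota_-=(13-\sqrt{89})/8$, your heuristic attributes it to three loosely stated constraints combining into a quadratic; in the paper it arises from one specific balance (the estimate of the piece $\text{\scriptsize$\circled{5}$}_+^{\,\iota}$ in the proof of Proposition~\ref{simplyfprop}, coupled with the compatibility requirement $\beta<(3+\iota)/5$ from Proposition~\ref{reduceprop}), leading to $12\iota^2-39\iota+15<0$. Also, the mechanism producing the correlation factor $e^{-i\frac{\ell}{6}(T-s)/(\sigma_1+\sigma_2)^2}$ is not a stationary-phase computation but the action of the multiplier $e^{i(T-s)(p(\xi)-1)/\eps^2}$ evaluated at the combined wave-number $\xi=-\mathfrak{p}^1_{k_1,k_2}\approx(k_1+k_2)\pi$, after the $(y,\xi)$-integral has collapsed to an approximate Dirac mass; your heuristic about ``effective frequencies $\sim1/\sigma_j$'' has the right flavor (the wave-numbers add, and $p(\xi)-1\sim\ell/(6\xi^2)$ at large $\xi$) but should be phrased in terms of wave-numbers $k\pi$ rather than frequencies $1/\sigma$. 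In short, the skeleton of your proof is correct, but the spatial-lattice interpretation of the packets is a real gap that must be corrected before the remaining steps can be made rigorous.
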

 \noindent Theorem \ref{theo:resumeNLbis} means that both constructive and destructive interferences persist in 
  the nonlinear framework. 
  
\smallskip

\noindent The different wave packets $ u_k $ composing $ \cU^{(0)} $ interact through the quadratic term of equation 
\eqref{eq:Picard1}. There are consequently additional nonlinear  effects which are reflected in the triple integral appearing 
in the right hand side of \eqref{desintertheoprinbis}. The nonlinear impact is not obtained, as could be expected by extrapolating 
\eqref{integralformuspecialcase}, that is by just multiplying the linear profiles $ b $ inherited from \eqref{consintertheoprin}. It 
also involves the correlation coefficient $ \exp \( \tfrac{-i \ell (T-s)}{ 6 (\sigma_1 + \sigma_2)^2 }\) $.
 
 \smallskip
 
\noindent It should be emphasized that Theorem~\ref{theo:resumeNLbis} cannot be inferred from Theorem~\ref{theo:resumeNL}, 
even on a formal level, due to the fact that nonlinear effects are quite strong. We will discuss more specifically these aspects 
at the end of Section~\ref{sec:nonlinear}, in Subsection~\ref{sec:nonlineareffectg=1}, where Theorem~\ref{theo:resumeNLbis} is proved. 

 \smallskip
 
\noindent It may seem that the assumptions made to state Theorem~\ref{theo:resumeNLbis} are quite restrictive, for instance: 
the space and time localization of the nonlinearity (through the cut-off function $\chi$), a rather strange lower bound on the 
parameter $\iota$ related to the spatial scale, and the fact that we consider only the first two iterates of a Picard's scheme 
(this last point was already motivated above). Nevertheless, to obtain Theorem~\ref{theo:resumeNLbis}, we need already a 
rather involved analysis and careful estimates to deal with the oscillatory integrals coming from Duhamel's formula. 

 \smallskip
 
\noindent Pursuing the  analysis in order to examine the ``complete'' nonlinear  situation \eqref{third-si-gene} is beyond  the 
scope of this article, see Remark \ref{nonlieafinrem}. 
 
 \smallskip

 \noindent In conclusion, the key innovation of the present article is, in the context of SMP and NMR, a refined analysis of resonances, 
 as well as a subsequent study of related interferences and nonlinear interactions.  This will be done first in a linear setting (Section 
 \ref{sec:lineffect}) and then in a nonlinear framework (Section \ref{sec:nonlinear}).

\subsection*{Acknowledgements} The authors wish to thank the referee for 
a very careful reading of the paper and  numerous constructive comments.


\section{The origin of the model}\label{sec:model}



 The equation (\ref{third-si}) with $ \varphi $ as in (\ref{third-sipourphipha}) first appears in \cite{editorial} as a textbook case when it comes 
 to studying plasma turbulence. It is a very  elementary model aimed at explaining wave-particle interaction \cite{Koch}. In (\ref{third-si}), 
 the ``{\it wave}'' is represented by $ u $ while the influence of ``{\it particles}'' is incorporated at the level of the source term, through the 
 special structure of the phase $ \varphi $ inside $ F_{\! L} $ as well as the choice of the nonlinearity $ F_{\! N \! L} $. 
 
 \smallskip
 
\noindent The content of $ \varphi $, of equation \eqref{third-si}, of $ F_{\! L} $ and of $ F_{\! N \! L} $ must be adjusted in connection with 
physics.  In  this section, we examine two frameworks. The first one deals with strongly magnetized plasmas (SMP); the second is about 
nuclear magnetic resonance (NMR). From these perspectives, the properties of $ \varphi $, \eqref{third-si}, $ F_{\! L} $ and $ F_{\! N \! L} $
selected in Subsection~\ref{subsec:toymodel} are far from sufficient.

 \smallskip
 
 \noindent Both SMP and NMR involve a strong varying external magnetic field $ \bf B (\cdot) $, and both imply rapid oscillations around the field 
 lines generated by $ \bf B (\cdot) $ at a Larmor frequency  which, in the time variable $ t $, is $ \eps^{-1} $ with $ \eps \ll 1 $. In SMP, the gyroscopic 
 motion refers to the dynamics of charged particles, and it is  governed by the \href{https://en.wikipedia.org/wiki/Vlasov_equation}{Vlasov equation}.
 In NMR, this motion concerns the magnetic moment $ \bf M $ that is induced by the spin of particles, and it is handled by 
 \href{https://en.wikipedia.org/wiki/Bloch_equations}{Bloch equations}. 
 
 \smallskip
 
 \noindent These two applications share another remarkable feature. They both entail some secondary slower periodic motion. 
 \begin{itemize}
   \item In SMP like coronas, planetary magnetospheres or fusion devices, the latter comes from the bouncing back and forth of charged particles 
   between two mirror points \cite{Whistler,MR3582249}. 
   \item In NMR, it is generated by the repeated action of many radio frequency excitations \cite{MR2127862}. 
\end{itemize}
 This second time periodic motion emerges at the level of the phase $ \varphi $ through the presence of the periodic function ``$ \cos t $" inside 
 \eqref{third-sipourphipha}. It also appears through the two time scales $ T / \eps $ and $ T / \eps^2 $ in the right hand side of \eqref{eq:EDONL}. But there is more: the 
 spatial inhomogeneities of the field $ \bf B $ generate variations of the phase $ \varphi$ with respect to the variable $ x $. The graph 
 $ \cG $ of the gradient of $ \varphi $, which is defined by \eqref{graphs-grad}, is associated with special {\it Lagrangian manifolds}, whose 
 geometries reflect the peculiarities of $ \bf B$. 

\smallskip

\noindent In SMP, classical choices of $ \bf B $ are the dipole model \cite{Whistler} and the axisymmetric field \cite{MR3582249} which 
are respectively adapted to the description of magnetospheres and tokamaks. In both situations, the condition $ \nabla_x \varphi \not = 0 $ 
results from some spreading of the characteristics. The level surfaces of $ \varphi $ involve very specific patterns. They give rise to wavefronts 
that are isolated and studied in \cite{Whistler,MR3582249}, where they are associated with a self-organization into coherent structures.

\smallskip

\noindent  In NMR, the applied field $ \bf B $ is the sum of a background field $ { \bf B }_0 $, plus a gradient field $ \bf G$ of the form 
$ \beta \cdot x $ with $ \beta \in \RR^3 $ and $ x \in \RR^3 $, plus a time dependent periodic field $ {\bf B}_1 $. In other words
\[  {\bf B} (t,x) = { \bf B}_0 + {\bf G} (x) + {\bf B}_1 (t) , \qquad {\bf G} (x) = \beta \cdot x . \]
In the course of an experiment, 
the static field $ \bf G  $ is turned on and off by selecting a collection of data $ \beta \in \RR^3 $ in view of signal processing. On the 
other hand, the radio frequency excitation  $ {\bf B}_1 $  is
triggered again and again to counterbalance  the effects of noise in
the measurements.  
The property $ \nabla_x \varphi \not = 0 $ is due to the gradient fields $ \bf G $. The corresponding structure of $ \varphi $ is identified 
(without exploitation) in the text \cite{MR2127862}. It will be more highlighted in what follows, see Paragraph \ref{subsec:Blocheq}. 
 
\smallskip
 
 \noindent Whether for SMP or NMR, the function $ \varphi $ is the sum of a linear function in $ t $, plus (locally near the origin) a quadratic
 function in $ (t,x) $, plus a periodic function in $ t $. A representative selection of $ \varphi $ is the one given in \eqref{pharetain}.
 More details are given in the course of this section. Subsection \ref{sec:SMP} is devoted to SMP, while Subsection \ref{subsec:Blocheq} 
 deals with NMR.


\subsection{Resonant wave-particle interactions}\label{sec:SMP} What happens inside collisionless plasmas is basically described by the 
Vlasov-Maxwell system, see \cite{MR4084146} for a specific study concerning the strongly magnetized case. Simplified models (of fluid type)
are also available through magnetohydrodynamics, see for instance the PhD thesis \cite[Appendix~A.2]{fontaine} and the numerous 
references therein. In the latter case, the equations take the form
 \begin{equation} \label{magnetohydrodynamics}
\part_t u + \frac{1}{\eps} L (\eps D_x) \, u + F = \, 0 \, , \qquad u_{\mid t=0} = 0 .
\end{equation}
In Paragraph \ref{subsec:RWI}, we exhibit some specificities of the differential operator $ L (\eps D_x) $, which acts on the {\it wave} $ u $. 
In Paragraph \ref{subsec:ICP}, we explain the features of the source term $ F $, which result from the motion of charged {\it particles}
(electrons or protons). The coupling between $ u $ and $ F $ through \eqref{magnetohydrodynamics} is a way to investigate phenomena 
related to wave-particle interactions \cite{Koch}.


\subsubsection{Plasma dispersion relations}\label{subsec:RWI} 
Here, the spatial dimension is $ d = 3 $. The state variable is $ u = {}^t (B,E,\cJ)\in \RR^9 $. It involves the magnetic field $ B \in \RR^3 $, the
electric field  $ E \in \RR^3 $, and the electric current $ \cJ \in \RR^3 $. Unlike the external fixed magnetic field $ \bf B $, the electromagnetic 
field $ (E,B) $ is self-consistent, and unknown. The wave propagation in strongly magnetized plasmas (SMP) is studied in detail in the articles 
\cite{CheFon,CheFon2}. It can be undertaken through the asymptotic analysis (when $ \eps $ goes to zero) of
\vskip -4mm
\begin{equation*} 
\part_t u + \sum_{j=1}^d \, S_j \, \part_{x_j} u + \frac{1}{\eps} A u + F =  0 ,
\qquad u_{\mid t=0} = 0 .
\end{equation*}
In practice, the number $ \eps^{-1} $ is a large parameter ($ \eps^{-1} \simeq 10^5 $) that is issued from a \href{https://en.wikipedia.org/wiki/Plasma_parameters}
{gyrofrequency}. Now, to recover the formulation (\ref{magnetohydrodynamics}), it suffices to define 
\begin{equation} \label{mhdeq} 
 L (\eps D_x) := \sum_{j=1}^3 S_j \, \eps \part_{x_j} + A= \eps 
 \begin{pmatrix}
   0 & + \nabla_x \times & 0 \\
- \nabla_x \times & 0 & 0 \\
0 & 0 & 0 
 \end{pmatrix} + A .
\end{equation}
In \eqref{mhdeq}, the differential operators $ \pm \nabla_x $ come from Maxwell's equations in vacuum. 
The matrix $A $ can be decomposed into $ 9 $ blocks of size $ 3 \times 3 $ given by (for some constant $ {\rm b}_e $
proportional to the strength of the external magnetic field)
\begin{equation} \label{mhdeqcontent} 
\qquad  A = \begin{pmatrix}
   0 & 0 & 0 \\
0 & 0 & + \mathrm{Id} \\
0 & - \mathrm{Id} & {\rm b}_e \, \Lambda 
 \end{pmatrix}
, \quad \ \Lambda = e_3 \times = 
        \begin{pmatrix}
           0 & -1 & 0 \\
+1 & 0 & 0 \\
0 & 0 & 0 
         \end{pmatrix}
, \quad \ e_3 := 
\begin{pmatrix}
  0 \\
0 \\
1
\end{pmatrix} . 
\end{equation}
The skew-symmetric 
matrix $ A $ can be split into two distinct parts involving $ \pm \mathrm{Id} $ and $ {\rm b}_e \, \Lambda $. The two components 
$ \pm \mathrm{Id} $ are due to the coupling between the charged particles and $ E $. They take into account one aspect of 
wave-particle interactions, arising in the electron cyclotron regime when computing the electric current in the Vlasov-Maxwell 
system. On the other hand, the skew-symmetric matrix $ {\rm b}_e \, \Lambda $ captures the influence of the Lorentz force. 
It corresponds to the effects of a strong external magnetic field having (rescaled) amplitude $ {\rm b}_e $ and fixed direction 
$ e_3 $. To underline the dependence of the semi-classical operator $ L (\eps D_x) $ upon $S$ and $A$, we will sometimes 
denote by $L(S,A,\xi)$ the symbol of this operator. Thus
\begin{equation*}
 L(\xi) \equiv L(S,A,\xi) := i \xi_1 S_1  + i \xi_2 S_2 +  i \xi_3 S_3  + A.
\end{equation*}
In vacuum, when $ A = 0 $, the kernel of $ L(S,0,\xi) $ is (for $ \xi \not = 0 $) of dimension $ 5 $. The situation is different in 
magnetized plasmas, when $ A\not = 0 $. When $ A $ is as in (\ref{mhdeqcontent}), the dimension of $ \mathrm{ker} 
L(S,A,\xi) $ may be $ 2 $ or $ 3 $. In any case, it is strictly less than $ 5 $. This means that some nonzero eigenvalue 
$ \tau_j (S,A,\xi) $ of $ i L(S,A,\xi) $  is connected to $ 0 $ when $ A $ goes to $ 0 $, while the corresponding dispersion 
relation $ \tau_j (S,A,\xi) $ remains bounded for large values of $
\xi $. Emphasis will be placed on such eigenvalue.  

\smallskip

\noindent The characteristic variety associated with (\ref{magnetohydrodynamics}) is
\[
  \operatorname{Char} L = \RR_t\times \RR^3_x\times  \left\{ (\tau,\xi) \in \RR \times \RR^3 \, ; \ \det \bigl( i \tau \operatorname{Id} 
  + L(\xi) \bigr) = 0 \right\} .
  \]
  The analysis of $ \operatorname{Char} L $ in the context of \eqref{mhdeq}-\eqref{mhdeqcontent}  is achieved in the article 
  \cite{CheFon}, with explicit algebraic formulas. The general situation is rather complicated. But, for parallel
propagation,  
meaning that $ \xi = {}^t (0,0,\xi_3) \parallel e_3 $, the computations are simplified. With this in mind, we consider solutions $ u $ 
 which depend only on the third coordinate $ x_3 \in \RR $ so that $ x \equiv x_3 \in \RR $ (we work in space dimension $ d = 1 $) 
and $ \xi \equiv \xi_3 \in \RR $. Then, the dispersion relations issued from \eqref{mhdeq} are displayed in this 
\href{https://en.wikipedia.org/wiki/Electromagnetic\_electron\_wave}{link} \cite{linkwave}, which presents basic features of electron 
waves. As usual in physics, in \cite{linkwave}, the functions $ \tau_j $ are available through implicit relations involving the index 
of refraction $ \xi / \tau $. In particular, one can distinguish the right circular polarization corresponding to R-waves (which are 
sometimes also called \href{https://en.wikipedia.org/wiki/Whistler\_(radio)}{whistler modes})
\begin{equation} \label{condtauxiRini}
\frac{c_0^2 \ \xi^2}{\tau^2} = 1 - \frac{\omega_p^2/\tau^2}{1 -  (\omega_c/\tau)} \, .
\end{equation}
There is also the left circular polarization corresponding to L-waves
\begin{equation} \label{condtauxiLini}
\frac{c_0^2 \ \xi^2}{\tau^2} = 1 - \frac{\omega_p^2/\tau^2}{1 +
  (\omega_c/\tau)} \, . 
\end{equation}
In (\ref{condtauxiRini}) and (\ref{condtauxiLini}), the three constants $ c_0 $, $ \omega_p $ and $ \omega_c $ represent respectively 
the speed of light in vacuum, the \href{https://en.wikipedia.org/wiki/Plasma\_oscillation}{plasma frequency}, and the 
\href{https://en.wikipedia.org/wiki/Electron\_cyclotron\_resonance}{electron cyclotron resonance frequency}. The two conditions 
(\ref{condtauxiRini}) and (\ref{condtauxiLini}) correspond to the selection of two important branches inside $ \text{\rm Char} \, 
L $. The first is issued from (\ref{condtauxiRini}); it is valid only for $ 0 < \tau < \omega_c \, $; and it becomes physically relevant 
when $ \tau $ becomes close to the resonance frequency $ \omega_c $. The second branch comes from (\ref{condtauxiLini}); it 
operates when $ \tau $ is above a cutoff frequency. 

\smallskip

\noindent The two conditions (\ref{condtauxiRini}) and (\ref{condtauxiLini}) can be written in dimensionless form (implying that 
$c_0 = 1 $ and $ \omega_p = \omega_c = 1 $). Concerning 
the relation (\ref{condtauxiRini}), this yields 
\begin{equation} \label{condtauxiR}
\frac{1}{\xi^2} = G_- (\tau) \, , \qquad G_- (\tau) := \frac{\tau - 1}{\tau^2 \, (\tau-1) -\tau} , 
\qquad 0 < \tau < 1 . \quad
\end{equation}
From (\ref{condtauxiLini}), we can extract
\begin{equation} \label{condtauxiL}
\! \frac{1}{\xi^2} = G_+ (\tau) \, , \qquad G_+ (\tau) := \frac{\tau + 1}{\tau^2 \, (\tau + 1) -\tau} , 
\qquad \frac{\sqrt 5 -1}{2}  < \tau . 
\end{equation}
A simple calculation shows that
\begin{equation} \label{Gprime}
\quad  \forall \, \tau \in \, ]0,1[ \, , \quad \ G_-'(\tau) = \Bigl( \frac{- \tau+2}{\tau^2 -\tau-1} + \frac{1}{\tau} \Bigr)' = \frac{(\tau-1) \, 
(\tau-3)}{(\tau^2 -\tau-1)^2} - \frac{1}{\tau^2} \leq -1 ,  
\end{equation}
and that
\begin{equation} \label{Gprimeenc}
\lim_{\tau \rightarrow 0+} G_- (\tau) = + \infty \, , \qquad G_- (1) = 0 \, , \qquad G'_- (1) = -1 . 
\end{equation}
The function $ G_- $ is continuous and strictly decreasing from $ ]0,1] $ onto $ [0 , + \infty [ $. Therefore, it gives rise 
to a diffeomorphism between these two intervals, with inverse function $ G_-^{-1}  $. The whistler dispersion relation 
expresses $ \tau $ as a function of $ \xi $, through $ \tau \equiv \tau_w (\xi) := G_-^{-1} \bigl( \xi^{-2} \bigr) $. This function 
$ \tau_w (\cdot) $ is even. This property does not come from the general condition (\ref{symhyp}), but from other specificities
related to (\ref{mhdeq}). By construction, we have
\begin{equation} \label{doublelimG}
\quad \ \ \lim_{\xi \rightarrow 0 \pm} G_-^{-1} \bigl( \xi^{-2} \bigr) = \tau_w (0) = 0 \, , \qquad \lim_{\xi \rightarrow \pm \infty} 
G_-^{-1} \bigl( \xi^{-2} \bigr) = \lim_{\xi \rightarrow \pm \infty} \tau_w (\xi) = 1 . 
\end{equation}
In (\ref{doublelimG}), the first limit means that the whistler dispersion relation is linked to some zero eigenvalue 
of $ L (S,A,0) \equiv A $. The second limit indicates, as noted before, that it  appears as a perturbation (in terms of $ A $) 
of some zero eigenvalue of $ L(S,0,\xi) $. This is consistent with a bounded behavior of $ \tau $ when $ \vert \xi \vert $ goes 
to infinity. 

\begin{figure}[h]
\begin{minipage}[b]{.95\linewidth}
\includegraphics[scale=0.235,width=12cm,height=4cm]{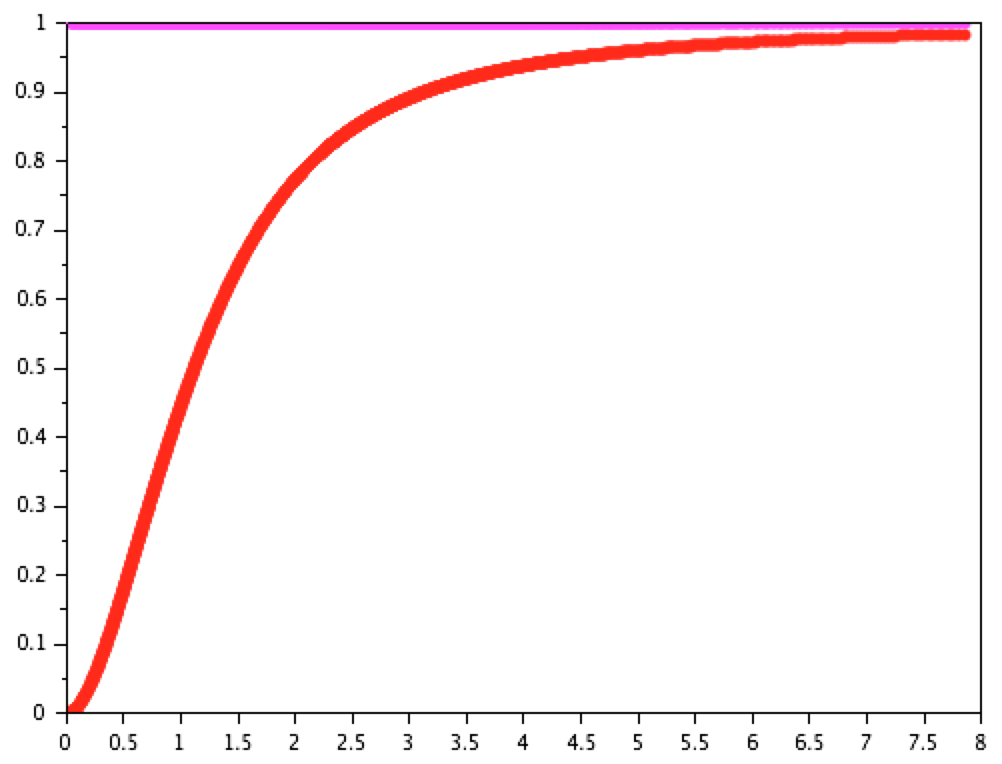} 
\caption{
\color{airforceblue}{\small Graph of the function $ \xi \longmapsto \tau_w (\xi) $ on $ \RR_+ $ in {\color{red}{red}}. 
$ \qquad \qquad \qquad \qquad \qquad \qquad \qquad \qquad \qquad $}
{\color{airforceblue}{\small Asymptotic direction of the dispersion relation in {\color{magenta}{magenta}}.}}
}
\label{graofg}
\end{minipage}
\end{figure}

\noindent The function $ \tau_w  $ connects $ 0 $ (for $ \xi = 0 $) to $ \omega_c \equiv 1 $ (for $ \xi = \pm \infty $), 
in a one-to-one smooth relation. Moreover, we can see on Figure \ref{graofg} that the value $ \tau $ becomes close 
to $ \omega_c \equiv 1 $ on condition that $ \vert \xi \vert $ goes to infinity. In view of applications (see e.g. \cite{ElPa17,SrSh11}), 
the whistler dispersion relation $ \tau_w  $ has more impact near the resonance, that is when $ \vert \xi \vert $ becomes
large enough. 

\smallskip

\noindent On the other hand, the regime is semiclassical. This means that the value $ \xi = 0 $ corresponds 
to a transition zone between spatial frequencies of size $ 1 $ and $ \eps^{-1} $. What happens near $ \xi = 0 $
is therefore physically less significant. For this reason and also to avoid a possible singularity at $ \xi = 0 $, we 
can skip what occurs near $ \xi = 0 $. Thus, we can multiply $ \tau_w $ by $ 1 - \chi $ where $ \chi $ is an even, 
smooth cut-off function which, for instance, is such that 
\begin{subequations}\label{mollichi} 
\begin{eqnarray} 
\qquad &  \displaystyle \forall \, \vert s \vert < 5/8 , \quad \chi(s) = 1 , & \qquad \forall \, s \in \, ] 5/8 , 1[ , \quad 
0 < \chi(s) = \chi(-s) , \label{mollichi1} \\
\qquad &  \displaystyle \forall \, \vert s \vert > 1 , \qquad \chi (s) = 0 , & \qquad \forall \, s \in \, ] 5/8 , 1[ , \quad 
\chi' (s) < 0 . \label{mollichi2} 
\end{eqnarray}
\end{subequations}
 
\begin{exam}[The physical model of R-waves] \label{phyexRwaves} Take $ p \in \cC^\infty ( \RR ; \RR ) $ with 
\begin{equation} \label{reldispright}
\qquad p(\xi) := \bigl( 1 - \chi (\xi) \bigr) \ G_-^{-1} \bigl( \xi^{-2} \bigr) = \bigl( 1 - \chi (\xi) \bigr) \ \tau_w (\xi) .
\end{equation}
\end{exam}

\noindent The function $ p $ inside (\ref{reldispright}) is even; it is equal to $ 0 $ in a neighborhood of $ \xi = 0 $; it 
coincides  with the function $ \tau_w  $ for $ 1 \leq \vert \xi \vert $. Applying Fa\`a di Bruno's formula, we can 
also see that
\begin{equation} \label{asymppon}
\forall \, n \in \NN^* \, , \qquad \lim_{\xi \rightarrow + \infty} \ \xi^{2+n} \ p^{(n)} (\xi) = (-1)^{n+1} \ (n+1) ! 
\end{equation}

\noindent This article is a first mathematical approach of the subject. Thus, we will only consider a scalar wave equation 
in one space dimension ($ d = 1 $), like (\ref{third-si-gene}). In what follows, the special choice (\ref{reldispright}) of 
$ p  $ will serve to guide the discussion.


\subsubsection{The impact of charged particles}\label{subsec:ICP} The source term $ F $ inside (\ref{magnetohydrodynamics}) 
is aimed to collect extra contributions appearing when passing from the Vlasov-Maxwell system to MHD equations. Typically, the 
function $F$ is built with moments  
\[
  \cM_n (f) := \int \underbrace{v \otimes \cdots \otimes v}_{n \
    \text{times}} \ f(t,x,v) \ dv \, , \qquad n \in \NN^*
  \]
of the distribution function $ f(t,x,v) $ satisfying the Vlasov equation. As explained in \cite[Appendix~A.2]{fontaine}, the content
of $ F$ must take into account the underlying physics. In the context of confined magnetized plasmas, the function $ F $ inherits 
from the computation of $ \cM_n (f) $ a special set of characteristics.

\smallskip

\noindent We consider as a first approximation that the expression $ F$  takes the following form
\begin{equation} \label{nonlinform}
F \equiv F \Bigl( \eps, \eps \, t,t,x,\frac{\varphi (t,x)}{\eps},u \Bigr) . 
\end{equation}
We must specify briefly the three-scale, oscillating and nonlinear structure of $ F $. The function $ F (\eps,T, t,x,\theta,u) $ depends 
on the parameter $ \eps \in [0,1] $, on the long time variable $ T := \eps \, t $ with $ T \in [0,\cT] $ for some $ \cT >0$, on the time 
variable $ t \ge 0 $, on the spatial position $ x \in \RR $, on the periodic variable $ \theta \in \TT := \RR / (2 \, \pi \, \ZZ) $, and on the 
state variable $ u \in \RR $. It is a smooth function of class $ \cC^\infty $ of all these variables, on the domain $ [0,1] \times [0,\cT] 
\times \RR^2 \times \TT \times \RR $. In Paragraph \ref{monoaspects}, we explain the origin of $ \varphi $. In 
Paragraph \ref{Osciaspects}, we describe the dependence of $ F(\cdot) $ on $ \theta $ and $ u $.

\medskip

\paragraph {\underline{The} \underline{mono}p\underline{hase} \underline{context}.} \label{monoaspects} Under the influence of a 
strong external magnetic field, the collective motion of charged particles creates {\it coherent  structures} which involve mesoscopic 
oscillations \cite{Whistler,MR3582249}. Through a procedure detailed in \cite{editorial}, when computing the moments $ \cM_n (f) $, 
this furnishes macroscopic oscillations involving a specific phase $ \varphi (t,x) $. As outlined in \cite{editorial}, see the lines (2.7) 
and (3.7) there, the relevant function $ \varphi$ is issued from a mesoscopic gyrophase after freezing the momentum $ v $ at mirror 
points. It can be determined through
\begin{equation} \label{detervarphi}
\varphi(t,x) = \int_0^t {\rm b}_e \bigl( X_r (s,x) \bigr) \ ds ,
\end{equation}

\smallskip

\noindent where the function $ X_r $ can be deduced from gyrokinetic equations or, as in \cite{Whistler,MR3582249}, from a notion 
of {\it reduced} Hamiltonian (the subscript $ r $ in $ X_r $ stands for \underline{r}educed). The function $ X_r $ is smooth, and it can 
be viewed as a flow on $ \RR^d $, with $ d=3 $ in the case of applications. We refer to the articles \cite{Whistler,MR3582249} for more 
details concerning the properties of $ X_r $ in connection with plasma physics, and to \cite{editorial} for a short presentation. 

\smallskip

\noindent In what follows, we will just retain the basic representative features of $ X_r$, and therefore of $ \varphi  $. There is a remarkable 
fact concerning $ X_r$, which is due to underlying integrability conditions. For all $ x $, the function $ X_r (\cdot,x) $ is periodic with respect 
to the first variable $ s$. To simplify the discussion (or after reductions), we can even suppose that the  period of $ X_r (\cdot,x) $ is uniform 
with respect to all positions $ x $, say equal to $ 2 \pi $, so that
\begin{equation} \label{deterXrsati}
\forall \, (s,x) \in \RR \times \RR^3 , \qquad  X_r (0 , x) = x , \qquad X_r (s+2 \pi , x) = X_r (s,x)  .
\end{equation}
The periodic function $ X_r (\cdot,x) $ produces a spatial periodic trajectory, starting  from $ x $ at time $ s=0 $. From 
(\ref{detervarphi}), we can deduce a decomposition of $ \varphi $ separating average and oscillatory parts. The average part is
\[ \langle {\rm b}_e \circ X_r \rangle (x) := \frac{1}{2  \pi} \int_0^{2  \pi} {\rm b}_e \bigl( X_r (s,x) \bigr) \, ds  .  \]
The oscillatory part $ ({\rm b}_e \circ X_r)^* (t,x) $ is $ 2 \pi $-periodic with zero average. It may be defined according to
\begin{equation} \label{decompovarphi}
\begin{array}{rl}
({\rm b}_e \circ X_r)^* (t,x) \! \! \! & \displaystyle = \int_0^t \bigl \lbrack {\rm b}_e \circ X_r (s,x) - \langle {\rm b}_e \circ X_r \rangle (x) 
\bigr \rbrack \, ds \\
\ & = \varphi(t,x) -  \langle {\rm b}_e \circ X_r \rangle (x) \ t .
\end{array}
\end{equation}
Recall that the quantity $ {\rm b}_e (x) $ represents the strictly positive amplitude of the external magnetic field computed at $ x $.
In (\ref{decompovarphi}), the linear part $ \langle {\rm b}_e \circ X_r \rangle (x) \, t $ is produced by the mean effect of the bouncing 
back and forth of charged particles between the mirror points, whereas the oscillating part $ ({\rm b}_e \circ X_r)^* (t,x) \not \equiv 0 $ 
takes into account the variations around this mean value. By definition, given $ x \in \RR^3 $, the latter term is of mean value zero 
with respect to $ t \in \TT $.  Observe that
\begin{equation} \label{decompovarphiderive}
\part_t \varphi(t,x) = {\rm b}_e \circ X_r (t,x)= \langle {\rm b}_e \circ X_r \rangle (x) + \part_t ({\rm b}_e \circ X_r)^* (t,x) > 0 .
\end{equation}
Working in the vicinity of a fixed position, say near the origin $ x = 0 $, we can roughly replace the part $ \langle {\rm b}_e \circ X_r 
\rangle (\cdot) $ by
\begin{equation} \label{pha}
\langle {\rm b}_e \circ X_r \rangle (x) \simeq \alpha +  \beta \cdot x \, , \qquad (\alpha, \beta) \in \RR \times \RR^3 ,
\end{equation}
with the following identifications
\begin{equation} \label{identi-fication}
\alpha = \langle {\rm b}_e \circ X_r \rangle (0) >0\, , \qquad  \beta = \nabla_x \langle {\rm b}_e \circ X_r \rangle (0) 
\in \RR^3 \setminus \{ 0 \} . 
\end{equation}
In fact, for the purpose of our analysis, the choice of $ \varphi $ or $ - \varphi $ does not matter. The remark (\ref{decompovarphiderive})
just says that $ \part_t \varphi $ has a constant sign which is positive under the convention (\ref{detervarphi}) since the function 
$ {\rm b}_e  $ is positive. As a consequence, we have to retain that $ \alpha >0 $ at the level of (\ref{identi-fication}). 

\smallskip

\noindent The dimensionless quantity $ \alpha $ has a physical meaning. It is a measure of the ratio between the size of the magnetic field 
and the cyclotron resonance frequency $ \omega_c $. Thus, when $ \omega_c = 1 $, as will be assumed later after rescaling 
(the aim of this arbitrary choice is just to simplify notations), the number $ \alpha $ indicates the average amplitude of 
the (rescaled) external magnetic field. Now, resonances arise when $ \alpha \sim \omega_c $. For this reason, we
select the value $ \alpha = 1 $.

\smallskip

\noindent In practice, both functions $ {\rm b}_e  $ and $ X_r $ are nontrivial functions of $x$. This is why we set $  \beta 
\not = 0 $ at the level of (\ref{identi-fication}). In fact, the inhomogeneities of the external magnetic field induce some spreading 
of the integral curves which are associated to the Vlasov equation. This is reflected in the term $  \beta \cdot x \not \equiv 0 $ 
of (\ref{pha}). Without loss of generality, after spatial rotations and rescalings, we can always adjust $  \beta $ so that $  
\beta = (0,0,-1) $. For solutions which depend only on the direction $ x_3 $, as it was supposed before, we just find 
$ \beta = -1 $. Finally, as a prototype of a nontrivial periodic function with zero mean, we can take
\begin{equation} \label{deritofvarphi}
\part_t ({\rm b}_e \circ X_r)^* (t,x) = \, - \, \gamma \, \sin \, t .
\end{equation}
Combining (\ref{pha}), (\ref{identi-fication}) and (\ref{deritofvarphi}), the positivity condition (\ref{decompovarphiderive}) is satisfied, 
at least for small enough positions $ x $, on condition that $ 0 < \gamma < \alpha = 1 $. To work on a spatial domain where the 
amplitude $ {\rm b}_e (\cdot) $ is expected to remain of magnitude comparable to the mean value $ \alpha $, we fix $ \gamma $ 
in the interval $ ]0,1/4[ $. This assumption turns out to be rather convenient for the forthcoming computations. It ensures that only 
one harmonic is resonant. The more general case $ \gamma >0$ would be more complicated. It may lead to supplementary 
dynamics compared to the one described in this paper. Since $ \varphi (0,\cdot) \equiv 0 $, the preceding discussion indicates that a choice 
of $ \varphi $ which should be relevant from the viewpoint of applications is given by (\ref{pharetain}).

\medskip

\paragraph {\underline{Nonlinear} \underline{as}p\underline{ects}.}\label{Osciaspects}
In the articles \cite{Whistler,MR3582249}, the function $ f(t,x,v) $ is obtained as the composition of a  
localized initial data $ f_0 (x,v) $ with the oscillatory flow that is  issued from the Vlasov equation. It follows that all 
harmonics $ m \varphi $ with $ m \in \ZZ $ are necessarily involved. Accordingly, the periodic function $ F
(\eps,T,t,x,\cdot,u) $ can be decomposed  in Fourier series
\begin{equation*} 
F (\eps,T,t,x,\theta,u) = \sum_{m\in \ZZ} F_m (\eps,T,t,x,u) \ e^{i  m \theta} . 
\end{equation*}
The MHD equations resulting from the Vlasov-Maxwell system are not closed. Some approximations are needed to recover 
self-contained equations. They usually are made in the form of nonlinearities. In the scalar setting (\ref{third-si}), this means to consider 
that the source term $ F$ is semilinear in $ u $ and $ \bar u $. The function $ F $ is made up of a part $ F_{\! L} $ which is 
affine with respect to $( u,\bar u) $, plus some nonlinear part $ F_{\! N \! L} $. We can decompose $ F_{\! L} $ into $ F_{\! L} 
(\cdot,u) = F_{\! L}^0  + F_{\! L}^1  \, u + \underline {\ } \! \! F_{ L}^1  \, \bar u $ with 
\begin{equation} \label{fnlineaajout}
F_{\! L}^0 (\cdot) := F (\cdot,0)  , \qquad F_{\! L}^1 (\cdot) := \part_u F (\cdot,0) , \qquad \underline {\ } \! \! F_{ L}^1 (\cdot) := 
\part_{\bar u} F (\cdot,0) . 
\end{equation}

\begin{rem}[About the elimination of $ F_{\! L}^1 $] \label{eliminationdeF} 
The influence of $ F_{\! L}^1 $ can sometimes be removed. This can be achieved for instance by 
modifying the dependence on $ t $ inside $ F_{\! L}^0 $, $ \underline {\ } \! \! F_{ L}^1 $ and $ F_{\! NL} $. To simplify, assume 
that  $ F_{\! L}^1 $ does not depend on $ (x,\theta) $ but only on $ (T,t) $. Then, define
\begin{equation} \label{changetoeliminate}
 v (t,x) := e^{- i \, \theta (t) / \eps } u (t,x) , \qquad \theta (t) := - \, i \, \eps \int_0^t F_{\! L}^1 (\eps,\eps s ,s) \, ds . 
\end{equation}
The new function $ v  $ solves
\begin{equation*} 
\qquad \part_t v - \frac{i}{\eps} p (\eps D_x) \, v + \, e^{- i \theta (t) / \eps } \, F_{\! L}^0 + e^{- 2i \theta (t) / \eps } \, \underline 
{\ } \! \! F_{ L}^1 \bar v + e^{- i \theta (t) / \eps } \ F_{\! N \! L} \bigl( \eps , \eps \, t , t , e^{+ i \, \theta (t) / \eps } \, v \bigr) .   
\end{equation*}
In particular, when $ F_{\! L}^1 (\eps,T,t) = \eps^{-1} \tilde F_{\! L}^1 (t) $ with a function $ \tilde F_{\! L}^1  $ purely 
imaginary and periodic in $ t $ with mean zero, the above expression $ \theta  $ becomes a periodic real valued 
function. This means that the gauge transformation \eqref{changetoeliminate} can introduce in the source term of \eqref{third-si} 
oscillations with a phase similar to \eqref{third-sipourphipha}. In other words, the oscillations of \eqref{third-sipourphipha} can 
appear after a procedure aimed to absorb the ``potential'' $ F_{\! L}^1 $, even if such oscillations are not visible at first sight. 
This provides another motivation for implementing phases $ \varphi$ like in \eqref{pharetain}.
\end{rem}

\noindent The nonlinear part  $ F_{\! N \! L}$ is chosen of the same form as in the introduction. As will be seen, an 
oscillatory source term such as (\ref{nonlinform}) does generate oscillations of the solution $ u $. By a mechanism 
similar to (\ref{third-si-devprime})-(\ref{third-si-dev}), the function $ u $ can be viewed as a sum of oscillating waves
$ u_k $. Note that, in the present context, nonlinear aspects can be revealed at the level of the source term $ F $ 
(coming from Vlasov) through the harmonics of $ \varphi $ but also inside the wave $ u $ itself (related to Maxwell) 
through the harmonics of the phases involved by the $ u_k $'s.

\smallskip

\noindent In the case of the Earth's magnetosphere, the emission of whistler waves $ u_k $ is a long-standing 
experimental evidence, coming back to works of  
\href{https://fr.wikipedia.org/wiki/Heinrich_Barkhausen}{H. Barkhausen} in 1917,
\href{https://en.wikipedia.org/wiki/Thomas_Eckersley}{T.L. Eckersley}
in 1935, and L.R.O. Storey in 1953 \cite{Storey}.
Thanks to progress in satellite means, like  
\href{https://fr.wikipedia.org/wiki/Van_Allen_Probes}{Van Allen Probes} of NASA or 
 \href{https://fr.wikipedia.org/wiki/Cluster_(satellite)}{Cluster} of ESA, whistler waves $ u_k $ can be today 
observed in detail. The internal mechanisms underlying the production of the $ u_k $'s are clarified in \cite{Whistler}.

\smallskip

\noindent In practice, the whistler waves $ u_k $ accumulate and form a  
\href{https://science.nasa.gov/science-news/science-at-nasa/2012/28sep_earthsong/}{chorus}.
Theorems~\ref{theo:resumeNL} and \ref{theo:resumeNLbis} describe intermittency phenomena that can occur during 
this process. It should be stressed that our results deal with a plasma turbulence which by nature is {\it anisotropic}. The 
   energizing external field $ {\bf B} $ points in special directions, which undergo variations according to
   specific geometries (revealed by the Lagrangian manifold $ \cG $). We will not further investigate here the potential 
   implications of our analysis in terms of plasma physics. We just refer to the text \cite{editorial} for preliminary
   comparisons between mathematical previsions and concrete observations.


\subsection{Nuclear magnetic resonance}\label{subsec:Blocheq}
The general framework concerning  
\href{https://en.wikipedia.org/wiki/Nuclear_magnetic_resonance}{Nuclear Magnetic Resonance} (NMR) is well explained in 
a paper of C.L. Epstein, see \cite{MR2127862} and the references therein. The NMR experiments are intended (for instance) 
to determine the distribution of water molecules in an extended domain. To this end, an external magnetic field $ {\bf B} (t,x) $ 
is repeatedly applied to the object under examination. At a microscopic level, the spins of particles react to $ {\bf B} $ by producing 
a magnetization field $ {\bf M} (t,x) $. 

\smallskip

\noindent The time evolution of $ {\bf M} $ is described by 
\href{https://en.wikipedia.org/wiki/Bloch_equations}{Bloch equations} which, in the absence
of relaxation terms, take the following form
\begin{equation} \label{timevomagreso}
\frac{d \bf M}{dt} =  g {\bf M} \times {\bf B} .
\end{equation}
The function $ {\bf B}$ is usually viewed as a sum $ {\bf B} = {\bf B}_0 + {\bf G} (x) + {\bf B}_1 (t) $, where $ {\bf B}_0 $ 
is a constant background field, $ {\bf G} $ is a gradient field which is collinear with $ {\bf B}_0 $, whereas $ {\bf B}_1$ 
is a time dependent radio frequency field which is orthogonal to $ {\bf B}_0 $ and $ {\bf G} (x)$. Typically \cite{MR2127862}, the 
components $ {\bf B}_0 $ and $ {\bf G}$ can be adjusted according to
$$ {\bf B}_0 = {}^t (0,0, b_0) , \qquad {\bf G} (x) = {}^t (0,0, \beta \cdot x) , $$
where $ b_0 >0 $ is the strength of the external magnet, and where the vector $  \beta \in \RR^3 $ comes from a collection
of static fields that are turned on and off. In hospital magnetic resonance imaging devices, acceptable orders of magnitude 
\cite{MR2127862} are
\begin{equation} \label{characdataNRM}
g \simeq 10^7 - 10^8 \ \text{rad/T\!esla} , \qquad b_0 \simeq 1 - 10 \ \text{T\!eslas} .
\end{equation}
In the absence of $ {\bf B}_1$, at the position $ x $, the
magnetization $ {\bf M} (\cdot,x) $ undergoes a (counterclockwise)  
precession about the $ z $-axis with the angular frequency 
\begin{equation*} 
\omega (x) :=  \omega_0 + g  \beta \cdot x , \qquad \omega_0 := g b_0 \simeq 10^8 .
\end{equation*}
 This underscores the importance of the small parameter $ \eps := \omega_0^{-1} \ll 1 $, which is the inverse of the {\it Larmor frequency}
 $ \omega_0 $. The field $ {\bf B}_1 $  represents the repeated action during the experiments of RF-excitations, which are all adjusted 
near the resonant frequency $ \omega (x) $. As explained in \cite{MR2127862}, this can be modeled by
\renewcommand\arraystretch{1.2}
\begin{equation} \label{RFexciNRM}
{\bf B}_1 (t) = b_1 (t,x) \left( \begin{array}{c}
+ \cos \, \bigl( \omega (x) t + c_1 (t,x) \bigr) \\
- \sin \, \bigl( \omega (x) t + c_1 (t,x) \bigr) \\
0
\end{array} \right) .
\end{equation}
\renewcommand\arraystretch{1}
\vskip -2mm
\noindent The amplitude $ b_1 $ of the field $ {\bf B}_1 $ is a scalar function. To model the repetition of measurements, 
which is aimed to reduce noise effects, the function $ b_1 (\cdot,x) $ is chosen periodic in $ t $ (say of period $ 2 \pi $). 
Define
\begin{equation} \label{defined111}
 d_1 (t,x) := g \int_0^t b_1 (s,x) ds = g t \bar b_1 (x) + g  b_1^* (t,x) , \qquad \quad 
 \end{equation}
where
$$ \bar b_1 (x) := \frac{1}{2 \pi} \int_0^{2 \pi} b_1 (s,x) ds , \qquad b_1^* (t,x) := b_1 (t,x) - \bar b_1 (t,x) = b_1^* (t+2 \pi,x) . $$
In (\ref{RFexciNRM}), the phase shift $ c_1 $ takes into account the small 
variations occurring when calibrating the frequency of the RF-pulse. The equation (\ref{timevomagreso}) can be 
interpreted by following the motion of $ {\bf M} (\cdot,x) $ in a frame rotating at the frequency $ \omega(x) $. This amounts to replacing $ \bf M $ 
by the new unknown  
$$ {\bf N} (t,x) := e^{- \omega(x) t \Lambda } {\bf M} (t,x) , \qquad \Lambda := \left( \begin{array}{ccc}
 0 & 1 & 0 \\
 -1 & 0 & 0 \\
0 & 0 & 0
\end{array} \right) . $$
The equation satisfied by  $ {\bf N} $ is simply
\begin{equation*} 
\frac{d \bf N}{dt} =  g b_1 (t,x) \left( \begin{array}{ccc}
 0 & 0 & \sin c_1(t,x) \\
0 & 0 & \cos c_1(t,x) \\
- \sin c_1(t,x) & - \cos c_1(t,x)  & 0
\end{array} \right)  {\bf N} .
\end{equation*}
When $ c_1$ does not depend on $ t $, for instance when $ c_1\equiv 0 $, the solution is explicit. With $ d_1 $ as in (\ref{defined111}), 
the solution operator $ {\bf U} $ associated with (\ref{timevomagreso}) is given by
\begin{equation} \label{formpourUNRM} 
{\bf U} (t,x) = \left( \begin{array}{ccc}
+ \cos (\omega t) & \sin (\omega t) & 0\\
- \sin (\omega t) & \cos (\omega t)  & 0 \\
0 & 0 & 1
\end{array} \right)
\left( \begin{array}{ccc}
 1 & 0 & 0 \\
0 & + \cos d_1 & \sin d_1 \\
0 & - \sin d_1 & \cos d_1
\end{array} \right) . 
\end{equation}
The formula (\ref{formpourUNRM}) reveals the role of the phase $ \omega (x) t $ and also, after linearization, the presence
in the description of $ {\bf U} (t,x) $ of the two extra phases $ \omega (x) t \pm d_1 (t,x) $. In coherence with (\ref{characdataNRM}), 
we can take $ b_0 = 1 $, so that $ \omega_0 \equiv g \equiv \eps^{-1} $. In one space dimension (when the vectors $  \beta $ 
have a fixed direction), there remains $  \beta \cdot x = \beta x $ with $ \beta \in \RR $ and $ x \in \RR $. Then, for the special 
choices $ \beta = -1 $, $ \bar b_1 \equiv 0 $ and $ b_1^* \equiv \gamma (1-\cos t) $, we just find $ \omega t - d_1 = \varphi / \eps $ 
with $ \varphi $ exactly as in (\ref{pharetain}). The solution $ {\bf M} $ to (\ref{timevomagreso}) does oscillate at the frequency 
$ \eps^{-1} $ according to the phase $ \varphi $. Thus, by plugging $ {\bf M} $ into Maxwell's equations through the magnetization 
current $ {\bf J}_m := \nabla \times {\bf M} $, we end up with a model similar to (\ref{magnetohydrodynamics}). Note also that the 
influence of such oscillating function $ \bf M $ in the source term of Maxwell's equations can also appear in the context of 
Maxwell-Landau-Lifshitz equations \cite{MR3227510}. 

\begin{rem}[Dispersion relations in human tissues] \label{disphumantissues} In the context of NMR, the relevant functions 
$p  $ do not appear to have been modeled precisely. But, like in SMP, the NMR experiments involve a magnetized 
medium. As a consequence, the corresponding dispersion relations should share common characteristics. Be that as it may, 
both situations involve hyperbolic systems to depict wave propagation. And, as will be seen in the next section, the 
properties of $p  $ which have been introduced in Subsection~\ref{sec:SMP} are fairly general in such a framework.
\end{rem}

\noindent  The production of the $ u_k $'s corresponds to some electromagnetic radiation. When dealing with magnetic 
resonance imaging, this may contribute to the heating of human tissues in a way which could be a consequence of 
Theorems~\ref{theo:resumeNL} and \ref{theo:resumeNLbis}.


\section{General setting and assumptions}\label{sec:setting}

In this section, we present a general framework which includes the previous two examples: strongly magnetized plasmas and nuclear
magnetic resonance. We also gather the assumptions that will be retained in the rest of the analysis. 

\subsection{The evolution equation}\label{pseudo-differential}

\noindent In this subsection, we start with $ N \in \NN^* $ state variables, so that $ u \in \RR^N $. The time variable is $ t \in \RR $. The  
spatial dimension is $ d \ge 1 $, so that $ x \in \RR^d $. The general context is based 
on dispersive nonlinear geometric optics \cite{MR1435679,MR1657485}. Consider a system of equations having the form
\begin{equation} \label{ondenonlinsymmetrizable} 
\part_t u + \frac{1}{\eps} L (\eps D_x) \, u + F \, = \, 0  , \qquad u_{\mid t=0} = 0  . 
\end{equation}
The real number $ \eps \in \, ]0,1] $ is a small parameter ($ \eps \ll
1 $). The dual
variables of $ t $ and $ x $ are denoted by  
$ \tau \in \RR $ and $ \xi \in \RR^d $, respectively. 
In Paragraph~\ref{AssumptionsL}, we describe precisely the content of $ L (\eps D_x) $. In Paragraph~\ref{redscalar},  we decompose 
(\ref{ondenonlinsymmetrizable}) into a diagonal system of transport equations, which are coupled through semilinear terms. Then, we 
make a strong decoupling assumption to work with $ N = 1 $, and we restrict our attention to one dimensional effects so that $ d = 1 $.
This reduction process leads to (\ref{third-si-gene}). At the end of this subsection, in Paragraph~\ref{solvescalar}, we explain how to 
express the solution to (\ref{third-si-gene}) as an  oscillatory integral.


\subsubsection{The pseudo-differential operator $L (\eps D_x) $}\label{AssumptionsL} The semiclassical symbol 
that is associated to $L (\eps D_x) $ is a matrix $ L (\xi) $. We suppose that (\ref{ondenonlinsymmetrizable}) is 
symmetrizable, that is, $ L (\xi) $ is antihermitian. Typically, we work with systems that can be reduced to the  
following symmetric form
\begin{equation} \label{ondenonlin} 
\part_t u + \sum_{j=1}^d \,S_j \, \part_{x_j} u + \frac{1}{\eps} A u + F \, = \, 0  ,
\qquad u_{\mid t=0} = 0  .
\end{equation}
In (\ref{ondenonlin}), the letters $ S_j $ represent real-valued {\underline s}ymmetric matrices. On the other hand,
the matrix $ A $ may be complex-valued and is {\underline a}ntihermitian. In other words
\begin{equation} \label{symhyp} 
\forall \, j 
\in \{1,\cdots,d \} \, , \qquad S_j  = {}^t S_j  \, , \qquad A^* = {}^t \bar A = - A .
\end{equation}
The symbol associated to (\ref{ondenonlin}) is 
\begin{equation*} 
L (S,A,\xi) \, := \sum_{j=1}^d \, i \, \xi_j \, S_j + A = - L (S,A,\xi)^* . 
\end{equation*}
The matrix-valued symbol $ i \, L (S,A,\xi) $ is hermitian. It is therefore diagonalizable, with real eigenvalues $ \tau_j (S,A,\xi) $
satisfying
\begin{equation} \label{Lipschitzeigen} 
\tau_j (S,A,\xi) = \tau_j ( S,0 , \xi) + \vert \xi \vert \ \left \lbrace \tau_j \Bigl( S, \frac{A}{\vert \xi \vert} , \frac{\xi}{\vert \xi \vert} \Bigr) 
- \tau_j \Bigl( S, 0 , \frac{\xi}{\vert \xi \vert} \Bigr) \right \rbrace . 
\end{equation}
The function $ \tau_j  $ is Lipschitz continuous on compact sets, including the compact neighborhoods of $ \bigl \lbrace 
(S,0) \bigr \rbrace \times {\mathbb S}^{d-1} $. With this in mind, exploiting (\ref{Lipschitzeigen}) and assuming a little more 
regularity in the variable $ A $ near the position $ (S,0, \sigma) $ with $ \sigma = \xi / \vert \xi \vert $, we can infer that
\begin{equation} \label{Lipschitboundbiss}
\tau_j (S,A,\xi) = \tau_j ( S,0 , \xi) + ( A \cdot \nabla_{\! A} ) \tau_j ( S, 0 , \sigma ) + \cO \bigl( \vert \xi \vert^{-1} \bigr) . 
\end{equation}
By this way, the expression $ \tau_j (S,A,\xi) $ appears for large values of $ \vert \xi \vert $ as a bounded perturbation of the 
eigenvalue $ \tau_j (S,0,\xi) $. Since $ \tau_j (S,0,\cdot) $ is homogeneous of degree one in $ \xi $, the directions $ \sigma 
\in {\mathbb S}^{d-1} $ with $ \tau_j ( S,0 , \sigma) \not = 0 $ give rise to symbols $ \tau_j (S,A,\xi) $ which tend to 
$ \pm \infty $ when $ \xi = \lambda \sigma $ goes to infinity (when $ \lambda \rightarrow + \infty$). On the contrary, the 
directions $ \sigma \in {\mathbb S}^{d-1} $ such that 
\begin{equation} \label{bagagegecz} 
 \tau_j (S,0,\sigma) = 0
\end{equation}
furnish eigenvalues $ \tau_j (S,A,\xi) $ of $  i \, L (S,A,\xi) $ satisfying
\begin{equation} \label{Lipschitbound}
\lim_{\lambda \rightarrow + \infty} \ \tau_j (S,A,\lambda \sigma) = \tau_j^\infty (S,A,\sigma) := ( A \cdot \nabla_{\! A} ) \tau_j ( S, 0 , \sigma ) . 
\end{equation}
The condition (\ref{Lipschitbound}) appears already in Paragraph \ref{subsec:RWI} at the level of (\ref{doublelimG}) when 
studying wave propagation in magnetized plasmas. It is reflected at the level of Figure \ref{graofg} by the horizontal 
asymptotic line. In what follows, the focus will be on such situations, which in fact have a general scope.

\begin{rem}[Omnipresence of a finite limit]\label{zeroeigenvalues} Zero eigenvalues $ \tau_j (S,0,\xi) = 0 $ of $ i \, L (S,0,\xi) $ 
are nearly always present in the evolution equations of mathematical physics. In the most favorable cases, there is a number of indices $ j $ 
such that \eqref{bagagegecz} is verified for all directions $ \sigma \in {\mathbb S}^{d-1} $. Otherwise, fix any $ \sigma \in 
{\mathbb S}^{d-1} $. Then, change $ x $ into $ x- t \tau_j (S,0,\sigma) \sigma $, and modify the solution $ u$ accordingly. By this way,  
it can be ensured that $ \tau_j (S,0,\sigma) \equiv 0 $. Consequently, given $ \sigma \in {\mathbb S}^{d-1} $, the existence 
of a finite limit as in \eqref{Lipschitbound} is (modulo adequate transformations) systematic. 
\end{rem}

\noindent From now on, the matrices $ S_j $ and $ A $ are fixed, with $ A \not = 0 $. The symbol $ L (S,A,\xi) $
is simply denoted by $ L \equiv L(\xi) $. We assume that, for $ \xi \not = 0 $, the matrix $ L(\xi) $ has exactly 
$ \tilde N \ge 1 $ (with $ \tilde N \leq N $) distinct eigenvalues which are of constant multiplicity,   denoted by 
$ - i \, \tau_j (\xi) $ with $ j = 1 , \ldots, \tilde N $. The characteristic variety which is issued from $L (\eps D_x) $ 
is $ \operatorname{Char} (L) := \RR_t \times \RR_x^d \times  \cV $ with 
\begin{equation} \label{caldeter} 
\cV \, := \bigl \lbrace (\tau,\xi) \in \RR \times \RR^d \, ; \ \det \bigl( i  \tau  \operatorname{Id} + L(\xi) \bigr) = 0 \bigr \rbrace .
\end{equation}
By construction, the set $ \cV $ consists of a finite number $ \tilde N $ of smooth sheets, which correspond to 
different branches of $ \operatorname{Char} (L) $, and which are nonintersecting except possibly at the position $ \xi = 0 $. We have 
$ (t,x,\tau,\xi) \in \operatorname{Char} (L ) $ if and only if $ \tau = \tau_j (\xi) $ for some $ j $. The functions $ \tau_j  $ are called 
\emph{dispersion relations}. They are smooth away from $ \xi = 0 $. Retain that $ \tau_j \in \cC^\infty \bigl( \RR^d \setminus 
\{0\} ; \RR \bigr) $. 


\subsubsection{Reduction to a scalar equation}\label{redscalar} The aim here is to explain how to pass from the system 
(\ref{ondenonlinsymmetrizable}) to a finite number of scalar equations.
\begin{lem}\label{lem:passage}
  For all $ \xi \in \RR^d \setminus \{ 0 \}  $, the normal matrix $ L (\xi) $ is unitarily 
similar to a diagonal matrix $ D(\xi) $. In other words
  \begin{equation*} 
\exists \, U (\xi) \in \cU (N) \, ; \qquad U (\xi)^{-1} = U (\xi)^*  \, , \qquad U (\xi) \, L(\xi) \, U (\xi)^* = D(\xi) . 
\end{equation*}
In addition, the function $U  $ can be chosen smooth away from $\{\xi=0\}$, bounded
as well as all its derivatives, and it has a non-zero limit in each direction
\begin{equation}\label{limhomodeU}
  \forall \sigma \in {\mathbb S}^{d-1},\qquad  \exists U_{\sigma} \in \cU (N) \, ; \qquad \lim_{\lambda \to + \infty} U(\lambda 
  \sigma)=U_{\sigma}.  
\end{equation}
\end{lem}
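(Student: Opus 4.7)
The plan is to combine the classical spectral theorem for normal matrices with Kato's perturbation theory to upgrade pointwise diagonalization into a smooth one, and then to exploit the two-scale structure $L(\xi) = i \sum_j \xi_j S_j + A$ to control the behavior at infinity.

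First, I would fix $\xi \in \RR^d \setminus \{0\}$ and observe that $L(\xi)$ is antihermitian, hence normal, so the spectral theorem yields a unitary $U(\xi)$ with $U(\xi) L(\xi) U(\xi)^* = D(\xi)$ diagonal. The real content of the lemma thus lies in the smooth and bounded dependence of $U$ on $\xi$ and in the existence of radial limits. For smoothness, I would invoke the hypothesis that, for $\xi \neq 0$, the eigenvalues $-i\tau_j(\xi)$ are distinct and of constant multiplicity $m_j$. Around each $-i\tau_j(\xi)$, pick a small positively oriented loop $\gamma_j(\xi)$ in $\CC$ enclosing only this eigenvalue, depending smoothly on $\xi$ locally. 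Then the Riesz projection
\begin{equation*}
P_j(\xi) = -\frac{1}{2\pi i} \oint_{\gamma_j(\xi)} \bigl( L(\xi) - z \operatorname{Id} \bigr)^{-1} dz
\end{equation*}
is smooth in $\xi$, self-adjoint, and of constant rank $m_j$.

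To turn these projections into a smooth unitary $U(\xi)^*$, I would build for each $j$ a smooth orthonormal frame of $\operatorname{Range} P_j(\xi)$ by applying Gram-Schmidt locally to the vectors $P_j(\xi) e_1, \ldots, P_j(\xi) e_N$ and selecting a nonzero $m_j$-tuple; this succeeds in a neighborhood of every point of $\RR^d \setminus \{0\}$, and is glued globally on each connected component via a partition of unity followed by a final Gram-Schmidt. Since the paper ultimately reduces to $d = 1$, where $\RR \setminus \{0\}$ has two contractible components, no topological obstruction arises.

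Boundedness of $U$ itself is immediate since $U(\xi) \in \cU(N)$ is compact. For boundedness of derivatives and existence of the radial limits $U_\sigma$, I would exploit the splitting $L(\lambda\sigma) = i\lambda M(\sigma) + A$ with $M(\sigma) := \sum_j \sigma_j S_j$ real symmetric. Diagonalizing $M(\sigma)$ and treating $A$ as a bounded perturbation, one obtains from \eqref{Lipschitboundbiss} that, for large $\lambda$, the spectrum of $L(\lambda\sigma)$ splits into clusters whose separation (after suitable rescaling) is uniform in $\lambda$, so that each Riesz contour $\gamma_j$ can be chosen uniformly in $\lambda$. Standard resolvent estimates then yield uniform bounds on $P_j(\lambda\sigma)$ and on all its derivatives, and convergence of $P_j(\lambda\sigma)$ to a limiting projection as $\lambda \to +\infty$; carrying this through the smooth frame construction produces the limit unitary $U_\sigma$.

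The main obstacle will be handling simultaneously the two qualitatively different types of eigenvalues of $L(\xi)$: those growing like $|\xi|$ in directions where $\tau_j(S,0,\sigma) \neq 0$, and those remaining bounded in directions satisfying \eqref{bagagegecz}. The Riesz contours must be chosen on different scales in each regime (radius $\sim |\xi|$ versus radius $\sim 1$), and the smooth frame construction must be compatible with the corresponding block decomposition, so that uniform boundedness of all derivatives and existence of $U_\sigma$ hold across both regimes.
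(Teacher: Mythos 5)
Your proposal follows essentially the same route as the paper: pointwise diagonalization via the spectral theorem for antihermitian matrices, smoothness from the constant-multiplicity hypothesis (which you implement via Riesz projections and local frames), and the radial limit obtained by perturbing off the homogeneous case $A=0$, where $U$ is degree-zero homogeneous so that $U_\sigma = U(\sigma)$. The paper states these three steps in a few lines and leaves the details implicit; your write-up is a faithful expansion of the same argument, with the added (and correct) observation that contractibility of the connected components of $\RR \setminus \{0\}$ removes any topological obstruction to a global smooth choice of $U$ in the $d=1$ case that is ultimately used.
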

\begin{proof}
  The diagonalisation is straightforward since $L(\xi)$ is antihermitian. The smoothness of $ \xi \mapsto U(\xi) $ follows from the 
  constant multiplicity assumption. It remains to focus on (\ref{limhomodeU}). In the case
  $A=0$, the function $U $ is homogeneous of degree zero, and we have (\ref{limhomodeU}) with $ U_{\sigma} = U 
  (\sigma) $. The lemma then follows from perturbative arguments. For large values of $|\xi|$, the contribution issued from 
 introducing  $ A $ yields only $ \cO (1/ \vert \xi \vert ) $ terms, as in the previous paragraph. 
\end{proof}

\noindent To avoid a possible singularity of $ p  $ at $ \xi = 0 $, we introduce a cut-off function 
$ \underline \chi_c $ near the zero frequency. In the absence of singularity at $ \xi = 0 $, just take $\xi_c=0$ and $\underline 
\chi_c\equiv 0 $. In the presence of a singularity at $ \xi = 0 $, fix some $\xi_c>0$, and select a smooth even cut-off function 
$ \underline \chi_c  $ satisfying
\begin{equation}\label{chicestc}
  \mathbf 1_{[-\xi_c,\xi_c]}\le \underline\chi_c\le  \mathbf    1_{[-2\xi_c,2\xi_c]} .
\end{equation}
The diagonal entries of the matrix $ D(\xi) $ are the imaginary numbers $ - i \, \tau_j (\xi)  $. Apply the operator $ \bigl \lbrack 1 -  
\underline\chi_c( \eps D_x ) \bigr \rbrack U (\eps D_x) $ on the left side of (\ref{ondenonlin}). Accordingly, define the 
modal decomposition 
\begin{equation*} 
{}^t (u_1, \cdots, u_N) := \bigl \lbrack 1 - \underline\chi_c ( \eps D_x ) \bigr \rbrack U (\eps D_x) \ u \in \RR^N . 
\end{equation*}
We emphasize again that this (possible) cut-off near $ \xi = 0 $ is consistent with physical approaches. In practice, the 
dispersion relation near the zero frequency requires often a distinct treatment.  Examples include the Alfv\'en wave regime as 
opposed to the Whistler wave regime, see e.g. \cite{ElPa17,SrSh11}. By this way, the PDE (\ref{ondenonlin}) is reduced to a 
coupled system of $ N $ scalar equations 
\begin{equation} \label{ondenonlinscainterm} 
  \part_t u_n - \frac{i}{\eps} p_n (\eps D_x) \, u_n + \bigl \lbrack 1 - \underline\chi_c (\eps D_x) \bigr \rbrack \, \bigl \lbrack U 
(\eps D_x) \, F \bigr \rbrack_n \, = \, 0 \, , \quad 1 \leq n \leq N . 
\end{equation}
In (\ref{ondenonlinscainterm}),  the pseudo-differential operator $ p_n (\eps D_x) $ involves a symbol $ p_n $ which 
is a smooth function that comes from a dispersion relation $ \tau_j $. More precisely
\begin{equation} \label{multipnparchi} 
\exists \, j \in \{ 1, \cdots , \tilde N \} \, ; \qquad p_n (\xi) = \bigl \lbrack 1 - \underline\chi_c (\xi) \bigr \rbrack \ \tau_j (\xi) .
\end{equation}
The multiplication by $ 1 - \underline\chi_c (\xi) $ inside \eqref{multipnparchi}  allows a smooth connection of the values $ \tau_j (\xi) $ 
for $ \xi \not = 0 $ to the value $ p_n (0)= 0 $ for $ \xi = 0 $. Now, we can remove the index $ n $. The above polarization and 
microlocalization procedure does highlight the important role of scalar equations of the type
\begin{equation} \label{ondenonlinsca} 
\qquad \part_t u - \frac{i}{\eps} p (\eps D_x) \, u + \zeta (\eps D_x)  F  =  0  , \qquad u_{\mid t=0} = 0 , 
\end{equation}
where $\zeta(\xi) := \bigl(1-\underline\chi_c (\xi) \bigr) U(\xi) $. In (\ref{ondenonlinsca}), there is only one mode of propagation.
We have $ u \in \RR $ and $ N = 1$. Moreover, assuming that the function $ F  $ depends only on $ u $, there is no more 
coupling between the different modes $ u_j $'s. 

\smallskip

\noindent Of course, the passage from (\ref{ondenonlinscainterm}) to (\ref{ondenonlinsca}) is a great simplification. Nonetheless, 
the equation (\ref{ondenonlinsca}) is very interesting. It is a simplified model giving a good indication of many mechanisms 
occurring at the level of systems like (\ref{ondenonlin}). Of special interest are the symbols $ p $ which, like in Section~\ref{sec:model}, 
stem from realistic models. Indeed, they allow to identify key physical phenomena.  


\subsubsection{Solving the scalar equation}\label{solvescalar}
In a first stage, we consider \eqref{ondenonlinsca}  when the source term $ F$ reduces to 
\begin{equation} \label{firstchoiceofF} 
F(\eps,t,x) \equiv F^0_{\! L} (\eps,t,x) = \, - \, \eps^{3/2} a_m (\eps \, t , t,x) \ e^{i m\varphi(t,x)/ \eps } .
\end{equation}
The function $ a_m$ is a smooth profile, which is compactly supported
with respect to the spatial variable $ x$; the function  $\varphi$ is
defined 
in \eqref{pharetain}, and $m\in \ZZ$.
More general choices of $ F$ will be presented in Subsection~\ref{sec:assumptions}.
By interpreting the equation (\ref{ondenonlinsca}) on the Fourier side, we can extract
\[ \hat u (t,\xi) = \, - \int_0^t  \! \int e^{i \, \lbrack - (s- t) \, p(\eps \xi) - \eps y \xi \rbrack /\eps} \zeta (-\eps \xi) \ F(\eps,s,y) \ ds \, dy  ,\]
where the Fourier transform is defined as
\begin{equation} \label{defFouriertrans} 
\qquad \hat f (\xi)= \cF f(\xi)=\int_\RR e^{-ix\xi} \ f(x) \, dx, \qquad \cF^{-1}f(x) = \frac{1}{2\pi} \ \int_\RR e^{ix\xi} \ \hat f(\xi) \ d\xi  .
\end{equation}
Since $ \zeta $ is smooth and bounded,  and $ a_m(\eps \, s ,s, \cdot ) $ is in the Schwartz space, the expression  
$ \hat u (t,\cdot) $ is rapidly decreasing and therefore integrable (in $ \xi $). The inverse Fourier transform of $ \hat u 
(t,\cdot) $ furnishes 
\[ u(t,x) = \, - \, \frac{1}{2 \pi} \int \! e^{ i x \, \xi } \left( \int_0^t  \! \! \int e^{i \, \lbrack - (s- t) \, p(\eps \xi) - y (\eps \xi) \rbrack 
 /\eps} \zeta (-\eps \xi) \ F(\eps,s,y) \ ds \, dy \! \right) d \xi . 
\]
Replacing $ F $ as indicated in (\ref{firstchoiceofF}), and changing the variable $ \xi $ into $  -\xi / \eps $ in the resulting
integral yields $ u(t,x) = \cI (\eps,t,x;m\varphi,\zeta,a) $ where the oscillatory integral $ \cI $ is given by
\begin{equation} \label{soloscoif} 
\qquad \cI := \frac{\sqrt \eps}{2\pi} \ \int \left( \int_0^t \! \int e^{- i \, \Phi(t,x;s,y,\xi) /\eps} \zeta(\xi) \ a(\eps \, s, s,y) \ ds \, 
dy \right)  d \xi  ,
\end{equation}
and where, assuming that $ p $ is even, the function $ \Phi $ stands for the phase
\begin{equation} \label{phaseOIFgen} 
\qquad \Phi (t,x;s,y,\xi) := (s - t ) \, p(\xi) + (x-y)  \xi - m\varphi(s,y) .
\end{equation}
The notation $ \Phi (t,x;s,y,\xi) $ emphasizes the dependence of the function of $(s,y,\xi)$ upon the parameters $(t,x)$. 
A large part of our work will focus on the asymptotic behavior when $ \eps $ goes to zero of expressions like (\ref{soloscoif}). 
Note that the application $ a(\eps t,t,\cdot) \mapsto u(t,\cdot) $ belongs to the general category of Fourier integral operators, see
the book \cite{MR2741911}. Non-standard features come here from the large domain of integration in time (of size 
$ s \sim \eps^{-1} $) and from the unusual properties of the phase $ \Phi $ (during large times $ s \sim \eps^{-1} $). Typically, 
the phase $\varphi (s,y) $ is linear in $y$ for fixed time, but with an increasing coefficient $s$, which enhances new phenomena. 


\subsection{Main assumptions} \label{sec:assumptions} Consider (\ref{ondenonlinsca}). By incorporating the action of 
$ \zeta (\eps D_x) $ inside the definition of $ F $, we find a scalar equation in one space dimension like
\begin{equation} \label{eq:main}
  \part_t u -\frac{i}{\eps}p\(\eps D_x\) u + F=0,\quad u_{\mid t=0}=0 .
\end{equation}
As in Paragraph \ref{Osciaspects}, we can decompose the source term $ F $ into $F= F_{\! L} + F_{\! N\! L}$ with $ F_{\! L} $ 
as in (\ref{fnlineaajout}). Below, we state our general assumptions regarding the terms which are present in \eqref{eq:main}.


\subsubsection{Assumptions on the dispersion relation}\label{disprela} We select some $ j $ giving rise to (\ref{bagagegecz}),
and we consider the subset of $ \operatorname{Char} (L) $ which is associated to the choice of the eigenvalue $ \tau_j (S,A,\xi) 
\equiv \tau_j $. For convenience, we will sometimes omit to mention $ j $ when dealing with $ \tau_j $ or related expressions. The 
surface
\begin{equation*} 
\cV \equiv \cV_j \, := \, \bigl \lbrace \bigl(\tau_j (\xi) ,\xi) \, ; \ \xi \in \RR^d \bigr \rbrace \subset \RR^{1+d}
\end{equation*}
does not depend on $ (t,x) $, and it is contained in all sections of $ \operatorname{Char} (L ) $. Our aim is to study the equation 
(\ref{eq:main}) with a pseudo-differential operator $ p (- i  \eps \part_x) $ whose symbol $ p = (1-\underline\chi_c) \tau $ is 
satisfying assumptions which are inspired by (\ref{reldispright}). With this in mind, we impose the following conditions.

\begin{ass}[Existence of a resonance]\label{resoa}  The symbol $ p \in \cC^\infty(\RR ; \RR )$ satisfies:
\begin{enumerate}
  \item [(a)] There exists $\xi_c \in \lbrack 0,1/2 \rbrack $ such that $p_{\mid [0,\xi_c]}\equiv 0 \, $; 
   \item [(b)] The function $p'  $ is positive on the interval $ ] \xi_c, + \infty [ \, $; 
  \item [(c)]  The derivative $ p'(\xi) $ converges to zero when $ \xi $ tends to $ + \infty \, $;
 \item [(d)] The function $p  $ is such that
 \begin{equation} \label{hypp2} 
\exists\, q\ge 2 \ \text{and} \ \ell <0 \, ; \qquad \lim_{\xi \rightarrow + \infty} \ \xi^{q+2} \ p'' (\xi) = \ell \, ;
\end{equation}
  \item [(e)] The function $p  $ is even.
  \end{enumerate}
\end{ass}

\noindent In other words, we have (\ref{hypp2}) together with 
\begin{subequations}\label{increasing} 
\begin{eqnarray} 
\qquad & & \displaystyle  \forall \, \xi \in [0,\xi_c] \, , \qquad \ p(\xi) = 0  , \label{increasing2} \\
\qquad & & \displaystyle  \forall \, \xi \in \ ]\xi_c,+\infty[  , \quad \, 0 < p' (\xi)  , \label{increasing3}  \\
\qquad & &  \displaystyle \forall \, \xi \in [0,+\infty[ \, , \quad \ \, 0 \leq p(\xi) = p(-\xi)  , \label{increasing1} 
\end{eqnarray}
\end{subequations}
as well as
\begin{equation} \label{increasing4} 
\lim_{\xi \rightarrow + \infty} \ p'(\xi) = 0  .
\end{equation}

\noindent As seen in Section~\ref{sec:model}, the formula (\ref{reldispright}) furnishes a typical example of 
symbol $ p  $, which is issued from electromagnetism.

\begin{lem} Assumption \ref{resoa} is satisfied by the function $ p  $ of \eqref{reldispright}. 
\end{lem}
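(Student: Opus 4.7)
The plan is to verify each of the five items of Assumption \ref{resoa} in turn, treating $\xi \geq 0$ by symmetry and reading off the behavior of $p$ from its factorization $p=(1-\chi)\tau_w$ with $\tau_w=G_-^{-1}(\cdot^{-2})$.

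Items (a) and (e) are essentially immediate from the construction. Evenness of $p$ follows from evenness of $\chi$ (stipulated in \eqref{mollichi}) and of $\tau_w$ (which depends on $\xi$ only through $\xi^{-2}$), giving (e). The mollifier satisfies $1-\chi\equiv 0$ on a neighborhood of $0$, so $p$ vanishes on an interval $[0,\xi_c]$; one picks $\xi_c$ to lie in $[0,1/2]$ (adjusting the support of $\chi$ if necessary), which is (a).

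For (b), I would split $(\xi_c,+\infty)$ into the transition zone, where $0<\chi<1$ and $\chi'<0$, and the region where $\chi\equiv 0$ and $p\equiv \tau_w$. On the latter, $\tau_w$ is strictly increasing on $(0,+\infty)$: indeed, by \eqref{Gprime} and \eqref{Gprimeenc}, $G_-$ is a $\cC^\infty$ diffeomorphism from $(0,1]$ onto $[0,+\infty)$ with $G_-'\leq -1$, hence $G_-^{-1}$ is strictly decreasing, and its composition with the strictly decreasing map $\xi\mapsto\xi^{-2}$ is strictly increasing; thus $\tau_w'>0$. In the transition zone, Leibniz gives
\begin{equation*}
p'(\xi)=-\chi'(\xi)\,\tau_w(\xi)+(1-\chi(\xi))\,\tau_w'(\xi),
\end{equation*}
and each point of $(\xi_c,1)$ falls in one of the following two cases: either $\chi'(\xi)<0$, in which case the first term is strictly positive while the second is non-negative, or $\chi(\xi)\in\{0,1\}$ and the analysis reduces to the monotonicity of $\tau_w$ itself. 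Combining, $p'>0$ throughout $(\xi_c,+\infty)$.

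For items (c) and (d), I would reduce to the regime $\xi\geq 1$ where $p=\tau_w$ and exploit the implicit relation $\xi^{-2}=G_-(\tau_w(\xi))$. Taylor-expanding $G_-$ at $\tau=1$ using $G_-(1)=0$ and $G_-'(1)=-1$ from \eqref{Gprimeenc} produces $\tau_w(\xi)=1-\xi^{-2}+O(\xi^{-4})$; differentiating this asymptotic identity (which is legitimate because $G_-$ is smooth and a diffeomorphism) gives $\xi^{3}p'(\xi)\to 2$, hence $p'(\xi)\to 0$, which is (c), and $\xi^{4}p''(\xi)\to -6$, so (d) holds with $q=2$ and $\ell=-6$. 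This computation is nothing other than the statement \eqref{asymppon} for $n=1,2$, derived via Fa\`a di Bruno applied to the composition $G_-^{-1}\circ(\cdot)^{-2}$.

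The only subtle step is (b): the transition region forces one to combine the signs of $-\chi'\,\tau_w$ and $(1-\chi)\,\tau_w'$ carefully, and this is where the specific choice of a non-increasing mollifier $\chi$ on $(5/8,1)$ from \eqref{mollichi} is essential. All other items follow either by direct inspection (for (a) and (e)) or by an elementary consequence of \eqref{asymppon} (for (c) and (d)).
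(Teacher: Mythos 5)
Your argument is correct and essentially coincides with the paper's proof, which likewise writes $p'(\xi)=-\chi'(\xi)\,\tau_w(\xi)+(1-\chi(\xi))\,\tau_w'(\xi)$ as a sum of non-negative pieces to get (b), and reads (c) and (d) off the limits $\xi^3 p'(\xi)\to 2$ and $\xi^4 p''(\xi)\to -6$, giving $q=2$ and $\ell=-6$. The one loose step is ``differentiating this asymptotic identity,'' which is not a legitimate operation in general; the rigorous substitute is implicit differentiation of $\xi^{-2}=G_-(\tau_w(\xi))$ (equivalently Fa\`a di Bruno, the route behind \eqref{asymppon}, which you do cite), so the content is sound even if the phrasing is informal.
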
 

\begin{proof} Recall that $ G_-^{-1}  $ is strictly decreasing from $ [0,+\infty[ $ to $ ]0,1] $. Taking into
account (\ref{mollichi}) and (\ref{reldispright}), we have (a) with $ \xi_c = 5/8 $. Compute 
$$ p'(\xi) = - \chi' (\xi) \, G_-^{-1} \bigl( \xi^{-2} \bigr) - 2 \, \bigl( 1 - \chi (\xi) \bigr) \, \xi^{-3} \, G'_- \circ G_-^{-1} 
\bigl( \xi^{-2} \bigr)^{-1} . $$
For $ \xi > \xi_c $, in view of (\ref{Gprime})  and (\ref{mollichi2}), $ p'(\xi) $ is the sum of two positive expressions.
This furnishes (b). On the other hand, using (\ref{Gprimeenc}), we find
$$ \lim_{\xi \rightarrow + \infty} \ \xi^3 p'(\xi) = - 2 \, \lim_{\xi \rightarrow + \infty} \ G'_- \circ G_-^{-1} \bigl( \xi^{-2} 
\bigr)^{-1}  = - 2 \, G'_- (1)^{-1} = 2 . $$
As a direct consequence, we have (c). For $ \xi > 1 $, there remains
$$ p''(\xi) = 6 \, \xi^{-4} \, G'_- \circ G_-^{-1} \bigl( \xi^{-2} \bigr)^{-1} - 4 \, \xi^{-6} \, G''_- \circ G_-^{-1} \bigl( 
\xi^{-2} \bigr) G'_- \circ G_-^{-1} \bigl( \xi^{-2} \bigr)^{-3} .$$
Passing to the limit $ \xi \rightarrow + \infty $, we recover (\ref{hypp2}) with $ q = 2 $ and $ \ell = -6 < 0 $. 
Finally, by construction, the function $ p $ is the product of two even functions, and therefore we 
have (e).
\end{proof}

\begin{rem}[About the optimality of Assumption \ref{resoa}]\label{optimalityofAssumption} Most of our results remain valid 
when $ q \geq 1 $, and some of them hold true when \eqref{hypp2} is relaxed according to
\begin{equation} \label{hypp2relax} 
\exists \, (q,\ell) \in \, \rbrack 0,+\infty[ \times \RR_- \, ; \qquad \lim_{\xi \rightarrow + \infty} \ \xi^{q+2} \ p'' (\xi) = \ell .
\end{equation}
For instance, with $ \chi$ as in \eqref{mollichi} and $ p(\xi) = \bigl \lbrack 1 - \chi (\xi) \bigr \rbrack \tau (\xi) $ as in 
\eqref{multipnparchi}, we could also consider the following choices
\[ \begin{array}{lll}
\displaystyle \tau (\xi) = \frac{2}{\pi} \ \arctan \vert \xi \vert \, , & \quad \ q =1 \, , & \displaystyle \quad \ \ell =- \frac{4}{\pi}  ,  \\
\displaystyle \tau ( \xi ) = \int_{0}^{\xi} \frac{ds}{1+|s|^{1+q}} , & \quad \ q > 0 \, , & \displaystyle  \quad \ \ell= -1- q  . 
   \end{array}
   \]
However, the precise description of the large time behavior of $u$ (for $t$ of order $1/\eps$, as in Proposition~\ref{supnormamplifi})
does require $ \ell <0$. As a matter of fact, the case $ \ell = 0 $ is an option which does not allow to quantify the dispersive effects 
(as in Lemma~\ref{localizations}).
\end{rem}

\noindent From (\ref{hypp2}), we obtain that $ p''$ is integrable on $
\RR_+ $. Using (\ref{increasing4}), this yields 
\begin{equation*} 
p'(\xi) = \, - \int_\xi^{+\infty} p''(s) \ ds  .
\end{equation*}
Then, exploiting (\ref{hypp2relax}), we can obtain
\begin{equation} \label{hypp3ded} 
\lim_{\xi \rightarrow + \infty} \ \xi^{q+1} \ p' (\xi) = - \frac{\ell}{q+1} >0  .
\end{equation}
Note that the condition (\ref{increasing3}) implies that the limit in the right hand side of (\ref{hypp3ded}) should be nonnegative. 
This is compatible with the condition $ \ell < 0 $ of (\ref{hypp2}) or with the condition $ \ell \leq 0 $ of (\ref{hypp2relax}). Now,  
from (\ref{hypp3ded}), we know that $ p'$ is integrable on $ \RR_+ $. Thus, we can find a number 
$ \omega^\infty_+ >0$ such that
\begin{equation}\label{reso1} 
\lim_{\xi \rightarrow + \infty} \ p(\xi) = \omega^\infty_+ := \int_0^{+\infty} p'(\xi) \ d \xi > 0 .
\end{equation}
The limit $ \omega^\infty_+ $ has clear physical meaning, in the sense of a resonance frequency. In the context of SMP, 
the number $ \omega^\infty_+ $ coincides with the electron cyclotron resonance frequency $ \omega_c $ introduced at 
the level of (\ref{condtauxiRini}) and (\ref{condtauxiLini}). In the text \cite{editorial}, it is called a {\it resonance of the first 
type}. 

\smallskip

\noindent In the scalar framework (\ref{eq:main}) which involves only one dispersion relation, changing the time scale $ t $ 
into $ \omega^\infty_+ \, t $, the symbol $ p $ and the source term $ F $ are respectively replaced by $ (\omega^\infty_+)^{-1} 
p $ and $ (\omega^\infty_+)^{-1} F $. By this way, we can ensure that $ \omega^\infty_+ = 1 $.

\begin{ass}[Normalization of the resonance]\label{normareso} The resonance frequency, that is the limit 
$ \omega^\infty_+ $, is normalized to unity.
\begin{equation} \label{resobis} 
\lim_{\xi \rightarrow + \infty} \ p(\xi) = 1 .   
\end{equation}
\end{ass}

\noindent Consequently, for $ \xi $ large enough, the dispersion relation $ \tau = p(\xi) $ does mimic the choice $ p \equiv 1 $ 
of \eqref{third-si}. However, in Assumption \ref{resoa}, due to (b), the function $ p  $ is definitely not constant, and therefore the 
variety $\cV$ is curved. Again, this is a hint that some kind of \emph{dispersive effects} are present. Let us clarify this point. 
On the one hand, combining \eqref{hypp3ded} and \eqref{resobis}, we get easily 
\[
  \lim_{\xi \rightarrow + \infty} \ p'(\xi) \ \xi \ p(\xi)^{-1} \, = \, 0 \, \not = \, 1 .
\]
In the vocabulary of geometric optics, this means that the group velocity $p'(\xi)$ and the phase velocity $p(\xi)/\xi$ are 
(asymptotically) different, and hence dispersive effects persist (see e.g. \cite{Rauch}). On the other hand, because the 
symbol $p$ is bounded, there are no (local in time) Strichartz 
estimates which could be associated to the propagator $e^{itp(D_x)}$,
improving Sobolev embeddings, in the sense that if
\begin{equation*}
  \|e^{itp(D_x)} f\|_{L^a([0,T];L^b(\RR))}\le C(T)
  \|f\|_{H^k(\RR)},\quad \forall f\in H^k(\RR),
\end{equation*}
for some $T>0$ and $(a,b)$ satisfying
$\tfrac{2}{a}=\tfrac{1}{2}-\tfrac{1}{b}$ (admissible pair),
then necessarily, $k\ge \tfrac{1}{2}-\tfrac{1}{b}$ (see \cite{CaDisp}). On
the other hand,  \emph{frequency localized} 
 Strichartz estimates are available (see \cite{CDGG06}): since we consider high
 frequency phenomena, these localized estimates are not helpful.  
One thus has to be cautious about the notion of dispersive effects that is involved. In this article, it refers to the first interpretation.

\begin{rem} The equation \eqref{third-si} of the introduction can be  put in the form \eqref{eq:main}  with $ p \equiv 1 $. It is also 
dispersive since, for $ \xi \not = 0 $, the derivative $ p'(\xi) = 0 $
is different from $ \xi^{-1} p(\xi) = \xi^{-1} $. It satisfies
Assumption ~\ref{normareso} but not Assumption~\ref{resoa}. Indeed, in
contradiction with \eqref{increasing3}, the group velocity $ p' (\xi) $ is simply zero.  
\end{rem}

\noindent Now, let us come back to the content of (a), (b), (c), (d) and (e).

\smallskip

\noindent - (a). By multiplying the eigenvalue $ \tau  $ by the cut-off function $ 1 - \underline\chi_c $ with $ \underline\chi_c $ 
as in (\ref{chicestc}), we can always guarantee (a) for $ p = (1- \underline\chi_c) \tau $. 

\smallskip

\noindent - (b). To better understand the origin of (b) and the conditions under which the property (b) is indeed accessible, 
we can examine what happens in the one dimensional framework ($ d = 1 $). Starting from (\ref{ondenonlin}), this means 
to fix $ \sigma \in \mathbb S^{d-1} $, to identify $ L(S,0,\sigma) $ with some symmetric matrix $ S $, and to look at 
\begin{equation} \label{ondenonlinoned} 
\part_t u + S \, \part_{x} u + \eps^{-1} A u + F \, = \, 0  , \qquad u_{\mid t=0} = 0 , \qquad x \in \RR .
\end{equation}
As already explained, to obtain (\ref{bagagegecz}) for some index $ j $, zero must be an eigenvalue of the symmetric 
matrix $ i L(S,0,1) $, which coincides with $ - S $. Let $ P $ be the orthogonal projector onto the kernel of $ S $. If the 
matrix $ S $ commutes with $ A $, that is if $ \lbrack S,A \rbrack = 0 $, the two matrices $ S $ and $ A $ are simultaneously 
diagonalizable, and the system (\ref{ondenonlinoned}) can be decoupled into distinct transport equations. In particular, 
from (\ref{ondenonlinoned}), we can extract
\begin{equation} \label{ondenonlinonedproject} 
\part_t Pu + \eps^{-1} (PAP)  Pu + PF \, = \, 0  , \qquad P u_{\mid t=0} = 0 .
\end{equation}
Among the eigenvalues $ \tau_j (S,A,\xi) $ of $ i L(S,A,\xi) $, we can distinguish those coming from (\ref{ondenonlinonedproject}),
which are simply eigenvalues of $ P A P $, and therefore constant in $ \xi $. Then, the condition (b) is not verified. This means 
that interesting situations may arise only on condition that $ \lbrack S,A \rbrack \not = 0 $. 

\smallskip

\noindent In fact, in order to have the condition (b), the important thing is the presence asymptotically, say for $ \xi \geq \xi_p $ 
with $ \xi_p \in \RR_+ $, of some nontrivial monotone behavior of $ \tau  $. Then, changing $ t $ into $ -t $ if necessary, 
we get the growth criterion $ p'(\xi) \geq 0 $ for $ \xi \geq \xi_p $. Changing $ x $ into $ \mu x $ with $ \mu \in \RR $, we can 
obtain $ 0 \leq \xi_p \leq \xi_c \leq 1/2 $. Then, multiplying $ \tau_j $ by $ 1- \underline\chi_c $ as in (\ref{multipnparchi}), the 
situation does fit in with (b). The supplementary restriction $ p'(\xi) > 0 $ is aimed to guarantee that dispersive effects associated 
with the variations of $ p  $ actually occur. 

\smallskip

\noindent - (c). The condition (c) is useful to infer (\ref{reso1}) from (\ref{hypp2}). Subject to (\ref{bagagegecz}), it is an easy  
consequence of (b). Let us briefly explain why. In view of Remark \ref{zeroeigenvalues}, the condition (\ref{bagagegecz}) 
gives rise to the existence of a finite limit $ \tau_j^\infty \equiv \tau_j^\infty (S,A,\sigma) $. At this stage, there is no sign condition 
on $ \tau_j^\infty \in \RR $. Now, given any $ \lambda \in \RR $, we can change the solution $ u (t,\cdot) $ into $ \tilde u (t,\cdot) := 
\text{exp} (i \lambda t/\eps ) \, u (t,\cdot) $. This gauge transformation is not without consequence on the source term $ F $ 
(see Paragraph~\ref{Osciaspects}), which must be adjusted accordingly. It also modifies the matrix $ A $ into $ \tilde A := A -
i \lambda $, and the symbol $ \tau_j (S,A,\xi) $ into 
$$ \tilde \tau_j (S, \tilde A ,\xi) := \tau_j (S, \tilde A ,\xi) = \tau_j (S, A ,\xi) + \lambda . $$
It follows that $ \tau_j^\infty $ and $ \tau_j (S,A,0) $ are respectively turned into 
$$ \tilde \tau_j^\infty := \tau_j^\infty + \lambda, \qquad \tilde \tau_j (S,\tilde A,0) = \tau_j (S,A,0) + \lambda . $$ 
For the choice of a sufficiently large number $ \lambda $, the new limit $ \tilde \tau_j^\infty $ becomes positive, like 
$ \omega^\infty_+ $ in (\ref{reso1}). Moreover, provided that $ \tau_j (S,A,\cdot) $ is increasing on $ \RR_+ $, which 
means that we can take $ \xi_p = 0 $ as well as $ \xi_c >0$ arbitrarily small, by selecting $ \lambda = - 
\tau_j (S,A,0) $, we can ensure that $ \tilde \tau_j (S,\tilde A,0) = 0 $ and $ \tilde \tau_j^\infty \equiv \omega^\infty_+ 
>0$. Then, as in the whistler case, the function $ \tilde \tau_j  (S,\tilde A,\cdot) $ connects some zero 
eigenvalue of $ S $ (when $ \xi \rightarrow + \infty $) to some zero eigenvalue of $ \tilde A $ (when $ \xi = 0 $). 

\smallskip

\noindent - (d). The condition (\ref{hypp2}) does not only guarantee the existence of a resonance frequency. It is much 
more restrictive. It provides information about the asymptotic behavior inside (\ref{Lipschitbound}). Indeed, from (\ref{hypp3ded}), 
we can extract the rate of convergence
\begin{equation}\label{reso2} 
\lim_{\xi \rightarrow + \infty} \ \xi^q \ \left\lbrack \omega^\infty_+ - p (\xi) \right\rbrack = -\frac{\ell}{q \, (q+1)} >0 .  
\end{equation}

\smallskip

\noindent - (e). The last restriction (e) is not essential. It is inspired by the whistler dispersion relation $ \tau_w  $ 
which is an even function, and for which a global analysis up to the zero frequency $ \xi_p = 0 $ is directly available, without 
involving a cut-off function $ \underline\chi_c $ with $ \xi_c $ large. It is imposed here for the sake of simplicity. It can be 
avoided, albeit with technicalities to distinguish what happens in the two directions $ \pm \sigma $.

\smallskip

\noindent To conclude, let us illustrate the above discussion in the context of equation (\ref{ondenonlinoned}), when $ N = 2 $.
After adequate rescalings, the framework can be reduced to
$$ S = \left( \begin{array}{cc}
0 & 0 \\
0 & - 1
\end{array} \right) , \qquad A = \left( \begin{array}{cc}
- i a & - b - i c \\
b - i c & - i d 
\end{array} \right), \qquad (a,b,c,d) \in \RR^4 . $$
The pertinent eigenvalue $ \underline {\tau} (\xi ) $ of 
$$ i L(s,A,\xi) = - \xi S + i A = \left( \begin{array}{cc}
a & c - i b \\
c + i b & d + \xi 
\end{array} \right) $$ 
is the one issued from the zero eigenvalue of $ S $. Thus, it must be zero when $ A = 0 $. For $ \xi \geq 0 $, this means to select
$$ \underline {\tau} (\xi ) := \frac{1}{2} \bigl( \xi + a + d - \sqrt {(\xi-a +d)^2 + 4 b^2 + 4 c^2} \bigr) , \qquad \lim_{\xi \rightarrow 
+ \infty} \ \underline {\tau} (\xi ) = a . $$
For large values of $ \xi \geq 0 $, the function $ \underline {\tau}  $ is constant (equal to $ a $) if and only if $ b = c = 0 $, 
or equivalently if and only if $ \lbrack S,A \rbrack = 0 $. We have to address the opposite case, when  $ \lbrack S,A \rbrack \not 
= 0 $ or when $ b c \not = 0 $. By adjusting 
the gauge parameter $ \lambda $, we can work with $ a = 1 $. Multiply $ \underline {\tau}  $ by $ 1 - \underline\chi_c $
to form $ p = (1 - \underline\chi_c) \underline {\tau} $. Then, for all choice of $ \xi_c >0 $, we obtain (a), (b) and (c). 
We also find the substitute (\ref{hypp2relax}) for (\ref{hypp2}) - or (d) - with $ q = 1 $. We do not have (e) but, as seen before, 
this is just a simplifying assumption. 

\smallskip

\noindent For technical reasons, we need to highlight the following condition.

\begin{ass}[Control of derivatives of $ p $] \label{strengthenp} 
\begin{equation} \label{controlderip} 
\quad \exists \, D \in \NN \setminus \{0,1\} \, ; \qquad \forall \, n \in \{ 2, \cdots, D \} \, , \qquad \limsup_{\xi \rightarrow + \infty} \
\frac{\vert p^{(n)}(\xi) \vert}{p'(\xi)} < + \infty .
\end{equation}
\end{ass}

\begin{lem}  Assumption~\ref{resoa} implies  Assumption~\ref{strengthenp}.
\end{lem}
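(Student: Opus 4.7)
The plan is to verify Assumption~\ref{strengthenp} with the minimal admissible choice $D=2$. Since the quantifier $\forall n \in \{2,\ldots,D\}$ then collapses to the single case $n=2$, it suffices to produce a finite limsup for the ratio $|p''(\xi)|/p'(\xi)$ as $\xi \to +\infty$.

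The two key inputs are both already available. The first is item~(d) of Assumption~\ref{resoa}, which is precisely \eqref{hypp2} and gives $\xi^{q+2} p''(\xi) \to \ell$. The second is the consequence \eqref{hypp3ded} derived just above the lemma, namely $\xi^{q+1} p'(\xi) \to -\ell/(q+1)$, a finite nonzero limit because $\ell<0$. Note also that item~(b) guarantees $p'(\xi)>0$ for $\xi>\xi_c$, so the denominator in the ratio does not vanish in a neighborhood of infinity and the quotient is well defined.

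The heart of the argument is then the simple factorization
\[
\frac{p''(\xi)}{p'(\xi)} \;=\; \frac{1}{\xi}\cdot\frac{\xi^{q+2}\,p''(\xi)}{\xi^{q+1}\,p'(\xi)} .
\]
As $\xi \to +\infty$, the fractional factor converges to the finite value $\ell/\bigl(-\ell/(q+1)\bigr)=-(q+1)$, while the prefactor $1/\xi$ tends to $0$. Hence $p''(\xi)/p'(\xi)\to 0$, which is strictly stronger than the finite limsup demanded by \eqref{controlderip}. With $D=2$, this establishes Assumption~\ref{strengthenp}.

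There is no genuine obstacle here, but one should notice that the approach does not extend for free to larger values of $D$: Assumption~\ref{resoa} does not directly control $p^{(n)}$ for $n\ge 3$, and one cannot legitimately differentiate an asymptotic relation in general (oscillatory perturbations in $p''$ would be compatible with \eqref{hypp2} yet could spoil the higher derivatives). Since the statement only asks for the existence of some admissible $D\ge 2$, the choice $D=2$ bypasses this difficulty entirely and yields the conclusion in a few lines.
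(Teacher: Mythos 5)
Your proof is correct and follows essentially the same route as the paper: combine \eqref{hypp2} with the derived consequence \eqref{hypp3ded} to get $\xi\, p''(\xi)/p'(\xi)\to -(q+1)$, conclude that the ratio $|p''|/p'$ tends to $0$, and verify Assumption~\ref{strengthenp} with the minimal choice $D=2$. Your closing remark about why the argument does not automatically extend to $D\ge 3$ is a sound and useful observation, consistent with the fact that the paper treats Assumption~\ref{strengthenp} with $D\ge 3$ as genuinely additional information later on.
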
 

\begin{proof} When $ \ell <0$ as required in (\ref{hypp2}), from (\ref{hypp2}) and (\ref{hypp3ded}), we deduce that
\begin{equation*} 
\lim_{\xi \rightarrow + \infty} \ \xi \ \frac{p''(\xi)}{p'(\xi)} = - q-1  , 
\end{equation*}
and hence 
\begin{equation*} 
\limsup_{\xi \rightarrow + \infty} \ \frac{\vert p''(\xi) 
\vert}{p'(\xi)} = 0  .
\end{equation*}
By this way, we find (\ref{controlderip}) for $ n= D= 2 $.
\end{proof}

\noindent Come back to (\ref{reldispright}) with $ G_- $ as in (\ref{condtauxiR}). In this case, comparing \eqref{asymppon} 
and \eqref{hypp3ded}, we see that the finiteness condition in Assumption~\ref{strengthenp} is verified for all $ D$. However, 
in the general case, when $ D > 2 $, Assumption~\ref{strengthenp} is adding new information, as shown by the example
\[
 p''(\xi) = \frac{\ell}{(1+ \xi^2)^2} \ \( 1 + \frac{1}{1+ \xi^2} \, \cos \, \xi^6 \),\quad  q = 2 \, , \ \ell < 0 \ \text{and} \  D=3 . 
\]
 In order to examine the role of $ D $, we will keep track of $ D $ in the various estimates.


\begin{rem} [About other nonlinear  effects]\label{othernonlineareffects} The model
  \eqref{eq:main} does not see the  
interactions that occur between the different modes $ u_n $ inside \eqref{ondenonlinscainterm}. Just a quick comment on  
this. In view of \eqref{condtauxiR}, the set $ \cV $ is symmetric with respect to the $ \tau $-axis. Consider a phase $ \psi (t,0,0,x_3) $ 
satisfying the eikonal equation associated to R-waves
\begin{equation} \label{Reikonal}
\part_t \psi  = p \bigl( \part_{x_3} \psi (t,0,0,x_3) \bigr) \, , \qquad \forall \, x_3 \in \RR .
\end{equation}
The function $ \psi(t,0,0,-x_3) $ is also a solution to \eqref{Reikonal}. On the other hand, the direction $ (\tau,\xi) $ is 
subject to \eqref{condtauxiR} if and only if its opposite $ (-\tau, -\xi) $, and $(-\tau,\xi) $, is satisfying \eqref{condtauxiL}. 
If a phase $ \psi (t,0,0,x_3) $
is as in \eqref{Reikonal}, the function $ - \psi (t,0,0,x_3) $ satisfies the eikonal equation associated to L-waves. 

\smallskip

\noindent The same would apply for  extended functions $ \psi (t,x) $ which could be issued from  $ \psi(t,0,0,x_3) $ by 
the resolution of  the complete eikonal equation, related to some $ \xi \in \RR^d \setminus \{ 0 \} $. 
Now, due to possible nonlinear interactions, a non oscillating term can be produced by combining the phases 
$ - \psi$ and $ + \psi $. Since the value $ \tau = 0 $ is still characteristic when $ \xi_3 = 0 $ (the zero eigenvalue is not 
completely eliminated, see  \cite[Lemma~6.1]{CheFon}), this term may propagate and be amplified. There would be at 
the level of the full system \eqref{mhdeq} a three-wave resonance to study.
\end{rem}

\noindent The two conditions $ N = 1 $ and $ d = 1 $ are of course quite restrictive. They do not allow to take into account 
a number of multidimensional and nonlinear aspects. But again, the focus here is on resonances, intermittencies and related
nonlinear effects, in a framework as accessible as possible.
 

\subsubsection{The source term $ F_{\! L} $}  \label{thecontentofFL} After adequate gauge transformations, see Remarks 
\ref{eliminationdeF} and \ref{disphumantissues} as well as the comment about (c) in the preceding Subsection \ref{disprela}, 
we look at the equation (\ref{eq:main}) under Assumption \ref{resoa}. To simplify  matters, we can suppose that $ F_{\! L} 
\equiv F_{\! L} ^0 $. On the other hand, to take into account the cut-off by $ \underline\chi_c $ and the possible pseudo-differential 
action $ \zeta (-\eps D_x) $ introduced in (\ref{ondenonlinsca}), we extend below the choice made in (\ref{firstchoiceofF}). 
Given $ M \in \NN^* $, we consider 
\begin{equation}\label{eq:source}
  F_{\! L} \equiv F_{\! L} ^0 = \eps^{3/2} \sum_{m \in [-M,M]}\ A_m (\eps \,
t,t,x,-i\eps\part_x)^* \ e^{i m \varphi(t,x) / \eps} , 
\end{equation}
where the phase $\varphi$ is given by \eqref{pharetain}, that is $ \varphi(t,x) =  t - x \, t + \gamma \, ( \cos \, t -1) $ with 
$ 0<\gamma <1/4 $, and the action of the adjoint $ A_m^* $ of the semi-classical pseudo-differential operator $ A_m 
(\eps \, t,t,x,-i\eps\part_x) $ corresponds  to the right quantization
\begin{equation}\label{rightquantization}
 A_m (\eps \, t,t,x,-i\eps\part_x)^* u (x) := \int \! \! \int
e^{i(x-y)\xi} A_m (\eps \, t,t,y,\eps\xi) u (y) dy d\xi .  
\end{equation}

\begin{ass}[Choice of the coefficients $A_m$]\label{choiopro}
  For all integers $ m \in [-M,M] $, we impose $ A_m(T,t,x,\xi) = \zeta_m(-\xi) a_m(T,t,x) $ where the functions $\zeta_m\in 
  \cC^\infty(\RR;\RR)$ and $ a_m \in \cC^\infty_b (\RR^3) $ satisfy the following conditions.
  \begin{itemize}
  \item  $\zeta_m$ goes to $1$ at infinity with
    \begin{equation*}
      \zeta_m(\xi) = 1+\cO\(\frac{1}{|\xi|}\),\quad |\xi|\to \infty \, ;
    \end{equation*}
  \item For $m=0$, $\zeta_0$ is identically zero near the origin,
    \begin{equation*}
      \exists \xi_0>0,\quad \zeta_{0\mid [-\xi_0,\xi_0]}\equiv 0 \, ;
    \end{equation*}
 \item There exists some $\cT >0 $ and some $ r \in ] 0, \gamma/2 [ $ such that
 \begin{equation} \label{suppdea} 
\forall \, m \in [-M,M] \, , \qquad \mathrm{supp} \, a_m 
\subset \, ]-\infty,\cT] \times [1,+\infty[ \, \times [-r,r] .  
\end{equation}
 \end{itemize}
\end{ass}

\noindent Since $\zeta_m$ is multiplied by $a_m$, the asymptotic value $\lim_{|\xi|\to \infty}\zeta_m(\xi)=1$ is 
somehow arbitrary. It could be replaced by any non-zero constant.  Also, some coefficients $a_m$ may
very well be identically zero. We will see that the most important coefficient is $a_1$. All other coefficients 
will have a negligible contribution in the regime we consider in this paper (this is Fact \ref{fact1}). 

\smallbreak

\noindent As explained in \cite{Whistler,MR3582249}, when dealing with strongly magnetized confined 
plasmas (SMP), the solution to the Vlasov equation is transported by an oscillating flow, see for instance (2.4) and 
(2.9) in \cite{editorial}. This induces special structures of the electric current $ {\bf J}_e $ which does enter in the 
composition of $ F$ at the level of (\ref{eq:main}). 

\smallskip

\noindent Again, due to the bouncing back and forth (of charged particles) 
between the mirror points at a frequency which has been normalized in (\ref{deterXrsati}) to the value $ 2 \, \pi $, 
it is expected that the profiles $ a_m $ inherit similar periodic structures with respect to $ t \in \RR $. The same 
applies in the case of nuclear magnetic resonance (NMR). According to (\ref{formpourUNRM}), the profiles $ a_m $ 
would be constant. But, in practice, they should be periodic due to relaxation phenomena between the repeated 
action of radio frequency (RF-)excitations. 

\smallskip

\noindent In fact, the periodicity property is important for our results only as far as the coefficient $a_1$ is concerned. 
That is why we just impose the following condition, which is a relaxed version of (\ref{asymptoticallypi}).

\begin{ass}[$ a_1$  is $2\pi$-periodic in $ t $ for large times]\label{persourcea} 
There exists $ {\rm t}_s >0 $ and a function $ \underline a  \in \cC^\infty (\RR \times \TT \times \RR ) $ such that
\begin{equation} \label{peratlar} 
\qquad \forall \, t \geq {\rm t}_s \, , \quad \forall \, n \in \NN \, , \quad a_1 (\cdot,t+ 2n \pi,\cdot) \equiv \underline a 
(\cdot,t+  2n \pi,\cdot) \equiv \underline a (\cdot,t,\cdot) . 
 \end{equation}
 \end{ass}


 \subsubsection{The nonlinearity $ F_{\! N \! L} $}  \label{contentofFNL}  The coupling between ``{\it particles}" and 
 ``{\it waves}" could also be described by nonlinear source terms. This induces an additional mixing, and provides a further 
 complication. In the same vein as (\ref{ajoutFLNL}) or (\ref{eq:NLedp}), the expression $ F_{\! N \! L} $ is chosen
 as a polynomial function in $ \eps^{-1} $, $ \eps $, $ u $ and $ \bar u $.

\begin{ass}[Choice of the nonlinearity]\label{choiononlinso} Given $ J \in \NN $ with $ 2 \leq J \in \NN $ and $ K \in \NN $,
as well as complex numbers $ \lambda_{j_1j_2\nu} \in \CC $, we impose
\begin{equation}\label{contofFNLNN}
     F_{\! N \! L} (\eps,t,x,u) := \sum_{2 \leq j_1 + j_2 \leq J} \ \sum^K_{\nu=2-j_1-j_2 } \! \lambda_{j_1j_2\nu} \, F_{j_1 j_2 
     \nu}(\eps,t,x,u) ,
 \end{equation}
where $ (j_1,j_2,\nu) \in \NN^2 \times \ZZ $, whereas 
 \begin{equation}\label{contofFNLNNstep2}
     F_{j_1 j_2 \nu} (\eps,t,x,u)= \eps^\nu 
      e^{i \omega_{j_1j_2\nu} t / \eps } \chi\( 3-2\frac{\eps t}{\cT}\) \chi\(\frac{x}{r\eps^\iota}\)u^{j_1} \bar u^{j_2} ,
\end{equation}
for a frequency $ \omega_{j_1j_2\nu} \in \RR $ and parameters $ \cT $, $ r $ and $ \iota $ satisfying as before 
the conditions $ 0 < \cT $, $ 0 < r < \gamma / 2 < 1/8 $ and $ \iota \in [0,1] $.
\end{ass}

\noindent We could assume more generally $\nu 
\in \RR$ (taking finitely many values), provided  that the size of the nonlinear coupling is at most critical, in the 
sense that $ j_1+j_2+\nu\le 2 $. Since the critical case corresponds to the choice $j_1+j_2+\nu=2$, assuming that 
$\nu \in \ZZ$ is not a strong restriction.

\smallskip

\noindent At the level of (\ref{contofFNLNNstep2}), the nonlinearity undergoes extra time and space localizations. 
The reasons for doing so will become clear in Section~\ref{sec:nonlinear}. 
They are related to the nature of the information established in the linear case (Section~\ref{sec:lineffect}). Indeed, 
we will get a precise  pointwise description of the linear solution only for sufficiently large time ($t \gtrsim 1/\eps$), 
and only in a small neighborhood of the origin ($|x| \le r$). 

\smallskip

\noindent The amplitude $ \eps^{3/2} $ of the source term $F_L$ and the 
nonlinearity (\ref{contofFNLNN}) are inspired by Subsection \ref{subsec:toymodel}. They are adjusted so that 
nonlinear effects can actually be critical in the limit $\eps\to 0$ on the long time scale $ T \sim 1 $ under consideration.


\subsection{The notion of quasi-rectification}\label{mildrecti} The term {\it rectification} has been first introduced in 
\cite{MR1687154} in the context of nonlinear diffractive geometric optics where it means the creation 
of non-oscillatory waves from highly oscillatory sources. A distinction is made between hyperplanes which are in the 
characteristic variety (contained in the section $ \cV $) and curved sheets. 

\smallskip

\noindent For wave vectors belonging to flat parts inside $ \cV $, the interaction 
cannot be ignored, while for wave vectors on curved parts, it is negligible at leading order. In the subsequent articles 
\cite{zbMATH01820804,MR1657485}, these ideas are extended to dispersive equations and to situations of "almost 
rectification" (when the resonance comes from the tangent space to the characteristic variety).

\smallskip

\noindent In what follows, the expression {\it quasi-rectification} will be used in reference to the pioneering work \cite{MR1687154}. 
As a matter of fact, as detailed in Paragraph \ref{analogies}, our approach presents certain similarities with that of \cite{MR1687154}. But there 
are also significant differences that will be emphasized in Paragraph \ref{differences}. In order to avoid confusion, it is important to 
explain clearly what the situation is. In Paragraph \ref{undermecha}, we provide an overview of what quasi-rectification is. The last 
Paragraph \ref{backsummary} is aimed to summarize the discussion.


\subsubsection{Analogies}\label{analogies} As in \cite{MR1687154}, we study a nonlinear hyperbolic equation for long time scales at which 
nonlinear effects are present. As in \cite{MR1687154}, the characteristic variety is a mix of curved and flat features. In Figure \ref{graofg}
the red graph is curved while its magenta asymptote is flat. As in \cite{MR1687154}, amplification phenomena can occur (on small sets). 
Moreover, in the spirit of \cite{MR1657485}, we deal with a kind of almost (or quasi) rectification" (at infinity in our case). 
But here is where the comparisons stop.


\subsubsection{Differences}\label{differences} The first change involves (\ref{mhdeq}). In \cite{MR1687154}, the authors impose 
the condition $ A = 0 $, and they investigate {\it diffractive} effects. On the contrary, we work here with $ A \not = 0 $, 
and we consider {\it dispersive} effects. When $ A = 0 $ and $ d = 1 $, the characteristic variety is a union of lines. By contrast, 
when $ A \not = 0 $ and $ d = 1 $, the section $ \cV $ does contain curved parts.

\smallskip

\noindent Since $ N = 1 $ and $ d = 1 $, many aspects of \cite{MR1687154} are not present here. For instance, we will not discuss 
problems related to the interaction of different modes ($ N > 1 $) of propagation (the interaction between the $ u_n $). Nor are we 
going to manage the multidimensional ($ d > 1 $) spreading of waves. However, there will be many new difficulties to deal with.

\smallskip

\noindent There is another distinction. In the article \cite{MR1687154}, the oscillations of the source term come from the oscillations  
of the initial data after a selection process implying the properties of the differential operator $L (\eps  D_x) $. That is not the case 
here. As a matter of fact, at the level of (\ref{ondenonlin}), the Cauchy data are simply zero. 

\smallskip

\noindent Here, the oscillations are imposed from outside, as a part of the source term $ F $. They are issued from the concrete 
considerations exposed in Section \ref{sec:model}. They do not at all involve the differential operator $L (\eps  D_x) $. As a matter 
of fact, the phase $ \varphi $ has no link with $L (\eps  D_x) $. In particular, the function $ \varphi $ is not solution to the eikonal 
equation that is associated with $L (\eps  D_x) $.  

\smallskip

\noindent In \cite{MR1687154}, there is a clear dichotomy between the flat and curved sheets contained in the characteristic variety 
$ \cV $. In Figure \ref{graofg}, there is no such strict separation. 
Instead, in view of (\ref{hypp2}), the function $ p$ is concave for large values of $ \xi $. Hence, there is no flat part. But there is 
a progressive transition between a curved dispersion relation and its flat asymptote (for $ \xi $ large). While the rectification would refer, 
among other things, to the presence inside $ \cV $ of branches without curvature, the {\it quasi}-rectification exploits the property that 
the curvature of the section $ \cV $ asymptotically approaches zero. 


\subsubsection{Underlying mechanisms}\label{undermecha}
\noindent The eikonal equation associated to $ p $, the one which could be obtained at the first step of a WKB analysis, 
would give the value of $ \part_t \psi $ in terms of $ \part_x \psi $ through
\begin{equation} \label{disprelrel}
\part_t \psi (t,x) = p \bigl( \part_x \psi (t,x) \bigr) \quad \Longleftrightarrow \quad (\part_t \psi,\part_x \psi)(t,x) \in \cV .
\end{equation}
Given a generic position $ (t,x) $, the phase $ \varphi $ that is involved in the source term $ F $ will not satisfy (\ref{disprelrel}). 
But remark that the spatial derivative $ \part_x \varphi (t,x) = - t $ becomes large when $ t $ is growing, while the time derivative 
$ \part_t \varphi (t,x) = 1-x - \gamma \, \sin t $ remains close to $ 1 $ (at least for $ \vert x \vert \ll 1 $ and $ t \simeq 0 $ modulo 
$ \pi $). Taking into account (\ref{resobis}), it follows that $ p \bigl( \part_x \varphi (t,x) \bigr) = p(-t) = p(t) $ is not far from $ \part_t 
\varphi (t,x) $ when $ t = n \, \pi $ with $ n $ going to infinity. This is the type of resonance which has been illustrated in the introduction 
through (\ref{third-si-devtt}). This is the reason why the asymptotic direction $ \tau = 1  $ of $ \cV $ is physically so important. 

\smallskip

\noindent Now, let us compare the position of $ \nabla_{\! t,x} \varphi $ relative to $ \cV $ more precisely. When $ \varphi  $ is as 
in (\ref{pharetain}), the gradient of $ \varphi$ gives rise to a folded Lagrangian manifold $ \cG (\varphi) $ 
(see Figure 3 in \cite{editorial}), which is 
\begin{equation*} 
\quad \ \cG (\varphi) := \bigl \lbrace (t,x,\part_t \varphi ,\part_x \varphi) (t,x) = ( t,x,1-x - \gamma \, \sin t , - t ) \, ; \, (t,x) \in \RR 
\times \RR \bigr \rbrace . 
\end{equation*}
In general, the direction $ (\part_t \varphi,\part_x \varphi) (t,x) $ is away from $ \cV $. But, near $ x = 0 $ and for large values 
of $ t $, it will repeatedly cross the section $ \cV $ in the course of time. Given some small $ x $, denote by $ t_k \equiv t_k (x) $ 
with $ t_k < t_{k+1} $ and $ k \in \NN^* $ the successive intersection points. The $ t_k $'s form a countably infinite set. By this 
way, wave packets $ u_k $ may be generated. As in (\ref{third-si-dev}), they look like
\[
  u_k(t,x) = \sqrt\eps\ a_k(t,x) \ e^{i \,
    \psi_k(t,x)/\eps} + o \( \sqrt\eps \) , \qquad k
  \in \NN \, ,
  \]
with $ \psi_k$ subject to the eikonal equation (\ref{disprelrel}). 

\smallskip

\noindent The phase $ \psi_k $ and the profile $ a_k $ are determined by the {\it local} 
geometrical properties of $ \cV $ near the position $ (\part_t \varphi, \part_x \varphi) \bigl(t_k(x),x \bigr) \in \cV $. Since 
$ p'(\xi) $ with 
$ \xi = - t $ is very small for large values of $ t $, especially when $ t \sim \eps^{-1} $, the waves $ u_k  $ become almost stationary 
for large integers $ k $. Their group velocity is not zero, but it tends to zero. It follows that the emitted waves $ u_k $ with $  k $ large
can strongly interact and produce important local effects during long times $ t \sim \eps^{-1} $. 

\smallskip

\noindent Since the accumulation process of the $ u_k $'s is related to the asymptotic shape of the set $ \cV $ (for large values of 
$ \xi $) and of the set $ \cG (\varphi) $ (for large values of $ t $), what happens ultimately depends on the \emph{global} geometrical
properties of $ \cV $ and $ \cG (\varphi) $. As will be seen, amplification phenomena can occur, but not everywhere.

\smallskip

\noindent The creation, propagation, accumulation and nonlinear interaction at the level of the evolution equation (\ref{ondenonlin}) of 
almost standing waves generated near a resonance by highly oscillatory sources like (\ref{nonlinform}), with $ \varphi$ 
as in (\ref{pharetain}), is what is called here {\it quasi-rectification}. As alluded to above, the study of quasi-rectification requires to combine 
local and global geometrical features of $ \cG (\varphi) $ and $ \cV $. From a physics viewpoint, the notion of quasi-rectification is well adapted 
to describe the observed production of quasi-electrostatic waves in SMP \cite{chorus} or NMR \cite{MR2127862}, and to measure the relative 
impacts. 

\smallskip

\noindent Note that certain mechanisms which are involved show also similarities with what is observed about surface plasmons 
\cite{zbMATH06629302}.

\smallskip

\noindent The same applies for vortex filaments \cite{zbMATH06313197,MR4105743,zbMATH06383460} with Talbot effect. In
this case, specific spatial structures appear at special times, while in the present context, we will obtain specific spatial structures
which remain over long time intervals. In Theorems~\ref{theo:resumeNL} and \ref{theo:resumeNLbis}, the wave function $\cU$ is of size
$\cO(1)$ at integer points only, over large time intervals.
 
\smallskip

\noindent We would also like to cite the recent works of Y. Colin de Verdi\`ere and L. Saint-Raymond, see \cite{MR4054361} and 
references therein. The motivations (fluid mechanics/SMP and NMR), the mechanisms (periodic medium/oscillating phase), the
structures (PDEs involving variable/constant coefficients, flat/corrugated Lagrangian) and the tools (semiclassical/WKB methods) 
are distinct. But still, there are similarities and deep connections. The two viewpoints are complementary.


\subsubsection{Back to the physical models, and summary} \label{backsummary}

In SMP, the motion of charged particles generates an electric current $ {\bf J}_e $. In NMR, the precession of the magnetic 
moment $ \bf M $ creates a magnetization current $ {\bf J}_m $. These two sorts of currents oscillate according to the phase 
$ \varphi $, and they both appear as a source term inside equations of 
\href{https://en.wikipedia.org/wiki/Maxwell\%27s_equations}{Maxwell's} type. But since the plasmas (in SMP) and the human 
tissues (in NMR) are inhomogeneous media, dispersion phenomena occur. 

\smallskip

\noindent The characteristic variety 
$ \operatorname{Char} L := \bigl \lbrace \bigl( t,x, p(\xi) , \xi \bigr) \, ; \, (t,x,\xi) \in \RR^3 \bigr \rbrace \subset \RR^2 \times \RR^2 $
is more  complicated than hyperplanes such as $ \tau = p(\xi) = 1 $. In real situations, there are  dispersive effects which are
encoded in the variations of the symbol $ p  $. As we have seen in Subsection~\ref{pseudo-differential}, this happens generically. 

\smallskip

\noindent In the context of SMP, the dispersive effects can be specified in details. Indeed, the dielectric tensor of magnetized 
plasmas can be computed, both in the cold case \cite{CheFon} and in the hot case \cite{CheFon2}. In SMP, the pertinent function 
$ p  $ is available, and it is such that $ p' (\xi) \not \equiv 0 $. Less information exists concerning NMR, but the situation 
should be similar.

\smallskip

\noindent The reason why the model $ p \equiv 1 $ is so important is
the following. On the one hand, after normalization, the function $p$  
will converge to $ 1 $ when $ \vert \xi \vert $ goes to infinity; thus, for large values of $ \vert \xi \vert $, the {\it dispersion relation} $ \tau = p(\xi) $ 
mimics the choice $ p \equiv 1 $ of \eqref{third-si}. On the other
hand, from \eqref{pharetain}, we can deduce that the derivative
$ \part_t \varphi $  
remains close to $ 1 $ and that $ \part_x \varphi = -t $; thus, for large values of $ t $ and especially during long times $ t \sim \eps^{-1} $, the 
eikonal equation 
 is almost satisfied. Due to the periodic part inside $ \varphi $, it is in fact repeatedly verified. By this way, two-dimensional oscillating waves 
$ u_k $ may be emitted.

\smallskip

\noindent Vlasov and Bloch equations are completely distinct from Maxwell's equations. The two objects $ \varphi $ and $ p $ are issued 
from different physical laws. As a consequence, the phase $ \varphi $ has nothing to do with the symbol $ p $. But they can intersect 
incidentally, in the sense of $ \cG \cap \cV $. For the foregoing reasons, they can even cross again and again. Such a configuration is 
a facet of what can be a {\it resonance}.

\smallbreak

\noindent As explained in \cite{editorial}, the cyclotron resonances and the internal repeated emissions of electromagnetic signals are 
both important phenomena which are involved in all collisionless magnetized plasmas. Applied in the context of SMP, our work helps 
to better understand the underlying mechanisms which are known to  generate in coronas, magnetospheres and fusion devices some 
heating and some \href{http://fusionwiki.ciemat.es/wiki/Anomalous_transport}{anomalous transport}. It also sheds a new light on some 
aspects of NMR.


\section{Linear analysis}\label{sec:lineffect}

\noindent In this section, we look at the evolution equation (\ref{eq:main}) with a source term $ F $ not involving $ u $, that is when 
$ F \equiv F_{\! L} $ with $ F_{\! L}$ as in Paragraph~\ref{thecontentofFL} and $ \varphi $ as in (\ref{pharetain}). 
Since the problem is linear, we can study separately each term of the sum present in $F_{\! L}$.  Accordingly,
we consider here what happens when $ F $ reduces to 
\begin{equation} \label{followformFgeninimodem} 
F \( \eps, \eps \, t,t,x,\frac{\varphi (t,x)}{\eps},u \) = \eps^{3/2}
A_m ( \eps \, t,t,x,-i\eps\part_x)^* \ e^{i \, m \, \varphi(t,x) / \eps} .
\end{equation}
We denote by $u^m$ the solution to (\ref{eq:main}) issued from the choice (\ref{followformFgeninimodem}). As usual, it is referred as 
the {\it mode $ m $}.  The purpose is to study the subsequent oscillatory integral 
$ \cI  (\eps,t,x;m\varphi,a_m) $, which is given by (\ref{soloscoif}), with $\zeta=\zeta_m$.  In
Subsection~\ref{subsec:non-resoscint}, we examine the case of a non-resonant phase. The rest of Section~\ref{sec:lineffect} 
is devoted to the analysis of the resonant situation. This starts in Subsection~\ref{subsec:resoscint} with a presentation of the 
strategy and results. Then, Subsections~\ref{startregime}, \ref{transiregime} and \ref{longregime} give the details.
Given a phase $\psi\in \cC^\infty(\RR;\RR)$, consider a generalization of \eqref{soloscoif}--\eqref{phaseOIFgen} which is 
  \begin{equation} \label{soloscoif-gen}
\ \cI (\eps,t,x;\psi,\zeta,a) = \frac{\sqrt \eps}{2\pi} \ \int \left( \int_0^t \! \int e^{- i \, \Phi(t,x;s,y,\xi) /\eps} \zeta(\xi) \ a(\eps \, s,
  s,y) \ ds \,  dy \right)  d \xi, 
  \end{equation}
where  
  \begin{equation} \label{soloscoif-genpourphi}
\Phi (t,x;s,y,\xi) := (s - t ) \, p(\xi) + (x-y) \, \xi - \psi(s,y) .  
  \end{equation}
We assume that the phase $\psi$ is \emph{at most quadratic}, in the sense that (for all $n\ge 2$)
\begin{equation} \label{boubdedfctbyn} 
\sup_{2 \leq \vert \alpha \vert \leq n} \ \sup_{(t,x) \in \RR^2} \
\vert \part^\alpha_{t,x} \psi (t,x) \vert =: \mathfrak B^\psi_n <\infty .  
\end{equation}
Such a notion is quite standard (see e.g. \cite{Fujiwara,KiKu81}) in the construction of fundamental solutions which are 
issued from Schr\"odinger equations involving a potential that is at most quadratic in space. Then, Hamilton-Jacobi equations 
with phases which are at most quadratic in space come into play. Here, the right variable is $(t,x)$. In the case of $\varphi$, 
see \eqref{pharetain}, the phase is of the form {\it linear}+{\it quadratic}+{\it bounded}. The quadratic part corresponds to 
the factor $tx$, and the function $\varphi$ is indeed at most quadratic. The asymptotic behavior of $\cI$ when $ \eps $ goes to 
$ 0 $ depends heavily on the existence or not of stationary points when looking at the phase $ \Phi (t,x;\cdot) $ as a
function of the variables $ (s,y,\xi) $. In view of
(\ref{soloscoif-gen}),  
we have
\begin{subequations}\label{statonlysy} 
\begin{eqnarray} 
\qquad & &  \displaystyle \part_s \Phi (t,x;s,y,\xi) \equiv \part_s \Phi (s,y,\xi) := p(\xi) - \part_s \psi(s,y) \, , \label{statonlysy1} \\
\qquad & & \displaystyle \part_y \Phi (t,x;s,y,\xi) \equiv \part_y
           \Phi (s,y,\xi) := -\xi - \part_y \psi(s,y) \,
           . \label{statonlysy2}  
\end{eqnarray}
\end{subequations}


\subsection{Non-resonant oscillatory integrals}\label{subsec:non-resoscint}

Unlike the derivative $ \part_\xi \Phi (t,x;\cdot) $, the right hand sides of (\ref{statonlysy}) do not involve the parameter $ t $. 
This makes things easier in the perspective of nonstationary phase arguments (in $ s $ and $ y $) applied with $ t \sim \eps^{-1} $. 
For this reason, we introduce the following notion.

\begin{defi}[Non-resonant phase] \label{defnon-resonantphase} Let $ r \in ]0,\gamma/2[ $ be the number allowing at the level of 
(\ref{eq:source}) to control uniformly the spatial supports of the profiles $ A_m $ inside (\ref{eq:source}). Fix some domain $ \cN $ 
satisfying
\begin{equation} \label{posideN}  
    \cN \subset\{ (s,y,\xi)\in \, ]0,\infty[\times ]-r,r[ \times\RR\} .
\end{equation}
The phase $ \psi $ is said to be non-resonant on the domain $ \cN $ if there exists a positive constant $ \eta>0 $ such that
\begin{equation} \label{non-resonanty} 
\qquad \forall \, (s,y,\xi) \in \cN \, , \qquad  \vert p(\xi) - \part_s \psi (s,y) \vert + \vert \xi +\part_y \psi (s,y) \vert \ge \eta . 
\end{equation}
\end{defi}

\noindent As a subset of the characteristic variety $ \text {Char} (L) $, one can distinguish
\begin{equation*} 
\quad \ \text {Char} (L,\cN) := \bigl \lbrace \bigl( s,y,p(\xi),\xi \bigr) \,  ; \, (s,y,\xi) \in \cN \bigr \rbrace \subset 
T^*(\RR^2)  . 
\end{equation*}
As a subset of  the Lagrangian manifold $ \cG (\phi) \subset T^*(\RR^2) $, one can identify
\begin{equation*} 
\cG_r(\psi) := \bigl \lbrace ( s,y,\part_s \psi, \part_y \psi ) (s,y) \,  ; \, s \in ]0,+\infty[ \, , \, y \in \, ]-r,r[ \bigr \rbrace  .  
\end{equation*}
The geometrical interpretation of Definition \ref{defnon-resonantphase} is the following. The phase $ \psi $ is 
non-resonant on the domain $ \cN$ if and only if the two subsets $ \text {Char} (L,\cN) $ and $ \cG_r(\psi) $ of 
the cotangent bundle $ T^* (\RR^2) $ stay a positive distance $ \eta $ away from each other. We now address 
the various harmonics $\psi = m\varphi$, where $\varphi$ is given by \eqref{pharetain}. We will see that two 
values play a special role, namely $m=0$ and $m=1$.

\begin{lem}[Non-resonant harmonics $ m \, \varphi $] \label{lem-nonresonantharm} Take 
  \[
    \cN =\{ (s,y,\xi)\in ]0,\infty[\times ]-r,r[\times\RR\}.
    \]
 Let $ \varphi $ as in (\ref{pharetain}).
 For $ m \in \ZZ \setminus \{ 0,1 \} $, the phase $ m \varphi $ is non-resonant on $\cN $.
\end{lem}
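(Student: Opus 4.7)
The plan is to show that for these harmonics the first term inside \eqref{non-resonanty} alone already stays bounded away from zero, so that the second term is not even needed. This will exploit the bound $p \in [0,1]$ coming from Assumptions~\ref{resoa} and~\ref{normareso}, combined with the smallness constraints $0<\gamma<1/4$ and $0<r<\gamma/2$.

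First, I would compute the derivatives of the given phase: from \eqref{pharetain}, one gets $\partial_s\varphi(s,y)=1-y-\gamma\sin s$ and $\partial_y\varphi(s,y)=-s$. On the window $|y|<r<\gamma/2$, and using $|\gamma\sin s|\le \gamma$, the quantity $\partial_s\varphi(s,y)$ lies in the interval $[1-r-\gamma,1+r+\gamma]\subset[1-3\gamma/2,\,1+3\gamma/2]$. Because $\gamma<1/4$, this interval is contained in $(5/8,\,11/8)$; in particular, $\partial_s\varphi(s,y)$ stays in a compact subinterval of $(1/2,3/2)$ that is entirely positive and strictly bounded below by $5/8$.

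Next, I would control the position of $m\,\partial_s\varphi(s,y)$ relative to the range $[0,1]$ of $p$. Indeed, from Assumption~\ref{resoa}(a),(b) combined with $\lim_{\xi\to+\infty}p(\xi)=1$ (Assumption~\ref{normareso}) and the evenness of $p$, we have $0\le p(\xi)\le 1$ for every $\xi\in\RR$. Now I distinguish three cases. For $m\ge 2$ one has $m\,\partial_s\varphi(s,y)\ge 2\cdot 5/8=5/4$, whence $|p(\xi)-m\,\partial_s\varphi(s,y)|\ge 5/4-1=1/4$. For $m=-1$ one gets $-\partial_s\varphi(s,y)\le -5/8$ while $p(\xi)\ge 0$, so $|p(\xi)-m\,\partial_s\varphi(s,y)|=p(\xi)+\partial_s\varphi(s,y)\ge 5/8$. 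Finally, for $m\le -2$, the same argument gives $|p(\xi)-m\,\partial_s\varphi(s,y)|\ge 5/4$. In every case one obtains a uniform lower bound, so \eqref{non-resonanty} holds with, for instance, $\eta=1/4$ independent of the point $(s,y,\xi)\in\cN$.

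I do not anticipate a real obstacle here: the restrictions $\gamma<1/4$ and $r<\gamma/2$ built into Assumption~\ref{choiopro} were imposed precisely so that $m\,\partial_s\varphi$ cannot enter the image of $p$ except for $m\in\{0,1\}$. The only conceptual point to check is that the boundedness $0\le p\le 1$, rather than merely the asymptotic value $p\to 1$, is needed to handle all $\xi\in\RR$ (including low frequencies where $p$ vanishes); this is where Assumption~\ref{resoa}(a) together with monotonicity (b) and evenness (e) enter decisively.
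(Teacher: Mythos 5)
Your proof is correct and follows essentially the same route as the paper's: both arguments show that the term $\vert p(\xi)-m\,\partial_s\varphi(s,y)\vert$ alone is bounded below by a positive constant, using only $0\le p\le 1$ together with $r+\gamma<3/8$ (so the paper likewise obtains the threshold $\eta=1/4$ without ever invoking the second term $\vert\xi-m\,s\vert$). The only cosmetic difference is that you first localize $\partial_s\varphi$ in the interval $(5/8,11/8)$ and then case-split on $m$, whereas the paper applies the triangle inequality directly, but the estimates and the final constant are identical.
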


\begin{proof} In the case of $ \psi = m \varphi $, the condition (\ref{non-resonanty}) becomes
  \[
    \forall \, (s,y,\xi) \in \cN_m \, , \qquad 0 < \eta \leq \vert p(\xi) - m + m \, y + \gamma \, m \, \sin s \vert + \vert \xi - m \, s 
    \vert .
    \]
For $ m <0 $, the properties $p\ge0$, and  $r<\gamma/2$ from
Assumption~\ref{choiopro}, yield 
\begin{align*}
\vert p(\xi) - m + m \, y + \gamma \, m \, \sin s \vert & \ge \vert p(\xi) - m \vert - \vert m \, y + \gamma \, m \, \sin s \vert \\
\ & \ge \vert m \vert \ (1-r-\gamma) > 5/8. 
\end{align*}
For $ m \geq 2 $, the property $0\le p(\xi)\le 1$ yields
\[
  \vert p(\xi) - m + m \, y + \gamma \, m \, \sin s \vert \ge m \
  (1-r-\gamma) -1 > 1/4 .
  \]
It suffices to take $ \eta = 1/4 $ to get the result.
\end{proof}

\noindent The phase $ \psi \equiv 0 $, which corresponds to the choice $ m \varphi $ with $ m = 0 $, requires a different treatment,
which explains the hypothesis on $\zeta_0$ in Assumption~\ref{choiopro}.

\begin{lem}[The non-resonant zero-phase]  \label{ex-zerophase}  For $\delta>0$, set
  \[
    \cN_0^\delta =\{ (s,y,\xi)\in ]0,\infty[\times ]-r,r[ \times\RR,\quad |\xi|\ge \delta\}.
    \]
 Then, for all $\delta>0$, the phase $\psi\equiv 0$   is non-resonant
 on $\cN_0^\delta$. 
\end{lem}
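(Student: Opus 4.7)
The plan is to unwind Definition~\ref{defnon-resonantphase} with $\psi \equiv 0$ and observe that the required inequality collapses to a condition on $\xi$ alone, which is exactly the one built into the set $\cN_0^\delta$.

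More precisely, since $\psi \equiv 0$ identically on $\RR^2$, both partial derivatives $\partial_s \psi$ and $\partial_y \psi$ vanish everywhere. Thus for any $(s,y,\xi) \in \cN_0^\delta$ the left-hand side of \eqref{non-resonanty} reduces to
\begin{equation*}
|p(\xi) - \partial_s \psi(s,y)| + |\xi + \partial_y \psi(s,y)| = |p(\xi)| + |\xi| .
\end{equation*}
By Assumption~\ref{resoa}, the symbol $p$ takes nonnegative values on $\RR$, so in particular $|p(\xi)| + |\xi| \geq |\xi|$. On $\cN_0^\delta$ one has by definition $|\xi| \geq \delta$, which gives $|p(\xi)| + |\xi| \geq \delta$. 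Choosing $\eta := \delta > 0$ then establishes \eqref{non-resonanty} on $\cN_0^\delta$, which is exactly the non-resonance condition.

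There is no real obstacle here: the whole point of the cut-off $|\xi| \geq \delta$ in the statement is precisely to rule out the only place where the trivial phase could fail to satisfy \eqref{non-resonanty}, namely a neighborhood of $\xi = 0$. Indeed, at $\xi = 0$ one has $p(0) = 0$ by Assumption~\ref{resoa}(a), so both summands vanish simultaneously and no uniform lower bound $\eta > 0$ could hold there. This is consistent with the assumption in Assumption~\ref{choiopro} that $\zeta_0$ vanishes near the origin, which ensures that the integrand in \eqref{soloscoif-gen} for the harmonic $m = 0$ is actually supported in a set of the form $\cN_0^\delta$.
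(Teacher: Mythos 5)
Your proof is correct and follows essentially the same route as the paper: since $\psi \equiv 0$, the quantity in \eqref{non-resonanty} reduces to $|p(\xi)| + |\xi| \geq |\xi| \geq \delta$ on $\cN_0^\delta$, so $\eta = \delta$ works. Your closing remark on why $\delta > 0$ is needed also matches the paper's own observation following the lemma.
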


\begin{proof}  It suffices to remark that
\begin{equation*} 
 \forall \, (s,y,\xi)  \in \cN_0^\delta \, , \qquad 0 < \delta \le \vert \xi \vert \leq \vert p(\xi) \vert + \vert \xi \vert . 
\end{equation*}
This yields (\ref{non-resonanty}) in the case $ \psi \equiv 0 $.
\end{proof}
\begin{rem}
  The above lemma becomes wrong when $\delta=0$, even if $\xi_c=0$. Indeed, in view of Assumption~\ref{resoa}, the sum 
  $ \vert p(\xi) \vert + \vert \xi \vert $ vanishes at $\xi=0$. 
\end{rem}

\noindent Finally, the remaining case $m=1$ turns out to be the richest. It will be analyzed in details in Subsection~\ref{subsec:resoscint}.
  
\smallskip

\noindent The above notion of non-resonant phase is motivated by the following result.

\begin{prop}[Vanishing oscillatory integrals] \label{prop-vanoscint}
  Let $\psi\in \cC^\infty(\RR;\RR)$ be an at most quadratic phase in the sense of (\ref{boubdedfctbyn}). We also assume that $ \psi $ is 
  non-resonant on a domain $\cN$ satisfying (\ref{posideN}). Let $ \zeta  $ and $ a $ be two smooth functions which are 
  adjusted as in Assumption \ref{choiopro}. Then, for all time $ t \geq \eps^{-1} \, \cT $, the Fourier integral operator $ \cI (\eps,t,x;\psi,\zeta,a) $  
issued from \eqref{soloscoif-gen} is well-defined as an oscillatory integral. In addition, for all $ n \ge 2$, there exists a constant  
$ C_n >0 $ such that, for all $ t \le 2\cT/\eps $ and for all
$ x \in \RR $, we have 
\begin{equation*} 
\qquad \vert \cI (\eps,t,x;\psi,\zeta,a) \vert \leq C_n \ r \ \cT \
\eps^{n-\frac{1}{2}}\eta^{1-4n} \ (1 + \mathfrak B^\psi_n)^n 
\ \parallel a \parallel_{W^{n,\infty} ( \cN)} . 
\end{equation*}
\end{prop}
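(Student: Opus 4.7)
The plan is a standard non-stationary phase argument in the $(s,y)$ variables, exploiting that the non-resonance condition \eqref{non-resonanty} is precisely a lower bound on $|\partial_s\Phi|+|\partial_y\Phi|$, in view of \eqref{statonlysy}. Set $g(s,y,\xi):=|\partial_s\Phi|^2+|\partial_y\Phi|^2$. On $\cN$ we have $g\ge \eta^2/2$, so we may define the first-order differential operator
\begin{equation*}
L := \frac{1}{g}\bigl(\partial_s\Phi\;\partial_s + \partial_y\Phi\;\partial_y\bigr),\qquad L\bigl(e^{-i\Phi/\eps}\bigr)=-\frac{i}{\eps}e^{-i\Phi/\eps}.
\end{equation*}
Its formal transpose $L^t f = -\partial_s(g^{-1}\partial_s\Phi\,f) - \partial_y(g^{-1}\partial_y\Phi\,f)$ allows integration by parts: after $n$ iterations we obtain
\begin{equation*}
\cI(\eps,t,x;\psi,\zeta,a) = \frac{(i\eps)^n\sqrt{\eps}}{2\pi}\int\!\!\int_0^t\!\!\int e^{-i\Phi/\eps}\,(L^t)^n\bigl(\zeta(\xi)\,a(\eps s,s,y)\bigr)\,ds\,dy\,d\xi,
\end{equation*}
provided all boundary terms vanish. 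This last point is where Assumption~\ref{choiopro} enters: since $\mathrm{supp}\,a \subset ]-\infty,\cT]\times[1,+\infty[\,\times[-r,r]$, the amplitude and all its derivatives vanish at $s=0$, at $s=t$ (because $\eps t\ge\cT$), and at $y=\pm r$, so no boundary contributions arise.

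Next I would establish the pointwise bound on the iterated symbol $(L^t)^n(\zeta\cdot a)$. The basic estimates are
\begin{equation*}
\bigl|g^{-1}\partial_s\Phi\bigr|\le g^{-1/2},\qquad \bigl|g^{-1}\partial_y\Phi\bigr|\le g^{-1/2},
\end{equation*}
and since $\partial_s^2\Phi,\,\partial_s\partial_y\Phi,\,\partial_y^2\Phi$ are (up to signs) second derivatives of $\psi$, hence bounded by $\mathfrak B^\psi_2$, one gets $|\partial g|\lesssim \mathfrak B^\psi_2\sqrt{g}$. A straightforward induction on the order of differentiation, using \eqref{boubdedfctbyn} to absorb every occurrence of $\partial^\alpha\psi$ with $|\alpha|\ge 2$, gives
\begin{equation*}
\bigl|(L^t)^n(\zeta\,a)\bigr|(s,y,\xi)\;\lesssim\;\bigl(1+\mathfrak B^\psi_n\bigr)^n\,\eta^{-(2n-1)}\,\bigl(g(s,y,\xi)\bigr)^{-n/2}\,\|a\|_{W^{n,\infty}}.
\end{equation*}
Crucially, the explicit gain $g^{-n/2}$ also provides decay in $\xi$: for $|\xi|$ large, $|\partial_y\Phi|\ge |\xi|/2$, so $g^{-n/2}\lesssim |\xi|^{-n}$, while for bounded $\xi$ the non-resonance gives $g^{-n/2}\le (2/\eta^2)^{n/2}$. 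The resulting factor is integrable in $\xi$ as soon as $n\ge 2$, which is why this hypothesis appears in the statement.

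It remains to integrate. The $y$-integration is over an interval of length $2r$; the $s$-support of $a(\eps s,s,\cdot)$ forces $1\le s\le \cT/\eps$, contributing a factor of order $\cT/\eps$; and the $\xi$-integration contributes a finite constant depending on $\eta$ and $n$. Putting everything together,
\begin{equation*}
|\cI| \;\lesssim\; \eps^n\sqrt{\eps}\cdot r\cdot \frac{\cT}{\eps}\cdot \eta^{-(2n-1)}\,\eta^{-(2n-1)}\cdot (1+\mathfrak B^\psi_n)^n\,\|a\|_{W^{n,\infty}(\cN)},
\end{equation*}
where the second factor of $\eta^{-(2n-1)}$ comes from the bounded-$\xi$ part of the $\xi$-integration of $g^{-n/2}$, which accounts for the $\eta^{1-4n}$ in the proposition. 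The main obstacle I expect is the bookkeeping in the inductive estimate for $(L^t)^n$: one must verify that differentiating $g^{-1}$, $\partial_s\Phi$ and $\partial_y\Phi$ never produces factors growing in $s$ or $y$, which is exactly what the at-most-quadratic hypothesis \eqref{boubdedfctbyn} is designed to guarantee; first derivatives of $\psi$ may grow linearly, but they only appear \emph{inside} $\partial_s\Phi$ and $\partial_y\Phi$, and the division by $g$ always compensates them via the elementary bound $|\partial_{s,y}\Phi|\le g^{1/2}$.
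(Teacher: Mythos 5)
Your proposal follows essentially the paper's proof: $n$-fold integration by parts against the non-stationary vector field $L = g^{-1}(\partial_s\Phi\,\partial_s + \partial_y\Phi\,\partial_y)$ (the paper's operator is $Q=iL$), followed by a pointwise bound on the iterated transpose symbol exploiting the at-most-quadratic hypothesis, and then integration over $(s,y,\xi)$ using the two lower bounds $g\ge \eta^2/2$ and $g\ge|\xi+\partial_y\psi|^2$ to get $\xi$-integrability precisely when $n\ge 2$. Your $\eta$-bookkeeping is coarser than the paper's graded-ring analysis of $(Q^*)^n$ (and your two factors multiply to $\eta^{2-4n}$ rather than the stated $\eta^{1-4n}$), but since $\eta$ may always be taken $\le 1$ this only strengthens the conclusion, so the argument is sound and the same in substance.
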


\noindent As a corollary, we can assert that $ \cI = \cO (\eps^\infty) $.

\begin{proof} The notion of non-resonant phase is designed to apply nonstationary phase arguments with respect to the variables $ s $
and $ y $. But the details remain to be worked out. The main problems when dealing with (\ref{soloscoif-gen}) are due to the domain of 
integration which, knowing that $ t \sim \eps^{-1} $, is of size $ \eps^{-1} $, as well as to the quadratic behavior of $ \psi $.

\smallskip

\noindent As a consequence of (\ref{statonlysy}) and (\ref{non-resonanty}), we have $ (\part_s 
\Phi)^2 + (\part_y \Phi)^2 \not = 0 $ on the domain $ \cN $. Thus,  we can 
introduce
\[ Q (s,y;\part_s , \part_y) := \, i \( (\part_s \Phi)^2 + (\part_y \Phi)^2 \)^{-1} \ (\part_s \Phi \ \part_s + \part_y \Phi \ \part_y ) . \]
By construction, we have
\begin{equation} \label{IPPfirst} 
\eps  Q(s,y;\part_s , \part_y) \, e^{-i \, \Phi/\eps} = e^{-i \, \Phi/\eps}  . 
\end{equation}
Select some $ n \ge 1 $. After $ n $ integrations by parts using the identity (\ref{IPPfirst}), the expression $ \cI $ of (\ref{soloscoif-gen}) 
is transformed into
\begin{equation} \label{FIOpostIPP} 
\cI = (-1)^n \ \frac{\eps^{n+ \frac{1}{2}}}{2\pi} \ \int \left( \int \! \! \int e^{- i \, \Phi /\eps} \ (Q^*)^n \, \(\zeta(\xi)a(\eps s, s,y) 
\) \ ds \, dy \right)  d \xi  , 
\end{equation}
where $ Q^* $ is the adjoint operator of $ Q $. In order to compute $ Q^* $, first remark that
\begin{equation} \label{derisys}  
\part^2_{ss} \Phi = - \part^2_{ss} \psi \, , \qquad \part^2_{sy} \Phi = - \part^2_{sy} \psi \, , \qquad \part^2_{yy} \Phi = - \part^2_{yy} 
\psi . 
\end{equation}
Recall the standard conventions
\[
  \alpha = (\alpha_1,\alpha_2) \in \NN^2 \, , \quad \vert \alpha \vert := \alpha_1 + \alpha_2 \, , \quad X = (X_1,X_2) \, , \quad 
  X^\alpha = X_1^{\alpha_1} \, X_2^{\alpha_2} .
  \]
Introduce the rational functions
\[
  R^1_{1,0} (X) := \frac{P^1_{1,0} (X)}{X_1^2 + X_2^2} , \quad P^1_{1,0} (X) := X_1  , \quad R^1_{0,1} (X) := \frac{P^1_{0,1} 
    (X)}{X_1^2 + X_2^2}  , \quad P^1_{0,1} (X) := X_2  ,
  \]
as well as
\[
  R^1_{0,0} (X) := \frac{P^1_{0,0} (X)}{(X_1^2 + X_2^2)^2}  , \qquad P^1_{0,0} (X) := (-\part^2_{ss} \psi + \part^2_{yy} \psi ) \ 
  (X_1^2 - X_2^2) + \, 4 \ \part^2_{sy} \psi \ X_1 X_2  .
  \]
Using (\ref{derisys}) and the above definitions, we get
\[- i \ Q^* = R^1_{1,0} (\part_s \Phi,\part_y \Phi) \ \part_s + R^1_{0,1} (\part_s \Phi,\part_y \Phi) \ \part_y + R^1_{0,0} 
  (\part_s \Phi,\part_y \Phi) .
  \]
Since $ \psi$ is at most quadratic, the functions $ P^1_\star
(X_1,X_2) $ are in the polynomial ring $ \cC^\infty_b (\RR^2)
[X_1,X_2] $, 
whose elements of degree less than $ j $ take the following form
\[ P(X) = \sum_{\vert \alpha \vert \leq j} \, C_\alpha (s,y) \
  X^\alpha \, , \qquad C_\alpha \in \cC^\infty_b (\RR^2)  .
  \]
The derivative $ \part_\star $, with $ \part_\star = \part_s $ or $ \part_\star = \part_y $, can act on the coefficients of $ P $. Define
\[ \part_\star P(X) = \sum_{\vert \alpha \vert \leq j} \, (\part_\star C_\alpha) (s,y) \ X^\alpha \, , \qquad \part_\star C_\alpha \in \cC^\infty_b 
  (\RR^2) \, , \qquad \mathrm{deg} \, (\part_\star P) \leq j  .
  \]
The two polynomials $ P^1_{1,0} $ and $ P^1_{0,1}$ are of degree $ 1 $, whereas $ P^1_{0,0}  $ is of degree $ 2 $. All the 
coefficients of these polynomials are bounded by a constant multiplied by $ \mathfrak B^\psi_2 $, with $ \mathfrak B^\psi_2 $ 
as in (\ref{boubdedfctbyn}). Introduce the graded ring
\[
  \cR := \bigoplus_{j= 1}^\infty \, \cR_j \, , \qquad \cR_j := \left \{ R (X) = \frac{P(X)}{(X_1^2+X_2^2)^j} \, ; \, P \in \cC^\infty_b 
    [X_1,X_2] \, , \, \operatorname{deg} P \leq j \, \right \} .
  \]
We have $ R^1_{1,0} \in \cR_1 $ and $ R^1_{0,1} \in \cR_1 $. On the other hand, we find $ R^1_{0,0} \in \cR_2 $. Now, given 
$ R \in \cR_j $ as above, we can compute
\[ \begin{array} {rl}
\displaystyle \part_\star \left \lbrack \frac{P(\part_s \Phi,\part_y \Phi)}{( (\part_s \Phi)^2 + (\part_y \Phi)^2 )^j } \right \rbrack = \! \! \! 
& \displaystyle - \, \frac{( \part^2_{\star s} \psi  \, \part_{X_1} \! P + \part^2_{\star y} \psi \, \part_{X_2} \! P) \ ( \part_s \Phi^2 
+ \part_y \Phi^2 )}{((\part_s \Phi)^2 + (\part_y \Phi)^2 )^{j+1} } \\
\ & \displaystyle + \, 2 \ j \ \frac{ P \ (\part^2_{\star s} \psi
    \, \part_s \Phi + \part^2_{\star y} \psi \, \part_y \Phi
    )}{((\part_s \Phi)^2  
+ (\part_y \Phi)^2 )^{j+1}} + \frac{\part_\star P (\part_s \Phi,\part_y \Phi)}{( (\part_s \Phi)^2 + (\part_y \Phi)^2 )^j} .
   \end{array} 
  \]
This implies that $ \part_\star \cR_j \subset \cR_j \oplus \cR_{j+1} $. Now, a simple induction on $ n $ shows that
\begin{equation} \label{qstarn}  
 (-i)^n \ Q^* (s,y;\part_s , \part_y)^n = \, \sum_{\vert \alpha \vert \leq n} \, R^n_\alpha (\part_s \Phi,\part_y \Phi) \ 
\part^\alpha_{s,y} , 
\end{equation}
with
\begin{equation} \label{qstarncontent}  
\qquad \ R^n_\alpha (X) = \! \! \! \! \sum_{j= n}^{2 n-\vert \alpha\vert} \frac{P^{n,j} _\alpha (X)}{(X_1^2+X_2^2)^j} \, , 
\quad \operatorname{deg}  P^{n,j} _\alpha \leq j \, , \quad \ R^n_\alpha \in \! \! \! \bigoplus_{j= n}^{2 n-\vert \alpha\vert} \, \cR_j . 
\end{equation}
In view of (\ref{FIOpostIPP}), (\ref{qstarn}) and (\ref{qstarncontent}), to get a control on $ \cI $, we have to estimate terms which 
look like
\begin{equation*} 
\qquad \eps^{n+ \frac{1}{2}} \ \int \left( \int_0^{2\cT/\eps}  \! \! \int_{-r}^{+r} \, \frac{\vert P^{n,j} _\alpha (\part_s \Phi,\part_y \Phi) \vert}
{((\part_s \Phi)^2+(\part_y \Phi)^2)^{j/2}} \ \frac{\parallel a \parallel_{W^{n,\infty} (\cN )}}{((\part_s \Phi)^2+(\part_y \Phi)^2
)^{j/2}} \ ds \, dy \right)  d \xi .
\end{equation*}
From (\ref{non-resonanty}), we can deduce that 
\begin{equation} \label{estimingradphi} 
0 < \frac{1}{(\part_s \Phi)^2 + (\part_y \Phi)^2} \leq \inf \ \Bigl( \, \frac{2}{\eta^2} \, ; \, \frac{1}{\vert \xi + \part_y \psi (s,y) \vert^2} \Bigr) . 
\end{equation}
On the other hand, the coefficients of $ P^{n,j} _\alpha $ can be roughly controlled by a constant multiplied by $ (1 + \mathfrak 
B^\psi_n)^n $. Note that $ \part_s \Phi $ and $ \part_y \Phi $ are not necessarily bounded in $ \eps $ for $ s \sim \eps^{-1} $. 
Actually, the expression $ \part_y \Phi $ is not at all bounded in the case of $ \varphi $. This is why the information 
$  \operatorname{deg}  P^{n,j} _\alpha \leq j $ is important. Exploiting (\ref{estimingradphi}), we can find a constant $ C^{n,j} $ such that
\[ \forall \, (s,y,\xi) \in \cN\, , \qquad \frac{\vert P^{n,j} _\alpha (\part_s \Phi,\part_y \Phi) \vert} {((\part_s \Phi)^2 
+ (\part_y \Phi)^2 )^{j/2}} \leq \frac{C^{n,j}}{\eta^j} \ (1 + \mathfrak B^\psi_n)^n .
\]
Now, the idea is to use the right hand side of (\ref{estimingradphi}) and the condition $ 2 \leq n \leq j $ inside (\ref{qstarncontent}) 
to recover some integrability in $ \xi $. As a matter of fact, we have
\begin{align*} 
\displaystyle \int \left( \int_0^{2\cT/\eps} \! \! \! \! \int_{-r}^{+r} \! \! \! \right. & \left. \frac{ds \, dy}{((\part_s \Phi)^2+(\part_y 
\Phi)^2)^{j/2}} \right) d \xi \\
\ &  \leq \int_0^{2\cT/\eps}  \! \! \int_{-r}^{+r} \left( \int \inf \ \Bigl( \, \frac{2^{\frac{j}{2}}}{\eta^{j}} \, ; \, \frac{1}{\vert \xi + 
\part_y \psi (s,y) \vert^j} \Bigr) \ d \xi \right) \, ds \, dy \\
\ & \leq \frac{4 \ r \ \cT}{\eps \ \eta^{j-1}} \, \int \inf \ \Bigl( \, 2^{\frac{j}{2}} \, ; \, \frac{1}{\vert \xi \vert^j} \Bigr) \ d \xi < + \infty . 
  \end{align*}
Combining all the above information, we get the expected result.
\end{proof}

\smallskip

\noindent \underline{Preliminar}y \underline{conclusion}. In view of Lemma~\ref{lem-nonresonantharm} together with 
Proposition~\ref{prop-vanoscint}, for all $m\in \ZZ\setminus\{0,1\}$, we get $u^m = \cO (\eps^\infty) $. In the case $m=0$, 
the property $\xi_0>0$ in Assumption~\ref{choiopro},  Lemma~\ref{ex-zerophase} (with $\delta = \xi_0$) as well as  
Proposition~\ref{prop-vanoscint} also yield $u^0 = \cO (\eps^\infty) $. This means that {\bf all modes $ u^m $ with $ m \not = 1 $ are negligible}. 

\smallskip

\noindent The harmonic $ m = 1 $ is the only choice which may give rise to interesting phenomena from the viewpoint  
of quasi-rectification. This is Fact \ref{fact1} in the PDE context (\ref{third-si}). With this in mind, in the rest of this
section, we focus on the case $ m = 1 $. We work with $ \psi \equiv \varphi $ and $ a \equiv a^1 $. Unless otherwise
specified, for $ m = 1 $, we will simply use the notations $ a \equiv a^1 $  and $ u \equiv u^1 $. Thus, in the next subsection 
\ref{subsec:resoscint}, the function $ u $ represents the solution to 
\begin{equation} \label{eq:mainajoutlinpourm}
  \part_t u -\frac{i}{\eps}p\(\eps D_x\) u + \eps^{3/2} A_1^* \ e^{i \, \varphi(t,x) / \eps} =0,
  \quad u_{\mid t=0}=0 .
\end{equation}


\subsection{Resonant oscillatory integrals}\label{subsec:resoscint} The analysis of the expression $ \cI $ defined by 
(\ref{soloscoif-gen})-(\ref{soloscoif-genpourphi}) with $ \psi = \varphi $ relies quite heavily on the explicit formula (\ref{pharetain}). The 
aim here is to give an overview of next Subsections~\ref{startregime}, \ref{transiregime} and \ref{longregime}, where precise 
results will be established. The reader who is not interested in the details of proofs can read this Subsection~\ref{subsec:resoscint}, 
and then go directly to Section \ref{sec:nonlinear}.
In Paragraph~\ref{sec:basicmechanisms}, we clarify what is new in comparison with the phenomena which have 
been exhibited in the introduction. In Paragraph~\ref{sec:decomposition}, we introduce a well-adapted partition of the long 
time interval $ [0,\cT /\eps ] $ with $ \cT >0 $. Then, in Paragraph~\ref{sec:vs}, this leads to a distinction between 
a dispersive and some almost stationary regime. What happens in these two regimes is described in Paragraphs 
\ref{subsubsec:dispreg} and \ref{subsubsec:stareg}, successively.


\subsubsection{Basic mechanisms} \label{sec:basicmechanisms} 

\noindent The toy model presented in Subsection~\ref{subsec:toymodel} explains how wave packets can be produced 
over large time. It shows that the creation of wave packets is basically due to the combination of two factors: the first is the 
presence of a resonance; the second is the introduction as a source term of well-adjusted oscillations. In the toy model of 
the introduction, the discussion involves basic choices of $L $ ($ p\equiv  1 $) and $ \varphi $ (which does not depend on $ x $). 
Now, we want to better understand what happens under the more realistic geometrical conditions, which are 
Assumptions~\ref{resoa} and~\ref{choiopro}, together with formulas \eqref{pharetain} and \eqref{eq:source}.

\smallskip

\noindent When the symbol $ p$ is not constant, the emitted signals do propagate spatially. The form of the wave packets 
is determined by the local geometrical characteristics of $ \cV $ at the intersection points between $ \text {Char} (L) $ and 
$ \cG(\varphi) $. But the way in which these wave packets propagate and can accumulate over time depends on the 
asymptotic properties of $ \cV $ when $ \vert \xi \vert $ goes to $ + \infty $ and of $ \cG(\varphi) $ when $ t $ goes to 
$ + \infty $. 

\smallskip

\noindent It follows that the whole picture is a combination of {\it local} and {\it global} geometrical features of $ \text {Char}
(L) $ and $ \cG(\varphi) $. The resulting effects are constructive and destructive interferences. They appear in the 
absence of nonlinearity. 


\subsubsection{Decomposition into wave packets} \label{sec:decomposition} 

\noindent The function $ \Phi $ of (\ref{phaseOIFgen}) with $ m = 1 $ - or equivalently the function $ \Phi $ of (\ref{soloscoif-genpourphi}) 
with $ \psi = \varphi $ - can be separated into two parts, according to $ \Phi = \phi + \gamma - t $. Since the symbol $ p$ 
is even, we find
\begin{equation*} 
\phi(t,x;s,y,\xi) := (s - t ) \, \bigl \lbrack p(\xi) -1 \bigr \rbrack + (x-y) \, \xi + y \, s - \gamma \, \cos s . 
\end{equation*}
Accordingly, another way to formulate (\ref{soloscoif-gen}) is to write
\begin{equation} \label{solosc} 
\qquad \cI := \frac{\sqrt \eps}{2\pi} \ e^{i (t-\gamma)/\eps} \, \int \left( \int_0^t \! \int e^{- i \, \phi(t,x;s,y,\xi) /\eps} \zeta(\xi)
a(\eps \, s, s,y) \ ds \, dy \right)  d \xi .
\end{equation}
From now on, it is implicitly assumed that $ t $ (or $ T=\eps t$) runs over a finite period of long times, in coherence with (\ref{suppdea}).
More precisely, we wait until the action of the perturbation through $ a  $ is completed, that is
\begin{equation} \label{adjusttT} 
\cT / \eps \leq t \leq 2 \, \cT / \eps  ,\quad \text{or equivalently}\quad\cT \leq T \leq 2 \, \cT . 
\end{equation}
Given $ t_0 >0$, with $ \chi $ as in (\ref{mollichi}), define
\begin{equation} \label{defchivar} 
\forall \, s \in \RR \, , \qquad \chi_{t_0} (s) := \chi (s / t_0 ) . 
\end{equation}
In addition, we can tune $\chi$ so that it generates a partition of unity with
\begin{equation} \label{partibuiltchino}  
\sum_{k \in \ZZ} \, \chi_{2 \pi/3} (x-k\pi) = 1 . 
\end{equation}
We introduce some truncation near the diagonal $ s = \xi $, namely
\begin{equation} \label{defvrv} 
v(t,x) := \int_0^t \! \int \! \! \int e^{- i \, \phi(t,x;s,y,\xi)  /\eps} \zeta(\xi) a (\eps \, s , s, y) \ \chi_{1/4} (s-\xi) \ ds \, dy \, d \xi .  
\end{equation}
The above expression $ v(t,x) $ is given by the integral in $ (s,y,\xi) $ on the compact domain $ [0,t] \times [-r,r] \times 
[- 1/4, 1/4+t] $ of functions depending smoothly on $ (\eps, t,x) $. It is therefore a well-defined smooth function of 
$ (\eps, t,x) $. This choice is motivated by the following lemma.
\begin{lem} \label{lem-nonresonantoutof} For all $\delta>0$, the phase $\varphi$ given by \eqref{pharetain}  is 
non-resonant on the set 
  \[
    \cN_1^\delta =\{ (s,y,\xi) \in \, ]0,\infty[\times \RR\times\RR,\quad | \xi- s |\ge \delta\} .
    \]
\end{lem}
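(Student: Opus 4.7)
The plan is immediate once we compute the gradient of $\varphi$. From the explicit formula \eqref{pharetain}, namely $\varphi(s,y) = s - y\,s + \gamma(\cos s - 1)$, we read off
\begin{equation*}
\partial_s \varphi(s,y) = 1 - y - \gamma \sin s, \qquad \partial_y \varphi(s,y) = -s .
\end{equation*}
The crucial observation is that $\partial_y \varphi$ depends only on $s$ (not on $y$), and in fact $\xi + \partial_y \varphi(s,y) = \xi - s$.

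Given any $(s,y,\xi) \in \cN_1^\delta$, the definition of $\cN_1^\delta$ provides $|\xi - s| \ge \delta$. Therefore
\begin{equation*}
|p(\xi) - \partial_s \varphi(s,y)| + |\xi + \partial_y \varphi(s,y)| \ \ge\ |\xi + \partial_y \varphi(s,y)| \ =\ |\xi - s| \ \ge\ \delta ,
\end{equation*}
which is exactly \eqref{non-resonanty} with the choice $\eta := \delta > 0$. Hence $\varphi$ is non-resonant on $\cN_1^\delta$, as claimed.

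No obstacle is expected: the statement is a one-line verification, and its purpose is structural. Together with Lemma~\ref{lem-nonresonantharm} (which disposed of the harmonics $m \neq 0,1$) and Lemma~\ref{ex-zerophase} (which handled $m = 0$ away from $\xi = 0$), this lemma confines the possible resonant region for the remaining harmonic $m = 1$ to an arbitrarily thin neighborhood of the diagonal $\{s = \xi\}$. That is precisely the geometric content already hinted at by the truncation $\chi_{1/4}(s-\xi)$ introduced in \eqref{defvrv}, and it motivates applying Proposition~\ref{prop-vanoscint} on the complementary domain where $|\xi - s|$ is bounded below.
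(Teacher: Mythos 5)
Your proof is correct and follows exactly the paper's argument: both drop the first term of the sum in \eqref{non-resonanty}, observe that $\xi + \partial_y\varphi(s,y) = \xi - s$, and conclude from the defining inequality $|\xi - s| \ge \delta$ on $\cN_1^\delta$ that $\eta = \delta$ works. The closing remark correctly identifies the lemma's role in motivating the cutoff $\chi_{1/4}(s-\xi)$.
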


\begin{proof} Requiring $ \varphi $ being non-resonant  on $ \cN_1^\delta $ amounts to finding $ \eta > 0 $ such that
\begin{equation*} 
\qquad \forall \, (s,y,\xi) \in \cN_1^\delta\, , \qquad 0 < \eta\leq
\vert p( \xi) - 1 + y + \gamma \, \sin s \vert + \vert \xi -  s \vert . 
\end{equation*}
Now, by definition, we have $ \delta\le  \vert \xi -  s \vert $ for all $ (s,y,\xi) \in \cN_1^\delta$. Thus, the choice $ \eta = \delta $ is suitable.
\end{proof}

\noindent In view of Lemma~\ref{lem-nonresonantoutof}  with $\delta=1/4$, Proposition~\ref{prop-vanoscint} yields
\begin{equation}\label{remindudevenk}
 u(t,x) = \frac{\sqrt \eps}{2\pi} \ e^{i \, (t-\gamma) / \eps} \ v (t,x) + \cO\(\eps^\infty\),\quad x\in \RR,\quad \frac{\cT}{\eps} \le t\le \frac{2\cT}{\eps}. 
\end{equation}
There remains to analyze $ v $. Using again \eqref{suppdea} with $ m= 1 $ and $ a \equiv a_1 $,
we can replace (\ref{defvrv}) by an integral on the whole domain $ \RR^3 $, which is
\begin{equation} \label{defvrvkensum} 
v(t,x) = \iiint e^{- i \, \phi(t,x;s,y,\xi) /\eps} \zeta(\xi) a (\eps \, s, s, y) \ \chi_{1/4} (s-\xi) \ ds \, dy \, d \xi .  
\end{equation}
Using (\ref{adjusttT}) and (\ref{partibuiltchino}), we can write
\begin{equation} \label{sizeofk} 
\ v(t,x) = \sum_{k \in \cK} \, v_k (t,x) \, , \qquad \cK := \Bigl \lbrace \, k \in \NN \, ; \, k \leq \frac{2}{3} + \frac{\cT}{\pi \, \eps} \, \Bigr \rbrace ,
\end{equation}
\noindent where, for $ k \in \cK $, we have introduced the signal
\begin{equation} \label{defvrvk} 
\begin{aligned}
 v_k (t,x) := \iiint &  e^{- i \, \phi(t,x;s,y,\xi) /\eps}     \zeta(\xi) a (\eps \, s , s, y) \\ 
 &  \qquad \times \, \chi_{1/4} (s-\xi) \ \chi_{2 \pi/3} (s-k\pi) \ ds \, dy \, d \xi  .
\end{aligned}
\end{equation}
Fix $ k \in \ZZ $, and change $ s $ into $ s - k \, \pi $ and $ \xi $ into $ \xi - k \, \pi $ in order to obtain
\begin{equation} \label{combi1} 
v_k (t,x) = e^{- i \, ( t - k \, \pi + k \, \pi  \, x ) / \eps} \ w_k (t,x) ,
\end{equation}
with
\begin{equation} \label{defidewk} 
\begin{aligned}
 w_k (t,x) := \iiint  &  e^{- i \, \Phi_k (t,x;s,y,\xi) /\eps} \ a_k (\eps, s,y,\xi) \\ 
 & \ \qquad \times \, \chi_{1/4} (s-\xi) \ \chi_{2\pi/3} (s) \ ds \, dy \, d \xi , 
\end{aligned}
 \end{equation}
and where
\begin{subequations}\label{defphikini} 
\begin{eqnarray} 
  \qquad & &  \displaystyle a_k (\eps, s,y,\xi) := \zeta(k\pi+\xi)
             a (\eps \, k \, \pi + \eps \, s , k \, \pi + s,y) , \label{defphikini1} \\
\qquad & & \displaystyle \Phi_k (t,x;s,y,\xi) := (k \, \pi -t) \ p (k \, \pi + \xi) + s \ \bigl \lbrack p (k \, \pi + \xi) - 1 \bigr \rbrack \qquad \label{defphikini2} \\
\qquad & & \qquad \qquad \qquad \qquad + (x-y) \, \xi + s \, y - 
(-1)^k \ \gamma \, \cos s . \nonumber
\end{eqnarray}
\end{subequations}

\noindent In view of (\ref{sizeofk}), the expression $ v$ is the sum of the wave packets $ v_k$. Now, the advantage 
when working with $ v_k$ (or $ w_k $) is that the domain of integration in $ (s, y,\xi) $ is compact and independent of
$ \eps $, $ k $ and $ t $. As a counterpart, at the level of line (\ref{defidewk}), the phase $ \Phi_k $ and the profile $ a_k $ 
involve $ \eps $, $ k $ and $ t $ as parameters. 

\smallskip 

\noindent The expression $ F_{\! L} $ of \eqref{eq:source} looks like the one of (\ref{ajoutFLNL}). By analogy with 
(\ref{third-expliu})-(\ref{third-si-devtt}), the expected amplitude of $ u $ is $ \cO (\eps ) $. In comparison to this 
reference threshold, all terms of smaller size $ o(\eps) $ will be considered negligible.

 
\subsubsection{Dispersive \emph{vs.} almost stationary regime} \label{sec:vs}

The asymptotic behavior of $ w_k  $ when $ \eps $ goes to $ 0 $ depends heavily on the size of $ k $. In view  of 
\eqref{mollichi} and \eqref{suppdea}, it suffices to deal with integers $ k $ such that $ k \in \cK $, with $ \cK $ as 
in (\ref{sizeofk}). Let $ q $ be the integer of Assumption \ref{resoa}. Given some $ c >0 $, we can decompose 
$ \cK $ into two separate parts $ \cK^c_d $ and $ \cK^c_ s $ with
\begin{subequations}\label{defdispki} 
\begin{eqnarray} 
\qquad & &  \displaystyle \cK^c_d \equiv \cK^c_d \Bigl( \frac{1}{q+1} \Bigr) := \Bigl \lbrace \, k \in \NN \, ; \, k \leq 
\frac{c}{\eps^{1/(q+1)}} \, \Bigr \rbrace , \label{defdispki1} \\
\qquad & & \displaystyle \cK^c_s \equiv \cK^c_s \Bigl( \frac{1}{q+1} \Bigr) := \Bigl \lbrace \, k \in \NN \, ; \, \frac{c}
{\eps^{1/(q+1)}} < k \leq \frac{2}{3} + \frac{\cT}{\pi \, \eps} \, \Bigr \rbrace . \label{defdispk2} 
\end{eqnarray}
\end{subequations}
In (\ref{defdispki}), the symbol $ \cK $ has exponent $ c $ and subscripts $ d $ or $ s $. The exponent $ c $ is 
aimed to specify the choice of the constant $ c $, while the subscripts $ d $ or $ s $ are used to refer respectively 
to the words {\it \underline{d}ispersion} and {\it almost \underline{s}tationary}.
\begin{itemize}
\item In Subsection \ref{startregime}, by adjusting $ c >0 $ small enough, we can ensure some strong 
\underline{d}ispersion of the wave packets $ w_k$ for values of $ k $ in $ \cK^c_d $ and for $t$ of order 
$1/\eps$. The contribution of these wave packets is negligible. They mainly emerge from the ball $ \vert x \vert < r $, 
with $ r $ as in Assumption \ref{choiopro}.
\item In Subsection~\ref{transiregime}, we consider the case $ k \in \cK^c_s $, for which signals may be emitted 
at a higher order. Such signals  are almost \underline{s}tationary, and therefore they can be detected 
during long times $ t \sim \eps^{-1} $ inside the ball $ \vert x \vert < r $. In Subsection \ref{longregime}, we study 
the local accumulation (near the position $ x = 0 $) of these wave packets.
\end{itemize}


\subsubsection{Dispersive regime ($ k \in \cK^c_d$)} \label{subsubsec:dispreg} Using a non-stationary phase argument in $\xi$, 
we will establish in Lemma~\ref{localizations} that for any given choice of $R>0$, we can find a constant $c>0$ such that
\begin{equation*}
  \sup \ \Bigl \lbrace \, \vert w_k (t,x) \vert \, ; \ 0 \leq k \leq \frac{c}{\eps^{1/(q+1)}} \ , \ \frac{\cT}{\eps} \leq t \leq \frac{2
    \, \cT}{\eps} \ , \ \vert x \vert \leq R \, \Bigr \rbrace  = \cO \(\eps^{D-1}\) , 
\end{equation*}
where $D \geq 2 $ is the integer appearing in Assumption~\ref{strengthenp}. Therefore, since $ q \geq 2 $, the terms $ w_k$
with $ k \in \cK^c_d $ enter into the composition of $ u$ through a contribution which can be estimated according to 
\begin{equation}\label{dispregesti}
\frac{\sqrt \eps}{2 \, \pi} \ \sum_{0 \leq k\le c \, \eps^{-1/(q+1)}}|w_k(t,x)| \, \lesssim \, \eps^{\frac{1}{2} + D-1-\frac{1}{q+1}} \, 
\lesssim \, \eps^{7/6} \ll \eps . 
\end{equation}


\subsubsection{Stationary regime ($ k \in \cK^c_s $)} \label{subsubsec:stareg}

\noindent The absence of dispersion may be revealed through the existence of critical points when looking at the phase $ \Phi_k $.
Accordingly, for $k\in \cK^c_s $, the asymptotic behavior of $w_k$ when $ \eps $ goes to $ 0 $ will be analyzed by stationary phase arguments: 
\begin{itemize}
\item For all $k\in \cK^c_s $, the phase $ \Phi_k$ involved in the
  definition of $w_k$ in \eqref{defidewk} has at most one critical point
  in the domain of integration (Lemma~\ref{statioponi}). 
\item For all $k\in \cK^c_s $, the above mentioned possible critical
  point is necessarily non-degenerate (Lemma~\ref{statioponidege}). 
\item Possible values of $ k $ such that $ c \leq \eps^{\frac{1}{q+1}}
  \, k < c_1 $ with $ c_1 $ large enough belong to a transition zone,
  treated as a black box. For all 
$ k\in \cK^{c_1}_s $, the phase $\Phi_k$ has indeed a unique critical point (Lemma~\ref{statioponialways}). 
  \item When $ D \geq 3 $, the solution $ u $ to (\ref{eq:mainajoutlinpourm}) can
    be viewed modulo $ \cO \(\eps^{5/3}\)$ as a sum of
    wave packets $ u_k$ with $ k \in \cK^{c_1}_s $ (Lemma
    \ref{decomposol}).
\end{itemize}


\subsubsection{Accumulation of the wave packets} \label{sec:accumwavepack}

\noindent At this stage, Assumption~\ref{persourcea} (about the property of periodicity for large times) on the amplitude $ a$ is needed. Then, a clear 
asymptotic dichotomy occurs regarding the order of magnitude of $ u $. According to the
position in time and space, different orders of magnitude  
are possible for the wave function $u$, with a precise expression when
$u$ reaches its maximal order of magnitude.  

\begin{itemize}
\item Constructive interference
  (Proposition~\ref{supnormamplifi}). For $x= 2j\eps$ with $j\in \ZZ$
  --  as forecast by (\ref{third-si-devtt})
with $ u^1 $ multiplied by $ \eps^{3/2} $ -- the solution $ u$ to (\ref{eq:mainajoutlinpourm}) is of
order $\eps$ exactly, see (\ref{increasenc}) and
(\ref{increasencbis}). 

\smallskip

\item Destructive interference (Proposition~\ref{supnormnonamplifi2}). On the contrary, for $x=\alpha \eps$ 
with $\alpha\not \in 2\ZZ$, we find that the amplitude of $ u $ is $
o(\eps) $, see (\ref{destructiveinter}). 
\end{itemize}


\subsection{The dispersive regime}\label{startregime}

\noindent This is when $ k \in \cK^c_d $ with $ \cK^c_d $ defined as in (\ref{defdispki1}). In the sum (\ref{defvrvkensum}), 
the associated waves $ v_k $ or $ w_k $ are emitted by the source term during relatively small times $ s $. Thus, for $ t $ 
large enough, that is for $ t $ as indicated at the level of (\ref{adjusttT}), these waves have enough time to (partially) disperse 
away from any ball $ \vert x \vert < R $. 

\smallskip

\noindent It is this idea that is developed and quantified in Lemma \ref{localizations} below. The notations have been set up 
in Subsection \ref{subsec:resoscint}. In particular, the wave packets $ v_k $ and $ w_k $ are given by (\ref{defvrvk}) and
(\ref{defidewk}), respectively.

\begin{lem} [Dispersion of waves when $ k \in \cK^c_d  $] \label{localizations} Let $ w_k $ be defined as in (\ref{defidewk}). 
With $ D $ as in Assumption \ref{strengthenp}, and for $R>0$ fixed, there exists a constant $ c >0$ (depending on $ R $) 
such that
\begin{equation} \label{remainderrv} 
\sup \left\lbrace \vert w_k (t,x) \vert  ; \ 0 \leq k \leq \frac{c}{\eps^{1/(q+1)}}  , \ \frac{\cT}{\eps} \leq t \leq \frac{2 
\, \cT}{\eps} , \ \vert x \vert \leq R \right\rbrace  = \cO \(\eps^{D-1}\) . 
\end{equation} 
\end{lem}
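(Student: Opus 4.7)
The plan is to apply $D$-fold integration by parts in the $\xi$-variable, exploiting a uniform lower bound on $|\partial_\xi \Phi_k|$ that holds precisely when $k$ is small in the sense of \eqref{defdispki1}. First I would compute
\begin{equation*}
\partial_\xi \Phi_k = -(t-k\pi-s)\,p'(k\pi+\xi) + (x-y), \qquad \partial_\xi^n \Phi_k = -(t-k\pi-s)\,p^{(n)}(k\pi+\xi) \text{ for } n\ge 2,
\end{equation*}
and observe that on the support of the integrand one has $|s|\le 2\pi/3$, $|\xi-s|\le 1/4$, and $|y|\le r$, while the window $t\in[\cT/\eps,2\cT/\eps]$ combined with $k\le c\,\eps^{-1/(q+1)}$ forces $t-k\pi-s\ge \cT/(2\eps)$ once $\eps$ is small.

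For $k\ge 1$ I would use $k\pi+\xi>\pi-1>\xi_c$ together with \eqref{hypp3ded} (plus continuity and strict positivity of $p'$ on compact sets away from $[-\xi_c,\xi_c]$ for the finitely many small values of $k$) to produce $C_p>0$ with $p'(k\pi+\xi)\ge C_p\,k^{-(q+1)}$. Combining,
\begin{equation*}
(t-k\pi-s)\,p'(k\pi+\xi)\ge \frac{\cT\,C_p}{2\,\eps\, k^{q+1}}\ge \frac{\cT\,C_p}{2\,c^{q+1}},
\end{equation*}
so choosing $c>0$ small enough (depending on $R$) makes the right-hand side exceed $2(R+r)$, whence $|\partial_\xi \Phi_k|\ge \tfrac12(t-k\pi-s)\,p'(k\pi+\xi)\ge 1$ uniformly. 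Assumption~\ref{strengthenp} then yields $|\partial_\xi^n \Phi_k|\le C_n(t-k\pi-s)\,p'(k\pi+\xi)\le 2C_n|\partial_\xi\Phi_k|$ for $2\le n\le D$, so the ratios $\partial_\xi^n\Phi_k/\partial_\xi\Phi_k$ stay bounded. After $D$ applications of $L=\tfrac{i\eps}{\partial_\xi\Phi_k}\partial_\xi$ (the cutoff $\chi_{1/4}(s-\xi)$ kills the boundary terms, and the derivatives of $a_k$, which depend on $\zeta$ and $a\in\cC^\infty_b$, are uniformly bounded in $k$ and $\eps$) this gives $|w_k(t,x)|\le C\eps^D$ uniformly in $k\ge 1$.

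The degenerate case $k=0$ is the main obstacle, since $p'$ vanishes on $[-\xi_c,\xi_c]$ and the $\xi$-argument collapses near $|\xi|=\xi_c$. I would resolve this by introducing a smooth partition $1=\chi_\flat(\xi)+\chi_\sharp(\xi)$ with $\operatorname{supp}\chi_\flat\subset\{|\xi|\le\xi_c+\delta_0\}$, choosing $\delta_0>0$ so small that $p(\xi_c+\delta_0)\le 1/8$. On $\operatorname{supp}\chi_\flat$ the identity
\begin{equation*}
\partial_s\Phi_0 = p(\xi)-1+y+\gamma\sin s
\end{equation*}
yields $|\partial_s\Phi_0|\ge 1 - p(\xi_c+\delta_0) - r - \gamma\ge 1/2$ (using $r<\gamma/2<1/8$ and $\gamma<1/4$), so $D$ integrations by parts in $s$ (whose higher derivatives $\partial_s^n\Phi_0$ are trivially bounded) give an $O(\eps^D)$ contribution. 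On $\operatorname{supp}\chi_\sharp$, $p'$ admits a strictly positive lower bound $\mu_0$ on the compact set $\{\xi_c+\delta_0/2\le|\xi|\le 2\pi/3+1/4\}$, so $|\partial_\xi\Phi_0|\ge \cT\mu_0/(4\eps)$ and the previous $\xi$-argument applies. Taking the supremum over $k\in\cK^c_d$ produces the claimed $O(\eps^{D-1})$ bound, with the hardest technical point being precisely this forced switch between $s$- and $\xi$-integration by parts at $k=0$, which relies simultaneously on all three smallness inputs $r<\gamma/2$, $\gamma<1/4$, and $p(\xi_c+\delta_0)\le 1/8$.
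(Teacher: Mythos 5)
Your nonstationary-phase-in-$\xi$ strategy is the right general idea, but two points need fixing.

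First, you single out $k=0$ as ``the main obstacle'' and develop a partition of unity with a switch to $s$-integration by parts near $|\xi|=\xi_c$. That workaround is superfluous, and the reason it is superfluous is precisely the observation the paper relies on to make the estimate uniform in $k$: by \eqref{suppdea}, the profile $a$ is supported in $\{t\ge 1\}$ in its second argument, which after the change of variables behind $w_k$ becomes $k\pi+s\ge 1$ on the support of $a_k$; combined with $\xi-s\ge -1/4$ (forced by $\chi_{1/4}(s-\xi)$), this yields $k\pi+\xi\ge 3/4>\xi_c$ for \emph{every} $k\ge 0$, including $k=0$. So the frequency $k\pi+\xi$ never enters the degenerate zone where $p'$ vanishes, and the same $\xi$-integration by parts applies across all of $\cK^c_d$ with no case split. (Incidentally, your stated lower bound $k\pi+\xi>\pi-1$ for $k\ge 1$ is off: without invoking the support condition, the cutoffs only give $k\pi+\xi\ge \pi/3-1/4$; that is still $>\xi_c$, but the claimed constant is wrong.)

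Second, and more consequentially, your integration-by-parts count is wrong. You assert that $D$ applications of $L=\tfrac{i\eps}{\partial_\xi\Phi_k}\partial_\xi$ give $O(\eps^D)$ and then pass to $O(\eps^{D-1})$ ``taking the supremum over $k$''; but a supremum over $k$ cannot lose a power of $\eps$, so that step is vacuous, and the real constraint lies elsewhere. The adjoint $\cD_\xi^*$ in \eqref{adjointdxi} already involves $\partial^2_{\xi\xi}\Phi_k$, and $n$ applications of $\cD_\xi^*$ produce ratios $\partial^j_\xi\Phi_k/\partial_\xi\Phi_k$ with $j$ up to $n+1$, hence (since $\partial^j_\xi\Phi_k=(k\pi+s-t)p^{(j)}(k\pi+\xi)$ for $j\ge 2$) quotients $p^{(j)}/p'$ for $j\le n+1$. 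Assumption~\ref{strengthenp}, via \eqref{controlderip}, only bounds $p^{(n)}/p'$ for $2\le n\le D$, so at most $D-1$ integrations by parts are justified, which is exactly what gives the $O(\eps^{D-1})$ of the statement. Your claimed $O(\eps^D)$ would require controlling $p^{(D+1)}$, which the hypotheses do not provide.
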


\noindent Note that since $ \vert v_k (t,x) \vert = \vert w_k (t,x)
\vert $, the same applies to $ v_k$. 

\begin{proof} Observe first that the integral defining $ w_k $ at the level of (\ref{defidewk}) is restricted to the compact set
\begin{equation} \label{compsetwk} 
 \Upsilon:=\left\lbrace \, (s,y,\xi) \in \RR^3  ; \, \vert s \vert \leq 2 \, \pi /3 , \ \vert y \vert \leq r  , \ \vert s - \xi \vert 
\leq {1/4}  \right\rbrace ,
\end{equation}
where $ r $ comes from Assumption \ref{choiopro}. 
We apply the principle of non-stationary phase, but this time through integrations by parts involving $ \xi $. To this end, with 
$ \Phi_k $ as in (\ref{defphikini2}), we have to compute 
\begin{equation} \label{sysPhi} 
\part_\xi \Phi_k (t,x;s,y,\xi) = (k \, \pi +s-t) \ p'(k \, \pi + \xi) +x-y . \quad
\end{equation}
The situation is the following. Knowing (\ref{hypp3ded}), for $k\lesssim \eps^{1/(q+1)}$ and
  $t\gtrsim 1/\eps$, since $s,x$  and $y$ are bounded, we find
  \begin{equation*}
    \part_\xi \Phi_k (t,x;s,y,\xi)\approx -t p'(k \, \pi + \xi)+\cO(1)\approx \frac{t}{k^{q+1}}+\cO(1)\gtrsim \frac{\eps t}{c^{q+1}} -C_0 .
  \end{equation*}
  For $ c $ small enough, the right hand side becomes positive. The rest of the proof consists
  in making this heuristical computation more quantitative.
  
\smallbreak
  
\noindent In view of (\ref{suppdea}), on the domain of integration giving rise to $ w_k $, we have  $ 1 \leq  k \, \pi + s $ as well 
as (in view of the support of $\chi_{1/4} $) the inequality $ - 1/4 \leq \xi - s $. It follows that $ \xi_c \leq 1/2 < 3/4 \leq k \, \pi + \xi $. 
Taking into account (\ref{increasing3}) and (\ref{hypp3ded}), knowing that $ \ell <0$, for such values of $ \xi $, we have
\begin{equation} \label{assertthat} 
\exists
\,\delta_0>0, \qquad \delta_0\le \sup_{3/4 \leq \eta} \, \eta^{q+1} \, p'(\eta) \leq (k \, \pi + \xi )^{q+1} \ p'(k \, \pi + \xi ) .  
\end{equation}
For $ t $ as in (\ref{adjusttT}), for $ (s,y,\xi) $ as in (\ref{compsetwk}), and for $ \vert x \vert \leq R $, we have
$$ \frac{\cT}{\eps} - k \, \pi - \frac{2 \, \pi}{3} \leq t - k \ \pi - s \, , \qquad k \, \pi + \xi \leq k \, \pi + \frac{2 \, \pi}{3} +1 \, ,
\qquad \vert x-y \vert \leq r+R . $$
It follows that
\begin{equation} \label{folminc} 
\quad \delta_0  \Bigl( \frac{\cT}{\eps} - k \, \pi - \frac{2 \, \pi}{3} \Bigr)  \Bigl( k \, \pi + \frac{2 \, \pi}{3} + 1 \Bigr)^{-q-1} \! -  r-R 
\leq \vert \part_\xi \Phi_k (t,x;s,y,\xi) \vert . 
\end{equation}
Define
\vskip -6mm
\begin{equation} \label{defdec} 
\quad \ \eps_0 := \frac{3^{q+1}}{(6+4 \, \pi)^{q+1}} \ \frac{\delta_0 \, \cT}{2 \, (\delta_0+ r+R)} \, , \qquad c := \frac{1}{2 \, \pi} \ 
\(\frac{\delta_0\, \cT}{2 \, (\delta_0+  r+R)} \)^{\frac{1}{q+1}} . 
\end{equation}
By this way, for $ \eps \leq \eps_0 $ and $ k \leq c \, \eps^{-1/(q+1)} $, we can assert that
\[
 \eps^{\frac{1}{q+1}} \, \left( \frac{2 \, \pi}{3} +1 \right) \leq \frac{1}{2} \, \left( \frac{\delta_0  \, \cT}{2 \, (\delta_0+ r+R)} 
\right)^{\frac{1}{q+1}} \! \! , \quad \eps^{\frac{1}{q+1}} \, k \, \pi \leq \frac{1}{2} \, \left( \frac{\delta_0 \, \cT}{2 \, (\delta_0+ r+R)} 
\right)^{\frac{1}{q+1}} \! . 
\]
Sum these two inequalities and rearrange the terms to get
\begin{equation} \label{ineginterm1} 
(\delta_0+ r+R) \ \left( k \, \pi  + \frac{2 \, \pi}{3} +1 \right)^{q+1} \leq
\frac{\delta_0\, \cT}{2 \, \eps} .  
\end{equation}
In particular
\[ \delta_0 \left( k \, \pi  +  \frac{2 \, \pi}{3} +1\right) \leq (\delta_0+r+R) \ \Bigl( k \, \pi  + \frac{2 \, \pi}{3} +1 \Bigr)^{q+1} \leq
\frac{\delta_0\, \cT}{2 \, \eps} ,
 \]
 and hence
 \[
 \frac{\cT}{2 \, \eps}  \ge k \, \pi + \frac{2 \, \pi}{3} \, , \qquad \frac{\cT}{2 \, \eps}  \leq \frac{\cT}{\eps} - k \, \pi - \frac{2 \, \pi}{3} .
\]
Exploiting (\ref{ineginterm1}), it follows that
$$ (\delta_0+ r+R) \ \Bigl( k \, \pi  + \frac{2 \, \pi}{3} +1 \Bigr)^{q+1} \leq \frac{\delta_0 \, \cT}{2 \, \eps} \,  \leq \delta_0 \, \Bigl( 
\frac{\cT}{\eps} - k \, \pi - \frac{2 \, \pi}{3} \Bigr) . $$
Coming back to \eqref{folminc}, this yields
\begin{equation} \label{inedephik} 
\quad \forall \, \eps \in \, (0,\eps_0] \, , \quad \forall \, k \in \NN \cap [0, c \, \eps^{-1/(q+1)} ] \, , \quad \delta_0 \leq \vert 
\part_\xi \Phi_k (t,x;s,y,\xi) \vert . 
\end{equation}
As long as $ \part_\xi \Phi_k \not = 0 $, the following identity may be used
\begin{equation} \label{appliLxi} 
 e^{- i \, \Phi_k / \eps} = \cD_\xi \, e^{- i \, \Phi_k / \eps} , \qquad \cD_\xi \equiv D(k)_\xi := \frac{\eps \, i}{\part_\xi \Phi_k} \ \part_\xi . 
\end{equation}
The differential operator $ \cD_\xi $ is not self-adjoint. We have to deal with
\begin{equation} \label{adjointdxi} 
\cD_\xi^* \equiv D(k)_\xi^* = \frac{\eps \, i}{\part_\xi \Phi_k (t,x;s,y,\xi)} \ \part_\xi - \frac{\eps \, i \ \part^2_{\xi \xi} \Phi_k (t,x;s,y,\xi)}{( 
\part_\xi \Phi_k)(t,x;s,y,\xi)^2 } .  
\end{equation}
Knowing that (\ref{inedephik}) is verified on the domain of integration giving rise to $ w_k $, an integration by parts (in the 
variable $ \xi $) using $ \cD_\xi $ yields
\[ \vert w_k (t,x) \vert := \Bigl \vert \iiint e^{- i \, \Phi_k (t,x;s,y,\xi) /\eps} \ \cD_\xi^* \bigl \lbrack a_k (\eps, s,y) \ \chi_{\frac{1}{4}} (s-\xi) \ 
\chi_{\frac{2 \pi}{3}} (s) \bigr \rbrack \ ds \, dy \, d \xi \Bigr \vert .
\]
Taking  \eqref{inedephik} into account, the application of $ \cD_\xi^* $ allows to gain a power of $ \eps $ on condition that 
\begin{equation} \label{divborn} 
\frac{\vert \part^2_{\xi \xi} \Phi_k (t,x;s,y,\xi) \vert }{\part_\xi \Phi_k (t,x;s,y,\xi)^2 } = \frac{\vert (k \, \pi +s-t) \ p''(k \, \pi + \xi) \vert}
{\vert (k \, \pi +s-t) \ p'(k \, \pi + \xi) +x-y \vert^2 } = \cO(1) . 
\end{equation}
The difficulty is that, at the level of (\ref{remainderrv}), neither $ k $ nor $ t $ can be bounded uniformly in $ \eps \in \, ]0,1] $. 
However, with $ X := (k \, \pi +s-t) \ p'(k \, \pi + \xi) $, remark that
\[\frac{\vert x-y \vert}{\vert X \vert} \leq \frac{1}{2} \quad \Longrightarrow \quad \displaystyle \frac{\vert \part^2_{\xi \xi} 
\Phi_k \vert} {\vert \part_\xi \Phi_k \vert} = \Bigl|1 + \frac{x-y}{X} \Bigr|^{-1} \ \frac{\vert p'' (k \, \pi + \xi) \vert }{p' (k \, \pi + \xi)} \leq 2 \ 
\sup_{s \ge 3/4} \ \frac{\vert p''(s) \vert}{p'(s)} .
\]
On the other hand, using (\ref{inedephik}) and then $ \vert x-y \vert \leq r+R $, we have
\[ \frac{\vert x-y \vert}{\vert X \vert} \geq \frac{1}{2} \quad \Longrightarrow \quad \displaystyle \frac{\vert \part^2_{\xi \xi} \Phi_k \vert }{\vert 
\part_\xi \Phi_k \vert} \leq \frac{|X|}{\delta_0} \ \frac{\vert p'' (k \, \pi + \xi) \vert }{p' (k \, \pi + \xi)} \leq
\frac{2(r+R)}{\delta_0} \sup_{s \ge 3/4} \ \frac{\vert p''(s) \vert}{p'(s) } . \]
From (\ref{hypp2}) and (\ref{hypp3ded}), we can deduce that
\[ C(p) := \sup_{s\ge 3/4}  \left\lbrace \, \vert p'' (s) \vert / p'(s) \right \rbrace \, < + \infty . \]
In short, we have (\ref{divborn}) with
$$ \frac{\vert \part^2_{\xi \xi} \Phi_k (t,x;s,y,\xi) \vert }{\part_\xi \Phi_k (t,x;s,y,\xi)^2 } \leq \frac{2}{\delta_0} \ \max \, \Bigl( \, 1 \, ; \, 
\frac{ r+R}{\delta_0} \Bigr) \, C(p) \, < + \infty . $$
By extension, the action of $ (\cD_\xi^*)^{n-1} $ with $ n \leq D $ involves the quotients $ \part_\xi^j \Phi_k / \part_\xi \Phi_k $ with 
$ 0 \leq j \leq n $. Under Assumption \ref{strengthenp}, the above argument can be repeated $ D-1 $ times, leading to the estimate 
(\ref{remainderrv}).
\end{proof}

\noindent Note that (\ref{remindudevenk}) allows to capture any position $ x \in \RR $. On the contrary, to 
obtain uniform $ \cO (\eps^N ) $ bounds and to measure more precisely the quantitative aspects of the 
dispersive effects, we need to restrict the size of the spatial domain, as in \eqref{remainderrv}. This is 
why we will work with $ R=r $, where $ r $ is fixed as in Assumption \ref{choiopro}. In agreement with 
Lemma~\ref{lem-nonresonantharm}, we restrict the discussion to the ball $ \vert x \vert < r $. From now 
on, the values of $ \eps_0 $ and $ c $ are adjusted as indicated in (\ref{defdec}) with $ R=r $.


\subsection{The regime of standing waves}\label{transiregime}

\noindent This is when $ k \in \cK^c_s $ with $ \cK^c_s $ as in (\ref{defdispk2}) and $ c $ as in (\ref{defdec}). 
The novelty when $ k \in \cK^c_s $ is that the oscillatory integral (\ref{defidewk}) defining $ w_k $ may involve 
stationary points, which are positions $ (s,y,\xi) $ satisfying 
$$ \nabla_{\! s,y,\xi} \Phi_k (t,x;s,y,\xi) = 0 , $$ 
or equivalently
\begin{subequations}\label{stastapoint} 
\begin{eqnarray} 
& &  \displaystyle \part_s \Phi_k (t,x;s,y,\xi) = p(k \, \pi + \xi) -1 + y + (-1)^k \ \gamma \, \sin s = 0  , \label{stastapoint1} \\
& & \displaystyle \part_y \Phi_k (t,x;s,y,\xi) = s- \xi = 0  , \label{stastapoint2} \\
& & \displaystyle \part_\xi \Phi_k (t,x;s,y,\xi) = (k \, \pi +s-t) \, p'(k \, \pi + \xi) +x-y = 0 . \label{stastapoint3} 
\end{eqnarray}
\end{subequations}
In Paragraph~\ref{repcripoints}, we show that there exists inside $ \Upsilon $, with $ \Upsilon $ as in (\ref{compsetwk}), at 
most one critical point which is denoted by $ (s_k,y_k,\xi_k) (t,x) $. This means that $ (s_k,y_k,\xi_k) $ is as in (\ref{stastapoint}),
together with
\begin{equation} \label{cdtstapoint} 
\qquad - 2 \, \pi/3 \leq s_k \leq 2 \, \pi/3 \, , \quad \ - r \leq y_k \leq r \, , \quad \ s_k - 1/4 \leq \xi_k \leq s_k + 1/4 .
\end{equation}
In Paragraph \ref{asyforcripoints}, we derive asymptotic formulas describing the behavior of $ s_k $ when $ k $ goes to infinity. 
In Paragraph \ref{cripointsnon-dege}, we remark that the critical points are all non-degenerate. In Paragraph~\ref{confirepcripoints}, 
we take $ k \in \cK^{c_1}_s $ with $ c_1 \geq c $ large enough to find that there exists indeed a critical point. In Paragraph 
\ref{towardsspr}, we consider conditions under which stationary phase arguments can be employed.


\subsubsection{Possible existence of critical points}\label{repcripoints} The phase $ \Phi_k (t,x;\cdot) $ depends on $ (k,t,x) $ 
and also (implicitly) on $ \eps \in ]0,\eps_0] $ through the condition $ 0 \leq t \leq 2 \cT/ \eps $. The same applies to 
$ (s_k,y_k,\xi_k) (t,x) $. 

\begin{lem}[At most one signal may be emitted from any value $ k \in \cK^c_s $]  \label{statioponi} Let us 

\noindent start by fixing $ \eps_0 $ as in (\ref{defdec}). Up to decreasing this value of $ \eps_0 $, for all $ \eps \in ]0,\eps_0 ] $, 
for all $ k \in \cK^c_s $ and for all $ (t,x) \in [0, 2 \cT/ \eps] \times [-r,r] $, there is inside 
$ \Upsilon $, in the sense of \eqref{stastapoint}, at most one critical point $ (s_k,y_k,\xi_k) (t,x) $ of the phase $ \Phi_k 
(t,x;\cdot) $, with positions $ y_k $ and $ \xi_k $ determined by
\begin{equation} \label{defskykxik} 
y_k = 1 - p(k \, \pi + s_k) - (-1)^k \ \gamma \, \sin s_k \, , \qquad \xi_k = s_k . 
\end{equation}
\end{lem}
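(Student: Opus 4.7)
My plan is to reduce the system \eqref{stastapoint} to a single scalar equation in the variable $s$, show that any critical point must lie in a small neighborhood of the origin, and then establish monotonicity on that neighborhood.

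First, from \eqref{stastapoint2} I obtain immediately $\xi_k = s_k$, which gives one half of \eqref{defskykxik}. Substituting this into \eqref{stastapoint1} yields $y_k = 1 - p(k\pi + s_k) - (-1)^k \gamma \sin s_k$, which is the second half of \eqref{defskykxik}. Thus both $y_k$ and $\xi_k$ are determined by $s_k$, and the uniqueness claim reduces to showing that the function
\[
f_k(s) := (k\pi + s - t)\, p'(k\pi + s) + p(k\pi + s) + (-1)^k \gamma \sin s + x - 1,
\]
obtained by substituting these expressions into \eqref{stastapoint3}, has at most one zero in $[-2\pi/3, 2\pi/3]$.

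Next, I will localize the possible critical points. The constraint $|y_k| \le r$ from \eqref{cdtstapoint}, combined with $r < \gamma/2$ and the bound $|1 - p(k\pi + s_k)| \le C/k^q$ coming from \eqref{reso2}, forces
\[
\gamma |\sin s_k| \le r + |1 - p(k\pi + s_k)| \le r + C/k^q.
\]
For $k \in \cK^c_s$ we have $k \ge c\eps^{-1/(q+1)}$, so the right-hand side is $< \gamma/2$ once $\eps_0$ is chosen small enough. Hence $|\sin s_k| < 1/2$, and since $|s_k| \le 2\pi/3 < 5\pi/6$, this forces $|s_k| < \pi/6$. I can therefore restrict my attention to $s \in [-\pi/6, \pi/6]$, where $\cos s \ge \sqrt{3}/2$.

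The main step is then the monotonicity of $f_k$ on $[-\pi/6, \pi/6]$. Differentiating,
\[
f_k'(s) = 2 p'(k\pi + s) + (k\pi + s - t)\, p''(k\pi + s) + (-1)^k \gamma \cos s.
\]
Using \eqref{hypp3ded} and \eqref{hypp2}, I estimate $|p'(k\pi + s)| \lesssim k^{-(q+1)} \lesssim \eps$ and $|p''(k\pi+s)| \lesssim k^{-(q+2)}$. Since $|k\pi + s - t| \le 2\cT/\eps + 2\pi/3$, the middle term is bounded by $C\cT \eps^{-1} k^{-(q+2)} \lesssim \eps^{1/(q+1)}$ on $\cK^c_s$. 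Thus both the $p'$-term and the $p''$-term are $o(1)$ as $\eps \to 0$, uniformly in $(k,t,x,s)$ within the relevant ranges, while the last term satisfies $|(-1)^k \gamma \cos s| \ge \gamma \sqrt{3}/2$ with a sign depending only on the parity of $k$. Shrinking $\eps_0$ if necessary, we conclude $|f_k'(s)| \ge \gamma \sqrt{3}/4 > 0$ on $[-\pi/6, \pi/6]$, so $f_k$ is strictly monotonic and admits at most one zero there. This yields uniqueness of $s_k$, and hence of $(s_k, y_k, \xi_k)$.

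The main obstacle is the uniform control of $(k\pi + s - t)\,p''(k\pi+s)$ for $t$ as large as $2\cT/\eps$; this is precisely where the lower bound $k \ge c\eps^{-1/(q+1)}$ defining $\cK^c_s$ is essential, since the decay rate encoded in \eqref{hypp2} must beat the growth of $t$. Everything else is bookkeeping based on the explicit form of $\varphi$ in \eqref{pharetain}.
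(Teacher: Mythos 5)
Your proof is correct and follows essentially the same route as the paper's: reduce via $\part_y\Phi_k = 0$ to $\xi = s$, substitute into $\part_s\Phi_k = 0$ to express $y$ in terms of $s$, and then show that the resulting scalar equation in $s$ (your $f_k(s) = 0$; the paper's $x = h_k(t;s)$, with $f_k' = -\part_s h_k$) has at most one solution by establishing a uniform lower bound on the derivative, using the constraint $|y_k|\le r$ to localize $s$ near $0$ and using the lower bound $k \gtrsim \eps^{-1/(q+1)}$ from $\cK^c_s$ to beat the $\cO(\eps^{-1})$ size of $t$ against the decay of $p''$. The only cosmetic difference is that you localize $s$ to $[-\pi/6,\pi/6]$ by pushing the inequality $|\sin s_k| < 1/2$ strictly, whereas the paper settles for the slightly weaker $|\sin s| < \sqrt 3/2$ and hence $|s| < \pi/3$; both give a uniform positive lower bound on $\gamma|\cos s|$ and the conclusion is the same.
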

 
\begin{proof} The constraint (\ref{stastapoint2}) reads $ s = \xi $. Thus, we can focus on the positions $ (s,y,s) $ satisfying 
(\ref{cdtstapoint}) together with (\ref{stastapoint1}) and (\ref{stastapoint3}), which become
\begin{subequations}\label{sysPhik} 
\begin{eqnarray} 
& &  \displaystyle y = 1 - p(k \, \pi + s)  - (-1)^k \ \gamma \, \sin s  , \label{sysPhik1} \\
& & \displaystyle x = h_k (t;s) := \, 1 - p(k \, \pi + s) - (-1)^k \ \gamma \, \sin s  \label{sysPhik2} \\
& &  \displaystyle \qquad \qquad \qquad \ \ \, - (k \, \pi + s - t) \ p' (k \, \pi + s) . \nonumber
\end{eqnarray}
\end{subequations}
This furnishes already (\ref{defskykxik}). Now, we can consider the determination of $ s $. The time $ s $ is {\it a priori} localized 
as indicated in (\ref{cdtstapoint}). But, using (\ref{sysPhik1}), it is possible to get a more precise information on $ s $. We know 
that $ \vert y \vert \leq r \leq 1/8 $. On the other hand, from (\ref{reso2}), we can infer that
\begin{equation} \label{minmaxp} 
\exists \, C_2>0, \quad \forall \, \xi \ge 1\, , \quad
0\le \xi^q \ \bigl \lbrack 1 - p(\xi)  
\bigr \rbrack \leq C_2 . 
\end{equation}
Using the condition $ c \, \eps^{-1/(q+1)} < k $ inside the definition of $ \cK^c_s $, as well as (\ref{minmaxp}), since $ r < \gamma /2 $, 
we have from (\ref{sysPhik1}) that 
\begin{equation} \label{easyinfer} 
\vert \sin s \vert \leq \frac{r}{\gamma} + \frac{C_2}{\gamma \, (k \, \pi + s)^q} \leq \frac{1}{2} + \cO \bigl( \eps^{q/(q+1)} \bigr) . 
\end{equation}
In view of (\ref{easyinfer}), for $ \eps \in \, ]0, \eps_0] $ with $ \eps_0 $ small enough, we have to deal with the necessary 
condition $ \vert \sin s \vert <\sqrt 3 / 2 $. Since $ \vert s \vert \leq 2 \pi/3 $, this means that $ 1/2 < \cos s $. Given this, we can 
deduce the preliminary information 
\begin{equation} \label{preliinform} 
- \pi/3 < s < \pi/3 . 
\end{equation}
Compute
$$ \quad \partial_s h_k (t;s) := \, - \, 2 \ p' (k \, \pi + s) - (k \, \pi + s - t) \ p'' (k \, \pi + s) - (-1)^k \ \gamma \, \cos s .  $$
Exploiting (\ref{hypp2}) and (\ref{hypp3ded}), as well as $ \vert t - k \, \pi - s \vert \leq 2 \, \cT / \eps $, we find
\[
\vert \partial_s h_k (t;s) \vert = \gamma \ \vert \cos s \vert + \cO \bigl( (k \, \pi + s)^{-q-1} \bigr) + \eps^{-1} \ \cO \bigl( 
(k \, \pi + s)^{-q-2} \bigr) .
\]
Knowing that $ s $ must satisfy (\ref{preliinform}) and that $ k \in \cK^c_s $ is bounded from below as indicated in 
(\ref{defdispk2}), there remains
\[
\vert \partial_s h_k (t;s) \vert = \gamma \ \vert \cos s \vert + \cO \bigl( \eps^{1/ (q+1)} \bigr) = \cO (1) .
\]
Thus, for $ \eps \in \, ]0, \eps_0] $ with $ \eps_0 $ small enough, we can assert that
\begin{equation} \label{minassert} 
\forall \, k \in \cK^c_s \, , \qquad \forall \, s \in \Bigl]-\frac{\pi}{3},\frac{\pi}{3}\Bigr[  , \qquad 0 < \frac{\gamma}{4} \leq \vert
\partial_s h_k (t;s) \vert .  
\end{equation}
It follows that the function $ h_k (t;\cdot) $ is one-to-one from the interval $ ] -\pi/3,\pi/3[$ onto its image, which may or may 
not contain the real number $ x $. At all events, there exists at most one position $ s_k (t,x) \in \, ]-\pi/3,\pi/3[$ such that 
\begin{equation} \label{defskykxikbisbs} 
h_k \bigl(t;s_k(t,x) \bigr) = x . 
\end{equation}
In short, any position $ (s,y,\xi) $ satisfying (\ref{stastapoint}) and (\ref{cdtstapoint}) is subject to $ \vert s \vert <\pi / 3 $. 
Knowing this, as claimed in Lemma \ref{statioponi}, there exists  inside $ \Upsilon $ at most one critical point $ (s_k,y_k,\xi_k) 
(t,x) $ and, if any, the value of $ s_k (t,x) $ is determined through the implicit relation (\ref{defskykxikbisbs}), while $ (y_k,\xi_k) 
(t,x) $ is given by (\ref{defskykxik}). 
\end{proof}

\noindent By construction, the function $ s_k(t,\cdot) $ is, for all $ t \in [0 , 2 \cT / \eps ] $, defined on the interval
\begin{equation} \label{intervaldefsk} 
\cI \! s_k (t) := \Bigl \lbrace h_k (t;s) \, ; \, - \frac{\pi}{3} < s < \frac{\pi}{3}  \Bigr \rbrace .
\end{equation}

\begin{lem} [Properties of $ s_k $] \label{Propertiesofsk} Let $ \eps_0 $ as in Lemma \ref{statioponi}, as well as $ \eps \in ]0,\eps_0] $
and $ k \in \cK^c_s $. The function $ s_k  $ determined by (\ref{defskykxikbisbs}) with $ h_k $ as in (\ref{sysPhik2}) is smooth on its 
domain of definition, which is
$$ \cD\!  s_k := \bigl \lbrace (t,x) \in [0,2 \cT / \eps] \times \RR \, ; \, x \in \cI \! s_k (t) \bigr \rbrace . $$
For all $ \alpha \in \NN^2 $, we can find $ C_\alpha >0$ giving rise to the uniform estimate 
\begin{equation} \label{unigliformskftx} 
\sup_{k \in \cK^c_s} \ \sup_{(t,x) \in \cD \! s_k} \
\vert \part^\alpha_{t,x} s_k(t,x) \vert \leq C_\alpha ,\quad \forall
\eps\in]0,\eps_0]. 
\end{equation}
For $ \alpha = (1,0) $, we get the following more precise estimate
\begin{equation} \label{unigliformskftxpour10} 
 \sup_{(t,x) \in \cD \! s_k} \ \vert \part_t s_k(t,x) \vert = \cO (k^{-q-1} ) = \cO(\eps)  .
\end{equation}
\end{lem}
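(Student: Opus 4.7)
The plan is to deduce everything from the implicit function theorem, applied to the equation $h_k(t;s_k(t,x))=x$. Since Lemma~\ref{statioponi} and the lower bound \eqref{minassert} give $|\partial_s h_k(t;s)|\ge \gamma/4$ uniformly on the relevant strip, and since $h_k$ is smooth in $(t,s)$, the implicit function theorem yields that $s_k$ is smooth on $\cD\! s_k$. The two basic identities to keep throughout are obtained by differentiating $h_k\bigl(t;s_k(t,x)\bigr)=x$:
\begin{equation*}
\partial_x s_k = \frac{1}{\partial_s h_k\bigl(t;s_k\bigr)},\qquad \partial_t s_k = -\frac{\partial_t h_k\bigl(t;s_k\bigr)}{\partial_s h_k\bigl(t;s_k\bigr)}.
\end{equation*}

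Next I would turn to the refined estimate \eqref{unigliformskftxpour10}. From the definition \eqref{sysPhik2} of $h_k$, one simply gets $\partial_t h_k(t;s)=p'(k\pi+s)$. Combined with \eqref{hypp3ded}, which gives $p'(k\pi+s)=\cO\bigl((k\pi+s)^{-q-1}\bigr)=\cO(k^{-q-1})$ uniformly for $s\in(-\pi/3,\pi/3)$, and with the uniform lower bound $|\partial_s h_k|\ge\gamma/4$, we obtain $|\partial_t s_k|=\cO(k^{-q-1})$. Since $k\in\cK^c_s$ implies $k\ge c\eps^{-1/(q+1)}$, this is also $\cO(\eps)$, as claimed.

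For the general bound \eqref{unigliformskftx}, I would proceed by induction on $|\alpha|$, using the Fa\`a di Bruno formula applied to the implicit relation. Each differentiation produces expressions of the form
\begin{equation*}
\partial^\alpha_{t,x} s_k = \frac{P_\alpha\bigl(\partial^\beta_{s,t} h_k,\partial^{\beta'}_{t,x} s_k\bigr)}{(\partial_s h_k)^{N_\alpha}},
\end{equation*}
where $P_\alpha$ is a universal polynomial, $|\beta|\le |\alpha|+1$, and $|\beta'|<|\alpha|$. The denominator is under control thanks to $|\partial_s h_k|\ge\gamma/4$. For the numerator, the key observation is that derivatives of $h_k$ in $(s,t)$ involve only $p^{(n)}(k\pi+s)$ for $n\le |\alpha|+1$, together with the prefactor $(k\pi+s-t)$ which is of size $\cO(\eps^{-1})$. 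Assumption~\ref{strengthenp} combined with \eqref{hypp3ded} gives $p^{(n)}(k\pi+s)=\cO\bigl(p'(k\pi+s)\bigr)=\cO(k^{-q-1})=\cO(\eps)$ for $2\le n\le D$, so the products $(k\pi+s-t)p^{(n)}(k\pi+s)$ remain uniformly $\cO(1)$; terms containing no such prefactor, or only the periodic $\sin s,\cos s$ factors of $h_k$, are bounded trivially. By induction, the numerator stays bounded uniformly in $k\in\cK^c_s$, $\eps\in]0,\eps_0]$ and $(t,x)\in\cD\! s_k$, yielding \eqref{unigliformskftx}.

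The main obstacle is the bookkeeping in the induction step: one must verify that in every monomial produced by Fa\`a di Bruno, each occurrence of the potentially large factor $(k\pi+s-t)$ (of size $\eps^{-1}$) is compensated by at least one factor $p^{(n)}(k\pi+s)$ with $n\ge 2$, so that Assumption~\ref{strengthenp} can absorb it. This is the reason why Assumption~\ref{strengthenp} is needed with $D$ chosen sufficiently large relative to $|\alpha|$, and it is what forces the constant $C_\alpha$ to depend on $\alpha$ but \emph{not} on $k$ or $\eps$.
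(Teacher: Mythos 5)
Your proposal is correct and follows essentially the same route as the paper: implicit differentiation of $h_k(t;s_k(t,x))=x$ gives the exact formula $\part_t s_k = -p'(k\pi+s_k)/\part_s h_k(t;s_k)$, which combined with \eqref{hypp3ded} and \eqref{minassert} yields \eqref{unigliformskftxpour10}, and the higher-order bound \eqref{unigliformskftx} is obtained by the same induction on $|\alpha|$, with Assumption~\ref{strengthenp} absorbing the $(k\pi+s-t)\sim\eps^{-1}$ factor paired with $p^{(m+1)}(k\pi+s)$ in each $\part^m_{s,t} h_k$. The only cosmetic difference is that you phrase the induction through Fa\`a di Bruno whereas the paper writes the result as a finite linear combination after multiplying by $(\part_s h_k)^{|\alpha|}$; the content is identical, and your explicit bookkeeping remark about the compensation $(k\pi+s-t)\,p^{(n)}$ with $n\ge 2$ is precisely the mechanism the paper is implicitly relying on.
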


\begin{proof} The bound (\ref{unigliformskftx}) is, for $ \alpha = (0,0) $, a direct consequence of (\ref{cdtstapoint}). Compute
\begin{equation} \label{parttxskkk} 
 \part_t s_k (t,x) = - \frac{p' \bigl( k \pi + s_k (t,x) \bigr)}{\part_s h \bigl( k,t; s_k (t,x) \bigr)} , \qquad 
\part_x s_k (t,x) = \frac{1}{\part_s h \bigl( k,t; s_k (t,x) \bigr)} . 
\end{equation}
In view of (\ref{hypp3ded}) and (\ref{minassert}), this furnishes (\ref{unigliformskftxpour10}) for $ \alpha = (1,0) $, and also
(\ref{unigliformskftx}) for $ \alpha = (0,1) $. The general case $ \vert \alpha \vert > 1 $ can be obtained by induction. As a 
matter of fact, for $ \vert \alpha \vert > 1 $, the expression
$$ \part_s h \bigl( k,t; s_k (t,x) \bigr)^{\vert \alpha \vert} \, \part^\alpha_{t,x} s_k(t,x)  $$
is a finite linear combination of products involving $ p^i ( k \pi + s_k ) $, $ \part^j_s h ( k,t; s_k ) $ and $ \part^\beta_{t,x} 
s_k $ with $ i \leq \vert \alpha \vert $, $ j \leq \vert \alpha \vert $ and $ \vert \beta \vert < \vert \alpha \vert $.  It suffices 
to remark that all these quantities are uniformly bounded. This comes from Assumption \ref{strengthenp} concerning 
$ p^i $ and $ \part^j_s h $. This is due to the inductive hypothesis regarding $ \part^\beta_{t,x} s_k $.
\end{proof}


\subsubsection{Asymptotic formulas related to the critical points}\label{asyforcripoints} In the rest of this section and 
in Section \ref{sec:nonlinear}, we need to identify the asymptotic behavior of $ s_k $ and $ y_k $ for large values of 
$ k $. To this end, introduce
\begin{equation} \label{defskykxikbisbsalsoform} 
\tau_k (t ; s) := 1 - p(k \, \pi + s) - (k \,\pi + s - t) \, p' (k \, \pi + s) , 
\end{equation}
and remark that (\ref{defskykxikbisbs}) can also be formulated as
\begin{equation} \label{defskykxikbisbsalsoformpoursk} 
s_k (t,x) = (-1)^{k+1} \, \arcsin \, \Bigl( \frac{x}{\gamma} - \frac{\tau_k \bigl(t ; s_k (t,x) \bigr)}{\gamma} \Bigr) .
\end{equation}

\begin{lem} [Asymptotic formulas] \label{Asymptoticetformulas}
Let $ \eps_0 $ as in Lemma \ref{statioponi} as well as $ \eps \in ]0,\eps_0] $. 
Uniformly in $ k \in \cK_s^c $ and $ t \in [0 , 2 \cT / \eps] $, the critical point $ (s_k,y_k,\xi_k) (t,x) $, if any, is such that
\begin{subequations}\label{defskxjhgezjbbkldepour} 
\begin{eqnarray} 
& &  \displaystyle s_k (t,x) = (-1)^{k+1} \ \arcsin \, \Bigl( \frac{x}{\gamma} + \frac{\tau_k^0 (t) }{\gamma} \Bigr) +  
\cO \Bigl(\frac{1}{\eps \, k^{q+2}} \Bigr) , \label{defskxjhgezjbbkldepour1} \\
& & y_k (t,x) = x + (k \pi -t) p' (k \pi) + \cO \Bigl(\frac{1}{\eps \, k^{q+2}} \Bigr) = \cO\Bigl( \frac{1}{\eps \, k^{q+1}} \Bigr) , \label{defskxjhgezjbbkldepour2}
\end{eqnarray}
\end{subequations}
where
\begin{equation} \label{defskxjhgezjbbkltauk} 
\tau_k^0 (t) := \tau_k (t;0) = 1 - p (k \pi) - (k \pi -t) p' (k \pi) = \cO\Bigl( \frac{1}{\eps \, k^{q+1}} \Bigr) .
\end{equation}
\end{lem}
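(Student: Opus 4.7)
The plan is to derive both asymptotic formulas by starting from the implicit relations established in Lemmas~\ref{statioponi} and \ref{Propertiesofsk}, namely \eqref{defskykxik} and \eqref{defskykxikbisbsalsoformpoursk}, and then replacing $\tau_k\bigl(t;s_k(t,x)\bigr)$ by $\tau_k^0(t)=\tau_k(t;0)$ up to the required accuracy. The core quantitative input is the decay of $p$, $p'$, $p''$ at infinity combined with the fact that, on the range of interest, $\eps k=\cO(1)$: indeed $k\le\tfrac{2}{3}+\tfrac{\cT}{\pi\eps}$, so powers of $k^{-1}$ and powers of $\eps$ are interchangeable up to a constant.

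First, I would estimate $\tau_k^0(t)=[1-p(k\pi)]+(t-k\pi)\,p'(k\pi)$. By \eqref{minmaxp} (Assumption~\ref{resoa}(d)), $1-p(k\pi)=\cO(k^{-q})$, and by \eqref{hypp3ded}, $p'(k\pi)=\cO(k^{-q-1})$; since $|t-k\pi|\le 2\cT/\eps+\cO(1)$ and $k^{-q}=(\eps k)\cdot\eps^{-1}k^{-q-1}\lesssim \eps^{-1}k^{-q-1}$, we obtain $\tau_k^0(t)=\cO\bigl(\eps^{-1}k^{-q-1}\bigr)$, which is the last assertion of the lemma.

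Next, to replace $\tau_k(t;s_k)$ by $\tau_k^0(t)$, I would differentiate the definition \eqref{defskykxikbisbsalsoform}:
\[
\partial_s\tau_k(t;s) \, = \, -\,2\,p'(k\pi+s)\,-\,(k\pi+s-t)\,p''(k\pi+s).
\]
On the range $|s|\le\pi/3$ coming from \eqref{preliinform}, Assumption~\ref{resoa}(d) gives $|p''(k\pi+s)|=\cO(k^{-q-2})$, while $p'(k\pi+s)=\cO(k^{-q-1})$ and $|k\pi+s-t|=\cO(\eps^{-1})$. Using once again $k^{-q-1}\lesssim\eps^{-1}k^{-q-2}$, this yields $|\partial_s\tau_k(t;s)|=\cO(\eps^{-1}k^{-q-2})$. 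Integrating between $0$ and $s_k$, and using $|s_k|\le\pi/3$ from Lemma~\ref{statioponi}, we get $|\tau_k(t;s_k)-\tau_k^0(t)|=\cO(\eps^{-1}k^{-q-2})$. Plugging this into \eqref{defskykxikbisbsalsoformpoursk} and invoking the Lipschitz continuity of $\arcsin$ on the compact subinterval $[-\sin(\pi/3),\sin(\pi/3)]\subset(-1,1)$ (which contains the argument, thanks to \eqref{easyinfer}), we obtain the announced formula \eqref{defskxjhgezjbbkldepour1} for $s_k$.

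Finally, for $y_k$, I would combine \eqref{defskykxik} with \eqref{sysPhik2}, which together give the relation $y_k = x + (k\pi+s_k-t)\,p'(k\pi+s_k)$. A first-order Taylor expansion of $p'$ at $k\pi$ (controlled by $p''=\cO(k^{-q-2})$) yields $p'(k\pi+s_k)=p'(k\pi)+\cO(k^{-q-2})$; multiplying by $|k\pi+s_k-t|=\cO(\eps^{-1})$ produces an error of order $\cO(\eps^{-1}k^{-q-2})$, while the extra term $s_k\,p'(k\pi)$ is $\cO(k^{-q-1})=\cO(\eps^{-1}k^{-q-2})$. Summing, $y_k=x+(k\pi-t)\,p'(k\pi)+\cO(\eps^{-1}k^{-q-2})$, which is the first equality in \eqref{defskxjhgezjbbkldepour2}; the second follows from $|x|\le r$ combined with the support constraint $|y_k|\le r$ from $\Upsilon$ and the bound on $(k\pi-t)p'(k\pi)$. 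The main (though mild) obstacle throughout is bookkeeping: one has to convert between powers of $k^{-1}$, of $\eps$, and of $\eps^{-1}$ without losing the critical gain, and carefully justify that the $\arcsin$ linearization has bounded derivative uniformly in $k\in\cK_s^c$, which is where the restriction $|s_k|\le\pi/3$ (rather than just $\le 2\pi/3$) from Lemma~\ref{statioponi} is essential.
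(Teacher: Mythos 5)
Your proof is correct and follows essentially the same route as the paper: both arguments reduce the matter to bounding $\tau_k(t;s_k)-\tau_k^0(t)$ by $\cO(\eps^{-1}k^{-q-2})$ and then plugging this into \eqref{defskykxikbisbsalsoformpoursk} for $s_k$ and into \eqref{stastapoint3} for $y_k$. The only cosmetic difference is that the paper obtains the key bound via the explicit algebraic identity
\[
\tau_k(t;s)=\tau^0_k(t)+\bigl[p(k\pi)-p(k\pi+s)\bigr]-(k\pi-t)\bigl[p'(k\pi+s)-p'(k\pi)\bigr]-s\,p'(k\pi+s),
\]
estimating the three correction terms separately, whereas you differentiate $\tau_k(t;\cdot)$ and invoke the mean value theorem on $[0,s_k]$; these are literally the same Taylor-type computation, with the conversions $k^{-q-1}\lesssim\eps^{-1}k^{-q-2}$ (valid because $\eps k=\cO(1)$ on $\cK^c_s$) used in the same places.

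Two cautions nonetheless. First, you assert that plugging $\tau_k(t;s_k)=\tau_k^0(t)+\cO(\eps^{-1}k^{-q-2})$ into \eqref{defskykxikbisbsalsoformpoursk} ``gives the announced formula,'' but \eqref{defskykxikbisbsalsoformpoursk} carries a $-\tau_k/\gamma$ inside the $\arcsin$ while \eqref{defskxjhgezjbbkldepour1} as printed carries a $+\tau_k^0(t)/\gamma$; your (correct) derivation actually produces a minus sign, so the plus sign in \eqref{defskxjhgezjbbkldepour1} is presumably a misprint (all downstream uses of this formula, e.g.\ \eqref{devskyk} and the proof of Proposition~\ref{passtothelimitWbeta}, are insensitive to it). A clean write-up should flag the discrepancy rather than silently absorb it. Second, your justification of the final equality $y_k=\cO(\eps^{-1}k^{-q-1})$ — ``follows from $|x|\le r$ combined with the support constraint $|y_k|\le r$'' — does not hold up: for $k$ near the upper end of $\cK^c_s$ one has $\eps^{-1}k^{-q-1}\sim \eps^q\ll r$, so the a priori bounds on $x$ and $y_k$ give no such decay. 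What your first equality and \eqref{stastapoint3} genuinely deliver is the \emph{relative} bound $y_k-x=(k\pi-t)p'(k\pi)+\cO(\eps^{-1}k^{-q-2})=\cO(\eps^{-1}k^{-q-1})$, which is precisely what is used later (see \eqref{devskyk} and the step leading to \eqref{devdepbkinterm}); the unconditional bound as written would only follow under a further restriction such as $x=\cO(\eps^{-1}k^{-q-1})$, which is the regime that actually appears in the applications.
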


\noindent Recall that 
$$ \forall \, k \in \cK_s^c  , \qquad \cO \Bigl(\frac{1}{\eps \, k^{q+2}} \Bigr) = \cO \Bigl(\frac{1}{k} \Bigr) = \cO 
\bigl( \eps^{\frac{1}{q+1}} \bigr) , $$
so that (\ref{defskxjhgezjbbkldepour}) furnishes indeed an explicit description of $ s_k (t,x) $ and $ y_k (t,x) $ modulo a 
small remainder. Remark that $ \tau_k^0(t) = \cO (1) $ as long as $ k \sim \eps^{-1 / (q+1)} $ and $ t \sim \eps^{-1} $. 
By contrast, when $ \eps \, k^{q+1} $ is large, we can exploit the following information
\begin{equation}\label{devskyk} 
s_k (t,x) = \xi_k (t,x) = s_k^x + \cO \Bigl(\frac{1}{\eps \, k^{q+1}} \Bigr)  , \qquad y_k (t,x)  = x + \cO \Bigl( \frac{1}{\eps \, k^{q+1}} \Bigr) 
\end{equation}
with
\begin{equation} \label{defskx} 
s_k^x := (-1)^{k+1} \ \arcsin  \Bigl( \frac{x}{\gamma} \Bigr) .
\end{equation}

\begin{proof} It suffices to write
\begin{equation} \label{delobhk} 
\begin{array}{rl}
 \tau_k (t;s) = \! \! \! & \tau^0_k (t) + \bigl \lbrack p (k \pi)- p (k \pi +s) \bigr \rbrack \\
 & - (k \pi -t) \bigl \lbrack p' (k \pi +s) - p' (k \pi ) \bigr \rbrack - s p' (k \pi +s) .
 \end{array} 
 \end{equation}
On the one hand, we have $ \vert s \vert \leq 2 \pi / 3 $. On the other hand, we can exploit (\ref{hypp2}) and (\ref{hypp3ded})
to obtain (since $ \eps k $ is bounded when $ k \in \cK^c_s $)
 \begin{subequations}\label{etilffautmon} 
\begin{eqnarray} 
& &  \displaystyle \qquad \tau_k (t;s) = \tau^0_k (t) + \cO \Bigl( \frac{1}{k^{q+1}} \Bigr) + \cO \Bigl( \frac{1}{\eps \, k^{q+2}} \Bigr) =
\tau^0_k (t) + \cO \Bigl( \frac{1}{\eps \, k^{q+2}} \Bigr) , \label{etilffautmon1} \\
& & \displaystyle \qquad \tau_k^0 (t) = \cO \Bigl( \frac{1}{k^{q}} \Bigr) + \cO \Bigl( \frac{1}{\eps \, k^{q+1}} \Bigr) = \cO \Bigl( \frac{1}
{\eps \, k^{q+1}} \Bigr). \label{etilffautmon2}
\end{eqnarray}
\end{subequations}
 
 \noindent We can deduce (\ref{defskxjhgezjbbkldepour1}) from (\ref{defskykxikbisbsalsoformpoursk}) and (\ref{etilffautmon1}). 
 Similarly, (\ref{defskxjhgezjbbkldepour2}) is a consequence of the relation (\ref{stastapoint3}).
\end{proof}

 
\subsubsection{All critical points are non-degenerate}\label{cripointsnon-dege} 
In view of stationary phase arguments, introduce  the Hessian matrix $ S_k (t,x) $ of the 
scalar function $ \Phi_k (t,x;\cdot) $, that is
\renewcommand\arraystretch{1.7}
\[ \begin{array} {rl}
 S_k (t,x) \! \! \! \! & := \operatorname{Hess} \, ( \Phi_k ) \bigl( t,x; s_k (t,x) , y_k (t,x) , \xi_k (t,x) \bigr) \\
 \ & \, = \text{$ \renewcommand\arraystretch{1}
 \left( \begin{array}{ccc}
\part^2_{ss} \Phi_k & \part^2_{sy} \Phi_k & \part^2_{s \xi} \Phi_k \\ 
\part^2_{ys} \Phi_k & \part^2_{yy} \Phi_k & \part^2_{y \xi} \Phi_k \\ 
\part^2_{\xi s} \Phi_k & \part^2_{\xi y} \Phi_k & \part^2_{\xi \xi} \Phi_k 
\end{array} \right)\bigl( t,x;s_k (t,x) ,y_k (t,x) ,\xi_k (t,x) \bigr) . 
$}
\end{array}
\]
\renewcommand\arraystretch{1} 

\noindent It is notable that a control on the invertibility of $ S_k $ turns out to be available for all $ k \in \cK^c_s $. 
What is even more remarkable is that such a control can be obtained with uniform bounds with respect to $ k \in \cK^c_s $
and $ t $ as in (\ref{adjusttT}).

\begin{lem} [The critical points are uniformly non-degenerate] \label{statioponidege} Up to decreasing again 
the value of  $ \eps_0 \in ]0,1] $, for all $ \eps \in ]0,\eps_0 ] $, for all $ k \in \cK^c_s $ as well as for all $ (t,x) \in [0, 2 \cT/ \eps] \times [-r,r] $, the 
possible critical point $ (s_k,y_k,\xi_k) (t,x) $ of $ \Phi_k (t,x;\cdot) $ is non-degenerate, such that $ \vert s_k (t,x) 
\vert \leq \pi / 3 $, and there exists $ C \in \RR_+^* $ such that
\begin{equation} \label{minunifdet} 
\forall \, (k,t,x) \in \cK^c_s \times [0, 2 \cT / \eps ] \times [-r,r] \, , \qquad 0 < C \leq \vert \mathrm{det} \, S_k (t,x) \vert .  
\end{equation}
In addition, the signature of the matrix $ S_k (t,x) $, that is the number of positive eigenvalues minus the number of 
negative eigenvalues, is given by
\begin{equation} \label{signSk} 
\operatorname{sign} \, \bigl( S_k (t,x) \bigr) = (-1)^k  . 
\end{equation}
\end{lem}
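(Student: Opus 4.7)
The plan is to read off both \eqref{minunifdet} and \eqref{signSk} from the three symmetric invariants of an explicit formula for $S_k(t,x)$. Differentiating $\Phi_k$ twice and using $\xi_k = s_k$ at the critical point, I would find
\[
 S_k(t,x) = \begin{pmatrix} A & 1 & B \\ 1 & 0 & -1 \\ B & -1 & C \end{pmatrix},
\]
where I abbreviate $A := (-1)^k \gamma \cos s_k(t,x)$, $B := p'\bigl(k\pi + s_k(t,x)\bigr)$ and $C := \bigl(k\pi + s_k(t,x) - t\bigr)\, p''\bigl(k\pi + s_k(t,x)\bigr)$.

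First, a direct cofactor expansion of $S_k$ yields $\det S_k = -A - 2B - C$; comparing this with the derivative of $h_k(t;\cdot)$ computed inside the proof of Lemma~\ref{statioponi}, I would recognize the identity $\det S_k(t,x) = \partial_s h_k\bigl(t; s_k(t,x)\bigr)$. The uniform lower bound \eqref{minassert} then immediately gives $|\det S_k(t,x)| \geq \gamma/4$, which is \eqref{minunifdet}. The bound $|s_k(t,x)| < \pi/3$ is already established inside the proof of Lemma~\ref{statioponi}, see \eqref{preliinform}.

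Second, I would compute the sum of the $2\times 2$ principal minors, namely $\mu_k := AC - B^2 - 2$. For $k \in \cK^c_s$, the lower bound $k \geq c\, \eps^{-1/(q+1)}$ combined with Assumption~\ref{resoa}, in particular \eqref{hypp2} and \eqref{hypp3ded}, yields $|B| = \cO(k^{-q-1})$ and $|C| \leq (2\cT/\eps)\, |p''(k\pi + s_k)| = \cO\bigl(\eps^{-1} k^{-q-2}\bigr) = \cO\bigl(\eps^{1/(q+1)}\bigr)$, uniformly in $(t,x,k)$. Hence both $AC$ and $B^2$ tend to $0$ uniformly, so $\mu_k \to -2$, and up to a further decrease of $\eps_0$ one may assume $\mu_k \leq -1$ throughout $\cK^c_s \times [0,2\cT/\eps] \times [-r,r]$. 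The same asymptotics applied to $\det S_k = -A - 2B - C$ give $\det S_k = -A + o(1) = -(-1)^k\gamma\cos s_k + o(1)$, and since $\cos s_k > 1/2$ one obtains $\operatorname{sgn}\bigl(\det S_k(t,x)\bigr) = (-1)^{k+1}$ uniformly.

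Finally, for a real symmetric $3\times 3$ matrix whose three eigenvalues share a common sign, i.e.\ with signature $\pm 3$, the sum of products of eigenvalues taken two at a time is strictly positive; since $\mu_k < 0$, the signature of $S_k$ must be $\pm 1$, and the discrimination rule reads: signature $= +1$ iff $\det S_k < 0$, and signature $= -1$ iff $\det S_k > 0$. Combined with the sign $(-1)^{k+1}$ of $\det S_k$ computed above, this yields $\operatorname{sign}\bigl(S_k(t,x)\bigr) = (-1)^k$, which is \eqref{signSk}. The main obstacle is ensuring the uniformity in $k\in \cK^c_s$ and $t\in [0,2\cT/\eps]$ of the decay estimates on $B$ and $C$: it is precisely the lower bound $k \geq c\, \eps^{-1/(q+1)}$ defining $\cK^c_s$ that compensates the large factor $(k\pi + s_k - t) = \cO(1/\eps)$ inside $C$ against the polynomial decay $|p''(\xi)| \sim |\ell|/\xi^{q+2}$ inherited from \eqref{hypp2}, producing the smallness $C = o(1)$ that simultaneously drives both the sign of $\mu_k$ and the sign of $\det S_k$.
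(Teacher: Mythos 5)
Your proof is correct and uses a genuinely different (and arguably cleaner) route than the paper's. The paper approximates $S_k$ by the model matrix $S^{\delta_k}_{(-1)^k}$ of \eqref{defdelta}, estimates the approximation error as $\cO(1/(\eps k^{q+2}))$, and then exploits the trace--determinant identity $\operatorname{Tr}(S^{\delta_k}_\pm)=\pm\delta_k=-\det(S^{\delta_k}_\pm)\neq 0$ to rule out a signature of $\pm 3$: since trace equals minus determinant, the eigenvalues cannot all share a sign. You instead work with $S_k$ itself and observe the \emph{exact} algebraic identity $\det S_k(t,x)=\partial_s h_k\bigl(t;s_k(t,x)\bigr)$, so that the lower bound \eqref{minassert} (established for Lemma~\ref{statioponi}) transfers verbatim to give $|\det S_k|\geq\gamma/4$ without redoing asymptotics --- this is a slightly sharper route to \eqref{minunifdet}. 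For the signature you replace the paper's trace-based invariant by the second elementary symmetric function $\mu_k = AC-B^2-2$, using the same decay estimates $|B|,|C|=o(1)$ (driven by the lower bound $k\gtrsim\eps^{-1/(q+1)}$ on $\cK^c_s$) to conclude $\mu_k\to -2<0$, which again forces signature $\pm 1$; combining with the sign of $\det S_k$ determined from the same identity then yields \eqref{signSk}. The two proofs are structurally parallel --- both reduce to ``one symmetric invariant rules out signature $\pm 3$, the determinant's sign then fixes $\pm 1$'' --- but your choice of invariant and the determinant identity make the argument somewhat more self-contained.
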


\begin{proof} We have already proven the condition $ \vert s_k \vert \leq \pi / 3 $, see (\ref{preliinform}). Now, 
taking into account (\ref{defphikini}), we find
\begin{equation} \label{devSkenvue}  
\begin{array}{rl} 
 S_k \! \! \! & = \begin{pmatrix}
(-1)^k \, \gamma \, \cos s_k & 1 & p' (k \, \pi + \xi_k) \\
1 & 0 & - 1 \\
p' (k \, \pi + \xi_k) & -1 & (k \, \pi + s_k - t) \, p'' (k \, \pi +
\xi_k)
\end{pmatrix} \\
\ & \displaystyle =  S^{\delta_k}_{(-1)^k} + \cO \Bigl( \frac{1}{\eps \, k ^{q+2}} \Bigr) 
\end{array} 
\end{equation}
with the conventions
\begin{equation} \label{defdelta} 
\quad S^{\delta_k}_\pm (t,x) := 
\begin{pmatrix}
\pm \, \delta_k & 1 & 0 \\
1 & 0 & - 1 \\
0 & -1 & 0
\end{pmatrix} , \qquad \frac{\gamma}{2} < \delta_k \equiv \delta_k (t,x) :=
\gamma \, \cos \, s_k < \frac{1}{4} .  
\end{equation}
For $ k \in \cK^c_s $, we find $ \eps^{-1} k^{-q-2} \lesssim \eps^{1/(q+1)} \ll 1 $. This furnishes 
\begin{equation} \label{calculdettt} 
\operatorname{det} S_k = \operatorname{det} \bigl( S^{\delta_k}_{(-1)^k} \bigr) + o(1) = - (-1)^k \delta_k + o(1) .
\end{equation}
In particular, this implies (\ref{minunifdet}). On the other hand, we have the algebraic property
\begin{equation} \label{algebraicproperty} 
  \operatorname{Tr} \, (S^{\delta_k}_\pm) = \pm \delta_k =  -\operatorname{det} \, (S^{\delta_k}_\pm) \not =0 .
\end{equation}
The trace is the sum of the eigenvalues, and the determinant is their product. In view of (\ref{algebraicproperty}), 
the eigenvalues cannot all have the same sign. Since $S^{\delta_k}_\pm$ is a
  $3\times3$ matrix, we have only two possibilities:
  \begin{itemize}
  \item The integer $ k $ is even. From (\ref{calculdettt}), the determinant must be negative. Two eigenvalues are positive 
  and one is negative. The signature is $ 1 $.
      \item The integer $ k $ is odd. From (\ref{calculdettt}), the determinant must be positive. One eigenvalue is positive and 
      two are negative. The signature is $ -1 $.
  \end{itemize}
Both results are consistent with (\ref{signSk}).
\end{proof}


\subsubsection{The existence for sure of critical points when $ k $ is large enough}\label{confirepcripoints} Let $ c_1\ge c $. Define 
$ \cK_s^{c_1} $ as in  (\ref{defdispk2}). The inequality $ c \le c_1$ implies that $ \cK_s^{c_1} \subset \cK_s^{c} $. For $ c_1 $ large 
enough, the content of Lemma \ref{statioponi} can be refined.

\begin{lem} [Signals from $ \cK^{c_1}_s $ with $ {c_1} $ large enough are always  detected] \label{statioponialways} Let $ \eps_0 $ 
as in Lemma \ref{statioponidege} as well as $ \eps \in ]0,\eps_0] $. There exists $ c_1 \geq c $ such that, for all $ k \in \cK_s^{c_1} $  
and for all $ (t,x) \in [0, 2 \cT/ \eps] \times [-r,r] $, there is exactly one position 
$ (s_k,y_k,\xi_k) (t,x) $ satisfying the two conditions \eqref{stastapoint} and \eqref{cdtstapoint}. 
\end{lem}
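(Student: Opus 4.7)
The uniqueness is already covered by Lemma~\ref{statioponi}, so the plan focuses entirely on existence. The strategy is to show that for $c_1$ large enough, the interval $\mathcal{I}\!s_k(t)$ defined in \eqref{intervaldefsk} contains $[-r,r]$ for every $k\in \mathcal{K}_s^{c_1}$ and every admissible time $t$. Once this is established, the intermediate value theorem applied to the continuous function $h_k(t;\cdot)$ yields a position $s_k(t,x)\in\,]-\pi/3,\pi/3[$ with $h_k(t;s_k(t,x))=x$ for every $x\in [-r,r]$; the companion values $y_k$ and $\xi_k$ are then read off from \eqref{defskykxik}.

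The first step is to extract from \eqref{delobhk}, exactly as in the proof of Lemma~\ref{Asymptoticetformulas}, the asymptotic identity
\begin{equation*}
h_k(t;s) = -(-1)^k\,\gamma\sin s \;+\; \tau_k(t;s) \;=\; -(-1)^k\,\gamma\sin s \;+\; \mathcal{O}\!\Bigl(\tfrac{1}{\varepsilon k^{q+1}}\Bigr),
\end{equation*}
uniformly in $s\in\,]-\pi/3,\pi/3[$ and $t\in[0,2\mathcal{T}/\varepsilon]$, where the remainder bound comes from \eqref{etilffautmon1}--\eqref{etilffautmon2} together with Assumption~\ref{resoa}. For $k\in\mathcal{K}_s^{c_1}$, one has $\varepsilon k^{q+1}\ge c_1^{q+1}$, so the remainder is controlled by $C/c_1^{q+1}$ with a constant $C$ depending only on $p$, $\gamma$ and $\mathcal{T}$.

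The second step is the geometric observation that the principal part $s\mapsto -(-1)^k\gamma\sin s$ maps $]-\pi/3,\pi/3[$ onto $]-\gamma\sqrt{3}/2,\gamma\sqrt{3}/2[$, and that $r<\gamma/2<\gamma\sqrt{3}/2$ by Assumption~\ref{choiopro}. In particular, it sends $[-\pi/4,\pi/4]$ onto an interval strictly containing $[-r,r]$ with a fixed margin $\eta_0:=\gamma\sin(\pi/4)-r>0$. Choosing $c_1$ so large that $C/c_1^{q+1}<\eta_0/2$ guarantees that the perturbation $\tau_k(t;\cdot)$ cannot destroy this covering property. More precisely, for such $c_1$, $h_k(t;-\pi/4)$ and $h_k(t;+\pi/4)$ lie on opposite sides of any prescribed $x\in[-r,r]$, and the intermediate value theorem (combined with the strict monotonicity \eqref{minassert} of $h_k(t;\cdot)$ on $]-\pi/3,\pi/3[$) produces a unique preimage $s_k(t,x)$ inside this interval.

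The main obstacle is uniformity: we need the bound on $\tau_k(t;s)$ to hold simultaneously in $k\in\mathcal{K}_s^{c_1}$, $s\in]-\pi/3,\pi/3[$ and $t\in[0,2\mathcal{T}/\varepsilon]$, while $t$ itself is allowed to grow like $1/\varepsilon$. This is precisely what saves us: the dangerous factor $(k\pi+s-t)p'(k\pi+s)$ is bounded by $(2\mathcal{T}/\varepsilon)\cdot |p'(k\pi+s)|$, and by \eqref{hypp3ded} this quantity is of order $1/(\varepsilon k^{q+1})$, which is smaller than $1/c_1^{q+1}$ whenever $k\in\mathcal{K}_s^{c_1}$. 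All other contributions to $\tau_k(t;s)$ in \eqref{delobhk} are controlled by Assumption~\ref{resoa} independently of $t$. Once $c_1$ is fixed so that the uniform remainder is less than $\eta_0/2$, the conclusion follows for every $\varepsilon\in\,]0,\varepsilon_0]$ without any further shrinking of $\varepsilon_0$.
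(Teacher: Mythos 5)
Your proof is correct and takes essentially the same route as the paper's: reduce to showing $[-r,r]\subset \cI \! s_k(t)$ by comparing $h_k$ with its leading $\pm\gamma\sin s$ part at two symmetric endpoints and invoking the intermediate value theorem together with the monotonicity \eqref{minassert}, uniqueness already coming from Lemma~\ref{statioponi}. The only substantive difference is that you absorb both remainder contributions into a single $\cO(c_1^{-q-1})$ bound (using $\eps k\lesssim \cT$ so that $k^{-q}\lesssim 1/(\eps k^{q+1})$), whereas the paper keeps them as $\cO(\eps^{q/(q+1)})+\cO(c_1^{-q-1})$ and invokes ``for $\eps$ small enough''; your version has the small advantage of making it explicit that no further decrease of $\eps_0$ is required, which is actually what the lemma statement (with $\eps_0$ already fixed from Lemma~\ref{statioponidege}) calls for. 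Evaluating at $\pm\pi/4$ instead of $\pm\pi/3$ is a cosmetic choice; both lie in $]-\pi/3,\pi/3[$ where monotonicity holds and give a positive margin over $r<\gamma/2$.
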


\begin{proof} First observe that, given $ \eps_0 $ as in Lemma \ref{statioponidege}, all preceding estimates remain valid if, like in 
the case of $ c_1 $, we fix the value of $ c $ above the one of (\ref{defdec}). Consider (\ref{sysPhik2}), and remark that
\begin{equation} \label{delobhk2} 
 h_k (t;s) = (-1)^{k+1} \ \gamma \, \sin s + \cO(k^{-q}) + \eps^{-1} \ \cO(k^{-q-1}) . 
 \end{equation}
Since $ c_1 \, \eps^{-1/(q+1)} \leq k $, it follows that
\[
 h_k (t;\pm \pi/3) = \pm (-1)^{k+1} \ \sqrt 3 \ \gamma / 2 
 +\cO \bigl( \eps^{\frac{q}{q+1}}\bigr)+\cO\bigl(c_1^{-q-1} \bigr) . 
\]
In particular, for $ c_1 $ large enough and $\eps$ sufficiently small, we find
\[ \pm (-1)^{k+1} \ h_k (t;\pm \pi/3) > \gamma / 2 . \]
Taking into account (\ref{intervaldefsk}), we have
$$ x \in [-r,r] \subset [- \gamma/2,\gamma/2] \subset \cI \! s_k (t) . $$ 
Since $ h_k (t;\cdot) $ is continuous, we can apply the intermediate value theorem. It says that we can find $ s_k (t,x) \in ]- \pi/3 , \pi/3[ $ 
satisfying (\ref{defskykxikbisbs}). Lemma \ref{statioponi} guarantees that such $ s_k (t,x) $ is unique in the interval $ ]- 2 \pi/3 , 2 \pi/3 [ $. 
\end{proof}


\subsubsection{Towards stationary phase results}\label{towardsspr}

\noindent Below, we recall a standard statement, which can be found e.g. in \cite[Proposition~5.2]{DimSjo99} or
\cite[Theorem~3.16]{Zworski}. It will be used in this section and in the nonlinear analysis of Section~\ref{sec:nonlineareffectg=1}.

\begin{theo}[From \cite{DimSjo99,Zworski}]\label{theo:phazstat} 
Select $\phi\in \cC^\infty(\RR^n;\RR)$ and $a\in \cC_c^\infty(\RR^n)$ satisfying  $\operatorname{supp} a =\Upsilon$. Let $ h > 0 $. Denote
\[ I_h = I_h(a,\phi) := \int_{\RR^n} e^{-i\phi(x)/h}a(x)dx.\]
  Suppose that
  \[ x_0\in \Upsilon , \quad \nabla_{\! x} \phi(x_0)=0 ,\quad \operatorname{det} \part^2 \phi (x_0)\not =0.
  \]
  Assume further that $ \nabla_{\! x}\phi (x)\not =0$ on  $\Upsilon\setminus\{x_0\}$. Then, for all $ N \in \NN^* $, there exist 
  differential operators $M_{2j}(x;D)$ of order less than or equal to $2j$ such that
  \[ \begin{array}{l} 
\displaystyle \Bigl \vert I_h - h^{n/2} \sum_{j=0}^{N-1} h^j  \bigl \lbrack M_{2j}(x;D) a (\cdot) \bigr \rbrack_{x =x_0} e^{-i\phi(x_0)/h} \Bigr \vert \\
\displaystyle \qquad \qquad\qquad\qquad \qquad \le C_N h^{n/2+N} \! \! \sum_{|\alpha| \le 2N+n+1} \! \! \|\part^\alpha a\|_{L^\infty} .
    \end{array}
    \]
    The constant $C_N$ depends on the compact $\Upsilon$ and also on the $L^\infty$ norm of $ \phi $ and its derivatives 
    on $\Upsilon$. In particular, denoting by $ \operatorname{sign} S $ the signature of $ S $, we find
    \begin{equation} \label{stapourA0} 
    M_0 =\frac{ (2\pi)^{n/2}}{| \operatorname{det} \part^2 \phi (x_0)|^{1/2}} \ e^{-i\frac{\pi}{4}\operatorname{sign} \part^2 \phi
    (x_0)}. 
    \end{equation}
\end{theo}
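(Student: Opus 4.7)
The plan is to follow the classical three-step strategy for stationary phase asymptotics: localize around $x_0$ via a partition of unity, dispose of the tail by non-stationary phase, and reduce the remaining piece to a computable Gaussian integral. First I would fix a small open neighborhood $U\ni x_0$ on which the Morse normal form applies (this is possible since $\det\partial^2\phi(x_0)\neq 0$), pick $\chi\in \cC^\infty_c(U)$ with $\chi\equiv 1$ on a smaller neighborhood $U'\Subset U$, and decompose $a = \chi a +(1-\chi)a$, which splits $I_h$ into $I_h^{\rm stat} + I_h^{\rm rem}$.

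For the remainder piece $I_h^{\rm rem}$, the amplitude $(1-\chi)a$ is supported in the compact set $\Upsilon\setminus U'$, on which the hypothesis $\nabla\phi\neq 0$ yields a uniform bound $|\nabla\phi|\ge \eta>0$. I would then iterate $N$ times the transport operator $L=(ih/|\nabla\phi|^2)\,\nabla\phi\cdot\nabla$, which satisfies $Le^{-i\phi/h}=e^{-i\phi/h}$, and its adjoint $L^*$, exactly as in the proof of Proposition~\ref{prop-vanoscint}. This gives $|I_h^{\rm rem}|\le C_N h^N\sum_{|\alpha|\le N}\|\partial^\alpha a\|_{L^\infty}$ with a constant depending on $\eta$ and finitely many derivatives of $\phi$ on $\Upsilon$; this is better than what is claimed and contributes only to the remainder of the expansion. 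For the stationary piece $I_h^{\rm stat}$, I would either apply Morse's lemma to find a smooth diffeomorphism $y=\Psi(x)$ with $\Psi(x_0)=0$ and $\phi(x)-\phi(x_0)=\tfrac12\langle S_0 y,y\rangle$ where $S_0$ is a diagonal matrix of $\pm 1$'s with the same signature as $\partial^2\phi(x_0)$ and $|\det S_0|=|\det\partial^2\phi(x_0)|$, or (equivalently) use a linear change of variables to diagonalize the Hessian and treat the cubic remainder by one further integration by parts. Either way the integral becomes
\[
I_h^{\rm stat}= e^{-i\phi(x_0)/h}\int_{\RR^n} e^{-i\langle S_0 y,y\rangle/(2h)}\,b(y)\,dy,
\]
with $b\in \cC^\infty_c$ (the jacobian of $\Psi^{-1}$ is absorbed into $b$).

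The exact Fresnel identity
\[
\int_{\RR^n} e^{-i\langle S_0 y,y\rangle/(2h)}\,dy=\frac{(2\pi h)^{n/2}}{|\det S_0|^{1/2}}\,e^{-i\frac{\pi}{4}\operatorname{sign} S_0}
\]
combined with the Taylor expansion $b(y)=\sum_{|\alpha|<2N}\frac{\partial^\alpha b(0)}{\alpha!}y^\alpha+R_N(y)$ and the even-moment formulas for the Gaussian (computed by differentiating the identity above with respect to auxiliary parameters) produces the full asymptotic series. The coefficients pair the even-moment tensors of the quadratic $\langle S_0^{-1}\eta,\eta\rangle$ with derivatives of $b$, thereby defining the differential operators $M_{2j}(x;D)$ of order $\le 2j$ when one pulls everything back through $\Psi$ to $x$-coordinates. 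In particular the zeroth order term yields exactly the prefactor \eqref{stapourA0}. The main obstacle is to control the Taylor remainder $R_N$ and the contribution of the cubic error in $\phi$ uniformly, while displaying only the $L^\infty$-norms of $\partial^\alpha a$ up to order $2N+n+1$: this is handled by writing $R_N$ with an integral remainder, inserting it into the oscillatory Gaussian integral, and performing $\lfloor n/2\rfloor+1$ additional integrations by parts based on the identity $(ih/|y|^2)\,y\cdot\nabla_y e^{-i\langle S_0 y,y\rangle/(2h)}=-e^{-i\langle S_0 y,y\rangle/(2h)}$ after a suitable rescaling, which explains the index $2N+n+1$ in the final bound.
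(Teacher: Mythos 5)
The paper does not prove Theorem~\ref{theo:phazstat}; it is a standard stationary phase result cited verbatim from the references \cite{DimSjo99,Zworski}, so there is no in-house proof to compare against. Your sketch follows the classical scheme used in those references (localize, non-stationary phase for the tail, Morse reduction, Fresnel integral, Taylor expansion), and the first three steps are sound.

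However, there are two concrete problems. The small one: if $S_0$ is a diagonal matrix of $\pm 1$'s, then $|\det S_0|=1$, not $|\det\partial^2\phi(x_0)|$; the Hessian determinant must instead appear through the Jacobian of $\Psi$ at $x_0$, exactly as the denominator $|\det\partial^2\phi(x_0)|^{1/2}$ of $M_0$ suggests. The more serious one is the remainder control. Your proposed transport identity is incorrect: computing $y\cdot\nabla_y\,e^{-i\langle S_0 y,y\rangle/(2h)}=(-i/h)\langle S_0 y,y\rangle\,e^{-i\langle S_0 y,y\rangle/(2h)}$, one finds
\[
\frac{ih}{|y|^2}\,y\cdot\nabla_y\,e^{-i\langle S_0 y,y\rangle/(2h)}
= \frac{\langle S_0 y,y\rangle}{|y|^2}\,e^{-i\langle S_0 y,y\rangle/(2h)},
\]
and the factor $\langle S_0 y,y\rangle/|y|^2$ is not a constant (and indeed vanishes on a cone) unless $S_0=\pm I$. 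So the integration-by-parts argument you invoke does not close, and in particular cannot account for the precise count $2N+n+1$ of derivatives of $a$ in the error bound. The standard way to obtain that count, and the one used in the cited references, is to pass to the Fourier side: writing the Fresnel factor as the Fourier transform of the imaginary Gaussian $e^{ih\langle S_0^{-1}\xi,\xi\rangle/2}$, pairing with $\hat b$, Taylor expanding in $h$, and bounding the remainder $\cO(h^N)$ term via $\|\langle\xi\rangle^{-(n+1)}\|_{L^1}$ and $2N+n+1$ derivatives of $b$. This is where the extra $n+1$ derivatives come from. Without replacing your transport-operator step by this Fourier/Plancherel argument (or an equivalent rescaling of $y\mapsto\sqrt{h}\,y$ combined with a careful treatment of the non-absolutely-convergent tail), the proof has a genuine gap.
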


\noindent Theorem \ref{theo:phazstat} is aimed to be applied to the oscillatory integral defining $ w_k $. When doing this, it 
is important to get uniform estimates with respect to all parameters $ k $, $ t $ and $ x $. Lemma \ref{statioponidege} is a first 
indication that this works well. Another aspect is related to the uniform control of the constants $ C_N $. As mentioned above, this can 
be achieved by looking at the derivatives of $ \Phi_k  $ on $ \Upsilon $.

\begin{lem}[Estimates on the derivatives of $ \Phi_k $]  \label{controlofphhsggde} Let $ \eps_0 $ 
as in Lemma \ref{statioponidege} as well as $ \eps \in ]0,\eps_0] $. With the compact set $\Upsilon$ given by 
\eqref{compsetwk}, for all $ N \ge 2 $, there exists a constant $ C_N $ such that uniformly in $ t $ as in \eqref{adjusttT} and 
in $ x $ with $ \vert x \vert \leq r $, we have
\begin{equation} \label{boundunigradphi} 
\sup_{k \in \cK^c_s} \ \sup_{(s,y,\xi) \in \Upsilon} \ \sum_{1 \leq \vert \alpha \vert \leq N}  \vert \part^\alpha_{s,y,\xi} \Phi_k (t,x;s,y,\xi) 
\vert \leq C_N .
\end{equation}
\end{lem}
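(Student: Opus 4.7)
The plan is to compute all mixed partial derivatives $\partial^\alpha_{s,y,\xi}\Phi_k$ explicitly from the formula \eqref{defphikini2}, then dispose of the easy families by inspection and reduce the remaining ones to a single product estimate.

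First, I would exploit the fact that $\Phi_k$ is \emph{affine} in $y$. This immediately kills every $y$-derivative of order $\ge 2$; the only nonzero $y$-derivatives are $\partial_y\Phi_k = s-\xi$, $\partial_s\partial_y\Phi_k=1$ and $\partial_\xi\partial_y\Phi_k=-1$, all of them bounded on $\Upsilon$ since $|s|\le 2\pi/3$, $|\xi|\le s+1/4$. Next, purely $s$-derivatives of order $\ge 2$ reduce to $\pm(-1)^k\gamma \sin s$ or $\pm(-1)^k\gamma\cos s$, hence are bounded by $\gamma<1/4$; and $\partial_s\Phi_k = p(k\pi+\xi)-1+y+(-1)^k\gamma\sin s$ is bounded by $2+r$ uniformly since $0\le p\le 1$. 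What remains are derivatives involving at least one $\xi$.

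For these, I compute from \eqref{defphikini2} that, for $n\ge 2$,
\begin{equation*}
\partial_\xi \Phi_k = (k\pi + s - t)\,p'(k\pi+\xi) + x-y,\qquad \partial^{\,n}_\xi \Phi_k = (k\pi + s - t)\,p^{(n)}(k\pi+\xi),
\end{equation*}
with mixed derivatives $\partial_s\partial^{\,n}_\xi\Phi_k = p^{(n)}(k\pi+\xi)$ for $n\ge 1$, while $\partial^{\,m}_s\partial^{\,n}_\xi\Phi_k$ vanishes as soon as $m\ge 2$ and $n\ge 1$, and derivatives involving $y$ together with $\xi$ are handled as above. Thus the entire statement is reduced to showing that the family of products $(k\pi + s - t)\,p^{(n)}(k\pi+\xi)$ remains bounded uniformly.

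This product estimate is the main (and essentially only) obstacle. On the one hand, with $t\in[\cT/\eps,2\cT/\eps]$, $k\le 2/3+\cT/(\pi\eps)$ and $|s|\le 2\pi/3$, one has $|k\pi+s-t|=\cO(1/\eps)$; this factor is potentially large. On the other hand, the defining inequality $k\ge c\,\eps^{-1/(q+1)}$ for $k\in\cK_s^c$ together with $|\xi|\le s+1/4$ forces $k\pi+\xi\to +\infty$ as $\eps\to 0$, so I can apply the asymptotic \eqref{hypp3ded} to get $p'(k\pi+\xi)\lesssim (k\pi+\xi)^{-q-1}\lesssim k^{-q-1}\lesssim \eps/c^{q+1}$, and Assumption~\ref{strengthenp} then propagates this bound to the higher derivatives: $|p^{(n)}(k\pi+\xi)|\lesssim p'(k\pi+\xi)\lesssim \eps$ for $2\le n\le D$. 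Multiplying these two estimates, the large factor $1/\eps$ is exactly absorbed by the decay $\cO(\eps)$ of $p^{(n)}$ at the resonance, yielding $|(k\pi+s-t)\,p^{(n)}(k\pi+\xi)|=\cO(1)$ uniformly in $k\in\cK_s^c$ and in $(t,x)$ in the prescribed range (the bound $C_N$ naturally inherits the dependence on $D$ anticipated in Assumption~\ref{strengthenp}). Summing the individual bounds over $1\le|\alpha|\le N$ gives \eqref{boundunigradphi}.
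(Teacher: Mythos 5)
Your proof is correct and follows essentially the same line as the paper's: organize by how the multi-index $\alpha$ splits over $(s,y,\xi)$, note that all derivatives except those producing the factor $(k\pi+s-t)\,p^{(n)}(k\pi+\xi)$ are trivially bounded on $\Upsilon$, and then absorb the $\cO(\eps^{-1})$ size of $k\pi+s-t$ by the $\cO(k^{-q-1})=\cO(\eps)$ decay of $p^{(n)}(k\pi+\xi)$ coming from \eqref{hypp3ded} together with Assumption~\ref{strengthenp}, valid precisely because $k\in\cK_s^c$ forces $k\gtrsim\eps^{-1/(q+1)}$. (Minor slip: you wrote $|\xi|\le s+1/4$ where you mean $|\xi-s|\le 1/4$, but since $|s|\le 2\pi/3$ this still gives $\xi$ bounded and $k\pi+\xi\to+\infty$, so the conclusion is unaffected.)
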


\begin{proof} Looking at (\ref{compsetwk}) and (\ref{stastapoint2}), we have indeed (\ref{boundunigradphi}) for the terms which are involving 
multi-indices $ \alpha = (\alpha_1 , \alpha_2, \alpha_3 ) $ with $ 1 \leq \alpha_2 $. For $ \alpha_2 = 0 $ and $ 1 \leq \alpha_1 $,
consider the line (\ref{stastapoint1}). When $ \alpha_1 = 1 $, just apply (\ref{resobis}). For $ 1 < \alpha_1 $, combine the property
(\ref{hypp3ded}) together with (\ref{controlderip}). Now, assume that $ \alpha_1 = \alpha_2 = 0 $ and $ 1 \leq \alpha_3 \leq N $. 
Then, exploiting (\ref{controlderip}), we find
$$ \vert k \, \pi + s -t \vert \ \vert p^{(\alpha_3)} (k \, \pi + \xi) \vert \, \lesssim \, \eps^{-1} \ p' (k \, \pi + \xi) = \eps^{-1} \, \cO (
k^{-q-1} ) . $$ 
Since $ k \in \cK^c_s $, the right hand side is bounded. Summing  the preceding upper bounds over multi-indices $\alpha$
yields  (\ref{boundunigradphi}). 
\end{proof}


\subsection{The accumulation of wave packets}\label{longregime} 

\noindent In Paragraph~\ref{strucwavepack}, the solution $ u $ to the equation (\ref{eq:mainajoutlinpourm}) is represented 
modulo some $ o(\eps) $ as a sum of wave packets $ u_k $ with $ k \in \cK^{c_1}_s $. Then, the purpose is to distinguish 
between situations where constructive  \href{https://en.wikipedia.org/wiki/Interference_(wave_propagation)}{interferences} occur 
(Paragraph \ref{consinterference}) from those where, on the contrary, destructive
\href{https://en.wikipedia.org/wiki/Interference_(wave_propagation)}{interferences} take place (Paragraph~\ref{desinterference}). 


\subsubsection{The solution as a sum of wave packets}\label{strucwavepack} 
\noindent  Combining (\ref{remindudevenk}), (\ref{defvrvkensum}) and (\ref{combi1}), the solution $ u$ to (\ref{eq:mainajoutlinpourm}) 
can be put in the form
\begin{equation}\label{sumofwavepackets1}  
u(t,x) = \sum_{k \in \NN} \, u_k (t,x) + \cO ( \eps^\infty) ,
\end{equation}
\vskip -2mm
\noindent with
\begin{equation}\label{sumofwavepackets2}   
u_k (t,x) := \frac{\sqrt \eps}{2 \, \pi} \ e^{i \, (-\gamma+ k \, \pi - k \, \pi \, x)/\eps} \ w_k(t,x) . 
\end{equation}

\begin{lem}\label{decomposol} Fix $ c_1 $ as in Lemma \ref{statioponialways}. Under Assumption \ref{strengthenp} with 
$ D \geq 3 $, we can expand $ u  $ according to
\begin{equation}\label{sumofwavepackets3}  
u(t,x) = \sum_{k \in \cK_s^{c_1}} \, u_k (t,x) + \, \cO \bigl( \eps^{2-\frac{1}{q+1}} \bigr) .
 \end{equation}
 The wave packets $ u_k $ are of size $ \cO (\eps^2) $. Assuming that $ D \geq 4 $, they have the form
 \begin{equation} \label{desofuk} 
u_k (t,x) = \eps^2 \ b_k (\eps,t,x) \ e^{i \, \Psi_k (t,x,s_k) / \eps} + \cO \bigl( \eps^{2 + \frac{1}{q+1}} \bigr) = \cO 
\bigl( \eps^2 \bigr) ,
 \end{equation}
with  
 phases $ \Psi_k  $ and profiles $ b_k $ given by
 \begin{subequations}\label{defpsikbk} 
\begin{eqnarray} 
& & \qquad \displaystyle \Psi_k (t,x,s_k) := - \, \gamma + t + (-1)^k \, \gamma \, \cos s_k \label{defpsikbk1} \\
& & \qquad \displaystyle \qquad \qquad \qquad \ \ + \bigl \lbrack 1 - p (k \, \pi + s_k) \bigr \rbrack \, (k \, \pi + s_k  -t) - 
(k \, \pi + s_k) \, x , \nonumber \\
& & \qquad \displaystyle b_k (\eps,t,x) := (2 \, \pi)^{1/2} \ \frac{e^{-i \, (-1)^k \, \frac{ \pi}{4}} }{\vert \operatorname{det}
S_k \vert^{1/2}} \ a (\eps \, k \, \pi + \eps \, s_k , k \, \pi + s_k,y_k ) . \label{defpsikbk2} 
\end{eqnarray}
\end{subequations}
\end{lem}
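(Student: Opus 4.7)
The plan is to start from the representation \eqref{sumofwavepackets1}--\eqref{sumofwavepackets2}, which already writes $u$ as the sum of wave packets $u_k=\tfrac{\sqrt\eps}{2\pi}e^{i(-\gamma+k\pi-k\pi x)/\eps}w_k$ modulo $\cO(\eps^\infty)$, and to analyse the $w_k$'s in three separate groups: the dispersive range $\cK_d^c$, the transition zone $\cK_s^c\setminus\cK_s^{c_1}$, and the genuinely stationary range $\cK_s^{c_1}$. On each group I would prove a pointwise uniform estimate on $u_k$, and then sum.

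For $k\in \cK_d^c$, Lemma~\ref{localizations} applied with $R=r$ gives $|w_k|=\cO(\eps^{D-1})$ uniformly on $[\cT/\eps,2\cT/\eps]\times[-r,r]$, hence $|u_k|=\cO(\eps^{D-1/2})$. Since $|\cK_d^c|\le c\,\eps^{-1/(q+1)}$, the corresponding partial sum is $\cO(\eps^{D-1/2-1/(q+1)})$, which under the hypothesis $D\ge 3$ is $o(\eps^{2-1/(q+1)})$ and therefore absorbed in the remainder of \eqref{sumofwavepackets3}. For the transition zone $\cK_s^c\setminus\cK_s^{c_1}$, Lemma~\ref{statioponi} provides at most one critical point of $\Phi_k(t,x;\cdot)$ in $\Upsilon$: when none is present, a non-stationary phase argument in $(s,y,\xi)$ in the spirit of Proposition~\ref{prop-vanoscint} yields $w_k=\cO(\eps^\infty)$; when a critical point exists, it is non-degenerate by Lemma~\ref{statioponidege}, and Theorem~\ref{theo:phazstat} furnishes $w_k=\cO(\eps^{3/2})$, so $u_k=\cO(\eps^2)$. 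The transition zone contains at most $\cO(\eps^{-1/(q+1)})$ indices, hence the whole contribution is $\cO(\eps^{2-1/(q+1)})$, which is exactly the error claimed in \eqref{sumofwavepackets3}.

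For $k\in \cK_s^{c_1}$ I would apply Theorem~\ref{theo:phazstat} to $w_k$ with $h=\eps$, $n=3$, and the compact set $\Upsilon$ of \eqref{compsetwk}. Lemma~\ref{statioponialways} guarantees the existence of a unique critical point $(s_k,y_k,\xi_k)(t,x)$, Lemma~\ref{statioponidege} gives the uniform lower bound $|\det S_k|\ge C>0$ together with $\operatorname{sign}S_k=(-1)^k$, and Lemma~\ref{controlofphhsggde} supplies uniform upper bounds on the derivatives of $\Phi_k$; this is the step where Assumption~\ref{strengthenp} is activated, and where $D\ge 4$ is needed so as to control enough derivatives of $\Phi_k$ for the constants in Theorem~\ref{theo:phazstat} to be uniform in $(k,t,x)$. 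Reading off the leading coefficient $M_0=(2\pi)^{3/2}|\det S_k|^{-1/2}e^{-i(-1)^k\pi/4}$ from \eqref{stapourA0}, I obtain
\[
w_k=\eps^{3/2}M_0\,\zeta(k\pi+s_k)\,a(\eps k\pi+\eps s_k,k\pi+s_k,y_k)\,e^{-i\Phi_k(t,x;s_k,y_k,s_k)/\eps}+\cO(\eps^{5/2}),
\]
after which the factor $\zeta(k\pi+s_k)=1+\cO(1/k)=1+\cO(\eps^{1/(q+1)})$ from Assumption~\ref{choiopro} and multiplication by $\sqrt\eps/(2\pi)$ produce $u_k=\cO(\eps^2)$ together with the explicit leading term. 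The formula \eqref{desofuk}--\eqref{defpsikbk2} then follows from a short algebraic reduction of the total phase $-\gamma+k\pi-k\pi x-\Phi_k(t,x;s_k,y_k,s_k)$: substituting $\xi_k=s_k$ in \eqref{defphikini2} and regrouping by means of the identity $-p(k\pi+s_k)(k\pi+s_k-t)=[1-p(k\pi+s_k)](k\pi+s_k-t)-(k\pi+s_k-t)$ gives exactly $\Psi_k(t,x,s_k)$ as in \eqref{defpsikbk1}, while the amplitude collapses into $b_k$ as in \eqref{defpsikbk2}.

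The main obstacle is that $k$ and $t$ both scale like $1/\eps$, so the standard constants entering the stationary phase theorem would \emph{a priori} blow up; ensuring a single bound valid uniformly over $(k,t,x)\in \cK_s^{c_1}\times[\cT/\eps,2\cT/\eps]\times[-r,r]$ is the delicate point of the argument. The block of lemmas \ref{statioponi}--\ref{controlofphhsggde}, together with the asymptotic descriptions of $s_k$, $y_k$ and $\xi_k$ provided by Lemmas \ref{Propertiesofsk} and \ref{Asymptoticetformulas}, has been arranged precisely to defeat this difficulty, and once the uniform lower bound on $|\det S_k|$ and the uniform upper bounds on the derivatives of $\Phi_k$ are in place the machinery of Theorem~\ref{theo:phazstat} delivers the claimed estimates without further difficulty.
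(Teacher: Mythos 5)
Your proof follows essentially the same path as the paper's: the same three-zone splitting of $\cK$, Lemma~\ref{localizations} for the dispersive range, the stationary-phase package (Lemmas~\ref{statioponi}, \ref{statioponidege}, \ref{statioponialways}, \ref{controlofphhsggde} feeding Theorem~\ref{theo:phazstat}) for the remaining ranges, the replacement $\zeta(k\pi+s_k)=1+\cO(1/k)$ as the dominant approximation error, and the algebraic regrouping of the total phase into $\Psi_k$. The constants, the role of $D\ge 3$ versus $D\ge 4$, and the final bookkeeping all match.

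One small caveat concerns the transition zone $\cK_s^c\setminus\cK_s^{c_1}$: you assert that when $\Phi_k(t,x;\cdot)$ has no critical point in $\Upsilon$ a non-stationary phase argument gives $w_k=\cO(\eps^\infty)$. That would require a lower bound on $\vert\nabla_{s,y,\xi}\Phi_k\vert$ on $\Upsilon$ that is \emph{uniform} in $(k,t,x,\eps)$; but the unique critical point given by Lemma~\ref{statioponi} may sit just outside $\Upsilon$, so this uniform gap is not available, and the $\cO(\eps^\infty)$ is unjustified. The paper sidesteps this by observing that the uniform bound $w_k=\cO(\eps^{3/2})$ holds whether or not a critical point lies in $\Upsilon$, which is exactly what one needs and is all you actually use: $\cO(\eps^{-1/(q+1)})$ indices times $\cO(\eps^2)$ for $u_k$ reproduces the $\cO(\eps^{2-1/(q+1)})$ remainder. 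So the overstatement does not break the argument, but it should be replaced by the weaker and correct uniform $\cO(\eps^{3/2})$ bound.
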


\begin{proof} The sum inside (\ref{sumofwavepackets1}) can be split into 
\[  u(t,x) = \underbrace{ \vphantom{ \sum_{\frac{c}{\eps^{1/(q+1)}}<k < \frac{c_1}{\eps^{1/(q+1)}}} }  
\sum_{k \in \cK_d^c} u_k (t,x)}_{\text{\large \ding{202}}} + \underbrace{\sum_{\frac{c}{\eps^{1/(q+1)}}
<k \le \frac{c_1}{\eps^{1/(q+1)}}} \, u_k (t,x)}_{\text{\large \ding{203}}} + \underbrace{\vphantom{ 
\sum_{\frac{c}{\eps^{1/(q+1)}}<k < \frac{c_1}{\eps^{1/(q+1)}}} } \sum_{k \in \cK_s^{c_1}} \, u_k (t,x)}_{
  \text{\large \ding{204}}}  \, + \, \cO ( \eps^\infty) . \]
We may recognize here the dispersive part $ \text{\large \ding{202}} $, the transitional part $ \text{\large \ding{203}} $
which is possibly absent (when $c=c_1$), and the cumulative part $ \text{\large \ding{204}} $. Not all $ k \in \NN $ 
have a leading order contribution, and not all with the same size. We will explain separately how to estimate each part. 

\medskip

\noindent \text{\large \ding{202}} For $ k \in \cK_d^c $, it suffices to apply Lemma \ref{localizations} to get
$$ \Bigl \vert \sum_{k \in \cK_d^c} u_k (t,x) \Bigr \vert \leq \frac{\sqrt \eps}{2 \, \pi} \, \sum_{k \in \cK_d^c} \vert w_k (t,x) \vert 
\leq C \ \sqrt \eps \, \sum_{k \in \cK_d^c} \eps^{D-1} = \cO \bigl( \eps^{D-\frac{1}{2}-\frac{1}{q+1}} \bigr) . $$

\medskip

\noindent \text{\large \ding{203}} For $ k \in \cK_s^c $ with $ c $ as
in (\ref{defdec}), the idea is to exploit Lemma~\ref{statioponidege}
to implement Theorem~\ref{theo:phazstat} at the level of  
the oscillatory integral (\ref{defidewk}). To do this, all assumptions
must be checked: 

\smallskip

\noindent - The first, and most important, is (\ref{compsetwk}) which guarantees that the integration 
is on a compact set (independent of $ k $, $ t $ or $ \eps $).

\smallskip

\noindent - The second is (\ref{boundunigradphi}) which enables, away from $ (s_k,y_k,\xi_k)$, to perform $ D-1 $ 
integrations by parts, and still to obtain some $ \cO (\eps^{D-1} ) $ error term. When doing this, a major difficulty is 
that the phase $ \Phi_k  $ still depends on $ (t , x) $. And therefore, according to (\ref{adjusttT}), since $ t $
may be of size $ \eps^{-1} $, it does depend on $ \eps $. The aim of the control (\ref{boundunigradphi}) is precisely 
to overcome this difficulty.

\smallskip

\noindent - The main contribution is provided by a small neighborhood of $ (s_k,y_k,\xi_k) $. The implementation 
of Morse Lemma (usually used when proving Theorem~\ref{theo:phazstat}) is made possible by Lemma~\ref{statioponidege}. 
It requires three derivatives of $ \Phi_k$ to obtain a $ \cC^1 -$diffeomorphism. This implies that $ D $ must be at
least equal to $ 3 $. 

\smallskip

\noindent - The phase $ \Phi_k (t,x;\cdot) $ depends on the three variables $ (s,y,\xi) \in \RR^3 $, and therefore
the leading-order term is of amplitude $ \eps^{3/2} $ modulo some small $ o (\eps^{3/2}) $. Then, any 
extra derivative on $ \Phi_k  $ allows to gain a power of $ \eps $ in the asymptotic expansion. We must 
take $ D \geq 4 $ to be sure of some $ \cO (\eps^{5/2}) $ precision.

\smallskip

\noindent Now, the expression $ w_k$ of (\ref{defidewk}) can be
expanded in powers of $ \eps $ through  
Theorem~\ref{theo:phazstat}. To this end, taking into account the definitions (\ref{mollichi}), (\ref{defchivar}) and (\ref{defskykxik}) 
together with Lemma \ref{statioponidege} which implies $ \vert s_k \vert \leq \pi/3 $, first remark that
\begin{equation} \label{preliphasta} 
\chi_{1/4} (s_k-\xi_k) \ \chi_{2 \, \pi/3} (s_k) = \chi_{1/4} (0) \ \chi_{2 \, \pi/3} (s_k) = 1 .
\end{equation}
By Assumption~\ref{choiopro}, $\zeta(\xi) =1+\cO(1/|\xi|)$ as
$|\xi|\to \infty$, so
\[ \forall \, k \in \cK^c_s \, , \qquad \zeta (k \pi + s_k) = 1 + \cO \bigl( 1/k \bigr) \, , \qquad 1 - \chi (k \pi + s_k) = 1 . \]
Therefore, for all $ k \in \cK^c_s $, we find
\[
  A(\eps \, k \, \pi + \eps \, s_k , k \, \pi + s_k,y_k , k \pi + s_k) = a (\eps \, k \, \pi + \eps \, s_k , k \, \pi + s_k,y_k ) +  
  \cO \bigl( \eps^{1/ (q+1)} \bigr) .
  \]
On the other hand, the signature is given by (\ref{signSk}). Combining
all the above information, Theorem~\ref{theo:phazstat} yields (with $N=1$)
\begin{equation} \label{Phitracedev} 
\begin{aligned}
\quad \, w_k (t,x) & =   \left( 2 \, \pi \, \eps \right)^{3/2}
\ \frac{e^{-i \, (-1)^k \, \frac{ \pi}{4}} }{\vert \mathrm{det} \ S_k \vert^{1/2}} \ e^{- i \, \Phi_k (t,x;s_k,y_k,\xi_k) /\eps} \\
& \quad \ \times a (\eps \, k \, \pi + \eps \, s_k , k \, \pi + s_k,y_k ) \, + \, \cO \bigl( \eps^{\frac{3}{2}+ \frac{1}{q+1}} \bigr) ,
\end{aligned}
\end{equation}
where the remainder term, larger than the one provided by
Theorem~\ref{theo:phazstat}, stems from the above approximation of $A$. 
In (\ref{Phitracedev}), the $ \cO $ is uniform with respect to $ k \in \cK_s^c $ or $ t $ as in (\ref{adjusttT}). Whether 
there exists a stationary point or not, we have $w_k=\cO(\eps^{3/2})$, hence $u_k=\cO(\eps^2)$. This rough estimation
gives rise to
$$ \Bigl \vert \sum_{c \, \eps^{-1/(q+1)} < k < c_1 \, \eps^{-1/(q+1)}} u_k (t,x) \Bigr \vert = \cO \bigl( \eps^{2 - 
\frac{1}{q+1}} \bigr) . $$

\smallskip

\noindent \text{\large \ding{204}} For $ k \in \cK_s^{c_1} $ with $ c_1 $ as in Lemma \ref{statioponialways}, the content 
of $ u_k $ can be specified. Using the definition of $ u_k $ at the level of (\ref{sumofwavepackets2}) together 
with (\ref{Phitracedev}), we find \eqref{desofuk}, with
\eqref{defpsikbk}. 
 Integers $ k \in \cK_s^{c_1} $ are the most numerous; they may provide the main contribution; and therefore they are set 
aside at the level of (\ref{sumofwavepackets3}). Since $ D - 1/2 \geq 2 $ when $ D = 3 $, we can retain (\ref{sumofwavepackets3}).
\end{proof} 

\noindent In a similar way to the elementary model of Section~\ref{subsec:toymodel}, the superposition of the wave packets $ u_k $ can
induce a time growth of the solution $ u $ to (\ref{eq:mainajoutlinpourm}). The source term of (\ref{eq:main}) is of size $ \eps^{3/2}$; 
in view of (\ref{desofuk}),  it can trigger signals $ u_k  $ of amplitude $ \eps^2 $; at first sight, it can produce during long times 
$ t \sim \eps^{-1} \, T $ a contribution which may be of size $ \eps^2 \, t \sim \eps \, T $. 

\smallskip

\noindent That being said, this cumulative effect is only likely but not certain to occur, due to possible cancellations. The aim of the 
next Paragraphs~\ref{consinterference} and \ref{desinterference} is to check what is actually happening. 


\subsubsection{Constructive interferences}\label{consinterference} In this paragraph, we show that the amplification 
phenomenon of the preamble does apply at special positions. 

\begin{prop}[Asymptotic behavior of the solution on some moving lattice]  \label{supnormamplifi} $\, $

\noindent Under Assumptions~\ref{resoa},~\ref{normareso} and \ref{strengthenp} (with $ D \geq 4 $) on the symbol 
$ p $, as well as Assumption~\ref{persourcea} on the profile $ a \equiv a^1 $, for all $ T \in [\cT , 2 \, \cT] $ and all $ j \in \ZZ $, 
the solution $ u $ to (\ref{eq:mainajoutlinpourm}) is such that
\begin{itemize}
\item If $ q = 2 $, 
\begin{equation} \label{increasenc} 
  \begin{aligned}
\ u \( \frac{T}{\eps} , 2  j \eps \) =  o (\eps) + 
\frac{\eps}{\sqrt{2\pi \gamma}} &
e^{i \frac{T}{\eps^2} }\Bigl(  e^{ -i \frac{\pi}{4} } \, \int_0^{+\infty} e^{-i \, \frac{ \ell}{6} \, 
(\frac{1}{s} - \frac{T}{s^2})} \aaa (s,0,0) \ ds \\
& + e^{ - i \, (\frac{2\gamma}{\eps} - \frac{\pi}{4} )}  \, \int_0^{+\infty} e^{-i \, \frac{ \ell}{6} \, 
(\frac{1}{s} - \frac{T}{s^2})} \aaa (s,\pi,0) \ ds \Bigr) .
\end{aligned}
\end{equation}
\item If $q>2$,
  \begin{equation} \label{increasencbis} 
  \begin{aligned}
    \ u \( \frac{T}{\eps} , 2 \, j \, \eps \) = \, \cO \( \eps^{1+ \frac{q-2}{q+1}} \) + 
\frac{\eps}{\sqrt{2\pi \gamma}} &
e^{i \frac{T}{\eps^2} }\Bigl(  e^{ -i \frac{\pi}{4} } \, \int_0^{+\infty}  \aaa (s,0,0) \ ds \\
& + e^{ - i \, (\frac{2\gamma}{\eps} - \frac{\pi}{4} )}  \, \int_0^{+\infty} \aaa (s,\pi,0) \ ds \Bigr) .
\end{aligned}
 \end{equation}
\end{itemize}
  \end{prop}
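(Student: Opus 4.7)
The plan is to start from the decomposition provided by Lemma~\ref{decomposol} (valid since $D\ge 4$ and $q\ge 2$):
\[
u(t,x)=\sum_{k\in \cK_s^{c_1}}\eps^2 b_k(\eps,t,x)\,e^{i\Psi_k(t,x,s_k)/\eps}+R_\eps(t,x),
\]
where $|R_\eps|\le C\eps^{2-1/(q+1)}+C|\cK_s^{c_1}|\eps^{2+1/(q+1)}=\cO(\eps^{1+1/(q+1)})=o(\eps)$, using $|\cK_s^{c_1}|=\cO(\eps^{-1})$. Specializing to $t=T/\eps$ and $x=2j\eps$, the key cancellation occurs in the last summand of $\Psi_k/\eps$ from \eqref{defpsikbk1}: the term $-(k\pi+s_k)x/\eps=-2jk\pi-2js_k$ lies in $2\pi\ZZ+o(1)$, which is precisely why the lattice $x\in 2\eps\ZZ$ yields constructive interference while any other value would induce phase drift killing the Riemann sum.

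The main computation is the asymptotic analysis of $b_k$ and $\Psi_k/\eps$ along this lattice as $\eps\to 0$ with $\sigma:=\eps k\pi$ fixed in $(0,\cT]$. Since $x=2j\eps\to 0$, formulas \eqref{devskyk}--\eqref{defskx} give $s_k\to 0$ and $y_k\to 0$, while Lemma~\ref{statioponidege} yields $|\det S_k|\to \gamma$, so that the prefactor in \eqref{defpsikbk2} tends to $\sqrt{2\pi/\gamma}\,e^{-i(-1)^k\pi/4}$. Assumption~\ref{persourcea} then replaces $a_1(\eps k\pi+\eps s_k,\,k\pi+s_k,\,y_k)$ by $\aaa(\sigma,0,0)$ for even $k$ and by $\aaa(\sigma,\pi,0)$ for odd $k$. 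For the remainder of the phase, $(-1)^k\gamma\cos s_k-\gamma$ tends to $0$ when $k$ is even and to $-2\gamma$ when $k$ is odd, producing the factor $e^{-2i\gamma/\eps}$ in the odd part. Finally, combining \eqref{reso2} with $k\pi+s_k-T/\eps=(\sigma-T)/\eps+\cO(1)$ gives
\[
\frac{[1-p(k\pi+s_k)](k\pi+s_k-T/\eps)}{\eps}\longrightarrow -\frac{\ell}{q(q+1)}\cdot \eps^{2-q}\,\frac{\sigma-T}{\sigma^q},
\]
which for $q=2$ equals $-\tfrac{\ell}{6}(1/\sigma-T/\sigma^2)$ (giving the oscillatory factor in \eqref{increasenc}) and for $q>2$ vanishes at leading order (hence its absence from \eqref{increasencbis}).

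Next, split $\cK_s^{c_1}$ into even and odd indices. For $k=2m$ (resp. $k=2m+1$) the sampled positions $\sigma_m=\eps k\pi$ form a lattice of step $\Delta\sigma=2\pi\eps$. Using $\eps^2=(2\pi)^{-1}\eps\,\Delta\sigma$, each parity sum becomes
\[
\eps^2\sum_{m} F(\sigma_m)=\frac{\eps}{2\pi}\sum_m F(\sigma_m)\,\Delta\sigma,
\]
a Riemann sum converging to $\frac{\eps}{2\pi}\int_0^{+\infty}F(\sigma)\,d\sigma$. Assembling the even and odd contributions with the common factor $e^{iT/\eps^2}$ (the $-\gamma/\eps$ piece in $\Psi_k/\eps$ is absorbed by the parity-dependent term $(-1)^k\gamma\cos s_k/\eps$) gives exactly the right-hand sides of \eqref{increasenc} and \eqref{increasencbis}.

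The main obstacle is justifying uniform convergence of these Riemann sums to the improper integrals near $\sigma=0$. The asymptotic formulas of Lemma~\ref{Asymptoticetformulas} and the determinant estimate of Lemma~\ref{statioponidege} degrade when $\eps k^{q+1}\lesssim 1$, i.e.\ $\sigma\lesssim \eps^{q/(q+1)}$, and for $q=2$ the limiting phase $(\ell/6)(1/\sigma-T/\sigma^2)$ is singular at $\sigma=0$. The plan is to introduce an auxiliary cut-off $\eta>0$ and split the problem at $\sigma=\eta$: on $[\eta,\cT]$, the pointwise limits are uniform and dominated convergence transfers the Riemann sum to the integral; on $(0,\eta]$, the uniform bound $|u_k|=\cO(\eps^2)$ from Lemma~\ref{decomposol} controls the partial sum by $\cO(\eps\eta)$, while the rapid oscillation of the limiting phase (handled by a single integration by parts in $\sigma$) controls the truncated integral by $o_\eta(1)$. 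Letting first $\eps\to 0$ and then $\eta\to 0$ closes the argument.
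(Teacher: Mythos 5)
Your plan follows the same route as the paper's proof: start from the wave-packet decomposition of Lemma~\ref{decomposol}, observe that on the lattice $x=2j\eps$ the plane-wave factor $e^{-ik\pi\alpha}$ becomes trivial, replace $b_k$ and $\Psi_k$ by their limits as functions of $\sigma=\eps k\pi$ using Lemmas~\ref{Asymptoticetformulas} and~\ref{statioponidege}, split the sum by parity of $k$, recognize Riemann sums of step $2\pi\eps$, and introduce a lower cut-off $\eta$ to control the degenerate region $\sigma\lesssim\eps^{q/(q+1)}$. The parity bookkeeping (the $\aaa(\sigma,0,0)$ vs.\ $\aaa(\sigma,\pi,0)$ profiles, and the $e^{-2i\gamma/\eps}$ factor on the odd part), the limit $|\det S_k|\to\gamma$, and the preliminary $o(\eps)$ control of the remainder are all correct.

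Two points deserve attention. First, a sign typo: you write the phase contribution as $-\tfrac{\ell}{q(q+1)}\eps^{2-q}(\sigma-T)/\sigma^q$, but the correct power is $\eps^{q-2}$; indeed $1-p(k\pi+s_k)\sim -\tfrac{\ell}{q(q+1)}\eps^q\sigma^{-q}$ while $(k\pi+s_k-T/\eps)/\eps\sim(\sigma-T)/\eps^2$. Your subsequent statement that this factor degenerates to $1$ when $q>2$ matches the corrected exponent, so this is evidently a slip rather than a conceptual error, but as displayed the formula blows up for $q>2$.

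Second, and more substantively: the final step ``let first $\eps\to 0$ and then $\eta\to 0$'' only yields the error bound $o(\eps)$. This is exactly what \eqref{increasenc} asserts for $q=2$, but \eqref{increasencbis} states the explicit polynomial rate $\cO\bigl(\eps^{1+(q-2)/(q+1)}\bigr)$ for $q>2$. To obtain that rate you must record the $\eta$-dependence of the remainders quantitatively — they are of order $\eps\bigl(\cO(\eta)+\cO(\eps^{q-1}/\eta^{q+1})+\cO(\eps^{q-2}/\eta^q)+\cO(\eps^{q/(q+1)})\bigr)$ — and then optimize $\eta$ as a function of $\eps$ rather than sending $\eps\to0$ at $\eta$ fixed; the choice $\eta=\eps^{(q-2)/(q+1)}$ balances the competing terms and gives the advertised exponent. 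Inserting this balancing step would complete the argument for $q>2$. (As a minor remark, the integration-by-parts you invoke to control the truncated integral on $(0,\eta]$ is unnecessary: boundedness of the integrand already gives $\cO(\eta)$, which suffices and also covers $q>2$, where the limiting phase has no singularity.)
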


\noindent The leading term in the right hand side of (\ref{increasenc}) and (\ref{increasencbis}) does not depend 
on $ j $. On the other hand, for $ j = 0 $, the formula (\ref{increasenc}) provides the asymptotic behavior of $ u(T/\eps,\cdot) $ 
at a fixed position, which is the origin $ x = 0 $. 

\smallskip

\noindent Now, compare (\ref{third-si-devtt}) multiplied by $ \eps^{3/2} $ with (\ref{increasencbis}). When $ q > 2 $ and in the 
(extended) situation where $ a (\cdot,0,0) \equiv \mathbf 1_{[0,\cT]} $, the two formulas coincide. However, in the critical case 
$ q = 2 $, there are some differences. The wave packets $ u_k $ have
larger group velocities; their wave front sets can  
mix; they can interact meaningfully. As a matter of fact, the identity (\ref{increasenc}) is more complicated, and the amplification 
effect can be altered by the oscillatory factor in front of
$ \aaa$.
\smallbreak

\noindent We also note that if $\aaa$ is not only $2\pi$-periodic in its second
argument, but $\pi$-periodic, then the above formula boils down to the
one stated in Theorem~\ref{theo:resumeNL}.

\begin{proof} The starting point is (\ref{sumofwavepackets3}) together with (\ref{desofuk}). Select some $ \alpha \in \RR $.
Since the $ \cO(\eps^2) $ inside (\ref{desofuk}) is uniform with respect to $ k $ and $ t $, a rough estimate yields
\begin{equation} \label{devdepbkbobo} 
\eps^{-1} \, u (t,\eps \, \alpha) = \sum_{k \in \cK_s^{c_1}} \eps \ b_k (\eps,t,\eps \, \alpha) \ e^{i \, \Psi_k(t,\eps \, \alpha,s_k) 
/ \eps} + \, \cO \left( \eps^{\frac{q}{q+1}} \right) .
\end{equation}
Recall the definitions inside (\ref{defpsikbk}). The ingredients $ \Psi_k $ and $ b_k $ of (\ref{devdepbkbobo}) 
are not free from a dependence on $ \eps $ which may arise when specifying the choice of $ k $, when replacing $ t $ by 
$ T / \eps $, or when substituting $ x $ with $ \eps \, \alpha $. A first step in the analysis is to simplify modulo small error terms 
the content of $ \Psi_k $ and $ b_k $. Let us start by reducing $ \Psi_k $. With $ s^x_k $ as in (\ref{defskx}), coming back to 
(\ref{defpsikbk1}), compute 
\[
  \begin{array}{l}
\Psi_k (t,x,s_k) - \, \Psi_k (t,x,s_k^x) = (-1)^k \, \gamma \, (\cos s_k - \cos s_k^x) - (s_k - s_k^x) \ x \\
\qquad \qquad + \bigl \lbrack 1 - p (k \, \pi + s^x_k) \bigr \rbrack \, (s_k  - s_k^x) + (k \, \pi + s_k - t) \, 
\bigl \lbrack p (k \, \pi + s^x_k) - p (k \, \pi + s_k) \bigr \rbrack .
   \end{array} 
   \]
Combine the mean value theorem with (\ref{reso2}) and (\ref{hypp3ded}). For large values of $ k $, this gives rise to
\[ \Psi_k (t,x,s_k) - \, \Psi_k (t,x,s_k^x) = \left \lbrack 1 + \cO \left(\frac{1}{k^q} \right)  + \frac{1}{\eps} \, \cO\left(\frac{1}{k^{q+1}} \right) 
\right \rbrack \ \cO \left( \vert s_k - s_k^x \vert \right) .
\]
Then, knowing that $ k \in \cK_s^{c_1} $, we can apply (\ref{devskyk}) to just retain
\begin{equation} \label{devdepsik} 
\Psi_k (t,x,s_k) = \Psi_k (t,x,s_k^x) + \cO\left(\frac{1}{\eps \, k^{q+1}} \right) .
\end{equation}
For $ k \in \cK_s^{c_1} $ and $ \eps $ small enough to be sure that $ {\rm t}_s \leq k \, \pi + s_k $ with $ {\rm t}_s $ as in (\ref{peratlar}), 
using (\ref{peratlar}) and (\ref{devskyk}), we find
\begin{equation} \label{devdepbkinterm} 
\qquad a (\eps \, k \, \pi + \eps \, s_k, k \, \pi + s_k,y_k ) = \aaa (\eps \, k \, \pi , k \, \pi + s^x_k,x ) + \cO (\eps) + \cO \left(\frac{1}{\eps \, 
 k^{q+1}} \right)  .
\end{equation}
Examine (\ref{defpsikbk2}). To interpret the quantity $ \vert
\operatorname{det} S_k \vert $, exploit Lemma~\ref{statioponidege} (and its
proof). There remains 
\begin{equation} \label{devdepbk} 
\begin{array} {rl}
\displaystyle b_k (\eps,t,x) = \! \! \! & \displaystyle (2 \, \pi)^{1/2} \ e^{-i \, (-1)^k \, \frac{ \pi}{4}} \ (\gamma \, \cos s_k^x)^{-1/2} \\
\ & \displaystyle \times \, \aaa (\eps \, k \, \pi , k \, \pi + s^x_k,x ) + \cO(\eps) + \cO \left(\frac{1}{\eps \, k^{q+1}} \right) . \qquad \quad
\end{array}
\end{equation}
Replace $ x $ by $ \eps \, \alpha $. Coming back to (\ref{defskx}), this yields
\[
\qquad s_k^{\eps \, \alpha} = (-1)^{k+1} \ \arcsin \ \left( \frac{\eps \, \alpha}{\gamma} \right) = (-1)^{k+1} \ \frac{\eps \, \alpha}{\gamma} 
+ \cO (\eps^2) . 
\]
It follows that
\begin{subequations}\label{devdepsiken0} 
\begin{eqnarray} 
& & \displaystyle \Psi_k (t,\eps \, \alpha,s_k) = \Psi_k^0 (t) - k \, \pi \, \alpha \, \eps + \cO\left( \eps^2 \right) + \cO \left( \eps^{-1} \, 
k^{-q-1} \right) , 
\label{devdepsiken01} \\
& & \displaystyle b_k (\eps,t,\eps \, \alpha) = b_k^0 + \cO\left(\eps \right) + \cO\left(\eps^{-1} \, k^{-q-1} \right) , \label{devdepsiken02} 
\end{eqnarray}
\end{subequations}
with:
\begin{subequations}\label{defpsikbkeno0pb} 
\begin{eqnarray} 
\quad & & \displaystyle \ 
\begin{array}{ll}
\Psi_k^0 (t) := \! \! \! \! & - \, \gamma + t + (-1)^k \, \gamma + \bigl \lbrack 1 - p (k \, \pi) \bigr \rbrack \, (k \, \pi -t) , 
\end{array}
\label{defpsikbkeno0pb1} \\
& & \displaystyle \quad b_k^0 := (2 \, \pi)^{1/2} \ e^{-i \, (-1)^k \, \frac{ \pi}{4}} \ \gamma^{-1/2} \ \aaa (\eps \, k \, \pi , k \, \pi ,0 ) . 
\label{defpsikbkeno0pb2}  
\end{eqnarray}
\end{subequations}

\noindent For $ k \in \cK_s^{c_1} $, a precision like $ \cO (1 / \eps \, k^{q+1}) $ is not enough, since  for $ k \sim \eps^{-1/(q+1) } $, 
it is not necessarily small. By contrast, for larger $k$'s, assuming that $ \eps^{-1} \, \eta \, \leq \, k $ for some $ \eta \in \, (0,1] $, 
since $ q > 1 $, we have $ \cO (1 / \eps \, k^{q+1}) = \cO ( \eps^q / \eta^{q+1} ) $, and therefore
\begin{equation} \label{aepseta}
e^{i \, \Psi_k(t, \eps \, \alpha,s_k) / \eps} = e^{i \, \Psi_k^0 (t) / \eps} \ e^{- i \, k \, \pi \, \alpha} + \cO (\eps) + \cO 
\left(\frac{\eps^{q-1}}{\eta^{q+1}} \right) . 
\end{equation}
Note the loss of precision by the power $ \eps^{-1} $ when dividing $
\Psi_k $ by $ \eps $, as well as a bad dependence  
upon $ \eta $ near $ \eta = 0 $ inside the last term above. For the moment, we fix some $ \eta \in \, ]0,1] $. 
Back to (\ref{devdepbkbobo}), for $ k \in \cK_s^{c_1} $ with $ k \leq \eps^{-1} \, \eta $, just apply (\ref{desofuk}) in the rough 
form $ u_k = \cO(\eps^2) $ to get
\begin{equation} \label{oubliincorp}
\sum_{\cK_s^{c_1} \ni k \, \leq \eps^{-1} \, \eta} \eps \ b_k (\eps,t,\eps \, \alpha) \ e^{i \, \Psi_k(t,\eps \, \alpha,s_k) 
/ \eps} = \, \cO \left( \eta \right) .
\end{equation}
For $ k \in \cK_s^{c_1} $ with $ \eps^{-1} \, \eta \leq k $, we can separate even numbers $ k $ from odd numbers $ k $. In 
other words, we can split $ \cK_s^c $ into $ \cK_s^c (e) \cup \cK_s^c (o) $ with
\[
\cK_s^c (e) := \lbrace k \in \cK_s^c \, ; \, k \ \text{is even} \rbrace \, , \qquad \cK_s^c (o) := \lbrace k \in \cK_s^c \, ; \, k \ \text{is odd}
\rbrace \, . 
\]
By this way, using (\ref{devdepbkbobo}), (\ref{devdepsiken02}), (\ref{aepseta}) and (\ref{oubliincorp}), we get
\begin{equation} \label{ffirstsum} 
 \begin{aligned}
\eps^{-1} \, u (t,\eps \, \alpha) &=  \sum_{{\rm par} \in \{ e,o \} }
\ \sum_{\eps^{-1} \eta \, \leq \, k \in \cK_s^{c_1}({\rm par})} \eps \ b_k^0 \ 
e^{i \, \Psi_k^0 (t) / \eps} \ e^{- i \, k \, \pi \, \alpha}  \\
 & \quad + \cO(\eta) + \cO \left(\frac{\eps^{q-1}}{\eta^{q+1}} \right) + \cO\left(\eps^{\frac{q}{q+1}}\right)  . 
\end{aligned} 
\end{equation}
Now, we consider the dependence on $ k $ when computing the two sums inside (\ref{ffirstsum}):

\smallskip

\noindent i) For $ \alpha = 2 \, j $ as required in (\ref{increasenc}), we have to deal with $ e^{- i \, k \, \pi \, \alpha} = e^{- i \, 2 \, 
(k \, j) \, \pi} =1 $. The phase shift induced by the spatial inhomogeneities of $ \varphi $ is not detected;

\smallskip

\noindent ii) For $ {\rm par} = e $ or $ {\rm par} = o $, the power $ (-1)^k $ inside (\ref{defpsikbkeno0pb}) is simply $ 1 $ or $ -1 \, $;

\smallskip

\noindent iii) According to $ k \in \cK_s^{c_1} (e) $ or $ k \in \cK_s^{c_1} (o) $, we can replace $ \aaa (\eps \, k \, \pi , k \, \pi ,0 ) $
by $ \aaa (\eps \, k \, \pi , 0,0 ) $, or by $ \aaa (\eps \, k \, \pi
, \pi ,0 ) $, respectively.

\smallskip

\noindent After that, a dependence upon $ k $ remains inside
(\ref{ffirstsum}). It is examined in detail below. In view of  
\eqref{reso2}, remark that
\begin{equation} \label{partinPsik} 
 \begin{aligned} \bigl \lbrack 1 - p (k \, \pi) \bigr \rbrack \ (k \, \pi -t) & = - \, \frac{\ell}{q \, (q+1)}  \left( \frac{1}{(k \pi)^{q-1}} - 
 \frac{t}{(k \pi)^q} \right) \\
 \ & \quad \, +  \left( \frac{t}{k^q} - \frac{\pi}{k^{q-1}} \right) o(1) . 
 \end{aligned} 
\end{equation}
For $ k $ with $ \eps^{-1} \, \eta \, \leq \, k \in \cK_s^{c_1} (e) $, since $ 2 \leq q $, exploiting (\ref{defpsikbkeno0pb}) and (\ref{partinPsik})
together with ii) and iii), we can deduce that
\begin{equation} \label{devb0kpsi0k}  
\begin{aligned}
b_k^0 \ e^{\frac{i}{\eps} \, \Psi_k^0(\frac{T}{\eps})} &= \sqrt{\frac{2 \pi}{ \gamma}} \aaa (\eps \, k \, \pi , 0 ,0 ) 
\ e^{- i \, \frac{\pi}{4}+\frac{i \, T}{\eps^2}}  e^{-i \frac{\ell}{q(q+1)} \left(\frac{1}{\eps ( k \pi)^{q-1}} -  
\frac{T}{\eps^2( k \pi)^q }\right)}  \\
&\quad+ \frac{T}{\eta^q} \ o( \eps^{q-2} ) + \frac{1}{\eta^{q-1}} \ o( \eps^{q-2} ) . 
\end{aligned} 
\end{equation}
Introduce the symbols $ \cO_\eta (\eps^k) $ and $ o_\eta (\eps^k) $ to mean respectively $ C(\eta) \, \cO(\eps^k) $ and $ C(\eta) \, 
o(\eps^k) $ for some constant $ C(\eta) $ which may go to $ + \infty $ when $ \eta $ goes to zero. When summing even $ k $ at the 
level of the first sum inside (\ref{ffirstsum}), with $ {\rm par} = e $, we  recognize a Riemann sum with small width $ \eps \, 2 \, \pi $. 
Since the regularity of the integrand degenerates at $ s= 0 $, the rate of convergence is simply $ O_\eta (\eps) $. By this way, in view 
of (\ref{devb0kpsi0k}), when $ q=2 $, we obtain
\[
\begin{array}{rl}
\displaystyle \sum_{\eps^{-1} \eta \, \leq \, k \in \cK_s^c(e)} \! \! \eps \ b_k^0 \ e^{\frac{i}{\eps} \Psi_k^0 (\frac{T}{\eps})} = \! \! \! & 
\displaystyle \frac{e^{i \, ( \frac{T}{\eps^2} - \frac{\pi}{4}) }}{\sqrt{2  \pi \gamma}} \ \int_{\pi \, \eta}^{+\infty} \! \! e^{-i \, 
\frac{\ell}{6} \, (\frac{1}{s} - \frac{T}{s^2 })} \ \aaa(s, 0 ,0 ) \ ds \\
\ & \displaystyle + \,  o_\eta (\eps^0) + \cO_\eta (\eps) ,
\end{array}
\]
where we have used the property that $k\pi\eps $ goes up to $\cT$, so the integral carries over the whole 
support of $\aaa (\cdot,0,0)$. 
When $q>2$, since $ \eta \leq \eps \, k \leq \cT $, observe that (\ref{devb0kpsi0k}) involves the factor
$$ e^{-i \frac{\ell}{q(q+1)} \left(\frac{1}{\eps ( k \pi)^{q-1}} - \frac{T}{\eps^2( k \pi)^q }\right)} = e^{-i \frac{\ell}{q(q+1)} \, 
\left(\frac{1}{(\pi \eps k)^{q-1}} - \frac{T}{(\pi \eps k)^q} \right) \, \eps^{q-2}} = 1 + \cO \( \frac{\eps^{q-2}}{\eta^q} \) . $$
Hence, when $q>2$, the Riemann sum argument together with (\ref{devb0kpsi0k}) yields simply
\[
 \sum_{\eps^{-1} \eta \, \leq \, k \in \cK_s^c(e)} \! \! \! \! \! \eps \ b_k^0 \ e^{\frac{i}{\eps} \, \Psi_k^0 (\frac{T}{\eps})} = 
 \frac{e^{i \, (\frac{T}{\eps^2} - \frac{\pi}{4})}}{(2 \, \pi)^{1/2} \ \gamma^{1/2}} \ \int_{\pi \, \eta}^{+\infty} \!  \aaa(s, 0 ,0 ) 
 \ ds + \cO (\eps) + \cO \( \frac{\eps^{q-2}}{\eta^q} \) ,
\]
where we have used the fact that for a smooth integrand (as it is the case with $\aaa$ only, that is, without the above singular phase
term), a convergence rate (of the order of the discretization parameter) is available in Riemann sums. In the two preceding integrals, 
the integrand is bounded near $ s = 0 $. Modulo some $ \cO(\eta) $, we can integrate from $ 0 $ to $ + \infty $. Similar considerations 
apply when dealing with odd values of $k$. Summing up, we find the leading-order term of (\ref{increasenc}). 

\smallskip

\noindent Now, come back to (\ref{ffirstsum}). From the preceding estimates, when $ q =2 $, the error term is of the type  
$$ \eps \ \Bigl \lbrack \, \cO(\eta) + \cO \left(\frac{\eps^{q-1}}{\eta^{q+1}} \right) + \cO\left(\eps^{\frac{q}{q+1}}\right) + o_\eta (\eps^0) 
+ \cO_\eta (\eps) \, \Bigr \rbrack =  \eps \ \bigl \lbrack \, \cO(\eta) + o_\eta (\eps^0) \, \bigr \rbrack . $$
This is valid for all $ \eta \in \, ]0,1] $. By fixing $ \eta $ increasingly smaller and then letting $ \eps $ go to zero, this implies the bound 
$ o(\eps) $, as expected in (\ref{increasenc}). On the contrary, when $ q > 2 $, we have to deal with an error term like
\[ \eps \(\cO(\eta) + \cO \left(\frac{\eps^{q-1}}{\eta^{q+1}} \right) + \cO\left(\eps^{\frac{q}{q+1}}\right) + \cO (\eps) +  
\cO \( \frac{\eps^{q-2}}{\eta^q} \) \) . \]
Setting $ \eta = \eps^{\frac{q-2}{q+1}} $ then yields (\ref{increasencbis}).
 \end{proof}

\noindent The set $ \cC^0_c (\RR_+ ) $ of all continuous functions having a compact support is a Banach space when it is equipped 
with the sup-norm. Define $ \lambda (q) = \ell $ if $ q = 2 $ (with $ \ell $ as in Assumption \ref{resoa}) and $ \lambda (q) = 0 $ if $ q > 2 $. Given $ T\ge 0 $ and $ q \geq 2 $, 
consider the nontrivial continuous linear form
$$ \begin{array}{rcl}
\cL(T) \, : \, \cC^0_c (\RR_+ ) & \longrightarrow & \RR \\
\aaa  & \longmapsto & \displaystyle \cL(T)(\aaa) := \int_0^{+\infty} \! e^{-i \, \frac{\lambda(q)}{6} \, (\frac{1}{s} - \frac{T}{s^2 })} \ 
\aaa (s) \ ds .
\end{array}  
$$
Its kernel $ \mathrm{ker} \, \cL(T) $ is a closed vector space of codimension one. Obviously, the complement $ \bigl( \mathrm{ker} \, 
\cL(T) \bigr)^c $ of $ \mathrm{ker} \, \cL(T) $ is dense so that, generically, $ \aaa\in \bigl( \mathrm{ker} \, \cL(T) \bigr)^c $.

\begin{cor}[Constructive interference]  \label{supnormamplificor} Fix any $ T \in [\cT , 2 \, \cT] $. Select $ a$ as in Assumption 
\ref{persourcea}, with moreover $ \aaa (\cdot,0,0) $ or $ \aaa (\cdot,\pi,0) $ in $  \bigl( \mathrm{ker} \, \cL(T) \bigr)^c $. Looking 
at the solution $ u  $ to (\ref{subsec:resoscint}) at the time $ T /\eps $ and at well chosen positions (which may depend on the 
parameter $ \eps $), one can observe some amplification of the sup norm. As a matter of fact, for all $ j \in \ZZ $, we have:
\begin{equation} \label{increasenccor} 
\limsup_{\eps \rightarrow 0+} \quad \Bigl \vert \frac{1}{\eps} \ u \Bigl( \frac{T}{\eps} , 2 \, \eps \, j \Bigr) \Bigr \vert = 
\ell^{\aaa}_{s} \not = 0 
 \end{equation}
 with
 $$ \ell^{\aaa}_{s} := \frac{1}{\sqrt{2 \pi \gamma}} \ \left \lbrack \, \vert \cL(T)( \aaa (\cdot,0,0) ) \vert + \vert \cL(T)( \aaa 
 (\cdot,\pi,0) ) \vert \, \right \rbrack . $$
\end{cor}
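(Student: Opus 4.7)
The plan is to derive the corollary as a more or less direct consequence of Proposition~\ref{supnormamplifi}, which already furnishes an explicit two-term asymptotic expansion of $\eps^{-1} u(T/\eps, 2j\eps)$. Setting $I_0 := \cL(T)(\aaa(\cdot,0,0))$ and $I_\pi := \cL(T)(\aaa(\cdot,\pi,0))$ (this definition agreeing in both cases $q=2$ and $q>2$ thanks to the convention $\lambda(q) = \ell$ or $0$), the formulas \eqref{increasenc}--\eqref{increasencbis} can be rewritten uniformly as
\[
\frac{1}{\eps} u\(\frac{T}{\eps}, 2j\eps\) = \frac{1}{\sqrt{2\pi\gamma}} e^{iT/\eps^2} \( e^{-i\pi/4} I_0 + e^{-i(2\gamma/\eps - \pi/4)} I_\pi \) + o(1),
\]
where the $o(1)$ is uniform in $j \in \ZZ$. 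The first step is simply to invoke this representation, then observe that the overall unimodular prefactor $e^{iT/\eps^2}$ disappears when taking absolute values, so that the right-hand side reduces, modulo $o(1)$, to the quantity $\frac{1}{\sqrt{2\pi\gamma}} \vert I_0 + e^{-2i\gamma/\eps} I_\pi \vert$.

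The second step is the elementary observation $\sup_{\theta \in \RR} \vert I_0 + e^{i\theta} I_\pi \vert = \vert I_0 \vert + \vert I_\pi \vert$, the supremum being attained when $e^{i\theta} I_\pi$ is aligned with $I_0$. It then suffices to produce a sequence $\eps_n \to 0^+$ along which $-2\gamma/\eps_n$ approaches (mod $2\pi$) the optimal phase $\theta^\star := \arg(I_0) - \arg(I_\pi)$ (with the standard convention when one of $I_0, I_\pi$ vanishes). Since the map $\eps \mapsto 2\gamma/\eps$ is continuous and blows up at $0^+$, its image modulo $2\pi$ covers $[0, 2\pi)$ infinitely often, so such a sequence manifestly exists. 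Along this sequence, passing to the limit in the identity above yields the lower bound $\vert I_0 \vert + \vert I_\pi \vert$ divided by $\sqrt{2\pi\gamma}$, while the triangle inequality applied in the same identity yields the matching upper bound for $\limsup$.

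This produces \eqref{increasenccor} with the constant $\ell^\aaa_s$. The nondegeneracy $\ell^\aaa_s \neq 0$ is then exactly the hypothesis that at least one of $\aaa(\cdot,0,0)$ or $\aaa(\cdot,\pi,0)$ lies outside $\ker \cL(T)$. Given that Proposition~\ref{supnormamplifi} does all the analytic work, there is no genuine obstacle in this argument; the only point requiring any care is the density of $\{2\gamma/\eps \bmod 2\pi : \eps \in (0,1]\}$ in $[0,2\pi)$, which follows trivially from continuity and the fact that $2\gamma/\eps \to +\infty$ as $\eps \to 0^+$.
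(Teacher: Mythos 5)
Your argument is correct and supplies exactly the details the paper leaves implicit: the corollary is stated after Proposition~\ref{supnormamplifi} without a separate proof, as a direct consequence. Unifying the cases $q=2$ and $q>2$ via the convention $\lambda(q)$ in the definition of $\cL(T)$, factoring out the unimodular prefactor $e^{iT/\eps^2}$, invoking $\sup_{\theta}\vert I_0+e^{i\theta}I_\pi\vert=\vert I_0\vert+\vert I_\pi\vert$ together with the triangle inequality for the matching upper bound, and selecting a sequence $\eps_n\to 0^+$ along which $2\gamma/\eps_n$ realizes the optimal phase modulo $2\pi$ (available by continuity and unboundedness of $\eps\mapsto 2\gamma/\eps$) is precisely the intended reasoning.
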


\smallskip

\noindent Assume moreover that $ \aaa (\cdot,0,0) $ is $ \pi $-periodic. Then, (\ref{increasenc}) gives rise to
\begin{equation} \label{increasenccorreal} 
\quad \Bigl \vert \frac{1}{\eps} \ u \Bigl( \frac{T}{\eps} , 2 \, \eps \, j \Bigr) \Bigr \vert = \frac{2}{\sqrt{2 \pi \gamma}} 
\vert \cL(T)( \aaa (\cdot,0,0) ) \vert \ \Bigl \vert \cos \, ( \gamma - \frac{\pi}{4} ) \, \frac{1}{\eps} \Bigr \vert + o(1) . 
\end{equation}
Therefore, any number contained in the interval $ \lbrack 0, \ell^{\aaa}_{s} \rbrack  $ is an adherent point of the family $ \bigl \lbrace 
\eps^{-1} \, \vert u (\eps^{-1} \, T , 2 \, \eps \, j) \vert  \bigr \rbrace_\eps $. This is typical of a highly oscillating behavior. As mentioned 
before, the formula (\ref{third-si-devtt}) looks like (\ref{increasenc}) and (\ref{increasencbis}). But, as will be seen in the next paragraph, 
outside the moving lattice $ \{ 2 \, \eps \, j ; j \in \ZZ \} $, the situation is completely different.


\subsubsection{Destructive interferences}\label{desinterference} In this paragraph, we consider the situation where $ x = \alpha \, \eps $ with 
$ \alpha \in \RR \setminus \{ 2 \, \ZZ \} $. Then, the property i) in the proof of Proposition \ref{supnormamplifi} no longer applies. 
The definition (\ref{sumofwavepackets2}) of $ u_k $ does contain the  factor $ e^{- i \, k \, \pi \, \alpha} $ which comes from 
the spatial inhomogeneities of the phase $ \varphi$ and which, after summation, can induce additional cancellations. 

\begin{prop} [Destructive interference]  \label{supnormnonamplifi2}
  Select any $ \alpha \in \RR \setminus \{ 2 \, \ZZ \} $. Suppose that
  Assumptions~\ref{resoa},~\ref{normareso},~\ref{strengthenp} (with $
  D \geq 4 $) and~\ref{persourcea} are satisfied. Then for all 
$ T \in [\cT , 2 \, \cT] $, the solution $ u $ to (\ref{eq:mainajoutlinpourm}) is such that
\renewcommand\arraystretch{1.5}
\begin{equation}\label{destructiveinter} 
u \( \frac{T}{\eps} , \alpha \, \eps \) = \left \{ \begin{array}{lcl} o(\eps) & \text{if} & q = 2 , \\
\cO \left( \eps^{\frac{6 q-2}{5 \, q}} \right) & \text{if} & q > 2  . 
\end{array} \right.
\end{equation}
\renewcommand\arraystretch{1}
\end{prop}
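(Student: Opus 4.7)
The plan is to adapt the proof of Proposition~\ref{supnormamplifi}, starting from the wave-packet representation of Lemma~\ref{decomposol} combined with~\eqref{desofuk}, but now exploiting the cancellation induced by the oscillating factor $e^{-ik\pi\alpha}$ which is non-trivial precisely when $\alpha\notin 2\ZZ$. Concretely, one has $u(T/\eps,\alpha\eps)=\eps^{2}\sum_{k\in\cK_{s}^{c_{1}}}b_{k}e^{i\Psi_{k}(T/\eps,\alpha\eps,s_{k})/\eps}+\cO(\eps^{2-1/(q+1)})$. Lemma~\ref{Asymptoticetformulas} gives $s_{k}^{\eps\alpha}=(-1)^{k+1}\arcsin(\alpha\eps/\gamma)=\cO(\eps)$, so that the spatial contribution $-(k\pi+s_{k})\alpha$ in $\Psi_{k}/\eps$ (coming from $-(k\pi+s_{k})x$ in~\eqref{defpsikbk1}) factors as $e^{-ik\pi\alpha}(1+\cO(\eps))$. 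Each summand thus takes the form $\eps^{2}\bigl[C_{+}(\eps k\pi)+(-1)^{k}C_{-}(\eps k\pi)\bigr]\,e^{iT/\eps^{2}}\,e^{-ik\pi\alpha}$ up to lower-order corrections, with $C_{\pm}$ smooth on $[\pi\eta,\cT]$; the parity splitting originates in the factor $e^{-i(-1)^{k}\pi/4}$ and the term $(-1)^{k}\gamma\cos s_{k}$ from~\eqref{defpsikbk}, together with the alternation of $\aaa(\cdot,k\pi,\cdot)$ between $\aaa(\cdot,0,\cdot)$ and $\aaa(\cdot,\pi,\cdot)$ via $2\pi$-periodicity.

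Using the identity $(-1)^{k}e^{-ik\pi\alpha}=e^{-ik\pi(\alpha+1)}$, the dominant sum rewrites as $\sum_{k}C_{+}(\eps k\pi)\,e^{-ik\pi\alpha}+\sum_{k}C_{-}(\eps k\pi)\,e^{-ik\pi(\alpha+1)}$. The crucial observation is that for every $\beta\in\RR\setminus 2\ZZ$, the partial sums $S_{N}(\beta):=\sum_{k=0}^{N}e^{-ik\pi\beta}=(1-e^{-i(N+1)\pi\beta})/(1-e^{-i\pi\beta})$ are bounded uniformly in $N$ by $C_{\beta}:=2/|1-e^{-i\pi\beta}|$. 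Summation by parts (Abel summation) then yields
\[
\Bigl|\sum_{k=K_{1}}^{K_{2}}C_{\pm}(\eps k\pi)e^{-ik\pi\beta}\Bigr|\le C_{\beta}\Bigl(2\|C_{\pm}\|_{L^{\infty}}+\sum_{k=K_{1}}^{K_{2}-1}|C_{\pm}((k+1)\pi\eps)-C_{\pm}(k\pi\eps)|\Bigr),
\]
and the telescoping differences satisfy $\sum_{k}|\Delta_{k}C_{\pm}|\le(\cT-\pi\eta)\|\partial_{s}C_{\pm}\|_{L^{\infty}}$, providing a bound which is $\cO(1)$ as soon as $C_{\pm}$ remains smooth. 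Hence each of the two sums is $\cO(1)$ and multiplication by $\eps^{2}$ gives $u(T/\eps,\alpha\eps)=\cO(\eps^{2})$, which is much stronger than $o(\eps)$, whenever both $\alpha\notin 2\ZZ$ and $\alpha+1\notin 2\ZZ$, i.e.\ $\alpha\notin\ZZ$.

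The main obstacle is twofold. Near the cutoff $s=\pi\eta$ the smooth factor embedded in $C_{\pm}$ (namely $e^{-i\frac{\ell}{q(q+1)}\bigl(\frac{1}{(\eps k\pi)^{q-1}}-\frac{T}{\eps(\eps k\pi)^{q}}\bigr)}$ arising from~\eqref{devb0kpsi0k}) has derivative of order $1/\eta^{q+1}$ at the endpoint, so the Abel-summation error becomes of order $\eps^{2}/\eta^{q+1}$; this must be balanced against the cutoff contribution $\cO(\eps\,\eta)$ coming from discarding the range $k<\eps^{-1}\eta$ (each term being $\cO(\eps^{2})$) and against the remainder $\cO(\eps^{1+q/(q+1)})$ inherited from the stationary-phase estimates~\eqref{ffirstsum} and Lemma~\ref{decomposol}. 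Choosing $\eta=\eps^{a}$ and optimising $a$ to equalise these three contributions yields the precise exponent $(6q-2)/(5q)$ when $q>2$; for $q=2$ the same procedure merely improves $\cO(\eps)$ to a non-explicit $o(\eps)$ because the optimal $\eta$ tends to zero with $\eps$ while keeping $\eta$ arbitrary at the heuristic level. The second difficulty is the residual case $\alpha\in\ZZ\setminus 2\ZZ$: then $\alpha+1\in 2\ZZ$ and the second exponential loses its bounded-partial-sum property. In that situation one applies the Poisson summation formula to the smooth sampled function $C_{+}(\cdot)$ multiplied by $(-1)^{k}=e^{-ik\pi}$, which, via the rapid decay of the Fourier transform of $C_{+}$ at the shifted frequencies $(m+\tfrac{1}{2})/\eps$, contributes $\cO(\eps^{\infty})$; the remaining Riemann sum $\sum_{k}C_{-}(\eps k\pi)$ is then handled by combining the rapid $\eps$-oscillation of the factor $e^{-2i\gamma/\eps}$ present in $C_{-}$ with a non-stationary-phase integration by parts against the kernel $e^{-i\ell/6(1/s-T/s^{2})}$ (whose phase has no stationary point in $[\pi\eta,\cT]$ since the critical value $s=2T$ lies in $[2\cT,4\cT]$), eventually producing the needed decay.
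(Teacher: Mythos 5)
Your proof takes the same route as the paper: pass to the wave-packet representation of Lemma~\ref{decomposol}, isolate the oscillating factor $e^{-ik\pi\alpha}$, split by parity into the sums over even and odd $k$, and combine Abel summation with bounded geometric partial sums, while optimizing the cutoff parameter $\eta$ against the wave-packet remainders and the endpoint singularity of the profile. That is precisely what the paper does.

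You raise a real and subtle point worth emphasizing. The geometric sums controlling the Abel summation run over $k$ of a fixed parity, so consecutive $k$ differ by $2$ and the common ratio is $e^{-2i\pi\alpha}$, which equals $1$ whenever $\alpha\in\ZZ$ — not just when $\alpha\in 2\ZZ$. Hence the bounded partial-sum argument fails precisely on $\ZZ\setminus 2\ZZ$. The paper's own proof invokes the condition ``$e^{-i\pi\alpha}\neq 1$'' at the moment the sum over \emph{even} $k$ is bounded, but the quantity that must be different from $1$ there is $e^{-2i\pi\alpha}$, so the paper's argument as written covers only $\alpha\notin\ZZ$. Your reformulation $\sum_k C_{+}(\eps k\pi)e^{-ik\pi\alpha}+\sum_k C_{-}(\eps k\pi)e^{-ik\pi(\alpha+1)}$ makes this gap visible: both $\alpha$ and $\alpha+1$ must avoid $2\ZZ$, i.e.\ $\alpha\notin\ZZ$.

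However, your proposed repair for the residual case $\alpha\in\ZZ\setminus 2\ZZ$ does not go through, and that case remains open. The Poisson-summation step does dispose of $\sum_k(-1)^kC_+(\eps k\pi)$, but the remaining term $\eps\sum_k C_-(\eps k\pi)$ is a Riemann sum converging to $\pi^{-1}\int C_-(\sigma)\,d\sigma$, which is $\cO(1)$ and generically non-zero. The two devices you invoke to extract decay cannot be combined as you suggest: the factor $e^{-2i\gamma/\eps}$ sitting inside $C_-$ is a constant with respect to the integration variable $\sigma$, so it produces no oscillatory cancellation in $\int C_-$; and the kernel $e^{-i\frac{\ell}{6}(1/\sigma-T/\sigma^2)}$ contains no $\eps^{-1}$-large parameter, so a non-stationary-phase integration by parts against it gains nothing, irrespective of where its critical point $\sigma=2T$ sits. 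Concretely, at $\alpha$ an odd integer the even and odd contributions retain the form $e^{-i\pi/4}\int G^e_q - e^{i\pi/4}e^{-2i\gamma/\eps}\int G^o_q$ (up to a unimodular prefactor and normalization), which after dividing by $\eps$ is $\cO(1)$ and does not reduce to $o(1)$; the sign flip from $e^{-ik\pi\alpha}$ on odd $k$ changes a $+$ into a $-$ but does not make the expression small. So your patch does not restore the $o(\eps)$ estimate on the odd lattice, and the gap you correctly identified is not closed by the argument you propose. A smaller secondary issue: the intermediate claim $u=\cO(\eps^2)$ overshoots — the wave-packet remainder from Lemma~\ref{decomposol} is only $\cO(\eps^{2-1/(q+1)})$, and the stated $o(\eps)$ (resp.\ $\cO(\eps^{(6q-2)/(5q)})$) bound is the best one can derive from the paper's error budget, not $\cO(\eps^2)$.
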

 
\begin{proof}
We resume \eqref{ffirstsum}, which holds for all $\eta>0$. Exploiting (\ref{partinPsik}) and (\ref{devb0kpsi0k}), this becomes
  \begin{align*}
\frac{1}{\eps} \ u \( \frac{T}{\eps} , \alpha \, \eps \) = & \, (2 \, \pi)^{1/2} \ \gamma^{-1/2} \ e^{i \, (\frac{T}{\eps^2} - \frac{\pi}{4})} \ 
\sum_{\frac{\eta}{\eps} \le k \,  \text{even} \le \frac{\cT}{\pi\eps}} \eps \ e^{- i k \pi \alpha} \, G^e_q (\eps, \eps k \pi) \\
  \ & + (2 \, \pi)^{1/2} \ \gamma^{-1/2} \ e^{i \, (\frac{T}{\eps^2} - \frac{\pi}{4} - \frac{2 \, \gamma}{\eps})} \ 
  \sum_{\frac{\eta}{\eps} \le k \,  \text{odd} \le \frac{\cT}{\pi\eps}} \eps \ e^{- i k \pi \alpha} \, G^o_q (\eps, \eps k \pi) \\
& + \cO(\eta) + \cO \left(\frac{\eps^{q-1}}{\eta^{q+1}} \right) + \cO\left(\eps^{\frac{q}{q+1}}\right) +  o \( \frac{\eps^{q-2}}{\eta^q} \) ,
\end{align*}
where by definition 
\[
G^e_q(\eps,s) := e^{-i \, \frac{\ell}{6} \, (\frac{1}{s} - \frac{T}{s^2 }) \, \eps^{q-2}} \ \aaa (s, 0 ,0 ) \, , \qquad 
G^o_q(\eps,s) := e^{-i \, \frac{\ell}{6} \, (\frac{1}{s} - \frac{T}{s^2 }) \, \eps^{q-2}} \ \aaa (s, \pi ,0 ) .
\]
We consider separately the two above sums. We discuss the case $ k $ even, the case $ k $ odd being similar.
The idea is to use Abel's summation formula. To this end, given some $ \delta \in \, ]0,1] $, we interpret the sum 
as follows
$$ \sum_{\frac{\eta}{\eps} \le k \,  \text{even} \le \frac{\cT}{\pi\eps}} \eps \ e^{- i k \pi \alpha} \, G^e_q (\eps, \eps k \pi) 
= \sum_{\frac{\eta}{\delta} \leq j \leq \frac{\cT}{2 \pi \delta}} \ \ \sum_{\frac{j \delta}{\eps} \leq k \, \text{even} \leq \frac{(j+1) 
\delta}{\eps}} \eps \ e^{- i k \pi \alpha} \, G^e_q (\eps, \eps k \pi) . $$
For all $ j $, fix some $ k_j $ even inside $ \bigl \lbrack \frac{j \delta}{\eps} , \frac{(j+1) \delta}{\eps} \bigr \rbrack $. 
For all $ k $ in this interval, Taylor's formula gives rise to
\begin{equation*}
  |G^e_q(\eps,\eps k \pi)-G^e_q(\eps,\eps k_j \pi) | \le \pi \, \delta \, \sup_{s\ge \pi \eta}|\part_s G^e_q(\eps,s)|  =
  \delta \ \cO\(\frac{\eps^{q-2}}{\eta^3}\) + \delta \ \cO (1) .
\end{equation*}
It follows that
\[ \sum_{\frac{\eta}{\eps} \le k \,  \text{even} \le \frac{\cT}{\pi\eps}} \eps \ e^{- i k \pi \alpha} \, G^e_q (\eps, \eps k_j \pi) 
= \cE_r^e \, + \! \! \! \sum_{\frac{\eta}{\delta} \leq j \leq \frac{\cT}{2 \pi \delta}} \! \! \! \eps \ G^e_q (\eps, \eps k_j \pi) \times 
\! \! \! \! \! \! \sum_{\frac{j \delta}{\eps} \leq k \, \text{even} \leq \frac{(j+1) \delta}{\eps}} \! \! \!  \! \! \! e^{- i k \pi \alpha} . \]
The error term $ \cE_r^e $ can be estimated according to
$$ \cE_r^e = \frac{\cT}{2 \pi \delta} \ \frac{\delta}{\eps} \ \eps \ \left \lbrack \delta \ \cO\(\frac{\eps^{q-2}}{\eta^3}\) + \delta \ 
\cO (1) \right \rbrack = \delta \ \cO\(\frac{\eps^{q-2}}{\eta^3}\) + \delta \ \cO (1) . $$ 
Since $ e^{- i \, \pi \, \alpha} \not = 1 $, we have
\begin{equation*}
  \Bigl \vert \sum_{\frac{\eta}{\delta} \leq j \leq \frac{\cT}{2 \pi \delta}} \! \! \! \eps \ G^e_q (\eps, \eps k_j \pi) \times 
\! \! \! \! \! \! \sum_{\frac{j \delta}{\eps} \leq k \, \text{even} \leq \frac{(j+1) \delta}{\eps}} \! \! \!  \! \! \! e^{- i k \pi \alpha}  
\, \Bigr \vert = \frac{1}{\delta} \ \cO(\eps ) .
\end{equation*}
In short, we have
\renewcommand\arraystretch{2.5}
\begin{equation*}
\begin{array}{rl}
  \frac{1}{\eps} \ \left |u \Bigl( \frac{T}{\eps} , \alpha \, \eps \Bigr) \right| = \! \! \! & \displaystyle \cO(\eta) + \cO \left(\frac{\eps^{q-1}}{\eta^{q+1}} 
  \right) + \cO\left(\eps^{\frac{q}{q+1}}\right) + o \( \frac{\eps^{q-2}}{\eta^q} \) \\
  \ & \displaystyle + \, \delta \ \cO\(\frac{\eps^{q-2}}{\eta^3}\) + \delta \ \cO (1) + \frac{1}{\delta} \   \cO(\eps ) .
  \end{array}
\end{equation*}
\renewcommand\arraystretch{1}

\noindent This is valid for all $ (\eta , \delta) \in \, ]0,1]^2 $. We fix $ \delta = \eta^4 $ so that
$$ \cO(\eta) + \delta \ \cO\(\frac{\eps^{q-2}}{\eta^3}\) + \delta \ \cO (1) + \frac{1}{\delta} \ \cO(\eps) = \cO(\eta) + \frac{1}{\eta^4} \ \cO(\eps ) . $$
By fixing $ \eta $ increasingly smaller and then letting $ \eps $ goes to zero, we can recover some $ o (\eps^0) $ or, after multiplication by 
$ \eps $, some $ o (\eps) $ as announced in (\ref{destructiveinter}). When $ q > 2 $, a better estimate is available by optimizing the choice 
of $ \eta $. Just take $ \eta = \eps^{(q-2) / 5 q} $ to obtain
(\ref{destructiveinter}).
\end{proof}

\begin{rem}[Contrast between constructive and destructive interferences]\label{nonoptimal}  The controls of the error terms inside 
\eqref{increasenc}, \eqref{increasencbis} and \eqref{destructiveinter} are not claimed to be sharp. For instance, by specifying a rate 
of convergence at the level of \eqref{hypp2relax}, the precision $ o(\eps) $ in \eqref{increasenc} and \eqref{destructiveinter} could be 
improved into $ \cO(\eps^{1+\kappa}) $ for some  $ \kappa>0 $. At all events, the amplitude of the solution  $ u $ to (\ref{subsec:resoscint})
is asymptotically maximal on a set of Lebesgue measure zero, which is the lattice $ \eps \ZZ $ moving with $ \eps \in ]0,1] $. Everywhere 
else, it is smaller.
\end{rem}


\section{Nonlinear analysis} \label{sec:nonlinear}

\noindent In this chapter, we prove the nonlinear information (2) of Theorem~\ref{theo:resumeNL}, as well as Theorem~\ref{theo:resumeNLbis}.
In Section \ref{sec:settingNL}, we precise the framework, and we collect various estimates about the solution $ u^{(0)} $ of (\ref{eq:Picard0}). 
In Section~\ref{sec:sorting-gauge}, we measure the influence of different types of nonlinearity according to gauge parameters 
 $ \textfrak {g}$  that characterize them. We prove that nonlinear effects are not detected at leading order as long as $ \textfrak {g}  
 \not = 1 $. This is Fact \ref{fact2} in the PDE context. As a consequence, when $ \textfrak {g}  \not = 1 $, the distinction between 
 constructive and destructive interferences remains in the same state as in the linear case. This dichotomy does persist when 
 $ \textfrak {g}  = 1 $. But, as will be seen in Section \ref{sec:nonlineareffectg=1}, the profiles exhibited in (\ref{increasenc}) must 
 be modified accordingly, in order to take into account the nontrivial effects of nonlinearity.
 

\subsection{General setting}\label{sec:settingNL} In Paragraph \ref{subsec:mainassreca}, we recall the main assumptions, and 
we start the discussion about nonlinear effects. In Paragraph \ref{singularintegraloperator}, we study the kernel of a singular operator, 
which appears when seeking sup norm estimates. In Paragraph \ref{subsec:Classigaugeparameters}, we classify the different sorts 
of gauge parameters, and we illustrate them by examples. In Paragraph~\ref{subsec:roughestimaref}, we establish various estimates 
concerning the solution $ u^{(0)} $ of \eqref{eq:Picard0}. 


\subsubsection{Main assumptions}\label{subsec:mainassreca} We work under the hypotheses of Theorems~\ref{theo:resumeNL}
and~\ref{theo:resumeNLbis} concerning $ p $ and $\varphi$. In particular, we suppose that $ q= 2 $ and $ D \geq 4 $. The phase 
$\varphi$ is subject to Assumption~\ref{choiofa}. The expression $ u^{(0)} $ is obtained by solving the linear equation \eqref{eq:Picard0}, 
with $ F_{\! L} $ as in (\ref{eq:source}). In (\ref{eq:source}), the sum is assumed to be finite,  to avoid extra 
discussions about the convergence of infinite sums which can appear in the approximating process. 

\smallskip

\noindent In view of Propositions~\ref{supnormamplifi} and \ref{supnormnonamplifi2}, the function $u^{(0)}$ is of size $\eps $ in 
$L^\infty$. It follows that the quadratic nonlinearity of (\ref{eq:Picard1}) can be expected to play a role at leading order for long 
times $ t \sim \eps^{-1} $. The right hand side of (\ref{eq:Picard1}) may seem quite specific. It is adjusted in order to generate 
through (\ref{eq:Picard1}) a solution $ u^{(1)} $ of size comparable to $ u^{(0)} $. To understand why, and also to discern the 
possible effects of other nonlinearities, it is interesting to generalize (\ref{eq:Picard1}) up to some extent. With this in mind, we 
replace (\ref{eq:Picard1})  by (\ref{eq:Picard1NL}) with $ F_{\!N\!L} $ satisfying Assumption \ref{choiononlinso}. Following 
\eqref{eq:u/U}, we introduce
\begin{equation} \label{passutocU}
 u^{(j)} (t,x) = \eps  e^{i t/\eps}\, \cU^{(j)} \Bigl( \eps t,\frac{x}{\eps}\Bigr) , \qquad  \cU^{(j)}(T,z) = \frac{1}{\eps} e^{-iT/\eps^2} u^{(j)}
  \Bigl( \frac{T}{\eps},\eps z \Bigr). 
\end{equation}
The solution $ u^{(1)} $ to (\ref{eq:Picard1NL}) is a superposition of the contributions brought by the different terms $ F_{j_1 j_2 \nu} $
composing $ F_{\!N\!L} $, see (\ref{contofFNLNNstep2}). Thus, we can study separately what happens for a fixed choice of 
$ (j_1,j_2,\nu) \in \NN^2 \times \ZZ $. With this in mind, we focus our attention on a single monomial having the form
\begin{equation}\label{singlemonomial}
F_{\! N \! L} \equiv F_{j_1 j_2 \nu} = \eps^\nu e^{i \omega t/\eps} \chi \Bigl( 3-2\frac{\eps t}{\cT}\Bigr)\chi \Bigl(
   \frac{x}{r\eps^{\iota}} \Bigr) u^{j_1}\bar  u^{j_2}.
\end{equation}
We have seen in Subsection~\ref{subsec:toymodel} that the gauge parameter $ \textfrak {g} $ is a good indicator of the time 
oscillations which remain in the source term of equation (\ref{eq:EDONLajout}) after filtering out of the equation (\ref{third-si}) 
through the change (\ref{changeofuu}). When dealing with (\ref{singlemonomial}), a similar definition applies.

\begin{defi}[Gauge parameter] \label{defidegaugeparameter} The gauge parameter
  associated with $F_{j_1j_2\nu}$ is the real number $ \textfrak {g}_{j_1 j_2 \nu} $ defined by $
\textfrak {g}_{j_1 j_2 \nu} := \omega +j_1-j_2 $.
\end{defi}

\noindent From now on, we fix $ F_{\! N \! L}  $ as in (\ref{singlemonomial}) with indices $ \nu $, $ j_1 $ and $ j_2 $ adjusted 
in such a way that $\nu+j_1+j_2 \geq 2$. We will sometimes simply note $ \textfrak {g} \equiv \textfrak {g}_{j_1 j_2 \nu} \in \RR $. 
In (\ref{singlemonomial}), the coefficient which appears in front of $ u^{j_1}\bar  u^{j_2} $ is the product of three factors.

\smallskip

\noindent In the light of the first factor $ e^{i \omega t/\eps}  $, in the case of a non-zero frequency $ \omega \not = 0 $, 
the source term $ F_{\! N \! L} $ does involve time oscillations. Reasons for introducing $ e^{i \omega t/\eps}  $ have 
been explained in Remark~\ref{eliminationdeF}, and also in Paragraph \ref{disprela} when adjusting $ p  $ in 
order to recover (\ref{reso1}).

\smallskip

\noindent Looking at the second factor, the source term $ F_{\! N \! L} $ is switched on after all signals have been emitted, 
that is during the long time interval $ [\cT / \eps , 2 \cT / \eps ] $, which could be replaced by $ [\eta / \eps , 1 / (\eta \eps) ] $
for any $ \eta \in ] 0,1] $. But a positive gap ($ \eta > 0 $) seems to be needed. Indeed, Lemma~\ref{localizations} makes a 
first group of wave packets which, due to a dispersive phenomenon, is negligible in the limit $\eps\to 0$. It requires to consider 
sufficiently large times, so the phase $\Phi_k$ could be uniformly non-stationary in $\xi$.

\smallskip

\noindent In the light of the third factor, the source term $ F_{\! N \! L} $ is spatially localized in a ball of size
$ r \eps^\iota $. The impact of $ F_{\! N \! L} $ is potentially all the more stronger that $ \iota $ is small. The 
choice of a large negative parameter $ \iota $, with $ \iota \ll -1 $, involves almost no spatial localization. The 
case $ \iota = 0 $ corresponds to a diluted source which acts on the domain where Propositions \ref{supnormamplifi} 
and \ref{supnormnonamplifi2} furnish some refined information. Finally, the selection of the limiting value $ \iota = 1 $ 
implies a concentrated source which, for convenience, is placed here at the origin. Larger values of $ \iota $, with 
$ \iota \geq 1 $, will not be investigated because they have little interest. 

\smallskip

\noindent The impact of the nonlinearity (\ref{singlemonomial}) can be measured by looking at the difference 
$ \cW:=\cU^{(1)} - \cU^{(0)} $. From (\ref{eq:Picard0}) and  (\ref{eq:Picard1NL}), it is easy to deduce that
\begin{equation} \label{eq:detercD}
\qquad \part_T \cW- \frac{i}{\eps^2} \(p (- i \, \part_z) -1 \)  \cW =\eps^{\nu +j_1+j_2-2}
e^{i (\textfrak {g}-1) T/ \eps^2} \cG^\eps , \qquad \cW_{\mid t=0} \equiv 0 ,
\end{equation}
where $\textfrak {g} \equiv \textfrak {g}_{j_1 j_2 \nu} $ is as in Definition~\ref{defidegaugeparameter}, and the source 
term $ \cG^\eps  $ is determined by
\begin{equation}\label{eq:defcG}
\cG^\eps (T,z) =\chi \Bigl( 3-2\frac{T}{\cT} \Bigr) \, \chi \Bigl(\frac{z}{r\eps^{\iota-1}} \Bigr) \, \cU^{(0)}(T,z)^{j_1} \,
\bar\cU^{(0)}(T,z)^{j_2} .
\end{equation}
By construction, the function $ \cG^\eps  $ is smooth and compactly supported in $ (T,z) $. We have seen in 
Section~\ref{sec:lineffect} that $ \cU^{(0)} (T,z) $, and therefore $ \cG^\eps (T,z) $, is some $ \cO(1)$ as long as
$ (T,z) $ is such that $\cT\le T\le 2\cT$ and $|z|\le r/\eps$. Moreover, this control is sharp when $z\in \ZZ$. 
Coming back to (\ref{eq:detercD}), Duhamel's formula reads
\begin{equation} \label{eq:duhamelNL}
\begin{array}{rl}
\quad \ \cW(T,z)= & \displaystyle \! \! \! \eps^{\nu +j_1+j_2-2} \ (2 \pi)^{-1} \\
 & \displaystyle \times \int_0^T \! \! \iint e^{-i(z-y)\xi +i\frac{T-s}{\eps^2}(p(\xi)-1) +i(\textfrak {g}-1)\frac{s}{\eps^2}} 
 \, \cG^\eps(s,y) \, dsdyd\xi .
 \end{array} 
\end{equation}
Our aim is to study $ \cW(T,z) $ through (\ref{eq:duhamelNL}). As a first step, we would like to establish that,
for $ \cG^\eps $ as in (\ref{eq:defcG}), we have
\begin{equation}\label{eq:defcGintpetit}
\quad \int_0^T \! \! \iint e^{-i(z-y)\xi +i\frac{T-s}{\eps^2}(p(\xi)-1)
    +i(\textfrak {g}-1)\frac{s}{\eps^2}} \, \cG^\eps(s,y) \,
  dsdyd\xi =\cO(1)\text{ in } L^\infty .
\end{equation}
We know already that the function $ \cG^\eps  $ is of size $ 1 $ at integer points, and that it is of smaller amplitude 
at all other spatial positions. Thus, the matter is to understand how the integral operator inside (\ref{eq:defcGintpetit}) 
acts on $ L^\infty $. The main problem when dealing with (\ref{eq:defcGintpetit}) is the global domain of integration 
in $ \xi $ and (for $ \iota < 1 $) the large domain (of size $ \eps^{\iota -1} $) of integration in $ y $. This difficulty is 
examined, and partly solved, in the next paragraph.


\subsubsection{A singular integral operator}\label{singularintegraloperator} 
Following the convention (\ref{defFouriertrans}), denote by $ \cF_\star $ the partial Fourier 
transform with respect to the variable $ \star \in \{y,\xi \} $. Given $ \tau \in \RR $ and $ \Lambda \in L^\infty (\RR) $, 
define the operator $ B^\Lambda_\tau  $ by
\begin{align}
B^\Lambda_\tau \cG^\eps (z) & := \cF_\xi \bigl( 2 \pi \bigl( e^{i \tau (p(\xi)-1)} - 1 \bigr) \Lambda(\xi)  (\cF^{-1}_y \cG^\eps) (\xi) \bigr) (z) \label{defBktau} \\
\ & \, = \iint e^{-i (z-y) \xi} \bigl( e^{i \tau (p(\xi)-1)} - 1 \bigr) \Lambda(\xi) \cG^\eps (y) dyd\xi  . \nonumber
\end{align}
When $ \Lambda \equiv \mathds{1}_{\mathbb R} $, the operator $ B^\Lambda_\tau $ is simply denoted by $ B_\tau := 
B^{\mathds{1}_{\mathbb R}}_\tau $. Looking at (\ref{defBktau}), it is clear that $ B^\Lambda_\tau  : H^\sigma(\RR) 
\rightarrow H^\sigma(\RR) $ is a bounded operator for all $\sigma\in \RR$, with
\begin{equation}\label{Aktauv}
 \| B^\Lambda_\tau \cG^\eps \|_{H^\sigma(\RR)} \leq 2 \| \Lambda \|_{L^\infty(\RR) } \| \cG^\eps \|_{H^\sigma(\RR)} .
 \end{equation}
In \eqref{defBktau}, we first integrate in $ y $ and then in $ \xi $. Another viewpoint, which is more adapted to get $ L^\infty $-estimates, is 
to first integrate in $ \xi $ and then in $ y $. By this way, we find $ B^\Lambda_\tau \cG^\eps = K^\Lambda_\tau * \cG^\eps $ with a kernel $ K^\Lambda_\tau  $ 
given by
\begin{equation} \label{defdeffeKt}
\quad \ K^\Lambda_\tau (y) :=\int e^{-i y \xi} \bigl( e^{i \tau (p(\xi) -1)} -1 \bigr) \Lambda(\xi) d \xi , \qquad K_\tau := K^{\mathds{1}_{\mathbb R}}_\tau  . 
\end{equation}
In view of (\ref{reso2}), where $ \omega^\infty_+ = 1 $ and $ q = 2 $,
the integrand inside (\ref{defdeffeKt}) is integrable, and it depends
smoothly on the parameters $ \tau $ and $ y $.  
The expression $ K^\Lambda_\tau (y) $ is therefore well defined. It is
continuous with respect to $ (\tau,y) $, and 
in view of (\ref{reso2}), 
\begin{equation}\label{assertrat}
\vert K^\Lambda_\tau (y) \vert \leq C \, \vert \tau \vert \, \( \int \frac{ d \xi}{1+\xi^2} \) \, \Vert \Lambda \rVert_{L^\infty
(\RR)} .
\end{equation}
Now, the solution $ \cW $ to (\ref{eq:detercD}) can be decomposed into
$ \cW_l + \cW_{nl} $  with
\begin{align}
\cW_l (T,z) & := \eps^{\nu +j_1+j_2-2} \int_0^T e^{i (\textfrak {g}-1) s / \eps^2} \cG^\eps (s,z) ds , \label{partcDl} \\
\cW_{nl} (T,z) & := \frac{\eps^{\nu +j_1+j_2-2}}{2 \pi} \int_0^T e^{i (\textfrak {g}-1) s / \eps^2}  B_{(T-s)/\eps^2} \cG^\eps (s,z) ds . 
\label{partcDnl}
\end{align}
By this way, the improper integral inside (\ref{eq:duhamelNL}) is defined without ambiguity. Indeed, both (\ref{partcDl})
and (\ref{partcDnl}) involve local integrals with respect to $ s $ of bounded functions. For (\ref{partcDl}), this is obvious since
$ \cG^\eps (s,z) = \cO (1) $. Concerning (\ref{partcDnl}), this results from the pointwise estimate
$$ \vert B_{(T-s)/\eps^2} \cG^\eps (s,z) \vert \leq \| K_{(T-s)/\eps^2} \|_{L^\infty(\RR)} \ \| \cG^\eps (s,\cdot) 
\|_{L^1(\RR)} \lesssim \eps^{\iota -3} . $$
Let us look more closely at $ \cW_l (T,z) $. To get $ \cW_l (T,z) $, it suffices to know $ \cG^\eps (\cdot,z) $, that is $ \cU^{(0)}(\cdot,z) $. 
In this sense, the action on $ \cG^\eps $ leading to $ \cW_l $ is \underline{\it l$\, $}ocal in space, and therefore it is consistent with the 
dichotomy between constructive and destructive interferences exhibited in Propositions \ref{supnormamplifi} and \ref{supnormnonamplifi2}. 
In fact, a precise asymptotic description of $ \cW_l $ is available.

\begin{lem} [Description of the part $ \cW_l $] \label{descripofcdl} We work under Assumption \ref{persourcea}, with moreover 
$ \aaa (T,\cdot,x) $ periodic of period $ \pi $. Then, for all $ z \in \RR \setminus \{2 \ZZ \} $, we find that $ \cW_l (T,z) = o(1) $. When 
$ \nu+j_1+j_2 > 2 $ or when $ \textfrak {g} \not = 1 $, for all $ z = 2 j $ with $ j \in \ZZ $, we have again $ \cW_l (T,2j) = o(1) $. On the 
contrary, when $ \iota \in [0,1[ $, $ \nu+j_1+j_2 = 2 $ and $ \textfrak {g} = 1 $, we obtain that
{\small
\begin{align} 
& \ \cW_l (T,2j) = o(1) + \, \displaystyle \Bigl \lbrack \sqrt{\frac{2}{\pi \gamma}} \, \cos \Bigl( \frac{\gamma}{\eps} - \frac{\pi}{4} \Bigr) 
\Bigr \rbrack^{j_1+j_2} e^{i (j_2-j_1) \gamma / \eps} \int_0^T \chi \Bigl( 3-2\frac{s}{\cT} \Bigr) \times \label{estnonlincasg} \\
& \quad \Bigl(  \int_0^{+\infty} e^{-i \, \frac{ \ell}{6} \, (\frac{1}{\sigma_1} - \frac{s}{\sigma_1^2})} 
\, \aaa (\sigma_1,0,0) \, d\sigma_1 \Bigr)^{j_1} \Bigl(  \int_0^{+\infty} e^{i \, \frac{ \ell}{6} \, (\frac{1}{\sigma_2} - \frac{s}{\sigma_2^2})} 
\, \aaa (\sigma_2,0,0) \, d\sigma_2 \Bigr)^{j_2} \, ds . \nonumber
\end{align}}
\end{lem}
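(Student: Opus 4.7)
The plan is to substitute the asymptotic formulas for $\cU^{(0)}$ coming from the linear analysis (Propositions~\ref{supnormamplifi} and~\ref{supnormnonamplifi2}) into the definition \eqref{eq:defcG} of $\cG^\eps$, and then to examine the resulting $s$-integral by a non-stationary phase argument when $\textfrak{g}\ne 1$, or by direct passage to the limit when $\textfrak{g}=1$. Note that for $\iota\in[0,1[$, the spatial cut-off $\chi(z/(r\eps^{\iota-1}))$ converges to $\chi(0)=1$ for each fixed $z$, so it plays no role at leading order.

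When $z\in \RR\setminus 2\ZZ$, Proposition~\ref{supnormnonamplifi2} applied with $\alpha=z$ gives $\cU^{(0)}(s,z)=o(1)$ uniformly for $s$ in the compact set $[\cT,2\cT]$ which contains $\mathrm{supp}\,\chi(3-2\cdot/\cT)$. Hence $\cG^\eps(s,z)=o(1)$ uniformly in $s$, and since $\nu+j_1+j_2\ge 2$, we get $\cW_l(T,z)=\eps^{\nu+j_1+j_2-2}\cdot o(1)\cdot (T-\cT)=o(1)$.

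Now fix $z=2j$ with $j\in\ZZ$. Introducing
\[
b(s):=\int_0^{+\infty} e^{-i\frac{\ell}{6}(\frac{1}{\sigma}-\frac{s}{\sigma^2})}\aaa(\sigma,0,0)\,d\sigma,
\]
Proposition~\ref{supnormamplifi} combined with the $\pi$-periodicity of $\aaa$ in its second argument (which gives $\aaa(\sigma,\pi,0)=\aaa(\sigma,0,0)$) rewrites as
\[
\cU^{(0)}(s,2j) = A_\eps^2\,b(s)+o(1),\qquad s\in[\cT,2\cT],
\]
since the sum $e^{-i\pi/4}+e^{-i(2\gamma/\eps-\pi/4)}$ simplifies to $2e^{-i\gamma/\eps}\cos(\gamma/\eps-\pi/4)$, which matches $A_\eps^2$ up to the constant $\sqrt{2/(\pi\gamma)}$ of \eqref{defcoefaeps}. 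Since $A_\eps^2$ and $b(s)$ are $\cO(1)$ uniformly, raising to powers $j_1$ and $j_2$ and multiplying preserves the $o(1)$ remainder, yielding
\[
\cG^\eps(s,2j)=(A_\eps^2)^{j_1}(\overline{A_\eps^2})^{j_2}\,\chi(3-2s/\cT)\,b(s)^{j_1}\bar b(s)^{j_2}+o(1).
\]

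It remains to integrate in $s$. If $\nu+j_1+j_2>2$, the prefactor $\eps^{\nu+j_1+j_2-2}$ alone forces $\cW_l(T,2j)=o(1)$. If $\nu+j_1+j_2=2$ but $\textfrak{g}\ne 1$, one integration by parts with respect to $s$ using the oscillating factor $e^{i(\textfrak{g}-1)s/\eps^2}$ gives boundary terms plus an integral of the $s$-derivative of the smooth profile $\chi(3-2s/\cT)b^{j_1}\bar b^{j_2}$; the $s=0$ boundary vanishes because $\chi(3)=0$, while the $s=T$ boundary and the remainder integral are both $\cO(\eps^2)$, so $\cW_l(T,2j)=o(1)$. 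In the critical resonant case $\nu+j_1+j_2=2$ and $\textfrak{g}=1$, the phase is identically $1$, and straight integration produces
\[
\cW_l(T,2j)=(A_\eps^2)^{j_1}(\overline{A_\eps^2})^{j_2}\int_0^T\chi(3-2s/\cT)\,b(s)^{j_1}\bar b(s)^{j_2}\,ds+o(1).
\]
Expanding the prefactor, using that $\cos(\gamma/\eps-\pi/4)$ is real, gives $(A_\eps^2)^{j_1}(\overline{A_\eps^2})^{j_2}=[\sqrt{2/(\pi\gamma)}\cos(\gamma/\eps-\pi/4)]^{j_1+j_2}e^{i(j_2-j_1)\gamma/\eps}$, which matches exactly the prefactor in \eqref{estnonlincasg}. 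The main (mild) obstacle is ensuring the uniformity in $s\in[\cT,2\cT]$ of the remainders produced by Proposition~\ref{supnormamplifi} when raised to powers; this follows because the proof of that proposition relies on Riemann sums that converge uniformly on compact time intervals, together with the uniform boundedness of $\cU^{(0)}$ at integer points.
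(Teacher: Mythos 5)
Your overall strategy — substitute the asymptotics of $\cU^{(0)}$ from Section~4 into $\cG^\eps$ and then analyze the $s$-integral — is sound, and your treatment of the case $z\notin 2\ZZ$, of $\nu+j_1+j_2>2$, and of the resonant case $\textfrak{g}=1$ matches the paper's conclusion and arrives at \eqref{estnonlincasg} by essentially the same computation (up to using dominated convergence where you invoke uniformity). However, there is a genuine gap in your handling of the case $z=2j$, $\nu+j_1+j_2=2$, $\textfrak{g}\ne 1$.

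You call $\chi(3-2s/\cT)\,b(s)^{j_1}\bar b(s)^{j_2}$ a ``smooth profile'' and integrate by parts. But there is no reason for $b(s)=\int_0^{+\infty}e^{-i\frac{\ell}{6}(\frac{1}{\sigma}-\frac{s}{\sigma^2})}\aaa(\sigma,0,0)\,d\sigma$ to be $C^1$ in $s$: differentiating under the integral sign produces a factor $\sigma^{-2}$, and Assumption~\ref{choiopro}/\ref{persourcea} only give that $\aaa(\cdot,0,0)$ is bounded with compact support in $]0,\cT]$; there is no vanishing of $\aaa(\sigma,0,0)$ near $\sigma=0$, so $\int_0^{\cT}\vert\aaa(\sigma,0,0)\vert\sigma^{-2}\,d\sigma$ may diverge. (The paper itself signals this degeneracy at $s=0$ in the proof of Proposition~\ref{supnormamplifi}, which is why it keeps the parameter $\eta$ and works with $[\pi\eta,+\infty[$ there.) The paper's proof avoids the problem entirely by integrating by parts against $\cG^\eps$ directly and invoking the independently established bound $\vert\part_s\cG^\eps(s,z)\vert=\cO(\eps^{-2/3})$ from Lemma~\ref{lem:fine-estimates}, which yields $\cW_l(T,2j)=\cO(\eps^{4/3})$. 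Your conclusion can be rescued by replacing the integration by parts with a Riemann--Lebesgue argument (the main term is bounded, continuous, compactly supported, hence $L^1$, so $\int_0^T e^{i(\textfrak{g}-1)s/\eps^2}\Phi(s)\,ds\to 0$), but this is a different mechanism and gives no rate. Similarly, for the terms you discard as $o(1)$, the uniformity in $s\in[\cT,2\cT]$ that you flag as a ``mild obstacle'' is not proved; the paper sidesteps it by combining the pointwise $o(1)$ of Propositions~\ref{supnormamplifi}--\ref{supnormnonamplifi2} with the uniform bound \eqref{majounideU0} and dominated convergence, which is cleaner and requires nothing beyond what is actually stated.
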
 

\noindent Lemma \ref{descripofcdl} is instructive. It indicates, among other things, that the constructive interferences do not
impact $ \cW_l $ when $ \textfrak {g} \not = 1 $. As will be seen, this principle also applies to $ \cW_{nl} $. 

\begin{proof} First, recall that $ \cW_l (T,z) = 0 $ when $ 0 \leq T \leq \cT $. For $ \cT \leq T $, observe that
\begin{equation} \label{boundofDl} 
\vert \cW_l (T,z) \vert \leq \eps^{\nu+j_1+j_2 - 2} \int_{\cT}^T \vert \cU^{(0)} (s,z) \vert^{j_1+j_2} ds = \cO \bigl( \eps^{\nu+j_1+j_2 - 2} \bigr) . 
\end{equation}
When $ \nu+j_1+j_2 > 2 $, the smallness of $ \cW_l (T,z) $ follows directly from (\ref{boundofDl}). Now, assume that 
$ \nu+j_1+j_2 = 2 $. The first assertion of Lemma \ref{descripofcdl}, the one implying positions $ z \in \RR \setminus \{2 \ZZ \} $, 
is a direct consequence of (\ref{boundofDl}), Proposition \ref{supnormnonamplifi2} and  Lebesgue's dominated convergence 
theorem. Finally, consider the case $ z = 2 j $ with $ j \in \ZZ $. From the forthcoming bound (\ref{majounidederiofU0}), that will 
be derived independently in Paragraph \ref{subsec:roughestimaref}, we know that
$$ \vert \part_s \cG^\eps (s,z) \vert = \cO \bigl( \vert \part_s \cU^{(0)} (s,z) \vert \bigr) = \cO \bigl( \eps^{-2/3} \bigr) .$$
When $ \textfrak {g} \not = 1 $, an integration by parts in $ s $ performed at the level of (\ref{partcDl}) indicates that $ \vert \cW_l 
(s,z) \vert = \cO (\eps^{4/3}) $. When $ \textfrak {g} = 1 $, the time oscillating factor disappears from (\ref{partcDl}). Plug 
(\ref{increasenc}) into (\ref{passutocU}) to recover an asymptotic description of $ \cU^{(0)} $. When $ \iota \in [0,1[ $, for 
small values of $ \eps $, we find $ \chi (2j / r \eps^{\iota -1}) = 1 $, yielding (\ref{estnonlincasg}). 
\end{proof}

\noindent It should be noticed that the formula (\ref{estnonlincasg}) with $ (j_1,j_2) = (2,0) $ differs from (\ref{desintertheoprinbis}).
In (\ref{estnonlincasg}), the two integrals in $ d \sigma_1 $ and $ d \sigma_2 $ are separated while, at the level of (\ref{desintertheoprinbis}),
they are correlated through a nontrivial factor. The reason of this difference is that the contribution $ \cW_{nl} $ is not at all a small 
perturbation of $ \cW_l $. As will be seen, the decomposition of $ \cW $ into $ \cW_l $ and $ \cW_{nl} $ is suitable to show (at least 
when $ \textfrak {g} \not = 1 $) the sup norm decreasing of $ \cW $. But it is not sufficiently precise to obtain (\ref{desintertheoprinbis}). 
When $ \textfrak {g} = 1 $, the two terms $ \cW_l $ and $ \cW_{nl} $ combine  asymptotically to form (\ref{desintertheoprinbis}), which 
provides with the correct prediction.

\smallskip

\noindent There remains to study $ \cW_{nl} $. The access to $ \cW_{nl} $ is more complicated than for $ \cW_l $. Indeed, in 
(\ref{partcDnl}), the action of $ B_\tau $ is \underline{\it n$\, $}on \underline{\it l$\, $}ocal in space, and it is also singular in terms of 
$ \eps $ when $ \eps $ goes to zero. Let us examine this in more detail. From Young's convolution inequality, we know that
\begin{equation} \label{youngineq}
\| B^\Lambda_\tau \cG^\eps \|_{L^\infty (\RR)} \leq \| K^\Lambda_\tau \|_{L^p (\RR)} \| \cG^\eps \|_{L^{p/(p-1)} (\RR)} , 
\qquad \forall \, p \in [1,+\infty] . 
\end{equation}
Come back to (\ref{partcDnl}). Since $ \tau $ is aimed to be replaced by $ (T-s) / \eps^2 $, the access to $ \cW_{nl} $ needs to 
consider large values of $ \tau $, say $ \tau \in [1,+\infty[ $.

\begin{lem}[Estimates on the $ L^2$ and $ L^\infty $ norms of the kernel $ K^\Lambda_\tau $] \label{Lpestikernel} Fix $ \rho\ge 0 $, 
and assume that $ \Lambda \in L^\infty (\RR) $ is such that $ \Lambda (\xi) = \cO (\vert \xi \vert^{-\rho} ) $ when $ \vert \xi \vert $ 
goes to $ + \infty $. Denote by $ \tilde q := \tilde p / (\tilde p-1) $ the H\" older conjugate of $ \tilde p $. By convention, we have 
$ \tilde q = 1 $ when $ \tilde p = + \infty $, and $ \tilde q = 2 $ when $ \tilde p = 2 $. Then, for large values of $ \tau $, we find
\begin{equation} \label{kernelestiaa1} 
\forall \tilde p \in \{+ \infty,2\} , \qquad \| K^\Lambda_\tau \|_{L^{\tilde p} (\RR)} \lesssim 1 + \tau^{(1/q \tilde q) - (\rho/q)} , 
\end{equation}
where $ q \geq 2 $ is the number stemming from \eqref{hypp2}. 
\end{lem}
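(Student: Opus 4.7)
The plan is to exploit the elementary estimate $|e^{ix}-1| \le \min(2,|x|)$ together with the asymptotic decay of $p(\xi)-1$ that follows from \eqref{reso2} under Assumption~\ref{normareso}, namely $|p(\xi)-1| \lesssim |\xi|^{-q}$ for $|\xi|$ large. This yields the pointwise bound $|e^{i\tau(p(\xi)-1)}-1| \le \min(2, C\tau |\xi|^{-q})$ (for $|\xi|$ away from $[-\xi_c,\xi_c]$). The natural splitting is at $|\xi| \simeq A := \tau^{1/q}$, i.e. at the threshold where the two terms in the $\min$ balance. The contribution on a fixed bounded region is always $\cO(1)$ and will be absorbed into the `$1$' in the stated bound; this takes care of the neighborhood of $\xi=0$ where $p$ vanishes.

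For $\tilde p = \infty$, I would bound directly
\[
\|K^\Lambda_\tau\|_{L^\infty(\RR)} \le \int_\RR \min\bigl(2,\,C\tau|p(\xi)-1|\bigr)\,|\Lambda(\xi)|\,d\xi,
\]
split the integral at $|\xi|\simeq A$, and use $|\Lambda(\xi)| \lesssim (1+|\xi|)^{-\rho}$. The low-frequency part contributes $\int_{|\xi|\le A}(1+|\xi|)^{-\rho}d\xi \lesssim A^{1-\rho}$, while the high-frequency part contributes $\tau\int_{|\xi|>A}|\xi|^{-q-\rho}d\xi \lesssim \tau\,A^{1-q-\rho}$. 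Both equal $\tau^{(1-\rho)/q} = \tau^{1/q-\rho/q}$, which is the exponent of the stated estimate for $\tilde q = 1$.

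For $\tilde p = 2$, I would pass to the Fourier side via Plancherel,
\[
\|K^\Lambda_\tau\|_{L^2(\RR)}^2 = 2\pi \int_\RR \bigl|e^{i\tau(p(\xi)-1)}-1\bigr|^2 |\Lambda(\xi)|^2\,d\xi
\le C\int_\RR \min\bigl(4,\,\tau^2(p(\xi)-1)^2\bigr)|\Lambda(\xi)|^2\,d\xi,
\]
and perform the identical splitting at $A=\tau^{1/q}$. The low-frequency piece gives $\int_{|\xi|\le A}(1+|\xi|)^{-2\rho}d\xi \lesssim A^{1-2\rho}$, and the high-frequency piece gives $\tau^2\int_{|\xi|>A}|\xi|^{-2q-2\rho}d\xi \lesssim \tau^2 A^{1-2q-2\rho} = \tau^{(1-2\rho)/q}$. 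Both are of the same size, so $\|K^\Lambda_\tau\|_{L^2}^2 \lesssim 1+\tau^{(1-2\rho)/q}$; taking square roots yields $\|K^\Lambda_\tau\|_{L^2} \lesssim 1+\tau^{1/(2q)-\rho/q}$, which matches the case $\tilde q = 2$.

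The main technical subtlety is the borderline exponents $\rho = 1$ (for $\tilde p = \infty$) and $\rho = 1/2$ (for $\tilde p = 2$), where the low-frequency integral is logarithmically divergent in $A$. At these critical values the announced power $\tau^{1/(q\tilde q) - \rho/q} = \tau^0$ degenerates to a constant, and my crude split produces a spurious $\log \tau$. This can be eliminated either by absorbing the log into a slightly larger ambient constant (the inequality is non-sharp away from criticality, so $q\ge 2$ gives room), or by treating a thin annulus $A/\log\tau \le |\xi|\le A\log\tau$ separately. Apart from this minor hurdle, and the bookkeeping needed to handle the bounded exceptional region where \eqref{reso2} does not yet apply, the proof reduces to the two elementary $\xi$-integrations sketched above.
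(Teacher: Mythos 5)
Your proof is correct in substance and closely parallels the paper's, but the route is slightly more elementary. The paper changes variables to $\eta = 1 - p(\xi)$ (using \eqref{increasing3} to invert $p$), converts the weights via \eqref{hypp3ded} and \eqref{reso2} to $p' \circ (1-p)^{-1}(\eta) \gtrsim \eta^{(q+1)/q}$ and $|\Lambda \circ (1-p)^{-1}(\eta)| \lesssim \eta^{\rho/q}$, then rescales $\eta \mapsto \eta/\tau$ so that the split at $|\xi| \simeq \tau^{1/q}$ happens implicitly, with the tail treated in closed form. You replace this with the pointwise bound $|e^{ix}-1|\le \min(2,|x|)$ combined with $|p(\xi)-1| \lesssim |\xi|^{-q}$, and perform the split explicitly at $A = \tau^{1/q}$; this avoids dealing with $(1-p)^{-1}$ and its pushforward estimates, at the mild cost of separating the two regimes by hand. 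Both approaches produce the same exponent $\tau^{1/(q\tilde q)-\rho/q}$ in the non-borderline range $\rho\tilde q \neq 1$, and both use Plancherel identically in the $\tilde p = 2$ case. So this is essentially the same argument in less transformed coordinates.

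One caveat: neither of the two patches you sketch for the borderline exponent $\rho\tilde q = 1$ actually closes the gap. A factor $\log\tau$ cannot be absorbed into a constant, regardless of how much ``room'' $q\ge 2$ gives (the exponent is exactly zero there, not merely small), and restricting to a thin annulus $A/\log\tau \le |\xi|\le A\log\tau$ only redistributes the logarithm. The honest statement is that the split produces $\tau^{1/(q\tilde q)-\rho/q}$ for $\rho\tilde q \ne 1$ and $(\log\tau)^{1/\tilde q}$ when $\rho\tilde q = 1$. You should be aware, though, that the paper's own proof has the identical borderline issue: its change of variables yields the integral $\int_0^{\tau\eta_1} |1-\cos\eta|^{\tilde q/2}\,\eta^{-1}\,d\eta$ when $\rho\tilde q=1$, which is itself logarithmic in $\tau$, so the paper's remark that the borderline is a ``direct consequence'' of \eqref{itfollowsthat1} glosses over the same point. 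This is therefore not a defect of your proof relative to the paper's; it is a shared limitation of the lemma as stated.
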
 

\noindent When $ \Lambda  $ is just bounded ($ \rho = 0 $), the estimate (\ref{kernelestiaa1}) helps control the explosion when 
$ \tau \rightarrow + \infty $ of the $ L^{\tilde p} (\RR)$-norm 
of $ K^\Lambda_\tau $. The situation is improving when $ \rho > 0 $. In particular, when $ \tilde q = 1 $ and $ \rho = 1 $, the family 
$ (K^\Lambda_\tau)_\tau $ is bounded in $ L^\infty (\RR) $.

\begin{proof} We can assert that
$$ \| K^\Lambda_\tau  \|_{L^{\tilde p} (\RR)} \lesssim \Bigl( 1 + \int_1^{+\infty} \bigl \vert 1 - \cos  \bigl( \tau (1 - p(\xi)) \bigr) \bigr 
\vert^{\tilde q/2}  \, \vert \Lambda(\xi) \vert^{\tilde q} \, d \xi \Bigr)^{1/ \tilde q} . $$
This is obvious when $ \tilde p = + \infty $. This is a consequence of (\ref{defdeffeKt}) and Plancherel theorem when $ \tilde p = 2 $.
The change of variables $ \eta = 1 - p(\xi) $ sends $ \xi = 1 $ to the positive value $ \eta_1 := 1 - p(1) $, and $ \xi = + \infty $ to 
$ \eta_\infty = 0 $. It gives rise to
$$ \| K^\Lambda_\tau  \|_{L^{\tilde p} (\RR)} \lesssim \Bigl( 1 + \int_0^{\eta_1} \frac{\vert 1 - \cos (\tau \eta) \vert^{\tilde q/2}}
{p' \circ (1-p)^{-1} (\eta)} \, \vert \Lambda \circ (1-p)^{-1} (\eta) \vert^{\tilde q} \, d \eta \Bigr)^{1/ \tilde q} , $$
where $ (1-p)^{-1} :]0,\eta_1] \rightarrow [1,+\infty[ $ is the inverse function of $ 1-p $. 
From (\ref{hypp3ded}) and (\ref{reso2}), it is easy to infer that 
\begin{align*}
\exists C >0 , & \qquad  p' \circ (1-p)^{-1} (\eta) \ge C \eta^{(q+1)/q} ,  \quad \forall \eta \in ]0,\eta_1] , \\  
\exists C >0 , & \qquad \vert \Lambda \circ (1-p)^{-1} (\eta) \vert^{\tilde q} \leq C \eta^{\rho \tilde q/q} ,  \quad 
\forall \eta \in ]0,\eta_1] .
\end{align*}
It follows that
\begin{equation} \label{itfollowsthat1} 
\| K^\Lambda_\tau  \|_{L^{\tilde p} (\RR)} \lesssim \Bigl( 1 + \, \tau^{(1 - \rho \tilde q)/q} \int_0^{\tau \eta_1} \vert 1 - 
\cos \eta \vert^{\tilde q/2} \eta^{(- q -1 + \rho \tilde q) /q } \, d \eta \Bigr)^{1/ \tilde q} . 
\end{equation}
The integral on the right hand side of (\ref{itfollowsthat1}) is convergent near $ \eta_\infty = 0 $ because
$$ \forall \tilde q \in \{1,2 \} , \qquad \tilde q -1 - (1/q) + (\rho \tilde q / q ) > -1 . $$
When $ \rho \tilde q = 1 $, (\ref{kernelestiaa1}) is a direct consequence of (\ref{itfollowsthat1}). Otherwise, remark that
\begin{equation} \label{itfollowsthat2}
 \int_{\eta_1}^{\tau \eta_1} \vert 1 - \cos \eta \vert^{\tilde q/2} \eta^{(- q -1 + \rho \tilde q) /q } \, d \eta \leq \frac{2 q}{1- 
 \rho \tilde q} \eta_1^{(\rho \tilde q -1)/q} \bigl(1 - \tau^{- (1-\rho \tilde q)/q} \bigr) . 
\end{equation}
From (\ref{itfollowsthat1}) and (\ref{itfollowsthat2}), we can deduce (\ref{kernelestiaa1}).
 \end{proof}
 
\begin{lem} [Pointwise estimates on the kernel $ K_\tau $] \label{Lpestikernel2} In the case $ \Lambda \equiv \mathds{1}_{\mathbb R} $ 
and $ q = 2 $, we find (for some $ c \in \CC $) that 
\begin{equation} 
K_\tau (0) \sim c \tau^{1/2} , \quad \text{whereas:} \quad \forall \,
y \not =0 , \quad K_\tau (y) = \cO (\tau^{1/6}) . \label{kernelestiaasupnorm}
\end{equation}
\end{lem}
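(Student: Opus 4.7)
The plan is to treat the two claims in \eqref{kernelestiaasupnorm} separately, both relying on the precise asymptotics $1-p(\xi) \sim -\ell/(6\xi^2)$, $p'(\xi) \sim -\ell/(3\xi^3)$, $p''(\xi) \sim \ell/\xi^4$ valid as $\xi\to+\infty$ (Assumption~\ref{resoa} with $q=2$, combined with \eqref{hypp3ded} and \eqref{reso2}), together with the evenness of $p$.

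\smallskip

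\textbf{Analysis at $y=0$.} Using the evenness of $p$ and absolute integrability of the integrand (bounded at infinity by $\tau|p(\xi)-1|=\cO(\tau/\xi^2)$), I would write
\[
K_\tau(0) = 2\int_0^\infty \bigl(e^{i\tau(p(\xi)-1)}-1\bigr)\,d\xi .
\]
Split the interval into $[0,R]$ and $[R,\infty)$ for a fixed large $R$ beyond the transition zone of $p$; the first piece contributes $\cO(1)$. In the tail, perform the natural change of variable $\xi=\tau^{1/2}u$, under which $\tau(p(\tau^{1/2}u)-1)$ converges pointwise to $\ell/(6u^2)$. A dominated convergence argument with an envelope of type $\min\bigl(2,\, C/u^2\bigr)$ then yields
\[
K_\tau(0) \;=\; c\,\tau^{1/2} + o(\tau^{1/2}) , \qquad c := 2\int_0^\infty \bigl(e^{i\ell/(6u^2)}-1\bigr)\,du ,
\]
the integral being absolutely convergent (bounded by $2$ near $0$ and by $C/u^2$ at infinity) and nonzero for $\ell<0$, as required.

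\smallskip

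\textbf{Analysis at $y\neq 0$.} Setting $\Psi(\xi) := \tau(p(\xi)-1) - y\xi$, I would split $K_\tau(y) = \int e^{i\Psi(\xi)}\,d\xi - \int e^{-iy\xi}\,d\xi$, understood as conditionally convergent integrals. The second piece vanishes distributionally for $y\neq 0$, and this cancellation is made rigorous by first inserting a cut-off at $|\xi|\le\xi_c$ (where $p\equiv 0$ so both integrands coincide up to the constant factor $e^{-i\tau}-1$) and then integrating by parts in the outer region. Hence the main contribution is the oscillatory integral $\int e^{i\Psi(\xi)}\,d\xi$, which I would analyze by stationary phase. By evenness it suffices to treat $y>0$. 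The stationary equation $p'(\xi_*)=y/\tau$ admits, for $\tau$ large, a unique large solution $\xi_* \sim (-\ell\tau/(3y))^{1/3}$ (any other stationary point stays bounded in $\tau$ and would only contribute $\cO(\tau^{-1/2})$). At this point,
\[
|\Psi''(\xi_*)| \;=\; \tau\,|p''(\xi_*)| \;\sim\; \tau\,\frac{|\ell|}{\xi_*^{4}} \;\sim\; |\ell|^{-1/3}\,y^{4/3}\,\tau^{-1/3} ,
\]
so the stationary phase formula yields a leading contribution of order $(2\pi/|\Psi''(\xi_*)|)^{1/2} = \cO(\tau^{1/6}|y|^{-2/3})$, matching the desired bound.

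\smallskip

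\textbf{Main technical obstacle.} The real work is to decompose $\RR$ into a shrinking neighborhood of $\xi_*$ (where the stationary phase formula of Theorem~\ref{theo:phazstat} applies in a uniform manner) plus complementary regions on which $|\Psi'(\xi)| = |\tau p'(\xi)-y|$ is bounded below, and to perform sufficiently many integrations by parts there with the operator $(i\Psi')^{-1}\partial_\xi$ to gain decay. The difficulty, entirely analogous to the one overcome in the proof of Lemma~\ref{localizations}, is that neither $\tau$ nor $\xi$ is uniformly bounded, so the successive quotients $\partial_\xi^n\Psi/(\Psi')^n$ must be controlled globally. This is where Assumption~\ref{strengthenp} enters, exactly as in \eqref{divborn}: away from $\xi_*$, either $|y|$ dominates in $\Psi'$ or $\tau p'(\xi)$ does, and using the uniform bound on $|p^{(n)}/p'|$ for $n\le D$ one obtains an acceptable remainder. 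Assembling the stationary contribution with these negligible pieces yields the $\cO(\tau^{1/6})$ bound uniformly in $y$ away from zero, completing the proof.
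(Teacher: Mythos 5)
Your argument reaches the same conclusion by the same broad strategy (substitution at $y=0$; stationary phase at a critical point of size $\sim\tau^{1/3}$ for $y\ne 0$), but the bookkeeping is genuinely different, so a comparison is worth recording. The paper's proof first explicitly replaces $1-p(\xi)$ by $\xi^{-2}$, then splits the $\xi$-integral into $|\xi|\le 1$, $1<|\xi|\le c\tau^{1/3}$, and $c\tau^{1/3}\le|\xi|$, removes the constant $-1$ by exact integration in the middle region and a single integration by parts in the tail, and finally rescales $\xi=\tau^{1/3}\eta$ so that the remaining stationary-phase integral carries the \emph{fixed} phase $y\eta+\eta^{-2}$ with the large parameter $\tau^{1/3}$; the critical point $\eta=(2/y)^{1/3}$ and its Hessian are then independent of $\tau$, which makes the remainder estimates elementary. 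You instead keep $p$ general, dispose of $\int e^{-iy\xi}\,d\xi$ by a cutoff-plus-IBP argument, and run stationary phase directly on $\Psi(\xi)=\tau(p(\xi)-1)-y\xi$. This is more faithful to the actual symbol $p$ (the paper's substitution is acknowledged as a simplification) and correctly produces the $y^{-2/3}$ dependence of the leading amplitude; but because your stationary point $\xi_*\sim(\tau/y)^{1/3}$ escapes to infinity, the non-stationary-phase remainders really do require the global control of the quotients $\partial_\xi^n\Psi/(\Psi')^n$ that you flag as the main technical task, relying on the uniform bound $|p^{(n)}/p'|$ — the paper's rescaling is precisely the device that makes this step unnecessary. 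One small caution: $\int_{|\xi|\le R}e^{-iy\xi}\,d\xi=2\sin(Ry)/y$ is merely bounded and does not tend to zero, so the integration by parts you mention for removing the $-1$ piece is genuinely needed, not optional, and your phrase ``vanishes distributionally'' should be read in that operational sense. For $y=0$, your dominated-convergence argument after the rescaling $\xi=\tau^{1/2}u$, with the envelope $\min(2,C/u^2)$, is a rigorous version of the paper's change of variables $\tau\xi^{-2}=\eta$ and identifies the same constant $c$.
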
 

\begin{proof} For large values of $ \vert \xi \vert $, we know that $ 1 -p(\xi) \sim c \xi^{-2} $ for some positive constant $ c $, say $ c = 1 $. 
In what follows, to simplify the discussion, we directly replace $ 1 -p(\xi) $ by $ \xi^{-2} $. Then, the change of variables $ \tau \xi^{-2} = \eta $
gives rise to 
\begin{align} 
 K_\tau (0) & = \tau^{1/2} \int_0^{+\infty} (e^{- i \eta}-1) \eta^{-3/2} d \eta \sim c \tau^{1/2} . \nonumber 
  \end{align}
This furnishes the left part of (\ref{kernelestiaasupnorm}), and this indicates that (\ref{kernelestiaa1}) is sharp (at least when $ \tilde p = + \infty $ 
and $ \rho = 0 $). Now, fix some $ y \not = 0 $, and decompose $ K_\tau (y) $ into
\renewcommand\arraystretch{2}
$$ \begin{array}{rl}
K_\tau (y) = \! \! \! & \displaystyle \int_{\vert \xi \vert \leq 1} e^{-i y \xi} ( e^{-i \tau \xi^{-2}}-1 ) d\xi + \frac{1}{i y} \int_{1 < \vert \xi \vert \leq c \tau^{1/3}} 
\part_\xi \bigl( e^{-i y \xi} \bigr) d\xi  \\
\ & \displaystyle + \int_{1 < \vert \xi \vert \leq c \tau^{1/3}} e^{-i (y \xi + \tau \xi^{-2} )} d\xi + \int_{c \tau^{1/3} \leq \vert \xi \vert } e^{-i y \xi}
( e^{-i \tau \xi^{-2}}-1 ) d\xi . 
\end{array} $$
\renewcommand\arraystretch{1}

\noindent The first line is clearly some $ \cO (1) $. An integration by parts in the last term yields, modulo $ \cO (1) $, a better 
decreasing in $ \xi $, namely
$$ \int_{c \tau^{1/3} \leq \vert \xi \vert } e^{-i y \xi} ( e^{-i \tau \xi^{-2}}-1 ) d\xi = \cO (1) + \frac{2 \tau}{y} 
\int_{c \tau^{1/3} \leq \vert \xi \vert } \xi^{-3} e^{-i (y \xi + \tau \xi^{-2})} d\xi . $$
Then, apply the change of variables $ \xi = \tau^{1/3} \eta $ to obtain
$$ K_\tau (y) = \cO (1) + \tau^{1/3} \int_{1 < \vert \eta \vert \leq c} \! \! \! e^{-i \tau^{1/3} (y \eta + \eta^{-2} )} d\eta + \frac{2 \tau^{1/3}}{y} 
\int_{c \leq \vert \eta \vert } \! \! \!  \eta^{-3}  e^{-i \tau^{1/3} (y \eta + \eta^{-2} )} d\eta . $$
Use the principle of non-stationary phase to restrict the domain of integration near the (unique) critical point $ \eta = (2/y)^{1/3} $. When doing 
this, note that the boundary terms can be avoided by smoothing the above localizations.  After stationary phase approximation, there remains 
some $ \cO (\tau^{1/6}) $ as expected.
 \end{proof}

\noindent Lemma \ref{Lpestikernel2} indicates that, when $ \vert \tau \vert $ goes to $ + \infty $, the function $ K_\tau  $ may explode 
more rapidly near the origin than elsewhere. In view of (\ref{kernelestiaasupnorm}), the $ L^2 $-information contained in (\ref{kernelestiaa1}) 
appears as an intermediate information between the two extreme behaviors at $ y=0$ and $ y \not = 0 $. It is more precise than the local 
$ L^2 $-estimate that could be deduced from (\ref{kernelestiaa1}) when $ \tilde p = + \infty $. 

\begin{cor}\label{corollairefac} Fix $ \rho \ge 0 $, and assume that $ \Lambda \in L^\infty (\RR) $ is such that $ \Lambda (\xi) = \cO (\vert \xi 
\vert^{-\rho} ) $ when $ \vert \xi \vert $ goes to $ + \infty $. Let $ h \in L^\infty (\RR) $. Define
$$ \cG^\eps (z) := \chi \Bigl(\frac{\eps z}{r \eps^{\iota}} \Bigr) \, h(z) . $$
Then, for all $ \iota \in [0,1] $, we have
\begin{equation}\label{minoarrarvzdfl} 
\| B^\Lambda_{(T-s)/\eps^2} \cG^\eps \|_{L^\infty (\RR)} \lesssim \| h \|_{L^\infty (\RR)} \( \eps^{(\iota-1)/2} + \eps^{(\iota/2)
+ \rho-1}\) .
\end{equation}
\end{cor}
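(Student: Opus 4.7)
The plan is to combine Young's convolution inequality at exponent $p = 2$ with the $L^2$ kernel estimate of Lemma~\ref{Lpestikernel}. Since $B^\Lambda_\tau \cG^\eps = K^\Lambda_\tau * \cG^\eps$, Young's inequality immediately gives
\[
\| B^\Lambda_\tau \cG^\eps \|_{L^\infty(\RR)} \leq \| K^\Lambda_\tau \|_{L^2(\RR)} \, \| \cG^\eps \|_{L^2(\RR)} ,
\]
so the task reduces to bounding each factor separately.

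For the profile, I would exploit the support of the cutoff. Since $\chi$ is supported in $[-1,1]$, the factor $\chi(\eps z / (r \eps^\iota)) = \chi(z\eps^{1-\iota}/r)$ restricts $\cG^\eps$ to the set $|z| \leq r \eps^{\iota-1}$, whose Lebesgue measure is $\cO(\eps^{\iota-1})$. Bounding $|h|$ by $\|h\|_{L^\infty}$ on this support then yields $\|\cG^\eps\|_{L^2} \lesssim \|h\|_{L^\infty} \, \eps^{(\iota-1)/2}$.

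For the kernel, I would invoke Lemma~\ref{Lpestikernel} with $\tilde p = \tilde q = 2$ and $q = 2$ (the value fixed in Paragraph~\ref{subsec:mainassreca}), obtaining $\|K^\Lambda_\tau\|_{L^2} \lesssim 1 + \tau^{1/4 - \rho/2}$ for large $\tau$. For small $\tau$, the bound is still $\cO(1)$: from $|e^{i\tau(p(\xi)-1)} - 1| \lesssim \tau |1 - p(\xi)|$, Plancherel, and the integrability of $(1-p)^2$ deduced from \eqref{reso2}, one finds $\|K^\Lambda_\tau\|_{L^2} \lesssim \tau$ for $\tau \in [0,1]$. Substituting $\tau = (T-s)/\eps^2 \in [0, C \eps^{-2}]$ then gives $\|K^\Lambda_{(T-s)/\eps^2}\|_{L^2} \lesssim 1 + \eps^{\rho - 1/2}$, uniformly in $s \in [0,T]$: if $\rho \geq 1/2$ the exponent $1/4 - \rho/2$ is nonpositive and the power factor is harmless, while if $\rho < 1/2$ one has $\tau^{1/4 - \rho/2} \lesssim \eps^{-(1/2 - \rho)}$.

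Multiplying the two bounds yields
\[
\| B^\Lambda_{(T-s)/\eps^2} \cG^\eps \|_{L^\infty} \lesssim \| h \|_{L^\infty} \bigl( \eps^{(\iota-1)/2} + \eps^{(\iota-1)/2 + \rho - 1/2} \bigr) = \| h \|_{L^\infty} \bigl( \eps^{(\iota-1)/2} + \eps^{\iota/2 + \rho - 1} \bigr) ,
\]
which is precisely \eqref{minoarrarvzdfl}. The argument is essentially mechanical; the only mildly delicate point is ensuring the $L^2$ kernel estimate stays uniform down to $\tau = 0$ so that the later integration in $s$ near $s = T$ causes no issue, and this is handled by the short Plancherel argument just sketched.
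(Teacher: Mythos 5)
Your proof is correct and takes essentially the same route as the paper: Young's inequality with exponent $p=2$, together with the $L^2$ kernel estimate of Lemma~\ref{Lpestikernel} (with $\tilde p=\tilde q=2$, $q=2$), applied to the $L^2$-norm of the cut-off profile. The only addition is your explicit Plancherel patch for $\tau\in[0,1]$ to make the kernel bound uniform down to $\tau=0$; the paper simply writes $(1+\tau)^{1/4-\rho/2}$ in place of the lemma's $1+\tau^{1/4-\rho/2}$, which absorbs the small-$\tau$ regime implicitly, so your version is slightly more careful but not materially different.
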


\noindent The loss in the right hand side of (\ref{minoarrarvzdfl}) is decreasing when $ \rho $ is growing to $ \rho = 1/2 $, and then 
it is saturated for $ \rho = 1/2 $ at the value $ \eps^{(\iota-1)/2} $. As will be seen in the proof below, this residual loss (when $ \iota \in 
[0,1] $) is coming from the $ L^2 $-impact of the spatial localization in a domain of size $ \eps^{\iota-1} $.

\begin{proof} Exploit (\ref{youngineq}) with $ p=2 $, and then
  (\ref{kernelestiaa1}) with $ \tilde p = 2 $ (and $ q=2 $) to get
  \begin{align*}
\| B^\Lambda_{(T-s)/\eps^2} \cG^\eps \|_{L^\infty (\RR)}   & \lesssim \( 1 + \frac{T-s}{\eps^2} \)^{(1/4)-(\rho/2)}
\Bigl \lbrack \int \chi \Bigl( \frac{z}{r\eps^{\iota-1}}\Bigr)^2 h(z)^2 dz \Bigr \rbrack^{1/2} \\
  &  \lesssim \| h \|_{L^\infty (\RR)} \( 1 + \eps^{-(1/2)+\rho}\) \, \eps^{(\iota - 1)/2} ,
\end{align*} 
which yields (\ref{minoarrarvzdfl}) since $ \iota \leq 1 $.
 \end{proof}

\noindent Applied in the context of $ \cW_{nl} $, this furnishes
\begin{equation} \label{exploitatthelevel1}
\| \cW_{nl} (T,\cdot) \|_{L^\infty (\RR)} \lesssim \eps^{\nu+j_1+j_2-2} \| \cU^{(0)} \|_{L^\infty ([0,T] \times 
\RR)}^{j_1+j_2} ( 1 + \eps^{(\iota/2)-1}) .
\end{equation}
The influence of nonlinearities is clearly stronger when $ \nu+j_1+j_2 $ is small. In view of (\ref{exploitatthelevel1}), for 
$ \nu+j_1+j_2+\frac{\iota}{2}-3 > 0 $, it may be neglected. On the contrary, when $ \nu+j_1+j_2+\frac{\iota}{2}-3 < 0 $, the control 
(\ref{exploitatthelevel1}) does not help to extract some uniform bound in $ L^\infty $. For instance, it is clearly  insufficient when 
$ \nu+j_1+j_2 = 2 $, even in the most favorable case $ \iota = 1 $. 

\smallskip

\noindent The preceding analysis does not take into account the time oscillations (with respect to the variable $ s $) 
which can lead to further cancellations when computing the integral term inside (\ref{partcDnl}). Observe that the time 
derivative (in $ s $) of the phase involved in \eqref{eq:duhamelNL} is $ \textfrak {g} - p(\xi) $. This is why the discussion 
in the next paragraph is organized around the zeroes of this function. 


\subsubsection{Classification of gauge parameters}\label{subsec:Classigaugeparameters} Recall that the gauge parameter
$ \textfrak {g} $ has been introduced at the level of Definition \ref{gaugeparadefi} (or \ref{defidegaugeparameter}).
Compare the oscillating integral (\ref{soloscoif}) with (\ref{eq:duhamelNL}). For the choice $ \varphi (s,y) = \textfrak {g} \, s $ 
in (\ref{phaseOIFgen}), the phase $ \Phi $ of (\ref{soloscoif}) coincides with the one which is coming from (\ref{eq:duhamelNL}). But, in comparison with 
Section \ref{sec:setting}, the novelty is that the expression $ \cG^\eps (T,z) $, in contrast to $ a(T,s,x) $, is not strictly speaking
a ``profile". The spatial variables $ z $ and $ x $ are not the same. The support of $ \cG^\eps (T,\cdot) $ is (at least when 
$ \iota < 1 $) of size $ \eps^{\iota-1} \gg 1 $, while the support of $ a(T,s,\cdot) $ is of size one. Dealing with $ \cG^\eps  $ 
in the original variable $ x $ (instead of $ z $) would mean to involve an expression that is expected to be rapidly oscillating 
in $ x $, and therefore that is not compatible with integrations by parts in $ x $.

\smallskip

\noindent Much less information is available on $ \cG^\eps  $ than on $ a  $. However, due to the filtering procedure 
(\ref{passutocU}), it could be expected that $ \part_s \cG^\eps  $ is (to some extent) under control. This forecast, that 
will be confirmed in what follows, explains why integrations by parts in $ s $ should remain effective. But, to this end, 
the criterion (\ref{non-resonanty}) must be restricted. We must now focus on the role of $ \part_s \Phi \equiv p(\xi) - 
\textfrak {g} $. In place of $ \eta $ in (\ref{non-resonanty}), define the threshold
\begin{equation}\label{minocpg} 
c_{\textfrak {g}} := \inf \ \bigl \lbrace \vert p(\xi) - \textfrak {g} \vert \, ; \, \xi \in \RR \bigr \rbrace  . 
\end{equation}
Different situations can occur. 

\begin{defi} \label{classigaugedefi} The gauge parameter $ \textfrak {g}  $ is said:
\begin{itemize}
\item {\rm non resonant} when $ \textfrak {g} \not \in [0,1] $ so that $ c_{\textfrak {g}} >0$;
\item {\rm transitionally resonant} when $ \textfrak {g} =0 $ so that  $ c_{\textfrak {g}} = 0 $. Then, $ p \equiv \textfrak {g}  = 0 $ on the whole
interval $ [-\xi_c,\xi_c] $, and it becomes non zero near $ \pm \xi_c \, $;
\item {\rm pointwise resonant} when $ \textfrak {g} \in \, ]0,1[ $ so that  $ c_{\textfrak {g}} = 0 $, and we 
can find a unique position $ \xi_{\textfrak {g}} \in ]\xi_c,+\infty[ $ such that $ p(\xi_{\textfrak {g}} ) = p(- \xi_{\textfrak {g}} ) =\textfrak {g} \, $;
\item {\rm completely resonant} when $ \textfrak {g} = 1 $ so that  $ c_{\textfrak {g}} = 0 $. Then, the function $ p(\xi) $ can become 
arbitrarily closed to $  \textfrak {g} = 1 $ when $ \vert \xi \vert $ goes to $ + \infty $. 
\end{itemize}
\end{defi}

\noindent Below, we list some examples of $ F_{\! N\! L} $, where for the simplicity of the
presentation, we leave out the localizations involving the function
$\chi$. 
\begin{exam}[A standard choice]\label{excasecommon} Just take $
  F_{\! N\! L}= u^2 $ so that $ (j_1,j_2,\nu) = (2,0,0) $ and $ \omega = 0 $. We 
find $ \nu+j_1+j_2=2$ (critical size). The gauge parameter  is  $
\textfrak {g} = 2 $. It is non resonant.  
\end{exam}

\begin{exam}[Quadratic nonlinearity in $ \vert u \vert $]\label{excasecommonbis} For the selection of $ F_{\! N\! L}  = 
\vert u \vert^2 = u \bar u $, we find $(j_1,j_2,\nu)  = (1,1,0) $ and  $ \omega = 0 $. We still have $\nu+j_1+j_2=2$ 
(critical size), but this time, the gauge parameter is $ \textfrak {g} = 0 $. It is transitionally resonant.  
\end{exam}

\begin{exam}[Presence of time oscillations]\label{implitimeosc} For $ F_{\! N\! L}  = e^{i \omega t/\eps} \vert u \vert^2 $ 
with $ \omega \in  ]0,1[ $, we find $ \textfrak {g} \in  ]0,1[ $. The gauge parameter is pointwise resonant. 
\end{exam}

\begin{exam}[The nonlinearity investigated in Theorem~\ref{theo:resumeNL} and~\ref{theo:resumeNLbis}]
\label{excaseofintronl} The choice made in equation \eqref{eq:Picard1}, that is $ F_{\! N\! L}  = e^{- i t/\eps} u^2 $, 
is built with $ (j_1,j_2,\nu) = (2,0,0) $ and $ \omega = -1 $, so that $ \nu+j_1+j_2=2 $ and $ \textfrak {g} = 1 $. The 
size is critical and the gauge parameter  is  completely resonant. 
\end{exam}

\begin{exam}[Critical cubic nonlinearity]\label{excaseofcubictronl} The critical size can be achieved for a cubic nonlinearity 
like $ F_{\! N\! L}= \eps^{-1} e^{i\omega t/\eps} \vert u \vert^2 u $, in which case $ (j_1,j_2,\nu) = (2,1,-1) $.  When
$ \omega = 0 $, the gauge parameter is completely non resonant. This situation is expected to involve leading order 
nonlinear effects, like in Example~\ref{excaseofintronl}.
\end{exam}


\subsubsection{Various estimates involving $ \cU^{(0)} $}\label{subsec:roughestimaref} The purpose of this paragraph is 
to list estimates that are accessible concerning $ \cU^{(0)} $, and therefore that could be used when dealing with $ \cG^\eps  $
at the level of (\ref{eq:duhamelNL}). Propositions~\ref{supnormamplifi} and~\ref{supnormnonamplifi2} already furnish 
the following optimal local (for $ \vert x \vert \leq r $ or $ |z|\le r/\eps $) sup norm estimate
\begin{equation} \label{majounideU0}
\exists \, C >0, \quad \forall \, T \in [\cT , 2\cT] , \quad \forall |z|\le r/\eps, \quad \vert \cU^{(0)} (T,z) \vert \leq C .
\end{equation}
Global $ L^2 $ and $ L^\infty $ controls are also available. Unlike (\ref{majounideU0}), they may not be sharp.

\begin{lem} [Global control in $ L^2 $ and $ L^\infty $-norm] \label{GlobalcontrolintheL2} For all $ (j,n) \in \NN^2 $, we have
\begin{align} 
\forall \, T \in [0 , 2\cT] , \quad & \| \part^j_T \part^n_z \cU^{(0)} (T,\cdot) \|_{L^2(\RR)} = \cO 
(\eps^{-2j-n-1}) , \label{L2majounideU0mn} \\
\forall \, T \in [0 , 2\cT] , \quad & \| \part^j_T \part^n_z \cU^{(0)} (T,\cdot) \|_{L^\infty(\RR)} = 
\cO (\eps^{-2j-n- \frac{3}{2}}) . \label{LinftymajounideU0t} 
\end{align}
\end{lem}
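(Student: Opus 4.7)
The plan is to first establish the $L^2$ estimate \eqref{L2majounideU0mn} and then derive the $L^\infty$ estimate \eqref{LinftymajounideU0t} by the one-dimensional Gagliardo--Nirenberg inequality $\|f\|_{L^\infty(\RR)}^2 \leq 2\|f\|_{L^2(\RR)}\|\partial_z f\|_{L^2(\RR)}$. Applied with $f = \partial_T^j \partial_z^n \cU^{(0)}$, this yields
\begin{equation*}
\|\partial_T^j \partial_z^n \cU^{(0)}\|_{L^\infty} \lesssim \|\partial_T^j \partial_z^n \cU^{(0)}\|_{L^2}^{1/2}\, \|\partial_T^j \partial_z^{n+1} \cU^{(0)}\|_{L^2}^{1/2} \lesssim \eps^{-j-n/2-1/2}\, \eps^{-j-n/2-1} = \eps^{-2j-n-3/2},
\end{equation*}
so \eqref{LinftymajounideU0t} is an immediate consequence of \eqref{L2majounideU0mn}, and all the work is in the $L^2$ estimate.

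The case $j=0$ relies on the Duhamel formula $u^{(0)}(t,\cdot) = -\int_0^t e^{i(t-s)p(\eps D_x)/\eps} F_L(s,\cdot)\,ds$ and the fact that the propagator is an isometry on $L^2(\RR)$ that commutes with $\partial_x$. Because $\partial_x \varphi(s,y) = -s$ is the only non-vanishing spatial derivative of $\varphi$, expanding $A_m^*(e^{im\varphi/\eps})$ through \eqref{rightquantization} and the change of variable $\eta = \xi + ms/\eps$ yields the factorization $A_m^*(e^{im\varphi/\eps})(s,x) = e^{im\varphi(s,x)/\eps}\, b_m(\eps,s,x)$, where $b_m(\eps,s,\cdot)$ is (up to constants) the inverse Fourier transform of $\eta \mapsto \zeta_m(ms-\eps\eta)\,\widehat{a_m}(\eps s, s, \eta)$. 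Since $a_m(\eps s,s,\cdot)$ is compactly supported in $[-r,r]$, $\widehat{a_m}(\eps s, s, \cdot)$ is Schwartz uniformly in $(\eps,s)$; combined with the $L^\infty$ bound on $\zeta_m$, Parseval's identity gives $\|\partial_x^k b_m(\eps, s, \cdot)\|_{L^2} \lesssim 1$ uniformly. Since $\partial_x^2 \varphi \equiv 0$, Leibniz's rule then yields $\|\partial_x^n F_L(s,\cdot)\|_{L^2} \lesssim \eps^{3/2}\, (1+s/\eps)^n$, and integrating over $s \in [0,t]$ with $t \leq 2\cT/\eps$ produces $\|\partial_x^n u^{(0)}(t,\cdot)\|_{L^2} \lesssim \eps^{1/2-2n}$. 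Converting back through \eqref{passutocU} via the elementary identity $\|\partial_z^n \cU^{(0)}(T,\cdot)\|_{L^2} = \eps^{n-3/2}\|\partial_x^n u^{(0)}(T/\eps,\cdot)\|_{L^2}$, I obtain $\|\partial_z^n \cU^{(0)}(T,\cdot)\|_{L^2} \lesssim \eps^{-n-1}$, which is \eqref{L2majounideU0mn} for $j=0$.

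Time derivatives are handled by iterating the rescaled PDE
\begin{equation*}
\partial_T \cU^{(0)} = \frac{i}{\eps^2}(p(D_z)-1)\cU^{(0)} - \frac{1}{\eps^2}\,e^{-iT/\eps^2} F_L(T/\eps, \eps\cdot) .
\end{equation*}
Because $p-1$ is a bounded Fourier multiplier, the first term contributes $\lesssim \eps^{-2}\|\partial_z^n \cU^{(0)}\|_{L^2}$; for the source term, the previous step together with rescaling gives $\|\partial_z^n[e^{-iT/\eps^2} F_L(T/\eps, \eps\cdot)]\|_{L^2} \lesssim \eps^{1-n}$, so that $\|\partial_T \partial_z^n \cU^{(0)}\|_{L^2} \lesssim \eps^{-n-3}$ as expected. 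An induction on $j$, noting that every additional $\partial_T$ costs at most a factor $\eps^{-2}$ whether it lands on the propagator term or on the prefactor $e^{-iT/\eps^2}$ of the source, completes the proof of \eqref{L2majounideU0mn}.

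The main bookkeeping obstacle is the estimate $\|\partial_x^n F_L(s,\cdot)\|_{L^2} \lesssim \eps^{3/2-2n}$: because the symbol $\zeta_m$ is only bounded (not Schwartz), Parseval's identity delivers a uniform constant solely thanks to the compact support of $a_m$ in the spatial variable, which forces its $y$-Fourier transform to be Schwartz uniformly in $(\eps,s)$. Keeping track of this decoupling through the right quantization and the Leibniz expansion of $\partial_x^n(e^{im\varphi/\eps} b_m)$ is the one point that requires a careful argument.
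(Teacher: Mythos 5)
Your proposal is correct, but it follows a genuinely different route from the paper. Where you invoke Duhamel's formula and the unitarity of $e^{i(t-s)p(\eps D_x)/\eps}$ on $L^2(\RR)$, the paper runs the usual $L^2$-energy estimate directly on $\partial_t^j \partial_x^n u^{(0)}$ and closes with Gr\"onwall's lemma. Where you factor $A_m^*\bigl(e^{im\varphi/\eps}\bigr) = e^{im\varphi/\eps}\,b_m(\eps,s,\cdot)$ explicitly (a step that is indeed valid because $\varphi$ is affine in $x$ and $A_m = \zeta_m(-\xi)a_m(T,t,x)$ has tensor-product form) and then control $b_m$ by Plancherel, the paper keeps $A_m^*$ as a black box and invokes the Calder\'on--Vaillancourt theorem for its $L^2$-boundedness, compensating the non-square-integrability of the pure oscillation $e^{im\varphi/\eps}$ by inserting a spatial cut-off $\chi(|x|/\tilde r)$ justified by the compact support of $a_m$ in $y$. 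Finally, you bootstrap the $\partial_T^j$ estimate by iterating the rescaled equation $\partial_T\cU^{(0)}=\frac{i}{\eps^2}(p(D_z)-1)\cU^{(0)}-\eps^{-2}e^{-iT/\eps^2}F_L(T/\eps,\eps\cdot)$ and the boundedness of $p-1$ as a Fourier multiplier, whereas the paper estimates $\partial_t^j\partial_x^n u^{(0)}$ in one stroke and converts to $\cU^{(0)}$ only at the end. Both approaches yield the same powers of $\eps$ and both close the $L^\infty$ bound by the same Gagliardo--Nirenberg inequality. Your explicit factorization has the merit of making transparent why the compact support of $a_m$ (hence the Schwartz decay of $\widehat{a_m}$) is what rescues the merely bounded symbol $\zeta_m$, without importing Calder\'on--Vaillancourt; the paper's energy-method route is slightly more robust and would survive a less structured $A_m$. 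One small imprecision worth flagging: in the induction on $j$, an extra $\partial_T$ can land not only on the propagator piece or on $e^{-iT/\eps^2}$ but also on $F_L(T/\eps,\eps\cdot)$ itself (through $\partial_T = \eps^{-1}\partial_t$ acting on the $e^{im\varphi/\eps}$ oscillation); this also costs exactly $\eps^{-2}$, so the conclusion stands, but the case should be named.
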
 

\noindent  We emphasize the discrepancy between the optimal uniform $L^\infty$-bound inside \eqref{majounideU0} 
(for data localized in space), and the bound \eqref{LinftymajounideU0t} with $j=n=0$, which holds globally in space. 
This  loss of an $\eps^{-3/2}$ factor is most likely ``only'' technical. It explains why in the forthcoming analysis, the 
presence at the level of \eqref{eq:defcG} of some spatial cut-off (driven by $ \iota $ with $ 0 \leq \iota \leq 1 $) is needed. 

\smallskip

\noindent The derivatives $ \part_t $  and $ \part_x $ applied to oscillations of the form (\ref{eq:source}) with $ \varphi  $ 
as in (\ref{pharetain}) produce respectively the factors $ \eps^{-1} \part_t \varphi \sim \eps^{-1} $ and $ \eps^{-1} \part_x \varphi 
\sim \eps^{-1} t $. Thus, for long times $ t \sim \eps^{-1} $, it could be expected that the action of $ \part^\alpha_{Tz} $ takes 
the form of a loss similar to
\begin{equation}\label{couldbeexpected}
\qquad \forall \alpha = (\alpha_1,\alpha_2) \in \NN^2 , \qquad \part^\alpha_{Tz} \sim \eps^{-\alpha_1+\alpha_2} \part^\alpha_{tx} 
\sim \eps^{-2 \alpha_1} t^{\alpha_2} \sim \eps^{-2 \alpha_1- \alpha_2} . 
\end{equation}
The bounds (\ref{L2majounideU0mn}) and (\ref{LinftymajounideU0t}) are both in agreement with this prediction since the 
application of $ \part_T = \eps^{-1} \part_t $ and $ \part_z = \eps \part_x $ cost respectively $ \eps^{-2} $ and $ \eps^{-1} $.

\begin{proof} Consider the equation (\ref{eq:Picard0}) with $ F_{\! L} \equiv F $ as in (\ref{followformFgeninimodem}).
Since $ p(\eps D_x) $ is a pseudo-differential operator with constant coefficients, it does commute with the derivative 
$ \part_t^j \part_x^n $. Thus, through usual $ L^2 $-energy estimates, we can infer that
\[
  \part_t \| \part_t^j \part_x^n u^{(0)} \|^2_{L^2(\RR)} \lesssim \eps^{3/2} \! \! \sum_{\vert m\vert \leq M} \! \! 
\| \part_t^j \part_x^n u^{(0)} \|_{L^2(\RR)} \| \part_t^j \part_x^n
A_m^* e^{i m\varphi/\eps}  
\|_{L^2(\RR)} .
\]
Remark that 
\[ \part_t^j \part_x^n A_m^* = \sum_{i_n=0}^n \sum_{i_j=0}^j 
\left( \begin{array}{c}
\! \! n \! \! \\
\! \! i_n \! \!
\end{array} \right) \left( \begin{array}{c}
\! \! j \! \! \\
\! \! i_j \! \!
\end{array} \right) (\part_t^{i_j} \part^{i_n}_x
A_m)^* \part_t^{j-i_j} \part_x^{n-i_n} .\] 
By assumption, the symbol $ \part_t^{i_j} \part^{i_n}_x A_m(\eps t,t,,\cdot) $ is smooth, and its derivatives with 
respect to $ x $ and $ \xi $ are uniformly bounded in $ \eps $. From the Calder\'on--Vaillancourt Theorem, we know that 
$ (\part_t^{i_j} \part^{i_n}_x A_m)^*$ acts continuously on $ L^2(\RR) $. On the other hand, the function $ a_m$, and 
therefore $ \part_t^{i_j} \part^{i_n}_x A_m $, is spatially supported in the ball $ \vert x \vert < r $. Thus, we can replace 
$ e^{i m \varphi/\eps} $ by the $ L^2 $-function $ \chi ( \vert x \vert / \tilde r) e^{i m\varphi/\eps} $ where $ \tilde r := 8r/ 5 $. 
And thereby, we have to estimate a sum of terms similar to 
\[ \| \part_t^{j-i_j} \part_x^{n-i_n} \bigl( \chi (  x / \tilde r) e^{i m\varphi/\eps} \bigr) \|_{L^2(\RR)} .
\]
The derivatives $ \part_t $ and $ \part_x $, when they are applied to the oscillation $ e^{i m\varphi/\eps} $, produce 
respectively the singular factors $ \eps^{-1} \part_t \varphi \sim \eps^{-1} $ and $ \eps^{-1} \part_x \varphi \sim t \eps^{-1} $. 
The worst term arises when $ (i_j,i_n) = (0,0) $. As a consequence, we find that
\[ \part_t \| \part_t^j \part_x^n u^{(0)} \|^2_{L^2(\RR)} \lesssim \eps^{3/2} \eps^{-j} t^n \eps^{-n} \| \part_t^j \part_x^n u^{(0)} 
\|_{L^2(\RR)} , \]
and therefore, by Gr\"onwall's lemma, that
\begin{equation*} 
\| \part_t^j \part_x^n u^{(0)} (t,\cdot) \|_{L^2(\RR)} \lesssim t^{n+1} \eps^{(3/2)-j-n} . 
\end{equation*}
Then, in line with (\ref{L2majounideU0mn}), we get that
\begin{align*}  
\| \part_T^j \part_z^n \cU^{(0)} (T,\cdot) & \|_{L^2(\RR)} = \frac{1}{\eps}  \left\|\sum_{i_j=0}^j
 \begin{pmatrix}
    j\\
  i_j
  \end{pmatrix}
 \part_T^{i_j} \bigl \lbrack e^{-i T/ \eps^2} \bigr \rbrack  \times \part_T^{j-i_j} \part^n_z \bigl \lbrack u^{(0)} \(
  \frac{T}{\eps} , \eps \cdot \) \bigr \rbrack\right\|_{L^2(\RR)}  \\
  \ \lesssim& \, \eps^{-1+n-j} \sum_{i_j=0}^j
          \eps^{-i_j} \left\| (\part_t^{j-i_j} \part^n_x u^{(0)} ) \( \frac{T}{\eps} , \eps \cdot \) \right\|_{L^2(\RR)}   \\
  \ \lesssim& \, \eps^{-1+n-j} \sum_{i_j=0}^j
          \eps^{-i_j} \frac{1}{\sqrt \eps} \left\| (\part_t^{j-i_j} \part^n_x u^{(0)} ) \( \frac{T}{\eps} , \cdot \) \right\|_{L^2(\RR)}   \\
\ \lesssim & \, \eps^{-1+n-j} \sum_{i_j=0}^j \eps^{-i_j} \frac{1}{\sqrt \eps} \frac{T}{\eps} \eps^{3/2} \eps^{-(j-i_j)} \( \frac{T}{\eps} \)^n
\eps^{-n} = \cO (\eps^{-2j-n-1}) .
\end{align*}
This furnishes (\ref{L2majounideU0mn}). The sup norm control (\ref{LinftymajounideU0t}) is then a consequence of the standard
one-dimensional Gagliardo-Nirenberg  inequality
\begin{equation}\label{Gagliardo-Ni}
 \| \cV \|_{L^\infty (\RR)} \leq \sqrt 2\| \cV \|_{L^2 (\RR)}^{1/2} \| \part_z \cV \|_{L^2 (\RR)}^{1/2} .
\end{equation}

\vskip -4mm

\end{proof}

\noindent The interest of using $ \cU^{(0)} $ instead of $ u^{(0)}$ is twofold. First, as noted in (\ref{majounideU0}),  
the amplitude of the wave becomes of size one. Secondly, when passing from $ u^{(0)}$ to $ \cU^{(0)} $, the main 
temporal oscillations are locally filtered out in the following sense.

\begin{lem} [Local sup norm controls involving derivatives of $ \cU^{(0)} $] \label{lem:fine-estimates} 
\begin{align} 
\exists \, C >0, \quad \forall \, T \in [\cT , 2\cT] , \quad \forall
  |z|\le r/\eps, \quad & \vert \part_z \cU^{(0)} (T,z) \vert \leq  
C \eps^{-1} \, , \label{majounidederiofUdebut} \\
\exists \, C >0, \quad \forall \, T \in [\cT, 2\cT] , \quad \forall
  |z|\le r/\eps, \quad & \vert \part_T \cU^{(0)} (T,z) \vert \leq  
C \eps^{-2/3} \, , \label{majounidederiofU0} \\
\exists \, C >0,\quad \forall \, T \in [\cT , 2\cT] , \quad \forall |z|\le r/\eps, \quad & \vert \part^2_{Tz} \cU^{(0)} (T,z) \vert \leq 
C \eps^{-3/2} \, . \label{majounidederiofUfin}
\end{align}
\end{lem}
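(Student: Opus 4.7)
The plan is to reduce each derivative of $\cU^{(0)}$ to quantities governed by the wave packet decomposition of $u^{(0)}$ established in Lemma \ref{decomposol}, then apply stationary-phase and summation arguments analogous to those of Propositions \ref{supnormamplifi} and \ref{supnormnonamplifi2}. From the definition \eqref{passutocU} and the equation \eqref{eq:Picard0}, using $\partial_t u^{(0)} = (i/\eps) p(\eps D_x) u^{(0)} - F_L$, I compute
\begin{align*}
\partial_z \cU^{(0)}(T,z) &= e^{-iT/\eps^2}(\partial_x u^{(0)})(T/\eps, \eps z), \\
\partial_T \cU^{(0)}(T,z) &= \frac{i}{\eps^3} e^{-iT/\eps^2}\bigl[(p(\eps D_x)-1) u^{(0)}\bigr](T/\eps, \eps z) - \frac{1}{\eps^2} e^{-iT/\eps^2} F_L(T/\eps, \eps z),
\end{align*}
and $\partial^2_{Tz}\cU^{(0)}$ is obtained by an additional $\partial_z$. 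Since $\|F_L\|_{L^\infty}\lesssim \eps^{3/2}$ and $|\partial_x F_L|\lesssim \eps^{3/2}\cdot t/\eps \lesssim \eps^{-1/2}$ for $t\sim\eps^{-1}$ (using $\partial_x\varphi=-t$), the $F_L$-contributions give $\cO(\eps^{-1/2})$ in $\partial_T\cU^{(0)}$ and exactly $\cO(\eps^{-3/2})$ in $\partial^2_{Tz}\cU^{(0)}$, accounting for the dominant term in \eqref{majounidederiofUfin}.

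For \eqref{majounidederiofUdebut}, I use the decomposition $u^{(0)}(t,x) = \sum_{k\in\cK_s^{c_1}} u_k(t,x) + \cO(\eps^{5/3})$ with $u_k = \eps^2 b_k e^{i\Psi_k/\eps} + \cO(\eps^{7/3})$ from Lemma \ref{decomposol}. The envelope theorem applied at the critical points \eqref{stastapoint} gives $\partial_x \Psi_k = -(k\pi+s_k)$, so
\[
\partial_x u_k = -\frac{i(k\pi+s_k)}{\eps}\, u_k + \cO(\eps^2),
\]
yielding the pointwise bound $|\partial_x u_k|\lesssim \eps k$. Summing over $k\in \cK_s^{c_1}\subset [c_1\eps^{-1/3},\cT/(\pi\eps)]$ gives $\sum_k \eps k\lesssim 1/\eps$. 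The $\cO(\eps^{5/3})$ error is handled by re-running the proofs of Lemmas \ref{localizations} and \ref{decomposol} with one extra $\partial_x$ inserted inside the Fourier integral \eqref{defidewk}: this introduces an additional amplitude factor $-i\xi/\eps$, at worst $\cO(k/\eps)$ at the critical point, and the resulting contribution stays within the $\cO(\eps^{-1})$ bound.

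For \eqref{majounidederiofU0}, the crucial observation is that inserting $p(\eps D_x)-1$ directly inside the Fourier integral \eqref{defidewk} defining $w_k$ (using that $p$ is even) amounts to multiplying the integrand by the smooth factor $p(k\pi+\xi)-1$. No new stationary points appear, so Theorem \ref{theo:phazstat} applies to the modified amplitude and yields $(p(\eps D_x)-1) u_k$ with the extra prefactor $p(k\pi+s_k)-1 = -\ell/[6(k\pi+s_k)^2]+\cO(k^{-3})$ by \eqref{reso2} with $q=2$, hence $|(p(\eps D_x)-1) u_k|\lesssim \eps^2/k^2$. Summing,
\[
\sum_{k\ge c_1 \eps^{-1/3}} \frac{\eps^2}{k^2}\lesssim \eps^{7/3},
\]
so division by $\eps^3$ gives the claimed $\cO(\eps^{-2/3})$, which dominates the $\cO(\eps^{-1/2})$ contribution from $F_L$. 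The same strategy on $\partial^2_{Tz}\cU^{(0)}$ produces a wave-packet contribution at most $\cO(\eps^{-1}|\log\eps|)$, dominated by the $\cO(\eps^{-3/2})$ from $\partial_x F_L$, giving \eqref{majounidederiofUfin}.

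The main obstacle is making rigorous the assertion that $p(\eps D_x)-1$ acts on each individual wave packet $u_k$ essentially as multiplication by $p(k\pi+s_k)-1$. A naive global operator bound $\|p(\eps D_x)-1\|_{L^\infty\to L^\infty}\lesssim 1$ applied to a $\cO(\eps^{5/3})$ function would yield $\cO(\eps^{-4/3})$ after division by $\eps^3$, exceeding the claimed bound. One must instead exploit that $u_k$ is (asymptotically) frequency-localized near $\xi=-(k\pi+s_k)/\eps$, by re-doing the stationary-phase analysis of Section \ref{sec:lineffect} on each $(p(\eps D_x)-1)u_k$ separately. This is where Assumption \ref{strengthenp} with $D\ge 4$ is used, ensuring enough derivatives of $p$ to carry out the expansion uniformly in $k\in\cK_s^{c_1}$, and enough precision to absorb the contributions from the dispersive regime (Lemma \ref{localizations}) and the transitional regime inside the stated bounds.
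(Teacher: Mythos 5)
Your proposal is correct and takes essentially the same approach as the paper: differentiate the Duhamel/integral representation, isolate the contribution of $F_L$, and exploit that inserting $p(\eps D_x)-1$ into the wave-packet integrals amounts to multiplying the integrand by $p(k\pi+\xi)-1=\cO(k^{-2})$, which renders the sum over $k\in\cK_s^{c_1}$ convergent enough to yield the stated powers of $\eps$. The one point I would flag as needing to be made precise — the same point the paper spells out — is the fate of the non-resonant harmonics and the $\cO(\eps^{5/3})$ remainder under $p(\eps D_x)-1$: a naive $L^\infty\to L^\infty$ bound is indeed insufficient (as you note), and the correct resolution is to apply the operator \emph{before} performing the wave-packet decomposition, i.e.\ insert the factor $p(\xi)-1$ directly into the oscillatory integral and re-run the non-stationary-phase argument of Lemma~\ref{localizations} on the dispersive range $k\lesssim\eps^{-1/(q+1)}$ and Theorem~\ref{theo:phazstat} on the stationary range, exactly as in the paper's treatment of $\cP_m$ (with $\cP_m=\cO(\eps^\infty)$ for $m\neq 1$ by Proposition~\ref{prop-vanoscint}). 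Your ``envelope theorem'' computation $\partial_x\Psi_k=-(k\pi+s_k)$ is correct and matches the paper's rough accounting via the factor $\xi/\eps\lesssim\eps^{-2}$; the only cosmetic slip is the sign in $\partial_t u^{(0)}=(i/\eps)p(\eps D_x)u^{(0)}+F_L$ coming from \eqref{eq:Picard0}, which is immaterial for the estimates.
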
 

\noindent Proposition~\ref{supnormamplifi} implies that the information (\ref{majounideU0}) is sharp. Starting from 
(\ref{majounideU0}), the two controls (\ref{majounidederiofU0}) and (\ref{majounidederiofUfin}), which both involve 
time derivatives, represent improvements in comparison to what would be provided by (\ref{couldbeexpected}). This 
means that the time oscillations contained in $ u^{(0)} $ have indeed been somewhat filtered out when passing from 
$ u^{(0)} $ to $ \cU^{(0)} $.

\begin{proof} Denote by  $u^{(j)}_m$ with $ j \in \{0,1\} $ the $m^{th} $ harmonic  of $u^{(j)}$. In particular, $ u^{(0)}_m $ can be 
obtained by solving
\begin{equation}\label{eq:u0mhar}
\part_t u^{(0)}_m - \frac{i}{\eps} p (- i  \eps \part_x) \,
u^{(0)}_m = \eps^{3/2}A_m ( \eps \, t,t,x,-i\eps\part_x)^*
\, e^{i \,m \varphi(t,x) / \eps} ,
\end{equation} 
with initial data $ {u^{(0)}_m}_{\mid t=0} \equiv 0 $. The situation is as in Paragraph \ref{solvescalar}, see (\ref{firstchoiceofF})
and (\ref{phaseOIFgen}), with
\begin{equation*} 
 \Phi (t,x;s,y,\xi) := (s - t ) \, p(\xi) + (x-y) \, \xi - m \varphi (s,y)  . 
\end{equation*}
In order to lighten the notations, we drop the dependence of the phase upon $m$. As in Paragraph \ref{sec:decomposition}, 
we can separate $ \Phi $ according to $ \Phi= \phi + m (\gamma -t) $ to deal with
\begin{equation*}
\phi (t,x;s,y,\xi) := (s - t ) \, \bigl \lbrack p(\xi) -m \bigr \rbrack + (x-y) \, \xi +m y \, s - m\gamma \, \cos s .  
\end{equation*}
The function $ u_m^{(0)} $ looks like $ u $ in (\ref{solosc}), that is
\begin{equation*}
 u_m^{(0)} (t,x) = \frac{\sqrt \eps}{2\pi} \, e^{i m (t-\gamma) / \eps} \! \int_0^t \! \! \iint e^{- i \phi(t,x;s,y,\xi) /\eps}  
\zeta_m(\xi)a_m (\eps  s , s, y ) ds dy d \xi .
\end{equation*}
Apply the derivative $ \part_x $ to the above relation. This introduces a factor $ \xi/\eps $ in the integral. Since 
$ \xi $ is like $ \lesssim \eps^{-1} $ at the critical points, a rough estimate furnishes
\begin{equation} \label{boundprior} 
 \left|\part_x u_m^{(0)} (t,x)  \right|\lesssim \sqrt \eps \eps^{3/2} \eps^{-2} \eps^{-1} = \eps^{-1} , 
 \end{equation}
where the products of powers of $ \eps $ follows, one after another, from the amplitude, the stationary phase approximation 
(in dimension $ 3$), the term $\xi/\eps \sim \eps^{-2} $, and the number $ \eps^{-1} $ of critical points. The bound (\ref{boundprior}) 
is equivalent to (\ref{majounidederiofUdebut}).

\smallskip

\noindent From (\ref{eq:u0mhar}), we can deduce that
\[ \part_t \bigl( e^{-it/\eps} u^{(0)}_m  \bigr) = \eps^{3/2} A^*_m e^{i(m \varphi -t)/\eps} + \frac{i}{\eps} e^{-it/\eps} 
\( p (- i \, \eps \, \part_x) -1 \) u^{(0)}_m .\]
Then, from the above integral representation of $u^{(0)}_m$, we get
\begin{align}
& \part_t \bigl( e^{-it/\eps} u^{(0)}_m (t,x) \bigr) = \eps^{3/2} A^*_m e^{i(m \varphi -t)/\eps} + \frac{i}{2\pi\sqrt\eps} \, 
e^{i (mt-t-m \gamma)/\eps} \label{areutiliser} \\
& \qquad \qquad \quad \times \underbrace{\int_0^t \! \! \iint e^{-i\phi(t,x;s,y,\xi)/\eps} \(p(\xi)-1\)\zeta_m(\xi)a_m 
(\eps s,s,y)dsdyd\xi}_{=: \cP_m (t,x)} . \nonumber
\end{align}
Coming back to (\ref{rightquantization}) where $ u $ is replaced adequately, the first term in \eqref{areutiliser} can be expressed as
\[
  \vert A^*_m e^{i(m \varphi -t)/\eps} \vert = \frac{1}{\eps} \Bigl \vert \iint e^{i \, (x \xi - y \xi -myt)  /\eps}\zeta_m(\xi) a_m (\eps\, 
  t,t,y) dy d\xi \, \Bigr \vert .
  \]
The phase involved has only one critical point $ (y,\xi) = (x,-mt) $ which is non degenerate. Since the dimension is two, this
allows to gain the factor $ \eps $ so that
\begin{equation}\label{cen'estpasfait}
\eps^{3/2} A_m (\eps\, t,t,x,- i \eps\part_x)^* e^{i(m
  \varphi(t,x)-t)/\eps} = \cO ( \eps^{3/2} ) .  
\end{equation}
Let us now consider the expression $ \cP_m $ emphasized in (\ref{areutiliser}). In view of Lemma~\ref{lem-nonresonantharm} 
and Lemma~\ref{ex-zerophase}, and since $\zeta_0=0$ near the origin (Assumption~\ref{choiopro}),  Proposition~\ref{prop-vanoscint}
implies that $ \cP_m =\cO(\eps^\infty) $ when $ m \not = 1 $, so that
\begin{equation}\label{eq:d_t ugen}
\forall m \not = 1 , \quad \part_t \bigl( e^{-it/\eps} u^{(0)}_m(t,x) \bigr) = \cO\bigl( \eps^{3/2} \bigr)  ,\quad |x|\le r,\quad
 \frac{\cT}{\eps} \le t \le \frac{2\cT}{\eps}  .
\end{equation}
We thus focus on the resonant case $m=1$. We decompose $ \cP_1 $ as we did concerning $ v $ at the 
level of (\ref{sizeofk}) and (\ref{defvrvk}), to get
\begin{equation} \label{sizeofkfortaum} 
\ \cP_1 (t,x) = \sum_{k \in \cK} \, \cP_{1,k} (t,x), \qquad \cK = \Bigl \lbrace \, k \in \NN \, ; \, k \leq \frac{2}{3} + \frac{\cT}{\pi 
\, \eps} \, \Bigr \rbrace \, .
\end{equation}
The analysis of Section~\ref{startregime} readily shows that we can find a constant $c>0$ such that (recall that here, $q=2$)
\begin{equation}\label{esticleconcerfpart}
\sum_{0 \leq k \leq c \eps^{-1/3}} \, | \cP_{1,k}(t,x) | \lesssim \sum_{0 \le k \leq c \eps^{-1/3}} \eps^{D-1} \lesssim \eps^{D-4/3}  .
\end{equation}
For $ k \in \cK $ with $ c \eps^{-1/3} \leq k $, as in Section~\ref{transiregime}, we can rely on a stationary phase argument. 
The only difference in the treatment of $\cP_1$ compared to the preceding analysis of $u^{(0)}$ is the presence of the factor 
$ p(\xi)-1 $ in the integral. Remark that
\[
  \vert p(k\pi+\xi)-1 \vert=
  \cO (k^{-2}) ,\quad \text{uniformly in }|\xi|\le 1.
\]
This property allows to improve the convergence of the sum of wave packets. As a matter of fact, resuming the stationary 
phase argument in $ (s,y,\xi) $ and relying on the decay in $k$ which is provided by the factor to gain the convergence of 
the series in $k$, we come up with
\begin{equation}\label{esticleconcerrpart}
\sum_{c \eps^{-1/3} \leq  k \in \cK} \, | \cP_{1,k}(t,x) | \lesssim \eps^{3/2} \sum_{c \eps^{-1/3} \leq  k \in \cK} \, \frac{1}{k^2} 
\lesssim \eps^{3/2} \eps^{1/3} = \eps^{11/6}  . 
\end{equation}
As prescribed at the level of (\ref{areutiliser}), divide (\ref{esticleconcerfpart}) and (\ref{esticleconcerrpart}) by $ \sqrt \eps $. Since 
$ D \geq 4 $, we can retain that
\begin{equation}\label{eq:d_t u}
 \part_t \bigl( e^{-it/\eps} u^{(0)}_1(t,x) \bigr) = \cO \bigl( \eps^{4/3} \bigr)  ,\quad |x|\le r ,\quad 0\le t \le \frac{2\cT}{\eps}.
\end{equation}
Compute
\begin{equation*}
\part_T \cU^{(0)} (T,z) = \frac{1}{\eps^2}\part_t \(e^{-it/\eps} u^{(0)} \(t,\eps z\)\)\Big|_{t = T/\eps} .
\end{equation*}
To estimate $ \part_T \cU^{(0)} $, it suffices to multiply (\ref{eq:d_t ugen}) and (\ref{eq:d_t u}) by $ \eps^{-2} $, and to sum 
on the finite number of integers $ m $ satisfying $ \vert m \vert \leq M $. This yields (\ref{majounidederiofU0}).

\smallskip

\noindent Now, in order to get \eqref{majounidederiofUfin}, just take the derivative of (\ref{areutiliser}) with respect to 
$ x $. Since $ \part_x \varphi/\eps \sim t/\eps \sim 1/\eps^{2} $, 
we have
\begin{equation*}
\eps^{3/2} \part_x \( A_m (\eps\, t,t,x,- i \eps\part_x)^* e^{i(m \varphi(t,x)-t)/\eps} \) = \cO ( \eps^{-1/2} ) . 
\end{equation*}
This term turns out to bring the largest contribution. As a matter of fact, at the level of the oscillating integral in the 
second line of (\ref{areutiliser}), the $x$-derivative produces the factor $ \part_x \phi/\eps= \xi/\eps $. The multiplication 
by $ \xi $ is compensated by the decreasing of $ p -1 $. Due to the assumptions on $ p  $, the symbol 
$ \xi \bigl( p(\xi) -1 \bigr) \zeta_m (\xi) $ remains in a convenient class. We can still apply Lemma \ref{localizations}, 
except that the control inside (\ref{esticleconcerfpart}) must be replaced by some $ \cO (\eps^{D -7/3})$. On the other 
hand, at the critical points, $\xi/\eps$ behaves like $ k/\eps $, so the estimate (\ref{esticleconcerrpart}) becomes 
\[
  \eps^{3/2} \sum_{c \eps^{-1/3} \leq  k \in \cK} \, \frac{1}{k^2} \frac{k}{\eps} = \eps^{1/2} \sum_{c \eps^{-1/3} \leq  k \in \cK} \, 
  \frac{1}{k} \lesssim \sqrt\eps \ln \frac{1}{\eps}  .
  \]
  Gathering the above three estimates and recalling that $ \part_z =  \eps \part_x $, we can easily infer the content 
  of \eqref{majounidederiofUfin}.
  \end{proof}
  
\noindent So far, we have not exploited the fact that (\ref{majounideU0}) is achieved on a set of zero Lebesgue measure.
This property is useful for what follows.

\begin{lem} [Local vanishing properties] \label{vanishingprop} Let $ (m,n) \in \NN^* \times \NN $. Given a function $ w  $
in the Schwartz space $ \cS (\RR) $, define
$$ \cJ_\eps \equiv \cJ_\eps(T) \equiv \cJ_\eps(m,n,w;T) := \int \vert w (y) \vert  \vert\cU^{(0)} (T,y) \vert^m \vert \eps^{2/3} \part_T 
\cU^{(0)} (T,y) \vert^n dy . $$
Then, for all $ T \in [\cT , 2\cT] $, we have $ \cJ_\eps = o(1) $.
\end{lem}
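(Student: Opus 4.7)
The plan is to establish the desired decay by combining dominated convergence on a large but bounded window in $y$ with rapid-decay tail estimates. The key ingredients are already at hand: the uniform local bound $|\cU^{(0)}(T,y)|\le C$ from \eqref{majounideU0} and the sharp time-derivative bound $|\eps^{2/3}\part_T\cU^{(0)}(T,y)|\le C$ from \eqref{majounidederiofU0}, both valid on the expanding window $|y|\le r/\eps$; the pointwise destructive interference $|\cU^{(0)}(T,y)|=o(1)$ on $\RR\setminus 2\ZZ$ furnished by Proposition \ref{supnormnonamplifi2}; the fact that the exceptional set $2\ZZ$ has Lebesgue measure zero; and the Schwartz decay of $w$.

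First, I would split $\cJ_\eps=\cJ_\eps^{\mathrm{in}}+\cJ_\eps^{\mathrm{out}}$ according to $|y|\le r/\eps$ or $|y|>r/\eps$. For the outer piece, the global bounds \eqref{LinftymajounideU0t} give $\|\cU^{(0)}(T,\cdot)\|_{L^\infty}=\cO(\eps^{-3/2})$ and $\|\part_T\cU^{(0)}(T,\cdot)\|_{L^\infty}=\cO(\eps^{-7/2})$, so the integrand is pointwise dominated by $C\,\eps^{-3m/2-17n/6}|w(y)|$. Since $w\in\cS(\RR)$, the tail integral satisfies $\int_{|y|>r/\eps}|w(y)|\,dy=\cO(\eps^N)$ for every $N$, and thus $\cJ_\eps^{\mathrm{out}}=\cO(\eps^\infty)$ after choosing $N$ large.

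For $\cJ_\eps^{\mathrm{in}}$, the above local bounds show that the integrand is dominated, uniformly in $\eps\in(0,\eps_0]$, by $C^{m+n}|w(y)|\in L^1(\RR)$. I split once more, introducing an auxiliary parameter $R>0$, into $|y|\le R$ and $R<|y|\le r/\eps$. On the fixed interval $|y|\le R$, since $m\ge 1$ and $|\cU^{(0)}(T,y)|\to 0$ for every $y\in\RR\setminus 2\ZZ$ (a set of full measure), the product $|\cU^{(0)}|^m|\eps^{2/3}\part_T\cU^{(0)}|^n$ converges pointwise to zero almost everywhere while being dominated by $C^{m+n}|w|$; Lebesgue's dominated convergence theorem then yields an $o_\eps(1)$ contribution. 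On the annulus $R<|y|\le r/\eps$, the same uniform domination gives a contribution bounded by $C^{m+n}\int_{|y|>R}|w(y)|\,dy$, which is independent of $\eps$ and tends to $0$ as $R\to\infty$. A standard two-parameter argument (first fix $R$ large enough to make the tail contribution $<\eta$, then send $\eps\to 0$ to kill the inner part) concludes that $\cJ_\eps^{\mathrm{in}}=o(1)$, and therefore $\cJ_\eps=o(1)$.

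The proof is essentially a bookkeeping of previously established estimates; the only genuinely nontrivial point—and the place where the microlocal tools of Section \ref{sec:lineffect} really pay off—is the simultaneous use of the local $L^\infty$ control of $\eps^{2/3}\part_T\cU^{(0)}$ from Lemma \ref{lem:fine-estimates} (which records that the passage from $u^{(0)}$ to $\cU^{(0)}$ has filtered out the leading temporal oscillations) and the pointwise destructive interference of Proposition \ref{supnormnonamplifi2}. Together they make Lebesgue's dominated convergence theorem applicable even though $\cU^{(0)}$ retains a highly oscillatory structure with sharp $\cO(1)$ amplitudes on the moving lattice $2\ZZ$.
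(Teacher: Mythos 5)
Your proof is correct and follows essentially the same route as the paper: split at $|y|=r/\eps$, use the global $L^\infty$ bounds of Lemma \ref{GlobalcontrolintheL2} plus Schwartz decay of $w$ for the outer region, and use the sharp local bounds \eqref{majounideU0}--\eqref{majounidederiofU0} together with the a.e.\ pointwise vanishing from Proposition \ref{supnormnonamplifi2} to invoke dominated convergence on the inner region. The only cosmetic difference is that you introduce an auxiliary cutoff $R$ and run an explicit two-parameter limiting argument where the paper simply applies Lebesgue's dominated convergence theorem directly to the family $\mathbbm{1}_{[-r/\eps,r/\eps]}|w||\cU^{(0)}|^m|\eps^{2/3}\part_T\cU^{(0)}|^n$ dominated by $C|w|\in L^1(\RR)$; both are fine.
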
 

\begin{proof} Decompose $ \cJ_\eps $ into $ \cJ_\eps = \cJ^+_\eps + \cJ^-_\eps $ with
$$ \cJ^\pm_\eps := \int_{ \pm \eps \vert y \vert \leq \pm r} \vert w (y) \vert  \vert\cU^{(0)} (T,y) \vert^m  \vert \eps^{2/3} \part_T 
\cU^{(0)} (T,y) \vert^n  dy . $$
Exploit the global estimate (\ref{LinftymajounideU0t}) and the decreasing of $ w \in \cS (\RR) $ to obtain
\begin{align} 
\vert \cJ^-_\eps \vert & \leq C \eps^{- (3m/2)- (17 n/6)} \| w \|_{L^1 (r \leq \eps \vert y \vert)}= \cO(\eps^\infty ) . \nonumber
\end{align}
On the other hand, due to (\ref{majounideU0}) and (\ref{majounidederiofU0}), the family
$$ \mathbbm {1}_{[-r/\eps,r/ \eps]} (y) \vert w (y) \vert  \vert\cU^{(0)} (T,y) \vert^m \vert \eps^{2/3} \part_T \cU^{(0)} (T,y) \vert^n , 
\quad \eps \in \, ]0,1] $$
is uniformly bounded by $ C \vert w \vert \in L^1 (\RR) $. Applying Proposition \ref{supnormnonamplifi2}, it converges to 
zero out of the set $ 2 \ZZ $, which is of Lebesgue measure $ 0 $ in $ \RR $. Under such hypotheses, the Lebesgue dominated 
convergence theorem guarantees that $ \cJ^+_\eps = o (1) $.
\end{proof}


\subsubsection{General estimates involving $ \cW = \cU^{(1)}-\cU^{(0)} $} \label{subsec:roughestimadife} Depending on 
the choice of the parameters $ \nu $, $ j_1 $, $ j_2 $, $\omega $ and $ \iota $, the source term $ F_{\! N \! L}  $ can bring 
a contribution which is of the same size of $ \cU^{(0)} $, or not. To understand what happens, it is interesting to first investigate 
a situation implying no condition on $ \nu $, $ j_1 $, $ j_2 $, $\omega $, and no particular assumptions (through $ \iota $) on 
the spatial localization. To this end, we could directly exploit (\ref{LinftymajounideU0t}) with $  j = n =0 $ at the level of 
(\ref{exploitatthelevel1}) to obtain a preliminary sup norm control on $ \cW $. But, knowing (\ref{majounidederiofUdebut}), 
it is possible to improve this first bound.

\begin{lem} [Global sup norm control on $ \cU^{(1)} - \cU^{(0)} $] Fix any $ \iota \in \, ]-\infty,1] $. Then
\begin{equation}\label{roughdiffUcon}
\| (\cU^{(1)} - \cU^{(0)} ) (T,\cdot) \|_{L^\infty (\RR)} = \cO (\eps^{\nu - \frac{j_1}{2} -\frac{j_2}{2} - 2}) . 
\end{equation}
\end{lem}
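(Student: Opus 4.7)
The plan is to split $\cW := \cU^{(1)} - \cU^{(0)}$ into the local and non-local contributions $\cW = \cW_l + \cW_{nl}$ of \eqref{partcDl}--\eqref{partcDnl}. Both pieces share the prefactor $\eps^{\nu+j_1+j_2-2}$ and an $s$-integral over a set of size $\cO(T) = \cO(1)$, so it will suffice to establish, uniformly in $s \in [0,T]$, the bound $\cO(\eps^{-3(j_1+j_2)/2})$ for the corresponding integrand in $L^\infty_z$. Since $(\nu+j_1+j_2-2) - 3(j_1+j_2)/2 = \nu - (j_1+j_2)/2 - 2$, this is exactly the target exponent in \eqref{roughdiffUcon}.

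The local piece $\cW_l$ is immediate: because $\cG^\eps(s,\cdot)$ equals the product of two bounded cut-offs with $(\cU^{(0)})^{j_1}(\bar\cU^{(0)})^{j_2}$, the global pointwise estimate \eqref{LinftymajounideU0t} (with $j=n=0$) yields $\|\cG^\eps(s,\cdot)\|_{L^\infty} = \cO(\eps^{-3(j_1+j_2)/2})$, and a direct integration in $s$ over $[0,T]$ produces the expected control.

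For the non-local piece $\cW_{nl}$, the key observation is that the Fourier multiplier $\xi \mapsto e^{i\tau(p(\xi)-1)}$ has unit modulus, so in view of \eqref{defBktau} the operator $B_\tau$ is uniformly bounded on $L^2(\RR)$ with norm at most $2$, independently of $\tau$, and commutes with $\part_z$. I would then transfer this $L^2$ control back to $L^\infty$ via the one-dimensional Gagliardo--Nirenberg inequality \eqref{Gagliardo-Ni}, writing
\[
  \|B_\tau \cG^\eps(s,\cdot)\|_{L^\infty} \lesssim \|B_\tau \cG^\eps(s,\cdot)\|_{L^2}^{1/2} \|\part_z B_\tau \cG^\eps(s,\cdot)\|_{L^2}^{1/2} \lesssim \|\cG^\eps(s,\cdot)\|_{L^2}^{1/2} \|\part_z \cG^\eps(s,\cdot)\|_{L^2}^{1/2}.
\]
Combining Leibniz's rule with the H\"older-type inequality $\|f^{j_1+j_2}\|_{L^2} \leq \|f\|_{L^\infty}^{j_1+j_2-1}\|f\|_{L^2}$, and using the global estimates \eqref{LinftymajounideU0t} and \eqref{L2majounideU0mn}, I expect $\|\cG^\eps(s,\cdot)\|_{L^2} \lesssim \eps^{-3(j_1+j_2)/2+1/2}$ and $\|\part_z \cG^\eps(s,\cdot)\|_{L^2} \lesssim \eps^{-3(j_1+j_2)/2-1/2}$, whose geometric mean equals the desired $\eps^{-3(j_1+j_2)/2}$.

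The main delicate step will be the verification of this $H^1$ bound on $\cG^\eps$. Differentiating the spatial cut-off $\chi(z/(r\eps^{\iota-1}))$ produces a factor $\eps^{1-\iota}$, which is harmless precisely because $\iota \le 1$, and which actually becomes a small bonus when $\iota < 1$; the remaining contributions involving $\part_z \cU^{(0)}$ must be estimated through \eqref{L2majounideU0mn} with $(j,n)=(0,1)$, and it is the matching between the $L^2$ loss from the derivative and the $L^\infty$ loss from the power $j_1+j_2-1$ that guarantees the alignment of exponents yielding \eqref{roughdiffUcon}.
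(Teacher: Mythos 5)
Your proposal is correct and follows essentially the same route as the paper: the same splitting $\cW = \cW_l + \cW_{nl}$, the same trivial $L^\infty$ control of $\cW_l$, and the same treatment of $\cW_{nl}$ via the $L^2$-boundedness of $B_\tau$ combined with Gagliardo--Nirenberg, Leibniz, and the global $L^2$/$L^\infty$ estimates of Lemma~\ref{GlobalcontrolintheL2}. The exponent bookkeeping you carry out matches the paper's inequality (\ref{inferpourfkl2}) exactly.
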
 

\begin{proof} Recall (\ref{Gagliardo-Ni}) and (\ref{Aktauv}) which allow to control $ B^\Lambda_\tau \cV $ in sup norm uniformly 
in $ \tau $ through the $ L^2 $-norms of $ \cV $ and $ \partial_z \cV $ as indicated below
\begin{align}
\| B^\Lambda_\tau \cV \|_{L^\infty (\RR)} & \lesssim \| B^\Lambda_\tau \cV \|_{L^2 (\RR)}^{1/2} \| \part_z (B^\Lambda_\tau \cV) \|_{L^2 (\RR)}^{1/2} 
\label{inclusioninin} \\
 & \lesssim \| \Lambda \|_{L^\infty (\RR)} \| \cV \|_{L^2 (\RR)}^{1/2} \|\part_z \cV \|_{L^2 (\RR) }^{1/2} .  \nonumber
\end{align}
On the other hand, using (\ref{L2majounideU0mn}) and (\ref{LinftymajounideU0t}) with $ j = 0 $ and $ n \in \{0,1\} $, we can infer that,
for all $ k \in \{0,1\} $, we have
\begin{align} 
& \displaystyle \| \part_y^k \bigl \lbrack \chi ( \eps^{1-\iota} r^{-1} \vert \cdot \vert ) \, \cU^{(0)}(s,\cdot)^{j_1} \, \bar 
\cU^{(0)}(s,\cdot)^{j_2} \bigr \rbrack \|_{L^2 (\RR)} \label{inferpourfkl2} \\
& \qquad \lesssim \| \cU^{(0)}(s,\cdot) \|^{j_1+j_2-1}_{L^\infty (\RR)}  \bigl( \| \cU^{(0)}(s,\cdot) \|_{L^2 (\RR)} + \| \part_y^k
\cU^{(0)}(s,\cdot) \|_{L^2 (\RR)} \bigr) \nonumber  \\
& \qquad \lesssim  (\eps^{-\frac{3}{2}})^{j_1+j_2-1} (\eps^{-1} + \eps^{-k-1} ) . \nonumber
\end{align}
Recall that $ \cW = \cU^{(1)} - \cU^{(0)} = \cW_l + \cW_{nl} $ with $ \cW_l $ and $ \cW_{nl} $ as in (\ref{partcDl}) and (\ref{partcDnl}).
The part $ \cW_l $ can be controlled according to
$$ \| \cW_l \|_{L^\infty (\RR)} \lesssim \eps^{\nu + j_1 + j_2-2}  \| \cG^\eps \|_{L^\infty (\RR)} . $$
From (\ref{eq:defcG}) and (\ref{LinftymajounideU0t}) with $ j=n=0 $, we can easily deduce (\ref{roughdiffUcon}). On the other hand,
combine (\ref{inclusioninin}) and (\ref{inferpourfkl2}) at the level of (\ref{partcDnl}) to get
$$ 
\begin{array} {rl}
\| \cW_{nl} \|_{L^\infty (\RR)} \lesssim \! \! \! & \eps^{\nu + j_1 + j_2-2} \, \| \cG^\eps \|^{1/2}_{L^2 (\RR)} \, \| \partial_z \cG^\eps 
\|^{1/2}_{L^2 (\RR)} \\
\lesssim \! \! \!  & \displaystyle \eps^{\nu + j_1 + j_2-2} \, (\eps^{-\frac{3}{4}})^{j_1+j_2-1} \eps^{- \frac{1}{2}} \, 
(\eps^{-\frac{3}{4}})^{j_1+j_2-1} \, \eps^{-1} ,
\end{array} 
 $$
which leads directly to (\ref{roughdiffUcon}).
\end{proof}

\noindent The preliminary estimate (\ref{roughdiffUcon}) is far from enough to reach some $ \cO(1) $ or less, under the sole 
condition $ \nu + j_1 + j_2 \geq 2 $. More specific arguments (involving especially the spatial localization) are needed to 
improve the above analysis. 


\subsection{Sorting of gauge parameters} \label{sec:sorting-gauge} The constructive interferences of Theorem \ref{theo:resumeNL}
occur on a set of Lebesgue measure zero. From this viewpoint, comparisons in $ L^p $-norms with $ p < + \infty $ cannot be relevant. 
We must stick to the use of the sup norm. This motivates the following definition, which is inspired by a notion of linearizability introduced in \cite{PG96}.
\begin{defi}[Linearizability during long times]\label{def:linearizable}
  We say that the nonlinearity plays no 
role at leading order during long times when
\begin{equation} \label{Linearizabilitydu} 
\sup_{0\le T\le 2\cT} \| ( \cU^{(1)} - \cU^{(0)})(T,\cdot)\|_{L^\infty(\RR)} =o(1) \ \text{ as } \ \eps \to 0.
\end{equation}
\end{defi}

\noindent In this subsection, we show that when $ \textfrak {g} \not = 1 $,  nonlinear effects are absent at leading order during long 
times (in the sense of Definition~\ref{def:linearizable}), provided that $ \iota = 1 $. When $\textfrak{g}\not \in \{0,1\} $, the 
assumption $\iota=1$ may be relaxed to $\iota\in [0,1]$. In the next paragraphs \ref{nonresonantgaugeparameters},  
\ref{strictlyresonantgaugeparameters} and \ref{transitionalgaugeparameters}, following the distinctions which have been 
made in Definition~\ref{classigaugedefi}, we examine successively the cases $ \textfrak {g} \not \in [0,1] $, $ \textfrak {g} 
\in \, ]0,1[ $, and $ \textfrak {g} = 0 $.

\begin{rem}
  The results of this  subsection,  Propositions~\ref{sortnonlinaer}, \ref{sortnonlinearstrictgauge} and 
  \ref{sortnonlinaergaugeenzero}, rely on the estimates \eqref{majounideU0}, \eqref{majounidederiofUdebut}, 
  \eqref{majounidederiofU0} and \eqref{majounidederiofUfin}, which have been established only for $T\ge
  \cT$, hence the time localizing factor $\chi(3-2\eps t/\cT)$  in $\cG^\eps$. We will see later that these estimates 
  could be  adapted for $T\ge \eta $ with $ \eta > 0 $. But the case of  smaller times $t\ll \eta /\eps$ is not straightforward. 
\end{rem}

\subsubsection{The case of non resonant gauge parameters.} \label{nonresonantgaugeparameters} This is when 
$ \textfrak {g} \not \in [0,1] $. Then, the distance from $ p(\xi) $ to $ \textfrak {g} $ remains bounded below by a positive 
constant. In other words, the function
\begin{equation*}
\begin{array} {rcl}
\Gamma \, : \, \RR & \! \!  \longrightarrow \!  \! & \RR \\
\xi & \! \! \longmapsto \! \!  & \Gamma (\xi ) :=  \bigl( p(\xi)- \textfrak {g} \bigr)^{-1} 
\end{array}
\end{equation*}
is bounded, that is $ \Gamma \in L^\infty (\RR) $.
 
\begin{prop}\label{sortnonlinaer} Assume that $ \textfrak {g} \not \in [0,1] $ and $ \iota \in [0,1] $. Then, the nonlinearity 
plays no role at leading order during long times. More precisely
\begin{equation}\label{abovethesamenr}
\forall \, T \in \lbrack 0 , 2 \cT \bigr \rbrack , \quad \| (\cU^{(1)} - \cU^{(0)}) (T,\cdot) \|_{L^\infty (\RR) } \, = 
\cO \bigl( \eps^{\iota/2+1/3} \bigr) . 
\end{equation}
\end{prop}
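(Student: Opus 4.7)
The strategy is to integrate by parts once in $s$ in the Duhamel representation \eqref{eq:duhamelNL}, exploiting that, for $\textfrak{g}\notin[0,1]$, the function $\Gamma(\xi):=1/(p(\xi)-\textfrak{g})$ lies in $L^\infty(\RR)$ with $\|\Gamma\|_{L^\infty}\le 1/c_\textfrak{g}$. Setting $U(\tau):=e^{i\tau(p(D)-1)}$ and $V(s):=e^{i(\textfrak{g}-1)s/\eps^2}U((T-s)/\eps^2)$, a direct computation gives $V'(s)=-V(s)/(\eps^2\Gamma(D))$, so one IBP trades a factor $\eps^2$ for one application of $\Gamma(D)$. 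The boundary term at $s=0$ vanishes since the cutoff $\chi(3-2s/\cT)$ kills $\cG^\eps$ there, leaving
\begin{equation*}
\cW(T,z)=-\eps^{\nu+j_1+j_2}\,\Gamma(D)\Bigl\{e^{i(\textfrak{g}-1)T/\eps^2}\cG^\eps(T,z)-\int_0^T e^{i(\textfrak{g}-1)s/\eps^2}\,U\bigl((T-s)/\eps^2\bigr)\part_s\cG^\eps(s,\cdot)(z)\,ds\Bigr\}.
\end{equation*}
For $T\le\cT$, $\cG^\eps\equiv 0$ on $[0,T]$ and $\cW\equiv 0$; one may thus assume $T\in[\cT,2\cT]$, where Lemma~\ref{lem:fine-estimates} applies on $|z|\le r/\eps$.

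For the boundary term, since $\Gamma\in L^\infty$ and $\Gamma(D)$ commutes with $\part_z$, Gagliardo--Nirenberg \eqref{Gagliardo-Ni} yields $\|\Gamma(D)\cG^\eps(T,\cdot)\|_{L^\infty}\lesssim \|\cG^\eps(T,\cdot)\|_{L^2}^{1/2}\|\part_z\cG^\eps(T,\cdot)\|_{L^2}^{1/2}$. Using the support of size $r\eps^{\iota-1}$ together with \eqref{majounideU0} and \eqref{majounidederiofUdebut} gives $\|\cG^\eps(T,\cdot)\|_{L^2}\lesssim \eps^{(\iota-1)/2}$ and $\|\part_z\cG^\eps(T,\cdot)\|_{L^2}\lesssim \eps^{(\iota-3)/2}$, hence $\|\Gamma(D)\cG^\eps(T,\cdot)\|_{L^\infty}\lesssim \eps^{(\iota-2)/2}$. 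After multiplication by $\eps^{\nu+j_1+j_2}\ge \eps^2$ this contributes only $O(\eps^{1+\iota/2})$, comfortably below the target.

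The heart of the proof is the integral term, handled via the decomposition $\Gamma(D)U(\tau)=\Gamma(D)+(2\pi)^{-1}B^\Gamma_\tau$ with $B^\Gamma_\tau$ as in \eqref{defBktau}. Corollary~\ref{corollairefac} applied with $\Lambda=\Gamma$ and $\rho=0$ yields
\[
\|B^\Gamma_{(T-s)/\eps^2}\part_s\cG^\eps(s,\cdot)\|_{L^\infty}\lesssim \|\part_s\cG^\eps(s,\cdot)\|_{L^\infty(|z|\le r/\eps)}\,\bigl(\eps^{(\iota-1)/2}+\eps^{\iota/2-1}\bigr),
\]
uniformly in $s$. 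The key input is the sharpened time-derivative bound \eqref{majounidederiofU0}, $\|\part_T\cU^{(0)}(T,\cdot)\|_{L^\infty(|z|\le r/\eps)}\lesssim \eps^{-2/3}$, which forces $\|\part_s\cG^\eps\|_{L^\infty}\lesssim \eps^{-2/3}$. The product then gives $\eps^{-2/3}\eps^{\iota/2-1}$ (the dominant factor in the Corollary), and integrating over $s\in[0,T]$ (length $O(1)$) and multiplying by $\eps^{\nu+j_1+j_2}=\eps^2$ produces precisely the announced $O(\eps^{\iota/2+1/3})$. The remaining $\Gamma(D)\part_s\cG^\eps$ piece is controlled by the same Gagliardo--Nirenberg argument, using \eqref{majounidederiofUfin} to bound $\part^2_{Tz}\cU^{(0)}$, and contributes only $O(\eps^{5/12+\iota/2})$.

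The subtle point, and the reason the proof works at all, is that $\Gamma(D)U(\tau)$ is unitary on $L^2$ but explodes in $L^\infty$: the kernel of $B^\Gamma_\tau$ grows pointwise like $\tau^{1/2}\sim \eps^{-1}$ by Lemma~\ref{Lpestikernel2}. One only saves the estimate by exploiting simultaneously the spatial localization of $\cG^\eps$ (which allows one to use the milder $L^2$ bound on $K_\tau$, of order $\eps^{-1/2}$ by Lemma~\ref{Lpestikernel}) and the \emph{filtered} time-derivative bound $\|\part_s\cG^\eps\|_{L^\infty}\lesssim \eps^{-2/3}$; the naive prediction $\part_s\cG^\eps=O(\eps^{-1})$ suggested by \eqref{couldbeexpected} would miss the result by a factor $\eps^{1/3}$.
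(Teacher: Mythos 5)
Your proof is correct in substance and proves the stated bound, with one small arithmetic slip noted below. The first step (integrate by parts in $s$ using $\Gamma(\xi)=(p(\xi)-\textfrak{g})^{-1}\in L^\infty$, trading an $\eps^2$ for $\Gamma(D)$, and noting the $s=0$ boundary term vanishes) is exactly what the paper does. Where you diverge is in estimating the resulting integral term: the paper keeps the propagator and $\Gamma$ together as a single bounded multiplier $\tilde\Gamma(\sigma,\xi)=e^{i\sigma(p(\xi)-1)/\eps^2}\Gamma(\xi)$, then applies Gagliardo--Nirenberg plus Plancherel once, reducing to $\|\partial_s\cG^\eps\|_{L^2}^{1/2}\|\partial_y\partial_s\cG^\eps\|_{L^2}^{1/2}$; you instead split $\Gamma(D)U(\tau)=\Gamma(D)+(2\pi)^{-1}B^\Gamma_\tau$ and treat the $B^\Gamma_\tau$ piece by Young's inequality in the form of Corollary~\ref{corollairefac} (with $\rho=0$), which rests on the $L^2$ kernel bound of Lemma~\ref{Lpestikernel}. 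Both routes are legitimate $L^2\to L^\infty$ mechanisms and both require the filtered time-derivative bound \eqref{majounidederiofU0}; the paper's single Gagliardo--Nirenberg pass is slightly more economical, while your decomposition makes the role of the kernel estimates more visible. One correction: for the residual $\Gamma(D)\partial_s\cG^\eps$ piece, you claim $O(\eps^{5/12+\iota/2})$, but when $j_1+j_2\ge 2$ the cross term $\cU^{j_1+j_2-2}\,\partial_y\cU\,\partial_s\cU$ (pointwise of order $\eps^{-1}\cdot\eps^{-2/3}=\eps^{-5/3}$ by \eqref{majounidederiofUdebut} and \eqref{majounidederiofU0}) dominates the $\cU^{j_1+j_2-1}\partial^2_{sy}\cU$ term ($\lesssim\eps^{-3/2}$ by \eqref{majounidederiofUfin}), so $\|\partial_y\partial_s\cG^\eps\|_{L^2}\lesssim\eps^{\iota/2-13/6}$ and that piece also contributes $O(\eps^{\iota/2+1/3})$; this does not change the stated conclusion, since the two contributions are of the same order.
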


\begin{proof} It suffices to examine the critical size case, where $ \nu+j_1+j_2=2 $. In the case $ \nu+j_1+j_2>2 $, 
the above $\cO \( \eps^{\iota/2+1/3} \)$ is readily improved to $\cO \( \eps^{\iota/2+1/3+\nu+j_1+j_-2} \)$. The idea 
is to come back to Duhamel's formula \eqref{eq:duhamelNL}, and to exploit the oscillations occurring with respect 
to the time variable $ s $. Integrating by parts in $ s $, we find
\begin{align*}
  \cW (T,z) & =
  \frac{1}{2 \pi}
  \int_0^T \! \! \iint e^{-i(z-y)\xi +i\frac{T-s}{\eps^2}(p(\xi)-1)
                +i(\textfrak {g}-1)\frac{s}{\eps^2}} \cG^\eps(s,y)
                dsdyd\xi\\
& = \frac{i\eps^2}{2\pi} \iint e^{-i(z-y)\xi +i\frac{T-s}{\eps^2}(p(\xi)-1)
                +i(\textfrak {g}-1)\frac{s}{\eps^2}} \Gamma(\xi)\cG^\eps(s,y)
                dyd\xi\Big|_{s=0}^T\\
& - \frac{i\eps^2}{2\pi}\int_0^T \! \! \iint e^{-i(z-y)\xi +i\frac{T-s}{\eps^2}(p(\xi)-1)
                +i(\textfrak {g}-1)\frac{s}{\eps^2}} \Gamma(\xi)\part_s\cG^\eps(s,y) 
                dsdyd\xi.
\end{align*}
Given $ s \in [0,T] $, for $j\in\{0,1\}$, write the integral in $(y,\xi)$ in the more concise form
\begin{align*}
  \frac{1}{2\pi}\iint e^{-i(z-y)\xi +i\frac{\sigma}{\eps^2}(p(\xi)-1)
                } \Gamma(\xi)\part_s^j\cG^\eps(s,y)
                dyd\xi = \cF_{\xi} \bigl(e^{i\frac{\sigma}{\eps^2}(p(\xi)-1)
                }\Gamma(\xi) \cF^{-1}_{y} (\part_s^j\cG^\eps )\bigr).
\end{align*}
Set apart the weight
$$ \tilde\Gamma(\sigma,\xi) \equiv \tilde\Gamma_\eps (\sigma,\xi):= e^{i\frac{\sigma}{\eps^2}(p(\xi)-1)
                }\Gamma(\xi) \, , \qquad \sup_{\eps \in ]0,1]} \ \| \tilde\Gamma_\eps \|_{L^\infty (\RR^2) } 
                < + \infty . $$
To estimate such a term in $L^\infty$, we use the $L^2$-norms as intermediary norms, 
 so we lose as little information as possible at the
level of Fourier transforms, thanks to Plancherel identity. To do so,
we invoke Gagliardo--Nirenberg 
inequality,
\begin{align*}
  \left \| \cF_{\xi} \bigl(  \tilde\Gamma \cF^{-1}_{y}  ( \part_s^j \cG^\eps ) \bigr) \right\|_{L^\infty(\RR)}&\lesssim
 \left \| \cF_{\xi}\bigl(  \tilde\Gamma\cF^{-1}_{y}  (\part_s^j \cG^\eps ) \bigr)\right\|_{L^2}^{1/2} \, 
    \left\|\part_z \cF_{\xi} \bigl(  \tilde\Gamma\cF^{-1}_{y} (\part_s^j\cG^\eps ) \bigr) \right\|_{L^2}^{1/2}\\
 &\lesssim
 \left\|\part_s^j\cG^\eps\right\|_{L^2}^{1/2}
   \left\| \part_y\part_s^j\cG^\eps\right\|_{L^2}^{1/2}. 
\end{align*}
The assumption $\iota\ge 0$ is needed to later invoke the $L^\infty$ estimates of Lemma \ref{lem:fine-estimates}, 
concerning $\cU^{(0)}$. Below, to simplify notations, we can drop the exponent in $\cU^{(0)}$. For $j\in \{0,1 \} $, 
since $\iota\in [0,1]$, we can assert that
\begin{align*} 
\Bigl\| \, \part_y^j & \left[ \chi \( \frac{\cdot}{\eps^{\iota-1} r} \) \part_s ( \cU^{j_1}\bar\cU^{j_2}) \right] \, 
\Bigr\|_{L^2 (\RR)} \lesssim \\
\  & \quad \eps^{(1-\iota)/2} \|
    \cU(s)\|^{j_1+j_2-1}_{L^\infty(|z|\le r/\eps)}
    \| \part_s \cU(s) \|_{L^\infty(|z|\le r/\eps)} \\
\  &   + \eps^{(\iota-1)/2}
    \| \cU(s) \|^{j_1+j_2-2}_{L^\infty(|z|\le r/\eps)}
    \| \part^j_y \cU(s) \|_{L^\infty(|z|\le r/\eps)} \| 
\part_s \cU(s) \|_{L^\infty(|z|\le r/\eps)} \\
\  &  + \eps^{(\iota-1)/2}
    \| \cU(s) \|^{j_1+j_2-1}_{L^\infty(|z|\le r/\eps)}
    \| \part^{(1,j)}_{sy} \cU(s) \|_{L^\infty(|z|\le r/\eps)}  . 
\end{align*}
Since $ \iota \in [0,1] $, we can exploit the local sup norm estimates (\ref{majounideU0}), (\ref{majounidederiofUdebut}), (\ref{majounidederiofU0}) 
and (\ref{majounidederiofUfin}), so the above estimate yields
\begin{align*}
   &\left\| \part_y^j\left[\chi \( \frac{\cdot}{\eps^{\iota-1}
                 r}\)  \part_s \( \cU^{j_1}\bar\cU^{j_2}\) ) \right]
     \right\|_{L^2 (\RR)} \\
   &\qquad\lesssim \eps^{(1-\iota)/2} \eps^{-2/3} +
     \eps^{(\iota-1)/2} \eps^{-j} \eps^{-2/3} +
     \eps^{(\iota-1)/2} \eps^{-2/3-5 j/6} 
\lesssim\eps^{\iota/2} \eps^{-j} \eps^{-7/6} . 
\end{align*}
We  conclude that
\begin{equation} \label{recober}
\vert (\cU^{(1)} - \cU^{(0)} ) (T,z) \vert \lesssim \eps^2 \times
\eps^{\iota/4} \eps^{-7/12} \times \eps^{\iota/4}\eps^{-1/2} \eps^{-7/12} ,
\end{equation}
which is exactly (\ref{abovethesamenr}).
\end{proof}


\subsubsection{The case of pointwise resonant gauge parameters.} \label{strictlyresonantgaugeparameters} 
This is when $  \textfrak {g} \in \, ] 0,1[ $. In view of (\ref{increasing}), we can assert that 
\begin{equation} \label{assertexistxig}
\exists \, ! \, \xi_{\textfrak {g}} \in \, ]\xi_c,+\infty[ \, ; \qquad p(\xi_{\textfrak {g}}) = \textfrak {g} , \qquad 0 < 
p' (\xi_{\textfrak {g}}) .
\end{equation}

\begin{prop}\label{sortnonlinearstrictgauge} Assume that $ \textfrak {g} \in \, ] 0,1[ $ and that $ \iota \in [0,1] $. 
Then, the nonlinearity plays no role at leading order during long times. More precisely, for all $ \mu < 1/6 $, 
we have
\begin{equation}\label{abovethesamepointwise}
\forall \, T \in \lbrack 0 , 2 \cT \bigr \rbrack , \quad \| (\cU^{(1)} - \cU^{(0)} ) (T,\cdot) \|_{L^\infty(\RR) } \, = 
\cO (\eps^{3\iota/4+\mu}) . 
\end{equation}
\end{prop}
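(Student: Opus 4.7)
My plan is to adapt the proof of Proposition~\ref{sortnonlinaer}, whose pivotal tool was an integration by parts in the variable $s$ against the symbol $(p(\xi)-\textfrak{g})^{-1}$. In the present setting this symbol is singular at the two points $\pm\xi_{\textfrak{g}}$ identified in \eqref{assertexistxig}, and the strategy is to isolate these resonant frequencies from the rest by means of a scale-dependent frequency cutoff. Fix a smooth even cutoff $\Lambda_{\delta}\in\cC^\infty_c(\RR)$, equal to $1$ in a neighborhood of $\pm\xi_{\textfrak{g}}$ and supported in $\{|\xi-\xi_{\textfrak{g}}|\le\delta\}\cup\{|\xi+\xi_{\textfrak{g}}|\le\delta\}$ with $\delta\in(0,\xi_{\textfrak{g}}/2]$ to be optimized later as a power of $\eps$. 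Writing $\cW=\cW_{\mathrm{far}}+\cW_{\mathrm{near}}$ according to the insertion of the weights $1-\Lambda_{\delta}(\xi)$ and $\Lambda_{\delta}(\xi)$ inside \eqref{eq:duhamelNL} respectively, the two pieces are estimated by distinct mechanisms.

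For the far piece, on the support of $1-\Lambda_{\delta}$ one has $|p(\xi)-\textfrak{g}|\gtrsim\delta$, so the weight $\Gamma_{\delta}(\xi):=(1-\Lambda_{\delta}(\xi))(p(\xi)-\textfrak{g})^{-1}$ belongs to $L^\infty(\RR)$ with norm $\cO(1/\delta)$, and the proof of Proposition~\ref{sortnonlinaer} carries over verbatim --- replacing the bounded $\Gamma$ there by $\Gamma_{\delta}$ --- to give $\|\cW_{\mathrm{far}}(T,\cdot)\|_{L^\infty}\lesssim \eps^{\iota/2+1/3}/\delta$. For the near piece, no such integration by parts is directly available, and I would instead apply the $L^2$ version of Young's inequality $\|B^{\Lambda_{\delta}}_\tau\cG^\eps\|_{L^\infty}\leq\|K^{\Lambda_{\delta}}_\tau\|_{L^2}\|\cG^\eps\|_{L^2}$. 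Since $\Lambda_{\delta}$ is supported on a set of Lebesgue measure $\lesssim\delta$, Plancherel's identity yields $\|K^{\Lambda_{\delta}}_\tau\|_{L^2}\lesssim\sqrt{\delta}$ uniformly in $\tau$ (this is the content of Lemma~\ref{Lpestikernel} without invoking the dispersive decay). The spatial cutoff of width $r\eps^{\iota-1}$ in $\cG^\eps$ combined with the local $L^\infty$ bound \eqref{majounideU0} yields $\|\cG^\eps(s,\cdot)\|_{L^2}\lesssim\eps^{(\iota-1)/2}$, so $\|\cW_{\mathrm{near}}(T,\cdot)\|_{L^\infty}\lesssim\sqrt{\delta}\,\eps^{(\iota-1)/2}$.

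The remaining task is to choose $\delta=\eps^\theta$ so as to balance these two contributions. A single-scale optimization already gives an $o(1)$ bound, but to reach $\eps^{3\iota/4+\mu}$ for $\mu$ arbitrarily close to $1/6$ one needs a finer decomposition of the near region, splitting it into an innermost piece of width $\delta_1\ll\delta$ and an annular piece $\delta_1\lesssim|\xi\mp\xi_{\textfrak{g}}|\lesssim\delta$. On the annulus, $|p(\xi)-\textfrak{g}|\gtrsim\delta_1$ so IBP in $s$ becomes admissible again, this time using the sharper estimate $\|(p-\textfrak{g})^{-1}(\Lambda_{\delta}-\Lambda_{\delta_1})\|_{L^2}\lesssim 1/\sqrt{\delta_1}$ --- which replaces the $L^\infty$ norm $1/\delta_1$ and is crucial to improve the exponent. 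This IBP, combined with the Gagliardo--Nirenberg type inequality employed in Proposition~\ref{sortnonlinaer} and the $L^2$ derivative bounds for $\partial_s\cG^\eps$ coming from Lemma~\ref{lem:fine-estimates}, produces a third estimate of the form $\eps^{\iota/2+5/6}/\sqrt{\delta_1}$ for the annular contribution.

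A simultaneous balance of the three bounds in the two free parameters $(\delta,\delta_1)$, each taken as a suitable power of $\eps$, then yields the stated rate $\cO(\eps^{3\iota/4+\mu})$ for every $\mu<1/6$. The main technical obstacle I anticipate is the careful bookkeeping of this triple decomposition: at each scale the IBP produces boundary terms where $\partial_\xi$ falls on the cutoff and generates factors $1/\delta$ or $1/\delta_1$, and these must be shown not to dominate the expected gain thanks to the $L^2$ refinement on the annular weight. Uniformity in $\iota\in[0,1]$ must also be preserved, which requires that each of the $L^2$-derivative bounds on $\cG^\eps$ used in the proof depends on $\iota$ only through the factor $\eps^{(\iota-1)/2}$ produced by the spatial cutoff, as was already the case in the derivation of \eqref{recober}.
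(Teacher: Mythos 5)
Your proposal reproduces the paper's treatment of the far piece (an integration by parts in $s$ against a weight whose $L^\infty$ norm is $\cO(1/\delta)$, giving $\eps^{\iota/2+1/3}/\delta$), but for the near piece it misses the mechanism the paper actually uses, and this gap is not cosmetic: it changes the final exponent. The paper does not estimate the near kernel through Plancherel on the support size; it observes that for $\xi$ near $\pm\xi_{\textfrak{g}}$ the $\xi$-phase $\psi(\xi)=(T-s)p(\xi)-\eps^2(z-y)\xi$ satisfies $\psi'(\xi)\approx(T-s)p'(\xi_{\textfrak{g}})$, and $p'(\xi_{\textfrak{g}})>0$ by \eqref{assertexistxig}. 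Integrating by parts in $\xi$ then kills the near kernel to $\cO(\eps^\infty)$ whenever $T-s\gtrsim\eps^{\delta}$ for any fixed $\delta<1$, so one only pays the crude bound $|\cJ|\lesssim\eps^{\eta}$ on a boundary time layer of length $\eps^{\delta}$, yielding a near contribution $\eps^{\delta+\eta+\iota-1}$. Your bound $\sqrt{\delta_1}\,\eps^{(\iota-1)/2}$, being uniform in $\tau=(T-s)/\eps^2$, does not see this dispersive decay at all and is consequently much weaker.

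Two consequences. First, your assertion that a single-scale optimization already gives $o(1)$ is incorrect: balancing $\eps^{\iota/2+1/3-\theta}$ (far) against $\eps^{\theta/2+(\iota-1)/2}$ (near) forces $\theta=5/9$ and gives $\eps^{\iota/2-2/9}$, which blows up for $\iota<4/9$. Second, and more importantly, your three-scale refinement does not in fact reach the stated rate. Your annular idea — passing from $\|\Gamma_{\mathrm{ann}}\|_{L^\infty}\lesssim 1/\delta_1$ to $\|\Gamma_{\mathrm{ann}}\|_{L^2}\lesssim 1/\sqrt{\delta_1}$ by using the $L^2$-$L^2$ Young estimate instead of the Gagliardo--Nirenberg route — is a sensible observation and does produce the annular bound $\eps^{\iota/2+5/6-\theta_1/2}$ you announce. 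But equating annular and core, $\iota/2+5/6-\theta_1/2=\theta_1/2+(\iota-1)/2$, forces $\theta_1=4/3$ and an exponent $\iota/2+1/6$, and the far piece then imposes only $\theta\le 1/6$, leaving the overall exponent at $\iota/2+1/6$. For every $\iota>0$ this is strictly smaller than $3\iota/4+\mu$ when $\mu$ is close to $1/6$, so the decomposition does not recover \eqref{abovethesamepointwise}; only the endpoint $\iota=0$ coincides. The claim in your final paragraph that a simultaneous balance "yields the stated rate" is not backed by the numerology. To close the gap, the near region must be handled exactly as in the paper's proof: after freezing $s$, perform non-stationary phase in $\xi$ using $p'(\xi_{\textfrak{g}})\neq 0$ (this is where the pointwise-resonance hypothesis enters), and only then estimate the residual time layer.
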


\noindent This furnishes again some $o(1)$ in line with Definition~\ref{def:linearizable}. When $ \iota = 0 $, 
this bound is weaker than (\ref{abovethesamenr}). 

\begin{proof} We can still work with $ \nu+j_1+j_2=2$. Fix $ \eta \in [0,1[ $. We perform a frequency localization 
of size $ \eps^\eta $ near the two problematic positions $ \pm \xi_{\textfrak {g}} $. In practice, we insert in the integral 
\eqref{eq:duhamelNL} defining the error $\cU^{(1)}-\cU^{(0)}$ the decomposition
\begin{equation}\label{fctindicatricesum}
 1 = (1-\chi) \(\frac{\xi^2 -\xi_{\textfrak {g}}^2}{\eps^{\eta}}\)+ \chi
 \(\frac{\xi^2 -\xi_{\textfrak {g}}^2}{\eps^{\eta}}\).  
\end{equation}
Concerning the left part of (\ref{fctindicatricesum}), that is away
from the values $ \xi = \pm \xi_{\textfrak {g}} $, the proof of 
Proposition~\ref{sortnonlinaer} can be repeated with $ \Gamma $ replaced by 
\[
  \Gamma_\eta (\xi) := \frac{1}{p(\xi)- \textfrak {g}}  (1-\chi)
  \(\frac{\xi^2 -\xi_{\textfrak {g}}^2}{\eps^{\eta}}\) .
  \]
By construction, the function $ \Gamma_\eta  $ is zero on some
set of size $ \eps^\eta $ containing $ \pm \xi_{\textfrak {g}}$.  
It follows from (\ref{increasing}) and \eqref{assertexistxig} that $
\Gamma_\eta  $ is globally 
bounded by $ C \eps^{-\eta} $. The integration by parts with respect
to  
the time variable $ s $ can still be performed, but now we have to
take into account this singular estimate for 
$ \|\Gamma_\eta\|_{L^\infty} $. As a consequence, the gain is $ \eps^{2-\eta} $ instead of $ \eps^2 $. The corresponding contribution is therefore of 
size $ \eps^{\frac{\iota}{2}-\eta+\frac{1}{3}} $ instead of being of size $ \eps^{\frac{\iota}{2}+ \frac{1}{3}} $ as in (\ref{abovethesamenr}). 
\smallskip

\noindent From now on, we fix  $ \eta \in
[0,\frac{\iota}{2}+\frac{1}{3}[$ (so the above estimate yields a small
contribution), and we study the contribution coming from the right part of (\ref{fctindicatricesum}). 
The idea is to exploit at the level of \eqref{eq:duhamelNL} the
oscillations with respect to $ \xi $. To this end, the identity
\eqref{eq:duhamelNL} may be reformulated as
\begin{equation} \label{eq:duhamelpintwise}
\begin{aligned}
\cW (T,z) = \frac{1}{2\pi} \int_0^T \! \! \int e^{ i (\textfrak {g} s-T)/\eps^2} \cJ(\eps,T-s,y,z) \cG^\eps (s,y) \, dsdy \, ,
  \end{aligned}
\end{equation}
where we have put aside the oscillatory integral
\begin{equation} \label{oscillaintI}
\cJ(\eps,s,y,z) := \int e^{i \psi (\xi)/ \eps^2} \chi
 \(\frac{\xi^2 -\xi_{\textfrak {g}}^2}{\eps^{\eta}}\) d \xi , 
\end{equation}
built with the phase $ \psi (\xi) \equiv \psi(\eps, s,y,z;\xi) $ given by
\begin{equation} \label{psidefdef}
 \psi (\xi) := s p(\xi) - \eps^2 (z-y) \xi , \qquad \psi' (\xi) = s p'
 (\xi) - \eps^2 (z-y) .  
\end{equation}
Due to the presence of $\chi$, we have the obvious estimate $ |\cJ|\lesssim \eps^\eta $.
Fix  $ \delta \in [\eta,1[ $. Since $ p' (\xi_{\textfrak {g}})>0 $,
for all time $ s \gtrsim \eps^\delta $, we have $ 
\psi'(\xi) \gtrsim \eps^\delta $ for all $ \xi $ at a distance $ \sim \eps^\eta $ from $ \xi_{\textfrak {g}} $, 
and for all $ y $ and $ z $ located at a distance 
less than $ \sim \eps^{-1} $ from the origin. For $ s \gtrsim
\eps^\delta $, an integration by parts with respect to $ \xi $ yields
\[\cJ = i \eps^2 \int e^{i \psi (\xi)/ \eps^2} \left[- \frac{\psi''(\xi)}{\psi'(\xi)^2} \chi
 \(\frac{\xi^2 -\xi_{\textfrak {g}}^2}{\eps^{\eta}}\) 
 + \frac{2 \xi}{\eps^{\eta} \psi'(\xi)} \chi'
 \(\frac{\xi^2 -\xi_{\textfrak {g}}^2}{\eps^{\eta}}\) \right] d \xi . \]
This indicates a gain of $ \eps^{2-2 \delta} $ when computing $
\cJ$. Since this operation may be repeated indefinitely,
we deduce that $ \cJ = \cO (\eps^\infty ) $ for $ s\gtrsim \eps^\delta $.

\smallskip

\noindent There remains to control the contribution which, in (\ref{eq:duhamelpintwise}), is brought by the times $ s $ 
satisfying $ T- C\eps^\delta\le s\le T $. For such $ s $, a rough
estimate based on (\ref{majounideU0}) yields, since $|\cJ|\lesssim
\eps^\eta$, 
some $ \cO(\eps^{\delta + \eta+\iota-1}) $ error term. By optimizing the smallness of $ \eps^{\frac{\iota}{2}-\eta+\frac{1}{3}} $ 
and $ \eps^{\delta + \eta+\iota-1} $ through the selection of $ \eta = \frac{2}{3}-\frac{\delta}{2}-\frac{\iota}{4} $, we get some 
$ \eps^{\frac{3 \iota}{4}- \frac{1}{3} 
+ \frac{\delta}{2}} $ estimate. Since $ \delta < 1 $ can be chosen
arbitrarily closed to $ 1 $, we obtain
(\ref{abovethesamepointwise}). 
\end{proof}


\subsubsection{The case of the transitional gauge parameter.} \label{transitionalgaugeparameters}
This is when $ \textfrak {g} = 0 $. In this case, for all $ \xi $ in the interval $ [-\xi_c, \xi_c] $, we
have $ p(\xi) = \textfrak {g} = 0 $.  
On the other hand, in the case $\xi_c>0$ (which we shall assume in this
paragraph), the transitional region near 
the extreme positions $ \pm\xi_c$ is much more degenerate  
than in Paragraph~\ref{strictlyresonantgaugeparameters}. The function
$ p $ is flat near $ \pm\xi_c $.  
Consequently, there is no way to exploit as before, in the vicinity of
$ \pm\xi_c $, the oscillations with respect  
to $ \xi $. Still, we can show the following result, by restricting
the order of the spatial localization.

\begin{prop}\label{sortnonlinaergaugeenzero} Assume that $ \textfrak {g} = 0 $ and that $ \iota = 1$. 
Then, the nonlinearity plays no role at leading order during long
times, in the sense of Definition~\ref{def:linearizable}.
\end{prop}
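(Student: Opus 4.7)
My plan is to reduce to the critical case $\nu+j_1+j_2=2$ (the subcritical case gains an extra power of $\eps$ everywhere) and to build on the strategy of Proposition~\ref{sortnonlinaer}. Starting from the Duhamel representation \eqref{eq:duhamelNL}, the hypothesis $\textfrak{g}=0$ collapses the two time-oscillating factors into the single phase $-T/\eps^2+(T-s)p(\xi)/\eps^2$, whose $s$-derivative equals $-p(\xi)/\eps^2$. This derivative vanishes identically on the whole interval $[-\xi_c,\xi_c]$ where $p\equiv 0$, so the bounded-multiplier $1/(p-\textfrak{g})$ trick of Proposition~\ref{sortnonlinaer} is simply unavailable. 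I would therefore split $\cW=\cW^{low}+\cW^{hi}$ via a smooth frequency partition $\psi_0+\psi_1\equiv 1$ with $\psi_0\equiv 1$ on $[-\xi_c,\xi_c]$ and $\operatorname{supp}\psi_0\subset[-\xi_c-\delta_0,\xi_c+\delta_0]$ for a fixed small $\delta_0>0$ independent of $\eps$, and treat the two pieces by genuinely different tools.

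On the low-frequency piece $\cW^{low}$ I would renounce any integration by parts in $s$. Using Fubini to execute the $y$-integral first, one obtains $\tilde\cG^\eps(s,\xi):=\int e^{iy\xi}\cG^\eps(s,y)\,dy$, whose sup-norm in $\xi$ is bounded by $\|\cG^\eps(s,\cdot)\|_{L^1}$. Here the hypothesis $\iota=1$ is essential: it forces $\cG^\eps(s,\cdot)$ to be supported in $|y|\le r$ uniformly in $\eps$, so Lemma~\ref{vanishingprop} with $w(y)=\chi(y/r)$, $m=j_1+j_2$ and $n=0$ delivers $\|\cG^\eps(s,\cdot)\|_{L^1}=o(1)$ as $\eps\to 0$, and a compactness argument on $[\cT,2\cT]$ combined with the uniform-in-$T$ estimates underlying Proposition~\ref{supnormnonamplifi2} upgrades this to uniformity in $s$. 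Integrating the pointwise bound $|\tilde\cG^\eps(s,\xi)|=o(1)$ over the fixed bounded support of $\psi_0$ in $\xi$ and over $s\in[0,T]$ yields $\|\cW^{low}(T,\cdot)\|_{L^\infty}=o(1)$.

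On the high-frequency piece $\cW^{hi}$ we have $p(\xi)\ge\kappa_0:=p(\xi_c+\delta_0)>0$ on $\operatorname{supp}\psi_1$, so $\psi_1/p$ is a bounded Fourier multiplier. One integration by parts in $s$ produces a boundary contribution at $s=T$ proportional to $\eps^2$ times a Fourier multiplier of symbol $\psi_1/p$ applied to $\cG^\eps(T,\cdot)$ (the $s=0$ boundary vanishes since $\cG^\eps(0,\cdot)=0$), plus a remainder $\eps^2\int_0^T V_s\part_s\cG^\eps(s,\cdot)(z)\,ds$ with $V_s$ a bounded Fourier multiplier of symbol $\psi_1(\xi)p(\xi)^{-1}e^{i(T-s)p(\xi)/\eps^2}$. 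To pass from $L^2$ to $L^\infty$ I would invoke the one-dimensional Gagliardo--Nirenberg inequality. For the boundary term, $\|\cG^\eps(T,\cdot)\|_{L^2}^2\le\|\cG^\eps\|_{L^\infty}\|\cG^\eps\|_{L^1}=O(1)\cdot o(1)$ and $\|\part_z\cG^\eps(T,\cdot)\|_{L^2}=O(\eps^{-1})$ via \eqref{majounidederiofUdebut} and the bounded support, giving a contribution of size $o(\eps^{3/2})$. For the integral term, Lemma~\ref{vanishingprop} with $n=1$ yields $\|\part_s\cG^\eps(s,\cdot)\|_{L^1}=o(\eps^{-2/3})$; combining with $\|\part_s\cG^\eps\|_{L^\infty}=O(\eps^{-2/3})$ from \eqref{majounidederiofU0} and $\|\part_z\part_s\cG^\eps\|_{L^\infty, L^1}=O(\eps^{-3/2})$ from \eqref{majounidederiofUfin}, Gagliardo--Nirenberg yields a pointwise bound $\|V_s\part_s\cG^\eps\|_{L^\infty}=o(\eps^{-13/12})$, which after integration in $s$ and multiplication by $\eps^2$ gives $o(\eps^{11/12})$. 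Both pieces being $o(1)$, we conclude $\cW=o(1)$ in $L^\infty$.

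The hard part is that $p$ vanishes on a full interval rather than at a single point: this rules out the $\eps^\eta$-shrinking of the bad region that worked in Proposition~\ref{sortnonlinearstrictgauge}, and forces the frequency cut-off parameter $\delta_0$ to be a fixed constant. The consequence is that the smallness of $\cW^{low}$ cannot come from any oscillation-based mechanism and must instead be extracted from the smallness of $\cG^\eps$ itself in $L^1$; this is precisely what the destructive interferences of Proposition~\ref{supnormnonamplifi2}, transported through Lemma~\ref{vanishingprop}, deliver, and it is the reason the strong spatial localization $\iota=1$ is strictly required. A secondary technical point to watch is the uniformity in $s\in[\cT,2\cT]$ of the $o(1)$ statements coming from Lemma~\ref{vanishingprop}, which as stated only provides pointwise-in-$T$ convergence and needs to be turned into uniform convergence by a compactness argument.
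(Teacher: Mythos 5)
Your proof takes essentially the same route as the paper's: both split the frequency integral into a low-frequency region (where $p$ vanishes, killing any oscillation in $s$) and a high-frequency region (where $p$ is bounded away from zero and one can integrate by parts in $s$ as in Proposition~\ref{sortnonlinaer}); both deal with the low-frequency piece by bounding the bounded-kernel convolution against $\|\cG^\eps(s,\cdot)\|_{L^1}$, which is controlled by Lemma~\ref{vanishingprop} precisely because $\iota=1$ keeps the spatial support of $\cG^\eps$ in a fixed compact set; and both identify $\iota=1$ as the essential hypothesis for the dominated-convergence step.

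Two minor remarks. First, the paper's version of the high-frequency piece is just a citation of the argument in Paragraph~\ref{nonresonantgaugeparameters}, whereas you carry it out explicitly via Gagliardo--Nirenberg. This is fine, but your bound $\|\part_z\part_s\cG^\eps\|_{L^2}=O(\eps^{-3/2})$ drops the cross term $\part_z\cU^{(0)}\,\part_s\cU^{(0)}$, which is $O(\eps^{-1}\cdot\eps^{-2/3})=O(\eps^{-5/3})$ by \eqref{majounidederiofUdebut} and \eqref{majounidederiofU0} and dominates the $\part^2_{sz}\cU^{(0)}=O(\eps^{-3/2})$ term from \eqref{majounidederiofUfin}; the resulting estimate is then $o(\eps^{5/6})$ rather than $o(\eps^{11/12})$, but this still gives $o(1)$, so no harm is done. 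Second, the uniformity in $s$ that you flag as a delicate point does not actually require a compactness argument: since $\cJ_\eps(s)$ is uniformly bounded (by \eqref{majounideU0}) and tends to zero for each $s$, the integral $\int_0^T\cJ_\eps(s)\,ds$ tends to zero by dominated convergence, which is the mechanism the paper implicitly uses.
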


\begin{proof} Again, we can suppose that the order of magnitude of the
  nonlinearity is critical, $ \nu+j_1+j_2=2 $. We recall that we
  assume here $\xi_c>0$. If  $\xi_c=0$, the proof of
  Proposition~\ref{sortnonlinearstrictgauge} can be repeated. The argument of
  Paragraph~\ref{nonresonantgaugeparameters},  
that is an integration by parts with respect to the time variable, does apply for frequencies $ \xi $ located away from $ [-2\xi_c,2\xi_c] $. 
Then, it suffices to consider
\begin{equation} \label{eq:duhameltransi}
\begin{aligned}
\cW (T,z) = \frac{1}{2\pi} \int_0^T \! \! \int e^{- i T/\eps^2} \tilde \cJ(\eps,T-s,y,z) \cG^\eps(s,y) \, dsdy ,
  \end{aligned}
\end{equation}
where, with $ \psi  $ as in (\ref{psidefdef}), we have put aside 
\begin{equation} \label{oscillainttransi}
\tilde \cJ(\eps,s,y,z) := \int e^{i \psi (\xi)/ \eps^2} \chi \(\frac{\xi}{4\xi_c}\) d \xi = \cO(1) .
\end{equation}
By this way, using the uniform boundedness of $\cU^{(0)}$ and \eqref{majounideU0}, we find  
\[ \vert (\cU^{(1)}-\cU^{(0)})(T,z) \vert \lesssim \int \chi \(\frac{y}{r}\) \vert \cU^{(0)}(s,y) \vert dy . \]
From Lemma \ref{vanishingprop} with $ m = 1 $ and $ n = 0 $, the integral on the right hand side 
goes to $ 0 $ with $\eps$. This argument based on the Dominated Convergence Theorem breaks 
down when $0\le \iota<1$ because the localizing function $\chi(y/(\eps^{\iota-1}r))$ is no longer 
integrable \emph{uniformly in $\eps$}.  
\end{proof}

\noindent The condition $ \iota = 1 $ is quite restrictive because it requires a concentrated source. In fact, 
the difficulties raised by the value $ \textfrak {g} = 0 $ are somewhat artificial. They are induced by the localization
procedure of Paragraph \ref{redscalar}. At the level of (\ref{multipnparchi}), the symbol is multiplied by $ 1-\chi(\xi) $.
This operation does not correspond to a physical phenomenon but rather to a technical simplification.

 \smallskip
 
 \noindent In the next subsection, we examine the remaining situation $ \textfrak {g} =1 $, especially in 
 the interesting and representative framework of equation (\ref{eq:Picard1}).


\subsection{The completely resonant situation} \label{sec:nonlineareffectg=1} 

\noindent We now prove Theorem~\ref{theo:resumeNLbis}, along with some generalizations. When $ \textfrak {g} = 1 $,
there exists no $ \xi \in \RR $ such that $ p(\xi) = \textfrak {g} $. But, due to (\ref{resobis}), the quantity $ p(\xi) $ becomes 
arbitrarily close to the limiting value $ \textfrak {g} =1 $ when $ \vert \xi \vert $ goes to infinity. Since large values of $ \xi $ 
are addressed when dealing with (\ref{eq:duhamelNL}), the effects of this approximated resonance are enhanced in the 
actual context. The study of (\ref{eq:Picard1}) corresponds to the choice $ (j_1,j_2,\nu)=(2,0,0) $ with $\omega = -1$, so 
that indeed $ \textfrak {g} =1 $. The expression $ \cG^\eps $ of (\ref{eq:defcG}) reduces to
\begin{equation}\label{duhamelcassomipdef}
\cG^\eps (T,z) := \chi\(3-2\frac{T}{\cT}\) \chi\(\frac{\eps z}{r\eps^{\iota}}\) \cU^{(0)}(T,z)^2 .
 \end{equation}
And, in this context, Duhamel's formula (\ref{eq:duhamelNL}) simply reads
\begin{equation}\label{duhamelcassomip}
\cW \(T,z\)  = \operatorname{Op} (\cG) (T,z) ,
 \end{equation}
where we have introduced the integral operator
\begin{equation}\label{integrduhamelcassomip}
\operatorname{Op} (\cG) (T,z) := \frac{1}{2\pi} \int_0^T \! \! \int \! \! \int e^{-i(z-y)\xi + i \frac{T-s}{\eps^2}(p(\xi)-1) } \cG(s,y) \, dsdyd\xi . 
 \end{equation}
 To lighten the notations, we will often write $ u $ for $ u^{(0)} $ and $\cU$ for $\cU^{(0)}$. The above integral involves the 
 extended spatial cut-off $ \vert y \vert \leq r \eps^{\iota-1} $,
 through the introduction of $ \chi (\eps \cdot  / r\eps^{\iota} ) $ 
 inside $ \cG^\eps  $. Like in the previous subsection, we impose $
 \iota\in [ 0,1] $. This condition is needed because precise 
 information regarding $\cU(s,y)$ is available on condition that $|y|\le r \eps^{-1} $. Indeed, Section~\ref{sec:lineffect} 
 provides a description of the solution $u (t,x) $ to (\ref{eq:Picard0}) only for $|x|\le r$, that is only for $|y|\le r \eps^{-1} $. 
 
 \smallskip
 
 \noindent The formula (\ref{duhamelcassomipdef}) also involves, through the implementation of $\chi(3-2s/\cT)$, the  time cut-off 
 $ \cT \leq s \leq 2 \cT $. The choice of $ [\cT,2 \cT ] $ is inherited from \eqref{majounideU0}  and Lemma~\ref{lem:fine-estimates}. It is introduced for convenience.  It could be relaxed by expanding the time domain of integration to 
 any compact set inside $ ]0,\infty[$. For instance, it could be adapted to any interval of the form $ [\eta \cT, \eta^{-1} \cT ] $ 
 with $ \eta \in ]0,1] $. 
 
\smallbreak

\noindent When $ \textfrak {g} =1 $, the estimates of Lemma \ref{lem:fine-estimates} do not suffice to show the smallness of 
$ \cW$. And for good reason: the nonlinearity plays a role at leading
order, and modifies the asymptotic behavior by a 
nontrivial $ \cO(1) $-effect that is revealed by (\ref{desintertheoprinbis}). To see why, the idea is to use the fine description of 
the function $\cU \equiv \cU^{(0)} $ which is provided by Section~\ref{sec:lineffect}, and to inject it into \eqref{duhamelcassomip}. 
Thus, like in Section~\ref{sec:lineffect}, we impose $q\ge2$ and $D\ge 4$.  

\smallbreak

\noindent In Paragraph \ref{reparatoryw}, we explain how to exchange $ \cU^2 $ inside 
(\ref{duhamelcassomipdef})-(\ref{duhamelcassomip})-(\ref{duhamelcassomip}) with a more tractable expression made of a sum 
of wave packets, without changing the content of $ \cW $ modulo $ o(1) $. In Paragraph \ref{bilinear}, we simplify the content of
these wave packets, and we exploit their specific structure in order to replace the triple integral (\ref{integrduhamelcassomip}) by 
a simple integral in time. In the last Paragraph \ref{asymptoticanalysis}, we perform the asymptotic analysis, showing Theorem 
\ref{theo:resumeNLbis}.


\subsubsection{Reduction to a sum of oscillating waves}\label{reparatoryw} 

\noindent The strategy to analyze $ \cW $ when $ \textfrak {g} =1 $ is to approximate $ \cU $ by a sum of oscillating waves 
indexed by $ k \in \cK^c_s $. Looking at $ \cU^2 $, this yields a bilinear form indexed by $ (k_1, k_2) \in \cK^c_s  \times \cK^c_s  $. 
 Given $ \beta \in [0,1] $, define
\begin{equation}\label{defdispkiforbeta} 
\cD \! \cK^c_s (\beta) := \bigl \lbrace (k_1,k_2) \in \cK^c_s \times \cK^c_s \, ; \, c \, \eps^{- \beta} < k_1 + k_2 \bigr \rbrace . 
\end{equation}

\noindent In Section~\ref{transiregime}, we have seen that, for all $k\in \cK^{c}_s$, the function $\Phi_k (t,x;\cdot) $ 
has at most one critical point $ (s_k,y_k, \xi_k) $ which satisfies (\ref{cdtstapoint}), which is non-degenerate, and 
which is such that $ \xi_k = s_k $. Recall  that $ s_k $ and $ y_k $ depend smoothly on $ (t , x) $. Using the function 
$ s_k (t,x) $ issued from Lemma \ref{statioponi}, we define the auxiliary function
\renewcommand\arraystretch{1.3}
\begin{equation}\label{defipsiepskajout}
\begin{array}{rl}
\bm{\uppsi}_k (t,x) :=  \! \! \! \! & - x k \pi - x s_k (t,x) + (-1)^k  \gamma \cos s_k (t , x )  \\
\ & + \bigl \lbrack 1 - p \bigl( k \pi + s_k ( t , x ) \bigr) \bigr \rbrack 
\bigl( k \, \pi + s_k ( t , x )  - t \bigr) .
\end{array}
\end{equation}
   \renewcommand\arraystretch{1}

\smallskip

\noindent In the statement below, we eliminate from (\ref{duhamelcassomip}) a number of terms which seem difficult 
to identify precisely, but which are small enough. 

\begin{prop}[The difference $ \cW =\cU^{(1)} -
  \cU^{(0)} $ as a sum of interacting  
  terms]\label{reduceprop}
  Fix $ \iota \in [0,1] $, and $ \beta $ such that
\begin{equation}\label{fixbeta}
\frac{1}{q+1} \leq \beta < \frac{3+\iota}{5} \leq 1 .
\end{equation}
Then, the difference $ \cW = \cU^{(1)} - \cU^{(0)} $ is such that 
\begin{equation}\label{duhamelcassomipsum}
 \cW (T,z) = o(1) + \sum_{ (k_1,k_2) \in \cD \! \cK^c_s (\beta)} \cW^\eps_{k_1,k_2} (T,z) ,
  \end{equation}
where $ \cD \! \cK^c_s (\beta) $ is as in \eqref{defdispkiforbeta}, whereas
\begin{equation}\label{cWk1k2def}
\cW^\eps_{k_1,k_2} (T,z) := \eps^2 \, e^{-2i \gamma / \eps} \operatorname{Op}(\cG^\eps_{k_1,k_2}) (T,z) .
 \end{equation}
With $ \bm{\uppsi}_k  $ as in \eqref{defipsiepskajout}, the bilinear interaction term $ \cG^\eps_{k_1,k_2} $ 
of \eqref{cWk1k2def} is given by
\begin{equation}\label{lienentreGetH}
\quad \cG^\eps_{k_1,k_2} (s,y) := \chi\(3-2\frac{s}{\cT}\) \chi\(\frac{\eps y}{r\eps^{\iota}}\) e^{i (\bm{\uppsi}_{k_1} 
+ \bm{\uppsi}_{k_2}) (s/\eps, \eps y)/ \eps} \cB^\eps_{k_1,k_2} (s/\eps, \eps y) .
 \end{equation}
The functions $ \cB^\eps_{k_1,k_2} (t,x) $ are defined on $ [0, 2 \cT ] \times [-r,r] $. They take the form
\begin{equation} \label{taketheformBkk}
\cB^\eps_{k_1,k_2} := (2 \pi)^{-2} \cA_{k_1}^\eps \cA_{k_2}^\eps , \qquad \cA_k^\eps = \cA_{k,0}^\eps + \eps 
\cA_{k,1}^\eps , 
\end{equation}
with 
\begin{equation} \label{inparticc}
\begin{array}{rl}
\cA_{k,0}^\eps (t,x) := \! \! \! & (2\pi)^{3/2} \, e^{-i(-1)^k\frac{\pi}{4}} \, | \operatorname{det} S_k 
(t,x) |^{-1/2} \\
\ & \times \tilde a_k \bigl( \eps,s_k (t,x)  ,y_k (t,x)  , s_k (t,x)  \bigr) ,
\end{array}
\end{equation}
where the matrix $ S_k $ is defined at the beginning of Paragraph~\ref{cripointsnon-dege}, with 
$ \operatorname{det} S_k (t,x) $ as in \eqref{minunifdet}, whereas $ \tilde a_k  $ is as in 
\eqref{tildeak}. For all $  i \in \{ 0,1 \} $, we can find a positive constant $ C_i $ such that
\begin{equation} \label{uniformestpourt}
\sup_{\eps \in ]0,1]} \ \sup_{k \in \cK^c_s} \ \sup_{t \in  [0, 2 \cT]} \ \sup_{x \in [-r,r]} \ \vert \cA_{k,i}^\eps  (t,x) \vert \leq C_i .
\end{equation}
Moreover, for all $ \alpha \in \NN^2 $, we can find a constant $ C_\alpha >0 $ such that
 \begin{equation}\label{euniformboundedspder} 
\sup_{\eps \in ]0,1]} \ \sup_{(k_1,k_2) \in \cD \! \cK^c_s (\beta)} \ \sup_{t \in  [0, 2 \cT]} \ \sup_{x \in  
[-r,r]} \ \vert \part_{t,x}^\alpha \cB^\eps_{k_1,k_2} (t,x) \vert \leq C_\alpha .
\end{equation}
\end{prop}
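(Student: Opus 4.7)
The strategy is to substitute into \eqref{duhamelcassomipdef}--\eqref{integrduhamelcassomip} the wave-packet decomposition of $\cU^{(0)}$ which was established in Lemma~\ref{decomposol} and refined in \eqref{Phitracedev}, then to square it and to sort the resulting double sum. Recall from Lemma~\ref{decomposol} that, under Assumption~\ref{strengthenp} with $D\ge 4$, one has
\[
u(t,x)=\sum_{k\in\cK^{c_1}_s}u_k(t,x)+\cO(\eps^{2-1/(q+1)}),\qquad u_k(t,x)=\eps^2 \, b_k(\eps,t,x) \, e^{i\Psi_k(t,x,s_k)/\eps}+\cO(\eps^{2+1/(q+1)}),
\]
with the phase $\Psi_k$ and the profile $b_k$ given by \eqref{defpsikbk}. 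Filtering as in \eqref{passutocU} and relating $\Psi_k(t,x,s_k)$ to $-\gamma+t+\bm{\uppsi}_k(t,x)-2\gamma\mathds{1}_{\{k\text{ odd}\}}$ (modulo adjustments corresponding to the factor $e^{-2i\gamma/\eps}$ placed in front of \eqref{cWk1k2def}), I would exhibit, uniformly for $|x|\le r$ and $t\in[\cT/\eps,2\cT/\eps]$, an expansion
\[
\cU^{(0)}(T,z) \;=\; \sum_{k\in\cK^{c_1}_s}\frac{\eps}{2\pi}\, e^{-i\gamma/\eps}\,\cA^\eps_k(T/\eps,\eps z)\,e^{i\bm{\uppsi}_k(T/\eps,\eps z)/\eps}\;+\;\cO(\eps^{1-1/(q+1)}),
\]
with $\cA^\eps_k=\cA^\eps_{k,0}+\eps\,\cA^\eps_{k,1}$ obtained by keeping one more term in Theorem~\ref{theo:phazstat} and reading off \eqref{stapourA0} and \eqref{defpsikbk2}; the uniform bound \eqref{uniformestpourt} then follows from Lemmas~\ref{statioponidege} and \ref{controlofphhsggde} together with the smoothness of $a_1$.

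Next, I would square the preceding expansion. Writing $\cU^{(0)}=\cU_{\text{stat}}+\cR$ with $\|\cR\|_{L^\infty}=\cO(\eps^{1-1/(q+1)})$, and noting $\|\cU_{\text{stat}}\|_{L^\infty}=\cO(1)$ from Proposition~\ref{supnormamplifi}, the cross terms $\cU_{\text{stat}}\cR$ and $\cR^2$ are $\cO(\eps^{1-1/(q+1)})$ in $L^\infty$ and compactly supported in $y$ on a set of size $\cO(\eps^{\iota-1})$. Plugging them in the operator $\operatorname{Op}$ and estimating as in Corollary~\ref{corollairefac} (with $\rho=0$, $\tilde p=2$), their contribution to $\cW$ is bounded by $\cO(\eps^{(\iota-1)/2}\eps^{1-1/(q+1)}\eps^{-1})\cdot\eps^0=o(1)$ provided $\iota$ satisfies the assumption of Theorem~\ref{theo:resumeNLbis}; the precise numerology uses $q=2$, $D\ge4$ and dictates the lower bound $\iota>\iota_-$. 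The remaining squared sum splits according to whether $(k_1,k_2)\in\cD\!\cK^c_s(\beta)$ or $k_1+k_2\le c\eps^{-\beta}$. The latter diagonal/low-index block is controlled by: the number of pairs ($\lesssim\eps^{-2\beta}$), the $\eps^2$ amplitude of each wave packet, the $L^2$ size $\cO(\eps^{(\iota-1)/2})$ coming from the spatial cut-off, and the $L^\infty\to L^\infty$ bound $\cO(\eps^{-1/2})$ of $B_\tau$ from Lemma~\ref{Lpestikernel}; optimization gives a contribution $o(1)$ exactly when $\beta<(3+\iota)/5$, justifying \eqref{fixbeta}. The condition $\beta\ge1/(q+1)$ is needed to absorb the dispersive contribution of Lemma~\ref{localizations} and the transitional range $\cK^c_s\setminus\cK^{c_1}_s$, both handled by the same $L^2$-argument.

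For the surviving block $(k_1,k_2)\in\cD\!\cK^c_s(\beta)$, the product $u_{k_1}u_{k_2}$ naturally factors as $\eps^4(2\pi)^{-2}\,\cA^\eps_{k_1}\cA^\eps_{k_2}\,e^{-2i\gamma/\eps}\,e^{i(\bm{\uppsi}_{k_1}+\bm{\uppsi}_{k_2})/\eps}$ after extracting the common factor $e^{i(t-\gamma)/\eps}$ from each $\Psi_{k_j}$, and upon filtering by $e^{-iT/\eps^2}$ one recovers exactly the form \eqref{lienentreGetH}--\eqref{cWk1k2def}. The definition \eqref{taketheformBkk} of $\cB^\eps_{k_1,k_2}$ and the bound \eqref{uniformestpourt} are then immediate from the construction. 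Finally, \eqref{euniformboundedspder} for derivatives will be obtained by combining \eqref{unigliformskftx} (smoothness of $s_k$ with uniform bounds), Lemma~\ref{controlofphhsggde} and Assumption~\ref{strengthenp} to differentiate $\cA^\eps_{k,i}$ without losing powers of $k$; here the lower bound $k_1+k_2>c\eps^{-\beta}$ on $\cD\!\cK^c_s(\beta)$ is what prevents accumulation of $\log$-type divergences when summing, but it plays no role for the pointwise bounds \eqref{euniformboundedspder} themselves.

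The main obstacle I expect is the careful bookkeeping of remainders in the asymptotic expansion of each $u_k$: to get $o(1)$ globally after summing $\cO(\eps^{-1})$ wave packets and applying the singular kernel $K_\tau$ (which, by Lemma~\ref{Lpestikernel2}, diverges like $\tau^{1/2}$ at the origin), one cannot afford the naive remainder of Theorem~\ref{theo:phazstat} and must push the expansion to second order (whence the need for $D\ge4$, and the two-term splitting $\cA^\eps_k=\cA^\eps_{k,0}+\eps\cA^\eps_{k,1}$). The interplay between the index $\beta$, the spatial localization parameter $\iota$ and the integer $q$ appearing in \eqref{hypp2} has to be balanced precisely, and the range \eqref{fixbeta} will emerge as the one for which all the competing error estimates are simultaneously $o(1)$.
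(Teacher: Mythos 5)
Your overall strategy — inject the wave-packet decomposition of $\cU^{(0)}$ into Duhamel's formula, square it, and split the double sum at the threshold $k_1+k_2\sim\eps^{-\beta}$ — is precisely the paper's route, and the identification of the phases $\bm{\uppsi}_k$ with (a filtered version of) $\Psi_k$ is correct. However, there are two genuine gaps in the error bookkeeping.

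\smallskip

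\noindent \textbf{Gap 1: the remainder of the wave-packet expansion.} You write $\cU^{(0)}=\cU_{\text{stat}}+\cR$ with $\|\cR\|_{L^\infty}=\cO(\eps^{1-1/(q+1)})$, which is the bound issued from the $N=1$ stationary-phase expansion of Lemma~\ref{decomposol}, and then you estimate the cross terms $\cU_{\text{stat}}\cR$, $\cR^2$ through Corollary~\ref{corollairefac} with $\rho=0$. But the resulting exponent is $(\iota-1)/2+1-\frac{1}{q+1}-1 = \frac{\iota}{2}-\frac{1}{q+1}-\frac{1}{2}$, which for $q=2$ equals $\frac{\iota}{2}-\frac{5}{6}<0$ for every $\iota\le 1$; it is \emph{not} $o(1)$, and the restriction $\iota>\iota_-$ does not save it (in fact $\iota_-$ plays no role whatsoever in Proposition~\ref{reduceprop}, which only assumes $\iota\in[0,1]$). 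The paper closes this by pushing Theorem~\ref{theo:phazstat} to $N=2$ (this is where $D\ge4$ is used), so that after summation over $\cO(\eps^{-1})$ packets the remainder in $\cU^{(0)}$ is $\cO(\eps^2)$, hence $\cR^\eps=(\cU^2-\cU_{\text{stat}}^2)/\eps^2=\cO(1)$, and then $\eps^2\operatorname{Op}(\cR\!\cG^\eps)=\cO(\eps^{1+\iota/2})=o(1)$. You do mention the $N=2$ upgrade in your last paragraph, but your middle-paragraph arithmetic is inconsistent with it, and this is exactly the estimate that makes the proposition go through.

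\smallskip

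\noindent \textbf{Gap 2: the low-index block $k_1+k_2\le c\eps^{-\beta}$.} You count $\eps^{-2\beta}$ pairs, each with an $L^2\to L^\infty$ bound $\eps^{(\iota-1)/2}\cdot\eps^{-1/2}=\eps^{\iota/2-1}$ for $B_\tau$, and the $\eps^2$ prefactor, giving a total $\eps^{1+\iota/2-2\beta}$; this yields the constraint $\beta<(2+\iota)/4$, \emph{not} the claimed $\beta<(3+\iota)/5$. The missing ingredient is the fractional integration by parts in $y$ used in the paper: write $e^{iy\xi}=(1+|\xi|^2)^{-1/4}(1-\part_y^2)^{1/4}e^{iy\xi}$ so that Corollary~\ref{corollairefac} applies with $\rho=1/2$ (giving a saturated $\eps^{(\iota-1)/2}$ in place of $\eps^{\iota/2-1}$), at the cost of a half-derivative on $\cG^\eps_{k_1,k_2}$, which loses only $(k_1+k_2)^{1/2}\le\eps^{-\beta/2}$ on this block. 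The balance is then $\eps^{2+(\iota-1)/2-\beta/2-2\beta}=\eps^{(3+\iota-5\beta)/2}$, which is where $(3+\iota)/5$ actually comes from. Since $(2+\iota)/4<(3+\iota)/5$ for all $\iota\le1$, your estimate proves a strictly weaker version of \eqref{fixbeta}; for $\iota$ close to $\iota_-$ the range you obtain is too small to intersect the lower bound required later in Proposition~\ref{simplyfprop}, so the loss is not merely cosmetic.

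\smallskip

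\noindent The bounds \eqref{uniformestpourt} and \eqref{euniformboundedspder}, and the sorting of $(k_1,k_2)$ into $\cD\!\cK^c_s(\beta)$ versus its complement, are handled correctly and in the same spirit as the paper.
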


\noindent The double sum inside \eqref{duhamelcassomipsum} is actually finite since $ k = \cO (\eps^{-1} ) $ 
when $ k \in  \cK_s^c $: it runs over at most $ \cO ( \eps^{-2} ) $ terms. It involves fewer terms when 
$ \beta $ becomes close to the upper bound $ (3+\iota)/5 $ which is important because, as will be seen later, 
other conditions will be needed on $ \beta $. In comparison to $ \cU^2 $, the advantage of working with an 
expression like $ \cG $ in (\ref{lienentreGetH}) is a clear separation between an ``explicit phase''  $ \bm{\uppsi} $ 
and, in view of (\ref{euniformboundedspder}), a ``generalized profile'' $ \cB $. Knowing $ \bm{\uppsi} $, this will 
allow us to compute more precisely the content of $ \cW $.

\begin{proof} To prepare the analysis of $\cW $, we have first to resume the stationary phase arguments playing a central 
role in Section~\ref{sec:lineffect}. As already explained, see Lemmas~\ref{lem-nonresonantharm} and \ref{ex-zerophase} 
together with Proposition~\ref{prop-vanoscint}, the harmonic $m = 1$ inside (\ref{eq:source}) is the only one which may 
contribute to $ u \equiv u^{(0)} $ or $ \cU \equiv \cU^{(0)} $ modulo $\cO (\eps^\infty )$. Thus, it suffices to deal with  
$ a\equiv a_1 $. In coherence with \eqref{defphikini}, we work with
\begin{align}
 & a_k (\eps, s,y,\xi) = \zeta(k\pi+ \xi) a (\eps \, k \, \pi + \eps \, s , k \, \pi + s,y)  , \nonumber \\
 & \tilde a_k (\eps, s,y,\xi) := a_k (\eps, s,y,\xi) \, \chi_{1/4} (s-\xi) \, \chi_{2\pi/3} (s) , \label{tildeak} \\
&\Phi_k (t,x;s,y,\xi) = (k \, \pi -t) \ p (k \, \pi + \xi) + s \ \bigl \lbrack p (k \, \pi + \xi) - 1 \bigr \rbrack  
+ (x-y)  \xi \nonumber \\
 & \qquad \qquad \qquad \quad \ + s \, y - (-1)^k \ \gamma \, \cos s  .  \nonumber
\end{align}
Taking into account (\ref{tildeak}), the wave packet $ w_k $ of \eqref{defidewk} becomes
\begin{equation}\label{reinterdewk} 
 w_k (t,x) := \iiint e^{- i \, \Phi_k (t,x;s,y,\xi) /\eps} \, \tilde a_k (\eps, s,y,\xi) \, ds dy d \xi .
\end{equation}
Back to $u$ through (\ref{remindudevenk}), (\ref{sizeofk}) and (\ref{combi1}), we find
\begin{equation} \label{bacckkk} 
  u(t,x) = \cO ( \eps^\infty) + \frac{\sqrt \eps}{2 \, \pi} \sum_{0\le k \le 2/3+\cT/(\pi\eps)}  \, \ e^{i \, (-\gamma+ k \, \pi - k \, 
  \pi \, x)/\eps} \ w_k(t,x) .
\end{equation} 
Recall the distinction \eqref{defdispki} between $ \cK^c_d $ and $ \cK^c_s $. Then, use (\ref{passutocU}) to translate 
(\ref{bacckkk}) in terms of $\cU$ according to
\begin{equation}\label{eq:decompUdudepart}
\quad  \begin{aligned}
    \cU(T,z) = &\frac{1}{\eps} e^{-iT/\eps^2} u\Bigl( \frac{T}{\eps},\eps z \Bigr) = \cO ( \eps^\infty) + \frac{1}{ 2 \pi \sqrt \eps} \, 
    e^{-iT/\eps^2} \\
    & \times \Bigl( \sum_{k \in \cK^c_d} + \sum_{k \in \cK^c_s}  \Bigr)  e^{i (-\gamma + 
    k \pi - k \pi \eps z)/\eps} w_k \Bigl( \frac{T}{\eps},\eps z \Bigr) .
      \end{aligned}
 \end{equation}
 We have proved in Section~\ref{startregime}, see Lemma~\ref{localizations}, that for $c>0$ sufficiently small and 
 for $\cT/\eps\le t\le 2\cT/\eps$, we have $w_k = \cO(\eps^{D-1})$ uniformly on  $\cK^c_d$. Since $ q \geq 2 $ and 
 $ D \geq 4 $, in (\ref{eq:decompUdudepart}), the sum on $ \cK^c_d $ accounts for  some
$$ \cO \bigl( \eps^{- \frac{1}{2} - \frac{1}{q+1} +D-1} \bigr) = \cO ( \eps^2 ) .$$
Rearranging the terms, we can retain that
 \begin{equation}\label{eq:decompUdudepartremains}
\qquad \qquad \cU(T,z) = \cO ( \eps^2) + \frac{e^{-i \gamma/\eps}}{ 2 \pi \sqrt \eps} \sum_{k \in \cK^c_s} e^{-iT/\eps^2} e^{i 
( k \pi - k \pi \eps z)/\eps} w_k \Bigl( \frac{T}{\eps},\eps z \Bigr) .
 \end{equation}
When $ k \in \cK^c_s $, the content of the $ w_k $'s can be more detailed through asymptotic expansions in powers of $ \eps $. 
In the absence of a critical point, by nonstationary 
phase arguments, we just find $ w_k = \cO (\eps^\infty ) $. Otherwise, we can apply Theorem~\ref{theo:phazstat} 
to (\ref{reinterdewk}) in space dimension $ n = 3 $, with variables $ (s,y,\xi) \in \RR^3 $. This time, by stationary 
phase arguments, there exist differential operators denoted by $ M_{2j}^k (s,y,\xi ; D_{s,y,\xi}) $, giving rise to functions
 \begin{equation}\label{formuleAkj}
\cA_{k,j}^\eps (t,x) := \bigl \lbrack M_{2j}^k(s,y,\xi,D_{s,y,\xi}) \tilde a_k (\eps,\cdot) \bigr \rbrack_{\mid (s,y,\xi) = 
(s_k,y_k,\xi_k)(t,x)}  
\end{equation}
such that 
$$ \Bigl \vert w_k(t,x) - \eps^{3/2} \sum_{j=0}^{N-1} \eps^j \cA^\eps_{k,j} (t,x) e^{-i\Phi_k (t,x;s_k (t,x) ,y_k (t,x) , 
\xi_k (t,x) ) /\eps} \Bigr \vert = \cO ( \eps^{3/2+N} ) . $$
The expressions $ \cA_{k,j}^\eps  $ depend smoothly on $ \eps \in [0,1] $ through $ \tilde a_k (\eps,\cdot) $. 
In fact, taking into account (\ref{defskykxik}), they can be viewed as smooth functions of $ \eps \in [0,1] $ and $ s_k $. 
Then, the smoothness of $ \tilde a_k   $ and $ s_k   $  is communicated to $ \cA_{k,j}^{\eps}  $. 
The expressions $ \cA_{k,0}^\eps $ and $ \cA_{k,1}^\eps $ of (\ref{taketheformBkk}) are defined by (\ref{formuleAkj}).
In particular, combining (\ref{signSk}) and (\ref{stapourA0}), we find (\ref{inparticc}).

\smallskip

\noindent  As a result of Lemma \ref{controlofphhsggde}, the coefficients of the differential operator $ M_{2j}^k $ 
are uniformly bounded with respect to $ k\in \cK^{c}_s $. Due to Assumption \ref{choiopro}, the same applies to 
all derivatives of $ \tilde a_k  $ and to the preceding $ \cO ( \eps^{3/2+N} ) $. Applying Lemma~\ref{Propertiesofsk}, 
we can assert that the quantities $ \part^\alpha_{t,x} \cA_{k,j}^{\eps} (t,x) $ are, for all $ \alpha \in \NN^2 $,  uniformly 
bounded with respect to $\eps \in [0,1] $, $ k \in \cK_s^c $, $ t \in [0,2 \cT ] $ and $ x \in [-r,r] $. And therefore the 
functions $ \cA_{k,j}^{\eps}  $ can be viewed as ``generalized profiles'' satisfying the condition (\ref{euniformboundedspder}), 
which defines an algebra. As a consequence, coming back to the definition (\ref{taketheformBkk}) of $ \cB^\eps_{k_1,k_2} $, 
we have (\ref{euniformboundedspder}). 

\smallskip

\noindent  Plug the above expansion of $ w_k $ with $ N = 2 $ inside (\ref{eq:decompUdudepartremains}). The different 
phases combine to produce in coherence with the definition (\ref{defipsiepskajout}) the new phase
$$ \bm{\uppsi}_k (t,x) =  - t+ k \pi - k  \pi  x - \Phi_k \bigl( t,x; s_k (t,x) , y_k (t,x) , \xi_k (t,x) \bigr) . $$
When $ N = 2 $, the remainder is of size
$$ \frac{1}{\sqrt \eps} \sum_{k \in \cK^c_s} \cO ( \eps^{3/2+2} ) = \eps^{-1/2} \eps^{-1} \cO ( \eps^{3/2+2} ) = \cO 
( \eps^2 ) . $$
Recalling the definition $\cA_k^\eps = \cA_{k,0}+\eps\cA_{k,1}$, there remains
 \begin{equation}\label{eq:decompUdudepartremainsorzero}
\qquad \qquad \cU(T,z) = \cO ( \eps^2 ) + \eps \, \frac{e^{-i \gamma/\eps}}{ 2 \pi} \sum_{k \in \cK^c_s} e^{i \bm{\uppsi}_k 
(T/\eps,\eps z) /\eps} \cA_k^\eps (T/\eps,\eps z) ,
 \end{equation}
where, by convention, we set $ \cA_k^\eps \equiv 0 $ when there is no critical point. In (\ref{eq:decompUdudepartremainsorzero}), 
the sum on $ \cK^c_s $ runs over $\cO(1/\eps)$ terms which are all uniformly of size $\cO(1)$. Since $\eps$ is in factor 
of the sum, this may furnish some $ \cO (1) $. Now, we can compute 
$$ \cU(T,z)^2 = \eps^2 \, \cR^\eps (T,z) + \eps^2 \, e^{-2 i\gamma/\eps} \! \! \sum_{k_1 \in \cK^c_s} \sum_{k_2 \in \cK^c_s} 
\cB_{k_1,k_2}^\eps (T/\eps,\eps z) e^{i ( \bm{\uppsi}_{k_1} + \bm{\uppsi}_{k_2} )  (T/\eps,\eps z) / \eps} , $$
where$ \cR^\eps = \cO (1) $ and  $\cB_{k_1,k_2} =
(2\pi)^{-2}\cA_{k_1}\cA_{k_2}$. Define  
$$ \cR \! \cG^\eps (T,z) := \chi\(3-2\frac{T}{\cT}\) \chi\(\frac{\eps z}{r\eps^{\iota}}\) \cR^\eps (T,z) . $$
Replace $ \cU^2 $ as given by the above representation inside (\ref{duhamelcassomipdef}) and (\ref{duhamelcassomip}).
This explains the origin of the bilinear interaction term $ \cG^\eps_{k_1,k_2} = \cO (1) $ of (\ref{lienentreGetH}). Also, with 
$ \cW^\eps_{k_1,k_2} $ as in (\ref{cWk1k2def}), this gives rise to
\begin{equation}\label{remainscUquadratic} 
\cW = \eps^2 \operatorname{Op} (\cR \! \cG^\eps) + \sum_{k_1 \in \cK^c_s} \sum_{k_2 \in \cK^c_s} \cW^\eps_{k_1,k_2}  . 
\end{equation}
To go further, we have now to evaluate the amplitude of $ \cW^\eps_{k_1,k_2} $. To this end, we interpret 
(\ref{cWk1k2def}) as we did with $ \cW $ at the level of (\ref{partcDl}) and (\ref{partcDnl}). In other words, we write
$ \cW^\eps_{k_1,k_2} = \cW^\eps_{k_1,k_2,l}  + \cW^\eps_{k_1,k_2,nl}  $ with
\begin{align}
\cW^\eps_{k_1,k_2,l} (T,z) & := \eps^2 \, (2 \pi) \, e^{-2i \gamma /\eps} \int_\cT^T \cG^\eps_{k_1,k_2}(s,z) \, ds = \cO 
(\eps^2 ) , \label{partck1k2Dl} \\
\cW^\eps_{k_1,k_2,nl} (T,z) & := \eps^2 \, e^{-2i \gamma / \eps} \int_\cT^T B_{(T-s)/\eps^2} \cG^\eps_{k_1,k_2}(s,z) \, ds . 
\label{partck1k2Dnl}
\end{align}
At the level of (\ref{partck1k2Dnl}), we perform integrations by parts in $ y $, based on the identity
$$  \Lambda(\xi) (1-\part^2_y)^{1/4} e^{i y \xi} =
e^{i y \xi} , \qquad\Lambda(\xi):= (1 + \vert \xi \vert^2)^{-1/4} = \cO (\vert \xi
\vert^{-1/2} ) , $$ 
to get
$$ \cW_{k_1,k_2,nl} (T,z) := \eps^2 \, e^{-2i \gamma / \eps} \int_\cT^T B^\Lambda_{(T-s)/\eps^2} 
\bigl( (1-\part^2_y)^{1/4} \cG^\eps_{k_1,k_2} \bigr) (s,z) \, ds, $$
where we recall that the operator $B^\Lambda_\tau$ is defined in \eqref{defBktau}. 
Coming back to (\ref{defipsiepskajout}), (\ref{lienentreGetH}) and (\ref{euniformboundedspder}), we see that a spatial 
derivative $ \part_y $ applied on $ \cG^\eps_{k_1,k_2} (s,y) $ induces some loss of size $ \cO (k_1+k_2) $, where the 
$ \cO $ is uniform in $ \eps $ whereas its argument $ k_1+k_2 $, which
comes from the plane wave part $ x k \pi $ inside $ \bm{\uppsi}_k ( 
\cdot) $, is not, since $k_1,k_2\in \cK_s^c$. By interpolation, 
$$ \bigl( (1-\part^2_y)^{1/4} \cG^\eps_{k_1,k_2} \bigr) (s,y) = \cO \bigl( (k_1+k_2)^{1/2} \bigr) .  $$ 
Then, from Corollary \ref{corollairefac} applied with the optimal choice $ \rho = 1/2 $, we obtain that
\begin{equation}\label{estimlinftywk1nl}
k_1 + k_2 \leq \eps^{-\beta} \quad \Longrightarrow \quad \cW_{k_1,k_2,nl} (T,z) = \cO \bigl( \eps^{2+ (\iota-1 - \beta)/2 } \bigr) .
\end{equation}
Concerning $ \cR \! \cG^\eps $, the same type of arguments (this time involving Corollary \ref{corollairefac} with simply $ \rho = 0 $)
yields a single contribution of the type
\begin{equation}\label{ragoutt} 
\eps^2 \operatorname{Op} (\cR \! \cG^\eps) = \cO(\eps^2) + \cO (\eps^{1 + (\iota/2)} ) = \cO (\eps^{1 + (\iota/2)} ) . 
\end{equation}
In (\ref{remainscUquadratic}), given $ \beta $ as in (\ref{fixbeta}), we split the double sum into
$$ \sum_{k_1 \in \cK^c_s} \sum_{k_2 \in \cK^c_s}  = \sum_{k_1 + k_2 \leq \eps^{-\beta}} + \sum_{(k_1,k_2) 
\in \cD \! \cK^c_s (\beta)}. $$
In the above right hand side, the first sum involves at most $ \cO (\eps^{-2\beta} ) $ terms. Thus, using successively 
(\ref{ragoutt}), (\ref{partck1k2Dl}) and (\ref{estimlinftywk1nl}), we obtain
\begin{equation}\label{remainscUquadratic2} 
\begin{array}{rl}
\cW (T,z) = \! \! \! & \cO \bigl( \eps^{1+ (\iota/2) } \bigr) + \eps^{-2 \beta} \cO \bigl( \eps^2 \bigr) + \eps^{-2 \beta} \cO \bigl( 
\eps^{2+ (\iota-1-\beta)/2) } \bigr) \\
\ & \displaystyle + \sum_{(k_1,k_2) \in \cD \! \cK^c_s (\beta)} \cW^\eps_{k_1,k_2} (T,z) . \end{array}
\end{equation}
Computing
$$ 2+ (\iota-1 - \beta)/2 -2\beta = (3 + \iota - 5 \beta)/2  , $$
we obtain a positive number provided \eqref{fixbeta} is satisfied, hence \eqref{duhamelcassomipsum}.
\end{proof}

\noindent At this stage, it is instructive to revisit the proof of Lemma \ref{lem:fine-estimates} on the basis of the 
representation (\ref{eq:decompUdudepartremainsorzero}) of $ \cU $. When computing $ \part_T \cU$ 
through (\ref{eq:decompUdudepartremainsorzero}), we can focus on the sum involving $ k \in \cK^c_s $, and neglect 
the $ \cO (\eps^2) $ term which is presumed to be negligible. Coming back to the definition (\ref{defipsiepskajout}) of 
$ \bm{\uppsi}_k $ and because $ \cA^\eps_k $ can be viewed as a smooth function of $ \eps \in \lbrack 0,1 \rbrack $ 
and $ s_k $, the most significant contributions are brought by 
\begin{align*}
\part_T \bigl \lbrack \bm{\uppsi}_k (T/\eps, \eps z)/\eps \bigr \rbrack & =  \eps^{-2} (\part_t \bm{\uppsi}_k) 
(T/\eps, \eps z) \\
 & \lesssim \eps^{-2} \bigl\lbrack \bigl( 1 + \cO(\eps^{-1} k^{-q-1} + k^{-q}) \bigr) \vert \part_t s_k \vert + 
\vert 1 - p ( k \pi + s_k ) \vert \bigr \rbrack \\
 & \lesssim \cO (\eps^{-2}) \vert \part_t s_k \vert + \cO (\eps^{-2} k^{-q}) , \\
\part_T \bigl \lbrack \cA^\eps_k (T/\eps, \eps z)/\eps \bigr \rbrack  & \lesssim \eps^{-1} \vert \part_t s_k \vert .
\end{align*} 
Using (\ref{unigliformskftxpour10}), we can see that the right hand sides are $ \cO(\eps^{-2} k^{-q}) $. 
This means that the wave packets composing $ \cU$ inside (\ref{eq:decompUdudepartremainsorzero}) contain 
less and less time oscillations as $ k $ becomes large. This also implies that the derivative $ \part_T \cU$ appears 
as a sum of terms which may be controlled according to (with $ q = 2 $)
$$ \vert  \part_T \cU (T,z) \vert \lesssim \eps \sum_{k \in \cK^c_s} \eps^{-2} k^{-q} \lesssim \eps^{-1} 
( \eps^{-1/(q+1)} )^{-q+1} \lesssim \eps^{-1} \eps^{1/3} \lesssim \eps^{-2/3} . $$
This corresponds exactly to (\ref{majounidederiofU0}), which should be therefore optimal. In the same way, starting 
from (\ref{defipsiepskajout}) and exploiting (\ref{unigliformskftx}) with $ \alpha = (0,1) $, we find that 
$$ \part_z \bigl \lbrack \bm{\uppsi}_k (T/\eps, \eps z)/\eps \bigr \rbrack = \cO (k) . $$
This yields
$$ \vert  \part_z \cU (T,z) \vert \lesssim \eps \sum_{k \in \cK^c_s} k \lesssim \eps^{-1} , $$
which indicates that (\ref{majounidederiofUdebut}) could not be
improved any further.


\subsubsection{Analysis of the bilinear interaction term}\label{bilinear} We now come back to the 
study of the operator $\operatorname{Op}  $ given by (\ref{integrduhamelcassomip}). Since $ p(\xi) - 1 \sim 0 $ 
when $ \vert \xi \vert $ goes to $ + \infty $, for large values of $ \xi $, there is no way to gain some 
smallness in $ \eps $ by performing integrations by parts with respect to $ s $. Thus, the strategy 
is to fix $ s $, to integrate in $ (y,\xi) $, and to exploit the special form of $ \cG^\eps_{k_1,k_2}  $
in order to get a more tractable expression. The formula (\ref{lienentreGetH}) reveals the role of the phase
$$ \mathfrak p_{k_1,k_2} (t,x) := \bm{\uppsi}_{k_1}  (t,x) +  \bm{\uppsi}_{k_2}  (t,x) . $$
On the other hand, since $ |x| = \eps |y| \lesssim \eps^\iota $ on the domain of integration, for $ \iota \in ]0,1] $, 
only small values of $ x $ are involved. This remark indicates that the impact of $ \mathfrak p_{k_1,k_2}$ 
should be mainly driven by its Taylor expansion near $ x = 0 $, which may be written 
\begin{equation}\label{taylorexpansion} 
\mathfrak p_{k_1,k_2} (t,x) = \mathfrak p^0_{k_1,k_2} (t) + x \, \mathfrak p^1_{k_1,k_2} (t) + x^2 \,
\mathfrak {r}_{k_1,k_2} (t,x) , 
\end{equation}
where $ \mathfrak {r}_{k_1,k_2}  $ is the smooth function which is issued from the Lagrange remainder
at second order.

\smallskip

\noindent On the other hand, we find
\begin{subequations}\label{secondoredrtaylor} 
\begin{eqnarray} 
\quad & & \displaystyle \mathfrak p^0_{k_1,k_2} (t) := \bm{\uppsi}_{k_1}  (t,0) +  \bm{\uppsi}_{k_2}  
(t,0) = \sum_{i=1}^2 \Bigl \lbrace (-1)^{k_i}  \gamma \cos s_{k_i} (t , 0) \label{secondoredrtaylor0}  \\
\quad & & \displaystyle \qquad \qquad \quad \quad + \bigl \lbrack 1 - p \bigl( {k_i} \pi + s_{k_i} 
( t , 0 ) \bigr) \bigr \rbrack \bigl( {k_i} \, \pi + s_{k_i} ( t , 0 )  - t \bigr) \Bigr \rbrace , \nonumber \\
\quad & & \displaystyle \mathfrak p^1_{k_1,k_2} (t) := \part_x \bm{\uppsi}_{k_1}  (t,0) + \part_x \bm{\uppsi}_{k_2}  
(t,0) = - (k_1+k_2) \pi \label{secondoredrtaylor1}  \\
\quad & & \displaystyle \qquad \qquad \quad \ + \sum_{i=1}^2 \Bigl \lbrace - s_{k_i} (t,0) - (-1)^{k_i}  
\gamma \part_x s_{k_i} (t , 0 ) \sin s_{k_i} (t , 0 ) \nonumber \\
\quad & & \displaystyle \qquad \qquad \quad \quad \ + \bigl \lbrack 1 - p \bigl( {k_i} \pi + s_{k_i} ( t , 0 ) 
\bigr) \bigr \rbrack \part_x s_{k_i} ( t , 0 )  \nonumber \\
\quad & & \displaystyle \qquad \qquad \quad \quad \ - p' \bigl( {k_i} \pi + s_{k_i} ( t , 0 ) \bigr) \part_x s_{k_i} 
( t , 0 ) \bigl( {k_i} \, \pi + s_{k_i} ( t , 0)  - t \bigr) \Bigr \rbrace .  \nonumber 
\end{eqnarray}
\end{subequations}

\noindent As stated below, both $ \mathfrak p^0_{k_1,k_2} (t) $ and $ \mathfrak p^1_{k_1,k_2} (t) $ are involved 
in the asymptotic behavior of $ \cW^\eps_{k_1,k_2} $ at leading order.

\begin{prop}[Simplification of the bilinear interaction
  terms]\label{simplyfprop}
  Fix $ \iota \in ]\iota_-,1] $ with 
$ \iota_- := (13 - \sqrt{89}) / 8 < 1/2 $, and select $ \beta $ such that 
\renewcommand\arraystretch{1.8}
\begin{equation}\label{doubleineqagah}
\left \lbrace \begin{array} {lcl}
\displaystyle \frac{3}{2 (q+1)} \leq \frac{3 \iota + (3/\iota) (1-2 \iota)}{q+1} < \beta < \frac{3+\iota}{5} < 1 & 
\text{if} & \iota \in ]\iota_- , 1/2] \, , \\
\displaystyle \frac{3}{2 (q+1)} \leq \frac{1+ \iota}{q+1} < \beta < \frac{3+\iota}{5} < 1 & \text{if} & \iota \in [1/2,1] .
\end{array} \right.
\end{equation}
 \renewcommand\arraystretch{1}

\noindent Then, uniformly in $ (k_1,k_2) \in  \cD \! \cK^c_s (\beta) $, with $ \cA_{k,0}^\eps  $ as in \eqref{inparticc}, we have
\begin{equation}\label{Wepsk12entemps}
 \begin{aligned}
\cW^\eps_{k_1,k_2} (T,z) &= o(\eps^2)  \\
+  \eps^2 & (2 \pi)^{-2} e^{-2i \gamma / \eps} \int_0^T \chi\(3-2\frac{s}{\cT}\) 
e^{i \mathfrak p^0_{k_1,k_2} (s/\eps)/ \eps} \\
&\times e^{i z \mathfrak p^1_{k_1,k_2} (s/\eps)}  e^{ i (T-s) \lbrack p \circ \mathfrak p^1_{k_1,k_2} 
(s/\eps) -1 \rbrack / \eps^2} (\cA_{k_1,0}^\eps \cA_{k_2,0}^\eps) (s/\eps,0) \, ds .
 \end{aligned}
   \end{equation}
\end{prop}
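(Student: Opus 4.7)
The strategy is to compute the inner $(y,\xi)$-integral in $\operatorname{Op}(\cG^\eps_{k_1,k_2})$ for fixed $s$, treating the $y$-integral as a Fourier transform. Using the Taylor expansion \eqref{taylorexpansion}, I decompose
\[
  e^{i\mathfrak{p}_{k_1,k_2}(s/\eps,\eps y)/\eps} = e^{i\mathfrak{p}^0_{k_1,k_2}(s/\eps)/\eps}\; e^{iy\mathfrak{p}^1_{k_1,k_2}(s/\eps)}\; e^{i\eps y^2 \mathfrak{r}_{k_1,k_2}(s/\eps,\eps y)}.
\]
The first factor depends only on $s$ and exits the integral, the second (linear in $y$) acts as a Fourier-frequency shift placing $\xi$ near $-\mathfrak{p}^1_{k_1,k_2}$, and the third is the error-producing quadratic remainder. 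In parallel I will Taylor-expand $\cB^\eps_{k_1,k_2}(s/\eps,\eps y)$ in its second argument about the origin.

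At leading order I retain only $\cB^\eps_{k_1,k_2}(s/\eps,0) = (2\pi)^{-2}\cA^\eps_{k_1,0}\cA^\eps_{k_2,0}(s/\eps,0) + \cO(\eps)$ (the $\cO(\eps)$ accounting for the $\eps\cA^\eps_{k,1}$ contribution in \eqref{taketheformBkk}) and replace $e^{i\eps y^2\mathfrak{r}_{k_1,k_2}}$ by $1$. The $y$-integral then returns $r\eps^{\iota-1}\hat\chi\bigl(-r\eps^{\iota-1}(\xi+\mathfrak{p}^1_{k_1,k_2}(s/\eps))\bigr)$. Changing variables $\eta = r\eps^{\iota-1}(\xi + \mathfrak{p}^1_{k_1,k_2}(s/\eps))$ localizes the remaining $\xi$-integration at scale $\eps^{1-\iota}$ around $-\mathfrak{p}^1_{k_1,k_2}$; a Taylor expansion of $p$ about that point, using that $p$ is even so $p(-\mathfrak{p}^1_{k_1,k_2})=p(\mathfrak{p}^1_{k_1,k_2})$, exhibits the constant contribution $(T-s)[p(\mathfrak{p}^1_{k_1,k_2})-1]/\eps^2$. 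The Fourier inversion identity $\int\hat\chi(-\eta)\,d\eta=2\pi\chi(0)=2\pi$ then reconstructs exactly the asserted main term in \eqref{Wepsk12entemps}.

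The first- and second-order corrections to the Taylor expansion of $p$ contribute to the phase factors of size $\eps^{-(1+\iota)+\beta(q+1)}$ and $\eps^{-2\iota+\beta(q+2)}$, using \eqref{hypp3ded}, \eqref{hypp2} and the key lower bound $|\mathfrak{p}^1_{k_1,k_2}|\gtrsim k_1+k_2>c\eps^{-\beta}$ coming from $(k_1,k_2)\in\cD\!\cK^c_s(\beta)$. Both vanish under $\beta>(1+\iota)/(q+1)$, the lower bound in the second line of \eqref{doubleineqagah}. A parallel error of size $\eps^{1-\iota}$ arises from the term $-z\eta/(r\eps^{\iota-1})$, vanishing for $\iota<1$ directly, with $\iota=1$ handled without the rescaling step. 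The $\cB^\eps_{k_1,k_2}$-remainder $\cB^\eps_{k_1,k_2}(s/\eps,\eps y)-\cB^\eps_{k_1,k_2}(s/\eps,0)=\cO(\eps y)$ is absorbed by integrating against the rapidly decaying $\hat\chi$, using \eqref{euniformboundedspder} for uniform derivative bounds.

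The hard part will be handling $e^{i\eps y^2\mathfrak{r}_{k_1,k_2}}$ and extracting the sharp threshold $\iota>\iota_-$. After the rescaling $y=r\eps^{\iota-1}u$, the quadratic phase reads $r^2\eps^{2\iota-1}u^2\mathfrak{r}_{k_1,k_2}(s/\eps,r\eps^\iota u)$. For $\iota\in[1/2,1]$ this tends pointwise to zero and dominated convergence linearizes the exponential with error $\cO(\eps^{2\iota-1})$, which combined with the prefactor $\eps^2$ in \eqref{cWk1k2def} is $o(\eps^2)$. For $\iota\in\,]\iota_-,1/2]$ the quadratic phase is no longer small and a stationary phase argument in $u$ is required; its non-degeneracy comes from the lower bound $|\mathfrak{r}_{k_1,k_2}|\gtrsim\gamma$ inherited from the $(-1)^k\gamma\cos s_k$ term in \eqref{defipsiepskajout}, and the resulting amplification factor $\eps^{(1-2\iota)/2}$ per extraction forces the sharper bound $\beta>(3\iota+(3/\iota)(1-2\iota))/(q+1)$ from the first line of \eqref{doubleineqagah}. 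Compatibility of this lower bound with the upper restriction $\beta<(3+\iota)/5$ reduces exactly to the quadratic inequality $4\iota^2-13\iota+5<0$, whose admissible root yields precisely $\iota_-=(13-\sqrt{89})/8$.
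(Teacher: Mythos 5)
Your overall skeleton is right and matches the paper's: Fubini, Taylor decomposition of the phase, recognize the $y$-integral as a Fourier transform centered at $-\mathfrak{p}^1_{k_1,k_2}$, and reassemble the main term via Fourier inversion. The constraint $\beta > (1+\iota)/(q+1)$ in the regime $\iota\ge 1/2$ and the terminal quadratic $4\iota^2-13\iota+5<0$ (equivalent to the paper's $12\iota^2-39\iota+15<0$) are also correct. However, your treatment of the quadratic remainder phase in the regime $\iota\in\,]\iota_-,1/2]$ contains a genuine gap, and it diverges from the paper's argument.

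Concretely, you invoke a stationary phase in the rescaled variable $u$ with non-degeneracy supposedly coming from $|\mathfrak{r}_{k_1,k_2}|\gtrsim\gamma$, attributed to the $(-1)^k\gamma\cos s_k$ term in \eqref{defipsiepskajout}. This lower bound is not established and in fact fails: when $k_1$ is even and $k_2$ odd, the $(-1)^{k_1}$ and $(-1)^{k_2}$ contributions to the second $x$-derivative of $\bm{\uppsi}_{k_1}+\bm{\uppsi}_{k_2}$ cancel at leading order, so $\mathfrak{r}_{k_1,k_2}$ has no uniform lower bound. Moreover, even granting such a bound, a stationary phase in $u$ is not cleanly set up: the factor $e^{iy(\xi+\mathfrak{p}^1)}$, after the rescaling $y=r\eps^{\iota-1}u$, contributes a linear-in-$u$ phase $r\eps^{\iota-1}(\xi+\mathfrak{p}^1)u$ that competes with the quadratic, moving the putative critical point off $u=0$ and out of $\operatorname{supp}\chi$ except for $\xi$ in a thin window around $-\mathfrak{p}^1$. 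The paper avoids all of this: it bounds the Fourier transform of $\cE^\eps_{k_1,k_2}$ by integrations by parts in $\tilde y$ (estimate \eqref{rendrerigoureux}), with each derivative losing only a factor $\eps^{2\iota-1}$ (an \emph{upper} bound on $\mathfrak{r}$ via \eqref{unigliformskftx} suffices, not a lower bound), and then plays the resulting degraded decay $\eps^{(2\iota-1)\rho}$ against the smallness of $p'$ at $|\xi|\sim\eps^{-\beta}$ from \eqref{minotauree}. Balancing these produces the lower bound $\beta>(3\iota+(3/\iota)(1-2\iota))/(q+1)$ and hence $\iota_-$; you land on the same inequality but through a mechanism that, as written, does not go through. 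Two smaller inaccuracies: the ``parallel error of size $\eps^{1-\iota}$'' from the $z$-dependence is not how the paper treats it (the $z$-dependence enters exactly as $\cE^\eps_{k_1,k_2}(s/\eps,\eps^{1-\iota}r^{-1}z)$, and $\chi(\eps^{1-\iota}r^{-1}z)$ is then replaced by $\chi(0)=1$ using only $\iota>0$ and $z$ fixed), and the case $\iota=1$ requires a genuinely separate split at $|\tilde\xi|=\eps^{-\mu}$ in the paper rather than simply ``skipping the rescaling''.
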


\noindent Note that the reduction of (\ref{cWk1k2def})-(\ref{lienentreGetH}) to (\ref{Wepsk12entemps}) is quite 
striking. As a matter of fact, we got rid of the nonlocal aspect in $ dy d\xi $ that is involved by the operator $ \operatorname{Op} $ 
of (\ref{integrduhamelcassomip}). Indeed, knowing that $ \cG $ is as in (\ref{lienentreGetH}), the integral in $ dy d\xi $
reduces to a multiplication by the factor exhibited in (\ref{Wepsk12entemps}). This new presentation has many 
advantages. The action of $ \operatorname{Op} $ on $ L^\infty $ is not uniformly controlled in $ \eps \in ]0,1] $ with apparently, 
in view of Corollary \ref{corollairefac}, an optimal loss of the type $ \eps^{(\iota-1)/2} $ when $ \rho = 1/2 $.
Looking at (\ref{cWk1k2def}), we can only say that $ \cW^\eps_{k_1,k_2} = \cO (\eps^{(3+\iota)/2}  )$. But, looking 
at (\ref{Wepsk12entemps}), as a direct consequence of (\ref{uniformestpourt}), we can assert that
\begin{equation}\label{Wepsk12entempstotal}
\forall (k_1,k_2) \in  \cD \! \cK^c_s (\beta) , \qquad \cW^\eps_{k_1,k_2} (T,z) = \cO (\eps^2) .
\end{equation}

\begin{proof} The oscillating part inside (\ref{lienentreGetH}) can be decomposed according to
\renewcommand\arraystretch{2}
\begin{equation}\label{decompopoentrois} 
\begin{array}{rll}
e^{i (\bm{\uppsi}_{k_1} + \bm{\uppsi}_{k_2}) (s/\eps, \eps y)/ \eps} \! \! \! & = e^{i \mathfrak p^0_{k_1,k_2} (s/\eps) / \eps} &
\Bigl \rbrace \ \text{\scriptsize $ \circled{1} $} \\
\ & \times e^{i \mathfrak p^1_{k_1,k_2} (s/\eps) y} & \Bigl \rbrace \ \text{\scriptsize $ \circled{2} $} \\
\ & \times e^{i r^2 \eps^{2 \iota-1} \mathfrak {r}_{k_1,k_2} (s/\eps, \eps y) (r^{-1} \eps^{1-\iota} y)^2} & \Bigl \rbrace \ 
\text{\scriptsize $ \circled{3} $} 
\end{array} 
\end{equation}
\renewcommand\arraystretch{1}

\noindent The general idea of the proof is the following. At $ s $ fixed, the contribution {\scriptsize $ \circled{1} $} does not 
participate to the integration in $ (y,\xi) $, and hence appears as a simple factor at the level of (\ref{Wepsk12entemps}).
The oscillation {\scriptsize $ \circled{2} $} may be combined with $ e^{i y \xi} $ to produce after integration with respect to 
$ y $ a phase shift of size $ \mathfrak p^1_{k_1,k_2} (s/\eps) $ on the Fourier side, that is in $ \xi $. Knowing that 
$ \eps^{1-\iota} |y| \lesssim 1 $, the contribution {\scriptsize $ \circled{3} $} may be associated with the cut-off 
$ \chi (r^{-1} \eps^{1-\iota} y ) $ to produce, for $ \iota \geq 1/2 $ in the spatial variable $ \tilde y := r^{-1} \eps^{1-\iota} y $, a 
localized non oscillating term that may be viewed as  a profile
($\iota=1/2$), or a
some modulation ($\iota>1/2$). The other case $ \iota < 1/2 $ is
slightly different, 
more difficult, and will be considered separately. Finally, the integration in $ \xi $ operates as an inverse Fourier transform, 
which is reminiscent of a Dirac mass at $ \xi = \mathfrak p^1_{k_1,k_2} (s/\eps) $. 

\smallskip

\noindent Now, let us get into the specifics. Using Fubini's theorem together with (\ref{decompopoentrois}), we can interpret 
(\ref{integrduhamelcassomip}), (\ref{cWk1k2def}) and (\ref{lienentreGetH}) according to 
\begin{equation}\label{express561}
\begin{aligned}
& \cW^\eps_{k_1,k_2} (T,z) = \frac{\eps^2 \, e^{-2i \gamma /\eps}}{2\pi} \int_0^T \chi\(3-2\frac{s}{\cT}\) 
e^{i \mathfrak p^0_{k_1,k_2} (s/\eps) / \eps} \\
&\qquad  \times \bigg \lbrace \int e^{-i z \xi + i (T-s) (p(\xi)-1) / \eps^2 } \\
&\qquad \qquad \times \Bigl \lbrace \int 
e^{i y (\xi + \mathfrak p^1_{k_1,k_2} (s/\eps) )}
\cE^\eps_{k_1,k_2} (s/\eps, r^{-1} \eps^{1-\iota} y) \, d y \Bigr \rbrace \, d \xi \bigg \rbrace \, ds ,
 \end{aligned}
  \end{equation}
 where, taking $ \tilde y = r^{-1} \eps^{1-\iota} y $ as a new variable to work with a spatial localization of size one,
we have introduced
\begin{equation}\label{c'estceliuirce}
 \cE^\eps_{k_1,k_2} (t, \tilde y) := \chi (\tilde y) e^{i r^2 \eps^{2
     \iota -1} \mathfrak {r}_{k_1,k_2} (t, r  
\eps^\iota \tilde y) \tilde y^2} \cB_{k_1,k_2}^\eps (t,r \eps^\iota \tilde y) . 
\end{equation}
Obviously, we have
$$ \forall t  \ge 0 , \qquad \text{supp} \ \cE^\eps_{k_1,k_2} (t, \cdot) \subset \text{supp} \ \chi  \subset [-1,1] \, . $$
As long as $ \iota \in [1/2,1] $, the expression $ \cE^\eps_{k_1,k_2} (t,\cdot) $ does not oscillate in $ \tilde y $. Otherwise,
when $ \iota \in [0,1/2[ $, each derivative in $ \tilde y $ causes a loss of $ \eps^{2 \iota -1} $. On the other hand, the Lagrange
remainder $ \mathfrak {r}_{k_1,k_2} $ is built with second order derivatives of $ \bm{\uppsi}_k (t,\cdot) $ which, 
in view of (\ref{unigliformskftx}), may be uniformly bounded in $ (k_1,k_2) \in \cD \! \cK^c_s (\beta) $. We can combine 
this information with (\ref{euniformboundedspder}) to see that, for all $ j \in \NN $, we can find a constant $ C_j >0 $ 
such that 
$$ \sup_{(k_1,k_2) \in \cD \! \cK^c_s (\beta)} \ \sup_{t \ge 0} \ \sup_{\eps \in ]0,1]} \ \sup_{\tilde y \in \RR} \ \vert 
\part_{\tilde y}^j \cE^\eps_{k_1,k_2} (t, \tilde y) \vert \leq C_j \, \bigl( 1 + \eps^{(2\iota-1)j} \bigr) .  $$
By integration by parts in $ y $ and then interpolation, it follows
that for all $ \rho \ge 0 $ we can find a constant $ C_\rho 
>0 $ such that, uniformly in $ (k_1,k_2) \in \cD \! \cK^c_s (\beta) $, we have 
\begin{equation}\label{rendrerigoureux}
\quad \sup_{t \ge 0} \ \sup_{\eps \in ]0,1]} \ \sup_{\tilde \xi \in \RR} \ \vert ( 1 + \vert \tilde \xi \vert^2)^{\rho/2} 
\cF \bigl( \cE^\eps_{k_1,k_2}  (t, \cdot) \bigr) (\tilde \xi) \vert \leq C_\rho  \, \bigl( 1 + \eps^{(2\iota-1) \rho} \bigr) .  
\end{equation}
In (\ref{express561}), the integral with respect to $ y $ is exactly
\[ r \eps^{\iota-1} \cF \bigl( \cE^\eps_{k_1,k_2} (s/\eps, \cdot ) \bigr) (\tilde \xi) , \qquad \tilde \xi := - r \eps^{\iota-1} \bigl( \xi + 
\mathfrak p^1_{k_1,k_2} (s/\eps) \bigr) . \]
In the interest of simplifying notation, we sometimes note $ \mathfrak p^1 $ in place of $ \mathfrak p^1_{k_1,k_2} (s/\eps) $. 
Then, the  last two lines of (\ref{express561}) become
\[ \int e^{i z (\mathfrak p^1 + r^{-1} \eps^{1-\iota} \tilde \xi ) } e^{i (T-s) \lbrack p( - \mathfrak p^1 - r^{-1} \eps^{1-\iota} \tilde \xi  ) 
-1 \rbrack / \eps^2} \cF \bigl( \cE^\eps_{k_1,k_2} (s/\eps, \cdot ) \bigr) (\tilde \xi) \, d \tilde \xi = \, \text{\scriptsize $ \circled{4} $} + 
 \text{\scriptsize $ \circled{5} $} \, ,\]
with, since the function $ p $ is even
 \begin{align*}
\text{\scriptsize $ \circled{4} $}  & := e^{i z \mathfrak p^1} e^{i (T-s) ( p( - \mathfrak p^1) -1 ) / \eps^2}  \int e^{i z r^{-1} 
\eps^{1-\iota} \tilde \xi} \cF \bigl( \cE^\eps_{k_1,k_2} (s/\eps, \cdot ) \bigr) (\tilde \xi) \, d \tilde \xi ,  \\
\ & \ = e^{i z \mathfrak p^1} e^{i (T-s) ( p( - \mathfrak p^1) -1 ) / \eps^2} (2 \pi) \, \cF^{-1} \circ \cF \bigl( \cE^\eps_{k_1,k_2} 
(s/\eps, \cdot ) \bigr) ( \eps^{1-\iota}  r^{-1} z ) \\ 
\ & \ = e^{i z \mathfrak p^1} e^{i (T-s) ( p(\mathfrak p^1) -1 ) / \eps^2} (2 \pi) \, \cE^\eps_{k_1,k_2} (s/\eps, \eps^{1-\iota}  r^{-1} z ) .
\end{align*} 
In view of the definition (\ref{c'estceliuirce}) together with (\ref{euniformboundedspder}), we have 
\begin{align*}
\cE^\eps_{k_1,k_2} (t, \eps^{1-\iota}  r^{-1} z ) & = \chi (\eps^{1-\iota} r^{-1} z ) e^{i \eps z^2 \mathfrak {r}_{k_1,k_2} 
(t, \eps z)} \cB_{k_1,k_2}^\eps (t,\eps z) \\
 &  =  \chi (\eps^{1-\iota} r^{-1} z)  \bigl( 1 + \cO(\eps) \bigr) \bigl( \cB_{k_1,k_2}^\eps (t,0) + \cO(\eps) \bigr) \\
 &  = \chi (\eps^{1-\iota} r^{-1} z) \cB_{k_1,k_2}^\eps (t,0) + \cO(\eps) .
\end{align*} 
With (\ref{taketheformBkk}) and (\ref{uniformestpourt}), this becomes 
\[ \cE^\eps_{k_1,k_2} (t, \eps^{1-\iota}  r^{-1} z ) = \chi (\eps^{1-\iota} r^{-1} z) (2 \pi)^{-2} ( \cA_{k_1,0}^\eps \cA_{k_2,0}^\eps) 
(t,0) + \cO(\eps) .\]
Plug $ \text{\scriptsize $ \circled{4} $} $ with $ \cE^\eps_{k_1,k_2}  $ as above in place of the integral in $ dy d\xi $ 
(the two last lines) of (\ref{express561}). This furnishes the leading-order term of (\ref{Wepsk12entemps}). Thus, it remains 
to control the part $ \text{\scriptsize $ \circled{5} $} $, which is
\begin{align*}
\text{\scriptsize $ \circled{5} $} &  := \int e^{i z (\mathfrak p^1 + r^{-1} \eps^{1-\iota} \tilde \xi ) } 
\cF \bigl( \cE^\eps_{k_1,k_2} (s/\eps, \cdot ) \bigr) (\tilde \xi) \\
 & \qquad \ \, \times \bigl \lbrack e^{i (T-s) ( p( - \mathfrak p^1 - r^{-1} \eps^{1-\iota} \tilde \xi  ) -1 ) / \eps^2} - e^{i (T-s) ( p( - 
\mathfrak p^1) -1 ) / \eps^2}\bigr \rbrack  \, d \tilde \xi .
\end{align*} 
Since $ p $ is even, we have 
\begin{equation}\label{evenwehave}
\begin{aligned}
& \vert e^{i (T-s) ( p( - \mathfrak p^1 - r^{-1} \eps^{1-\iota} \tilde \xi  ) -1 ) / \eps^2} - e^{i (T-s) ( p( - \mathfrak p^1) -1 ) 
/ \eps^2} \vert \\
& \qquad \qquad \qquad \leq 2 \, \Bigl \vert \, \sin \Bigl( \frac{T-s}{2 \eps^2} \, \bigl( p(\mathfrak p^1 + r^{-1} \eps^{1-\iota} 
\tilde \xi  ) - p(\mathfrak p^1)  \bigr) \Bigr) \, \Bigr \vert \\
&  \qquad \qquad \qquad \leq  \frac{\vert T-s \vert }{\eps^2} \, \bigl \vert p(\mathfrak p^1 + r^{-1} \eps^{1-\iota} \tilde \xi  ) 
- p(\mathfrak p^1) \bigr \vert . 
\end{aligned}
\end{equation}
For the moment, we assume that $ 0 < \iota < 1 $. In the integral defining $ \text{\scriptsize $ \circled{5} $} \, $, we 
can distinguish a part where $ \eps^{1-\iota} \vert \tilde \xi \vert \leq 1 $ to take advantage of the smallness of the difference 
$ \vert p(\mathfrak p^1 + r^{-1} \eps^{1-\iota} \tilde \xi  ) - p(\mathfrak p^1) \vert $, and a part where $ 1 \leq \eps^{1-\iota} \vert 
\tilde \xi \vert $ to benefit from the rapid decreasing of $ \cF \bigl( \cE^\eps_{k_1,k_2} (s/\eps, \cdot ) \bigr)$. In other words, 
we use the fact that
\begin{equation}\label{evenwehavedecoup5}
\left \vert \, \text{\scriptsize $ \circled{5} $} \, \right \vert \leq \text{\scriptsize $ \circled{5} $}_-^{\, \iota} + \text{\scriptsize 
$ \circled{5} $}_+^{\, \iota}  ,
\end{equation}
with (for $ 0 < \iota < 1 $):
\[ \text{\scriptsize $ \circled{5} $}_\pm^{\, \iota} := \frac{\vert T-s \vert }{\eps^2} \int_{\pm \eps^{1-\iota} \vert \tilde \xi \vert \leq \pm 1}  \bigl 
\vert p(\mathfrak p^1 + r^{-1} \eps^{1-\iota} \tilde \xi  ) - p(\mathfrak p^1) \bigr \vert \, \vert \cF \bigl( \cE^\eps_{k_1,k_2} (s/\eps, \cdot ) 
\bigr) (\tilde \xi) \vert \, d \tilde \xi .\]
Since $ \vert p \vert $ is bounded by $ 1 $, exploiting \eqref{rendrerigoureux}, we find
\begin{align}
\text{\scriptsize $ \circled{5} $}_-^{\, \iota} & \leq \frac{\vert T-s \vert}{\eps^2} \int_{1 \leq \eps^{1-\iota} \vert \tilde \xi \vert} 2 \, C_\rho \, 
\bigl( 1 + \eps^{(2\iota-1) \rho} \bigr) \, (1 + \vert \tilde \xi \vert^2)^{-\rho/2} \, 
d \tilde \xi \nonumber \\
& \leq 2 \, C_\rho \, \frac{\vert T-s \vert }{\eps^2} \Bigl( \int_{1
                            \leq\eps^{1-\iota}
                             \vert \tilde \xi \vert} \vert \eps^{1-\iota} \tilde \xi 
\vert^{-\rho} d ( \eps^{1-\iota} \tilde  \xi ) \Bigr) \, \eps^{(1-\iota) (\rho-1)} \, \bigl( 1 + \eps^{(2\iota-1) \rho} \bigr) \nonumber \\
& \leq 2 \, C_\rho \, \vert T-s \vert \Bigl( \int_{1 \leq \vert \xi \vert} \vert \xi \vert^{-\rho} d \xi \Bigr) \, 
\eps^{(1-\iota) (\rho-1)-2} \, \bigl( 1 + \eps^{(2\iota-1) \rho} \bigr) . \nonumber 
\end{align}
From now on, we fix some $ \rho $ satisfying
\begin{equation}\label{fixsomegamma}
\left \lbrace \begin{array}{lcll}
\rho > -1 + 3/\iota & \geq 5 & \text{if} & \iota \in \, ]0,1/2] , \\
\rho > 1 + 2 /(1-\iota) \! \! \! \! \! & \geq 5  & \text{if} & \iota \in [1/2,1[ .
\end{array} \right. 
\end{equation}
By this way, we can recover $ \text{\scriptsize $ \circled{5} $}_-^{\, \iota} = o(1) $. 

\smallskip

\noindent To control  $ \text{\scriptsize $ \circled{5} $}_+^{\, \iota} $, we remark that
\begin{align*}
\text{\scriptsize $ \circled{5} $}_+^{\, \iota}  &  \leq \frac{\vert T-s \vert}{\eps^2} \ \Bigl( \sup_{\vert \xi \vert \leq r^{-1}} \, 
\vert p' ( \mathfrak p^1 + \xi ) \vert \Bigr) \int_{\vert \tilde \xi \vert \leq \eps^{\iota -1 }} r^{-1} \eps^{1-\iota} \vert \tilde \xi \vert \, 
C_\rho \frac{\bigl( 1 + \eps^{(2\iota-1) \rho} \bigr)}{(1 + \vert \tilde \xi \vert^2)^{+\rho/2}} \, d \tilde \xi \\
&  \leq \frac{C_\rho}{r} \, \vert T-s \vert \, \Bigl( \int (1 + \vert \tilde \xi \vert^2)^{(1-\rho)/2} \, d \tilde \xi \Bigr) \, 
\Bigl( \sup_{\vert \xi \vert \leq r^{-1}} \, \vert p' ( \mathfrak p^1 + \xi ) \vert \Bigr) \, \frac{1 + \eps^{(2\iota-1) \rho}}{\eps^{1+\iota}} \, . 
\end{align*} 
Since $ (1-\rho)/2 \leq -2 $, the above integral in $ \tilde \xi $ is finite. Now, we want to extract some additional smallness
from the sup term. To this end, we come back to the definition (\ref{secondoredrtaylor1}) of $ \mathfrak p^1 $. Knowing that 
$ (k_1,k_2) \in  \cD \! \cK^c_s (\beta) $, we get
\begin{align*} 
\vert \mathfrak p^1_{k_1,k_2} (t) \vert \geq c \pi \eps^{-\beta} - \sum_{i=1}^2 &\Bigl \lbrace \vert s_{k_i} 
(t,0) \vert + \gamma \vert \part_x s_{k_i} (t , 0 ) \vert  \\
&\quad+ \bigl \lbrack 1 - p \( {k_i} \pi + s_{k_i} ( t , 0 ) \) 
\bigr \rbrack \vert \part_x s_{k_i} ( t , 0 ) \vert  \\
&\quad+ \, 2 \cT \eps^{-1} \, p' \bigl( {k_i} \pi + s_{k_i} ( t , 0 ) \bigr) 
\vert \part_x s_{k_i} ( t , 0 ) \vert \Bigr \rbrace.  
\end{align*}
Since $ k_i \in \cK^c_s $, we can exploit (\ref{unigliformskftx}) and (\ref{hypp3ded}) to obtain
\begin{equation}\label{aaaaaaaaa}
\begin{aligned} 
\vert \mathfrak p^1_{k_1,k_2} (t) \vert  & \geq c \pi \eps^{-\beta} - 2 C_{(0,0)} -2 (\gamma + 1) C_{(0,1)}  - \sum_{i=1}^2 2 \cT \eps^{-1} \, \cO (k_i^{-q-1}) C_{(0,1)} \\
 & \geq c \pi \eps^{-\beta} - \cO (1) \gtrsim \eps^{-\beta} .
\end{aligned} 
 \end{equation}
Then, from (\ref{hypp3ded}), we get
\begin{equation}\label{minotauree}
\sup_{\vert \xi \vert \leq r^{-1}} \, \vert p' ( \mathfrak p^1 + \xi ) \vert \lesssim \eps^{\beta (q+1)}  .
\end{equation}
It follows that we can find $\rho$ satisfying \eqref{fixsomegamma} and
so that $ \text{\scriptsize $ \circled{5} $}_+^{\, \iota} = o(1) $ if
$$ \left \lbrace \begin{array}{lcl}
- 3 \iota + (3/ \iota) (2 \iota -1) + \beta (q+1) > 0 & \text{if} & \iota \in \, ]0,1/2] , \\
-1 - \iota + \beta (q+1) >0 & \text{if} & \iota \in [1/2,1[ .
\end{array} \right. $$
This provides with a lower bound for $ \beta $ which must be
compatible with (\ref{fixbeta}). When $ \iota \in [1/2,1[ $, we demand
\begin{equation*}
  -1-\iota +(q+1)\frac{3+\iota}{5}>0,
\end{equation*}
a condition which is always satisfied for $q\ge 2$ and $\iota\le 1$. 
When $ \iota \in ]0,1/2 ] $, we require
\begin{equation*}
  -3\iota+ (3/ \iota) (2 \iota -1) +(q+1)\frac{3+\iota}{5}>0,
\end{equation*}
a condition which boils down, in the less
favorable case $ q = 2 $, to
\begin{equation*}
  12\iota^2-39\iota+15<0.
\end{equation*}
Recalling that $\iota<1$, this condition implies that $ \iota_- < \iota $. This is where the specific value $\iota_-$ appears.

\smallskip

\noindent There remains to discuss the limiting case $ \iota = 1 $. The definition of $ \text{\scriptsize $ \circled{5} $}_-^{\, \iota} $
for $ 0 < \iota < 1 $ could be extended in the case $ \iota = 1 $. But the preceding argument does not work when $ \iota = 1 $, 
because there is no finite choice of $ \rho $ satisfying (\ref{fixsomegamma}). For this reason, we adopt the following alternative 
definition 
\[ \text{\scriptsize $ \circled{5} $}_\pm^{\, 1} := \frac{\vert T-s \vert }{\eps^2} \int_{\pm \vert \tilde \xi \vert \leq \pm \eps^{-\mu}}  \bigl 
\vert p(\mathfrak p^1 + r^{-1} \tilde \xi ) - p(\mathfrak p^1) \bigr \vert \, \vert \cF \bigl( \cE^\eps_{k_1,k_2} (s/\eps, \cdot ) \bigr) (\tilde \xi) 
\vert \, d \tilde \xi . \]
Use (\ref{rendrerigoureux}) with $ \rho > 1 + (2/\mu) > 3 $. When dealing with the sign $ - $, the above shift toward high frequencies 
$ \eps^{-\mu} \leq \vert \tilde \xi \vert $ allows to recover some smallness. Indeed:
\begin{align}
\text{\scriptsize $ \circled{5} $}_-^{\, 1} & \leq \frac{\vert T-s \vert}{\eps^2} \int_{\eps^{-\mu} \leq \vert \tilde \xi \vert} 2 \, C_\rho \, 
\bigl( 1 + \eps^{(2\iota-1) \rho} \bigr) \, (1 + \vert \tilde \xi \vert^2)^{-\rho/2} \, 
d \tilde \xi \nonumber \\
& \lesssim \frac{\vert T-s \vert }{\eps^2} \Bigl( \int_{1 \leq \vert \eps^\mu \tilde \xi \vert} \vert \eps^\mu \tilde \xi 
\vert^{-\rho} d ( \eps^\mu \tilde  \xi ) \Bigr) \, \eps^{\mu (\rho-1)} \nonumber \\
& \lesssim \vert T-s \vert \Bigl( \int_{1 \leq \vert \xi \vert} \vert \xi \vert^{-\rho} d \xi \Bigr) \, 
\eps^{\mu (\rho-1)-2} = o(1)  . \nonumber 
\end{align}
On the other hand, this does not affect the control of the part with the sign $ + $. Taking into account 
(\ref{aaaaaaaaa}), we find that
\begin{equation}\label{minotaureedepl}
\forall \, \vert \tilde \xi \vert \leq r^{-1} \eps^{-\mu} , \qquad \vert \mathfrak p^1 + r^{-1} \tilde \xi \vert \gtrsim \eps^{-\beta} ,
\end{equation}
and therefore, as before, we have
\begin{align*}
\text{\scriptsize $ \circled{5} $}_+^{\, 1} &  \leq \frac{\vert T-s \vert}{\eps^2} \ \Bigl( 
\sup_{\vert \xi \vert \leq r^{-1} \eps^{-\mu}} \, 
\vert p' ( \mathfrak p^1 + \xi ) \vert \Bigr) \int_{\vert \tilde \xi \vert \leq \eps^{-\mu }} r^{-1} \vert \tilde \xi \vert \, 
C_\rho \frac{2}{(1 + \vert \tilde \xi \vert^2)^{\rho/2}} \, d \tilde \xi \\
 &  \leq 2 C_\rho r^{-1} \, \vert T-s \vert \, \Bigl( \int (1 + \vert \tilde \xi \vert^2)^{(1-\rho)/2} \, 
d \tilde \xi \Bigr) \, \eps^{\beta (q+1)-2} ,
\end{align*} 
which is some $ o(1) $ for $ \beta $ as in (\ref{doubleineqagah}). Note that the above argument applied in the case $ 0 < \iota < 1$
would not improve (\ref{doubleineqagah}). As a matter of fact, the condition (\ref{doubleineqagah}) is issued from the analysis of 
$ \text{\scriptsize $ \circled{5} $}_+^{\, 1} $ which is not modified
by using (\ref{minotaureedepl}).

At this stage, we have proved
 \begin{align*}
\cW^\eps_{k_1,k_2} (T,z) &= o(\eps^2)  \\
+  \eps^2 & (2 \pi)^{-2} e^{-2i \gamma / \eps} \chi (\eps^{1-\iota} r^{-1} z) \int_0^T \chi\(3-2\frac{s}{\cT}\) 
e^{i \mathfrak p^0_{k_1,k_2} (s/\eps)/ \eps} \\
&\times e^{i z \mathfrak p^1_{k_1,k_2} (s/\eps)}  e^{ i (T-s) \lbrack p \circ \mathfrak p^1_{k_1,k_2} 
(s/\eps) -1 \rbrack / \eps^2} (\cA_{k_1,0}^\eps \cA_{k_2,0}^\eps) (s/\eps,0) \, ds .
 \end{align*}
  Since $ \iota >0$, we can substitute inside the term $ \chi (\eps^{1-\iota} 
r^{-1} z) $ with $ \chi (0) = 1 $, hence \eqref{Wepsk12entemps}. 
\end{proof}


\subsubsection{The asymptotic analysis}\label{asymptoticanalysis} From now on, we fix $ \beta $ as indicated 
in (\ref{doubleineqagah}). As a consequence of
(\ref{Wepsk12entempstotal}),
for $ j = 1 $ or $ j = 2 $,
we have
$$ \sum_{\substack{k_j \leq \eps^{-\beta} \\ (k_1,k_2) \in \cD \! \cK^c_s (\beta)}} \vert \cW^\eps_{k_1,k_2} (T,z) 
\vert \lesssim  \eps^{-\beta} \eps^{-1} \eps^2 = \cO (\eps^{1 - \beta} ) = o(1) . $$
Knowing this, we can replace (\ref{duhamelcassomipsum}) by 
\begin{equation}\label{duhamelcassomipsumred}
 \cW (T,z) = o(1) + \sum_{ \eps^{-\beta} \leq k_1 \in \cK^c_s} \ \sum_{ \eps^{-\beta} \leq k_2 \in \cK^c_s} 
 \cW^\eps_{k_1,k_2} (T,z) .
  \end{equation}
The final stage is to exploit the tools and the arguments of Section \ref{sec:lineffect} in order to pass to the limit 
at the level of (\ref{duhamelcassomipsumred}) when $ \eps $ goes to zero.

\begin{prop}[Proof of Theorem \ref{theo:resumeNLbis}]\label{passtothelimitWbeta}  Fix $ \iota \in ] \iota_-,1] $ 
and $ \beta $ as in \eqref{doubleineqagah}. Then, the limit of $ \cW (T,z) $ when $ \eps $ goes to zero is given 
by \eqref{desintertheoprinbis} when $ z = 2 j $ with $ j \in \ZZ $, and by \eqref{destrasyana} otherwise.
\end{prop}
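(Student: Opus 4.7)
The plan is to pass to the limit in the double sum \eqref{duhamelcassomipsumred} by substituting the asymptotic descriptions of $\mathfrak{p}^0_{k_1,k_2}$, $\mathfrak{p}^1_{k_1,k_2}$, and $\cA^\eps_{k_i,0}$ at large $k_i$, and then recognizing a (double) Riemann sum for the constructive case, or performing an Abel summation argument for the destructive case. First, I would exploit Lemma~\ref{Asymptoticetformulas} and \eqref{devskyk}--\eqref{defskx} to control $s_{k_i}(s/\eps,0)$, $\part_x s_{k_i}(s/\eps,0)$, and $y_{k_i}(s/\eps,0)$. For $k_i \geq \eps^{-\beta}$ with $\beta \geq 1/(q+1)$, one obtains $s_{k_i}(s/\eps,0) = \cO(\eps^\beta)$, $1 - p(k_i\pi+s_{k_i}) = \cO(k_i^{-q})$, and $p'(k_i\pi+s_{k_i})(k_i\pi-s/\eps) = \cO(k_i^{-q-1}/\eps)$. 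Combining these with \eqref{secondoredrtaylor1}, one derives the crucial simplifications
\begin{align*}
\mathfrak{p}^1_{k_1,k_2}(s/\eps) &= -(k_1+k_2)\pi + o(1), \\
p\circ\mathfrak{p}^1_{k_1,k_2}(s/\eps) - 1 &= -\tfrac{\ell}{q(q+1)}\,(k_1+k_2)^{-q}\pi^{-q} + o\bigl((k_1+k_2)^{-q}\bigr), \\
\mathfrak{p}^0_{k_1,k_2}(s/\eps)/\eps &= \bigl[(-1)^{k_1}+(-1)^{k_2}\bigr]\gamma/\eps + \text{controlled terms},
\end{align*}
and $(\cA^\eps_{k_1,0}\cA^\eps_{k_2,0})(s/\eps,0)$ can be replaced, up to a small error, by a product of the limiting amplitudes $b(\eps k_i\pi, s)$ from Section~\ref{sec:lineffect}, following the computation \eqref{devdepbk}--\eqref{devdepsiken0} performed there.

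Second, for the constructive case $z = 2j$, the decisive observation is that $e^{iz\mathfrak{p}^1_{k_1,k_2}(s/\eps)} = e^{-2ij(k_1+k_2)\pi}\cdot(1+o(1)) = 1+o(1)$, so the phase shift is asymptotically trivial. The remaining $k$-dependent factors are smooth in the variables $\sigma_i := \eps\pi k_i$, which range in $[\pi\eps^{1-\beta},\cT]$. The sum $\eps^2\sum_{k_1,k_2}\ldots$ becomes a Riemann sum with mesh $\pi^2\eps^2$ for a double integral in $(\sigma_1,\sigma_2)$. Separating the four parity classes (even--even, even--odd, odd--even, odd--odd) absorbs the $\exp\bigl([(-1)^{k_1}+(-1)^{k_2}]i\gamma/\eps\bigr)$ factor, which combines with the prefactor $e^{-2i\gamma/\eps}$ into the factor $[A_\eps^2]^2 = A_\eps^4$ after using the $\pi$-periodicity of $\aaa$ in its second argument; cf.\ the collapsing of the four harmonics into $\bigl[\sqrt{2/(\pi\gamma)}\cos(\gamma/\eps-\pi/4)\bigr]^2$. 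This yields exactly \eqref{desintertheoprinbis}, with the correlation factor $\exp\bigl(-\tfrac{i\ell}{6}\tfrac{T-s}{(\sigma_1+\sigma_2)^2}\bigr)$ arising from the term $(T-s)[p\circ\mathfrak{p}^1-1]/\eps^2$ after passage to the limit.

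Third, for $z \in \RR\setminus 2\ZZ$, the factor $e^{iz\mathfrak{p}^1_{k_1,k_2}(s/\eps)}$ becomes a nontrivial oscillation of the form $e^{-iz\pi(k_1+k_2)}(1+o(1))$, with $e^{-2iz\pi} \neq 1$. I would freeze one index (say $k_2$) and apply Abel summation in $k_1$ as in the proof of Proposition~\ref{supnormnonamplifi2}: grouping terms on dyadic blocks of size $\delta/\eps$, approximating the smooth amplitude by its value at a representative index $k_1^{(j)}$ on each block, and using the geometric sum $\sum_{k_1 \in \text{block}} e^{-iz\pi k_1} = \cO(1)$ uniformly. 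The remainder terms (discretization error on each block, then error from the summation over blocks) are controlled exactly as in the linear case. After a second application in $k_2$, this yields a uniform bound of the form $o(\eps^{-2})$ for the double sum, and since each $\cW^\eps_{k_1,k_2}$ carries a prefactor $\eps^2$, we obtain $\cW(T,z) = o(1)$, which is \eqref{destrasyana}.

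The main obstacle will be tracking \emph{uniformity} in $(k_1,k_2)$ of every error term, especially the $o(1)$ on the last line of \eqref{Wepsk12entemps} (which depends on $k_1,k_2$ through the condition $(k_1,k_2) \in \cD\!\cK^c_s(\beta)$), and controlling the Riemann sum convergence despite the integrand $b(\sigma_i,s)$ having nontrivial behavior as $\sigma_i \to 0^+$ (the lower cut-off $\pi\eps^{1-\beta}$ must vanish fast enough to recover the full integral over $\RR_+$). A secondary technical point is ensuring that the approximation of $\mathfrak{p}^1_{k_1,k_2}$ by $-(k_1+k_2)\pi$ inside $p(\cdot)-1$ preserves enough precision: since this quantity is then divided by $\eps^2$ and multiplied by $T-s\sim 1$, one needs $\bigl[p(\mathfrak{p}^1)-p(-(k_1+k_2)\pi)\bigr]/\eps^2 = o(1)$, which forces the subdominant corrections in $\mathfrak{p}^1$ (stemming from $s_{k_i}$ and $\part_x s_{k_i}$) to be handled with care; the lower bound $\beta > (1+\iota)/(q+1)$ in \eqref{doubleineqagah} is exactly what guarantees this, and it is the reason the two regimes $\iota \gtrless 1/2$ require slightly different conditions on $\beta$.
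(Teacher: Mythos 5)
Your proposal follows essentially the same path as the paper's proof: substitute the asymptotics of $s_k$, $y_k$, $\mathfrak{p}^0$, $\mathfrak{p}^1$, $\operatorname{det} S_k$ and $\tilde a_k$ valid for $k\gtrsim\eps^{-\beta}$ into \eqref{Wepsk12entemps}, split into parity classes, and recognize a double Riemann sum for $z\in 2\ZZ$ versus an Abel-summation cancellation for $z\notin 2\ZZ$, with the correlation factor coming from $(T-s)\bigl(p\circ\mathfrak{p}^1_{k_1,k_2}-1\bigr)/\eps^2$ as in the paper's \eqref{envoila2}. One small inaccuracy worth flagging: you attribute the $\iota$-dependent lower bound $\beta>(1+\iota)/(q+1)$ (and its more involved analogue for $\iota\le 1/2$) to the need for $\bigl[p(\mathfrak{p}^1_{k_1,k_2})-p(-(k_1+k_2)\pi)\bigr]/\eps^2=o(1)$; in fact that estimate, carried out in \eqref{envoila2}, only needs the weaker $\beta>3/(2(q+1))$ stemming from \eqref{ilfaiutlemettre}, and the sharper $\iota$-dependent restriction is imposed earlier, in Proposition~\ref{simplyfprop}, to control the ``non-Dirac'' remainders $\text{\scriptsize $\circled{5}$}_\pm$ in the $(y,\xi)$-integration that yields \eqref{Wepsk12entemps}.
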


\begin{proof} The starting point is (\ref{Wepsk12entemps}) together with (\ref{duhamelcassomipsumred}). The 
information $ \eps^{-\beta} \leq k_j $ inside (\ref{duhamelcassomipsumred}) is crucial because it allows to simplify
the content of $ \mathfrak p^0 $, $ \mathfrak p^1 $ and $ \cA^\eps_{k_j,0} $ at the level of (\ref{Wepsk12entemps}).
All these expressions depend on $ s_k $ and $ y_k $. But, knowing that $ \eps^{-\beta} \leq k $, from (\ref{defskxjhgezjbbkldepour1}) 
(\ref{defskxjhgezjbbkldepour2}) and (\ref{defskxjhgezjbbkltauk}), we have
\begin{equation}\label{ilfaiuut}
\qquad \quad s_k (t,0) = \cO(\eps^{(q+1)\beta-1}) ,\qquad y_k = \cO(\eps^{(q+1)\beta-1}) ,
\end{equation}
where, taking into account (\ref{doubleineqagah}), we are sure that 
\begin{equation}\label{ilfaiutlemettre} 
2 (q+1)\beta - 2 > 1 , \qquad q \beta > (q+1) \beta -1 .  
\end{equation}
Now, look at (\ref{secondoredrtaylor0}) to extract
\begin{align*}
\mathfrak p^0_{k_1,k_2} (t) &= \sum_{i=1}^2 \Bigl \lbrace (-1)^{k_i}  \gamma + \cO \( 
s_{k_i} (t , 0)^2 \) \\
&\qquad \quad \quad + \bigl \lbrack 1 - p \bigl( {k_i} \pi + s_{k_i} 
( t , 0 )\bigr) \bigr \rbrack \bigl( {k_i} \, \pi - t \bigr) + \cO (k_i^{-q} ) \vert s_{k_i} ( t , 0 )\vert \Bigr \rbrace . 
\end{align*}
From (\ref{reso2}) and (\ref{ilfaiuut}), we can deduce that
\begin{align*}
  \mathfrak p^0_{k_1,k_2} (t) &= \sum_{i=1}^2 \Bigl \lbrace (-1)^{k_i} \gamma + \cO(\eps^{2(q+1)
\beta-2})  \\
& \qquad \quad \quad - \, \frac{\ell}{q \, (q+1)} \Bigl( \frac{1}{(k_i \pi)^{q-1}} -   \frac{t}{(k_i \pi)^q} 
\Bigr)  \bigl( 1 + o(1) \bigr) + \cO(\eps^{q \beta + (q+1)\beta-1}) \Bigr \rbrace . 
\end{align*}
Using (\ref{ilfaiutlemettre}), there remains
 \begin{equation}\label{envoila1} 
 \begin{aligned}
\mathfrak p^0_{k_1,k_2} (s/\eps) / \eps & =  o(1)  + \( (-1)^{k_1} + (-1)^{k_2} \) (\gamma / \eps) \\
& \quad - \sum_{i=1}^2 \Biggl \lbrace \frac{\ell \, \eps^{q-2}}{q \, (q+1)} \( \frac{1}{(\eps k_i \pi)^{q-1}} -   
\frac{s}{(\eps k_i \pi)^q} \)  \( 1 + o(1) \) \Biggr \rbrace . 
\end{aligned} 
\end{equation}  
Next, consider (\ref{secondoredrtaylor1}). Taking into account (\ref{unigliformskftx}) with again (\ref{ilfaiuut}) and (\ref{ilfaiutlemettre}), 
this gives rise to
\begin{align*}
  \mathfrak p^1_{k_1,k_2} (t) &= - (k_1+k_2) \pi + \sum_{i=1}^2 \Bigl \lbrace \cO \bigl( \vert 
s_{k_i} (t , 0) \vert \bigr) + \cO (k_i^{-q} )  + \cO (\eps^{-1} k_i^{-q-1} ) \Bigr \rbrace \\
& = - (k_1+k_2) \pi + \cO(\eps^{(q+1)\beta-1}) = - (k_1+k_2) \pi + o(1) . 
\end{align*}
With (\ref{hypp3ded}), this implies that
\[ p \circ \mathfrak p^1_{k_1,k_2} (t) = p \( - (k_1 + k_2) \pi \) + \cO \( (k_1+k_2)^{-q-1} \)  \times\cO\(\eps^{(q+1)\beta-1}\) ,\]
and therefore
\begin{equation}\label{envoila2}  
\begin{aligned}
  \frac{\(p \circ \mathfrak p^1_{k_1,k_2} (s/\eps) -1\)}{\eps^2} & =
  \frac{\bigl( p (k_1 \pi + k_2 \pi) -1)}{\eps^2} 
+ \cO (\eps^{2 (q+1)\beta-3}) \\
&   = \frac{\bigl( p (k_1 \pi + k_2 \pi) -1)}{\eps^2} + o(1) \\
&  = - \, \frac{\ell \eps^{q-2}}{q \, (q+1)} \frac{1}{(\eps k_1 \pi + \eps k_2 \pi)^q } \bigl( 1 + o(1) \bigr) + o(1) .
\end{aligned} 
\end{equation}  
Finally, we examine (\ref{inparticc}). From (\ref{devSkenvue}), (\ref{ilfaiuut}) and (\ref{ilfaiutlemettre}), we get easily
 \begin{align*}
\operatorname{det} S_k &  =  (-1)^{k+1} \gamma \cos s_k+ \cO \( \frac{1}{\eps \,  k ^{q+2}} \) \\
 & =  (-1)^{k+1} \gamma + \cO(\eps^{2(q+1)\beta-2}) + \cO\(\eps^{-1 + \beta (q+2)}\) \\
 & = (-1)^{k+1} \gamma + \cO( \sqrt \eps ) . 
 \end{align*}
 On the other hand, combining (\ref{tildeak}) and (\ref{ilfaiuut}), we find that
$$ \tilde a_k \bigl( \eps,s_k (t,0)  ,y_k (t,0)  , s_k (t,0)  \bigr) = \zeta (k\pi) a(\eps k\pi,k\pi,0) + \cO( \sqrt \eps ) . $$
From Assumption~\ref{choiopro}, we know that $ \zeta(k\pi) = 1 + \cO ( k^{-1} ) = 1 + \cO(\eps^{\beta}) $. 
To make things easier, we replace Assumption~\ref{persourcea} by the more restrictive condition 
(\ref{asymptoticallypi}). Then, we can infer that
\begin{equation}\label{remindakk} 
\begin{aligned}
\tilde a_k \( \eps,s_k (t,0)  ,y_k (t,0)  , s_k (t,0)  \)  & = a (\eps
k\pi,k \pi,0) + o (1) \\  
& = \aaa (\eps k\pi,0,0) + o (1) .
\end{aligned}
\end{equation}  
From now on, we work with the case $ q = 2 $, which is the most interesting and also the most difficult situation,
because (\ref{envoila1}) and (\ref{envoila2}) contain supplementary contributions modulo $ o(1) $. 
We can decompose the sum inside (\ref{duhamelcassomipsumred}) into
\[  \sum_{ \eps^{-\beta} \leq k_1 \in \cK^c_s} \ \sum_{ \eps^{-\beta} \leq k_2 \in \cK^c_s}  = \sum_{
\substack{ \{ (\rhd,\lhd) ; \rhd \, \text{is even or odd} \\ \qquad \quad \ \lhd \, \text{is even or odd} \} } } \ \ 
\sum_{\substack{\eps^{-\beta} \leq k_1 \in \cK^c_s \\ k_1 \, \text{is of type} \, \rhd}} \ \ 
\sum_{\substack{\eps^{-\beta} \leq k_2 \in \cK^c_s \\ k_2 \, \text{is of type} \, \lhd}}  .\]
We deal below with the sum corresponding to the choice $ (\rhd,\lhd) = (\text{even}, \text{even}) $, the other cases 
being completely similar. When $ k_1 $ and $ k_2 $ are even, by combining (\ref{envoila1}), (\ref{envoila2}) and 
(\ref{remindakk}), the product inside (\ref{Wepsk12entemps}) can be reworded into
\begin{align*}
e^{i \mathfrak p^0_{k_1,k_2} (s/\eps) / \eps} e^{i z \mathfrak p^1_{k_1,k_2} (s/\eps)} e^{ i (T-s)(p \circ 
\mathfrak p^1_{k_1,k_2} (s/\eps) -1)/ \eps^2} (\cA_{k_1,0}^\eps \cA_{k_2,0}^\eps) (s/\eps,0) \\
\qquad \qquad \qquad = o(1) - \displaystyle \, i \, (2 \pi)^3 \,
  \gamma^{-1} \, e^{2 i \gamma / \eps}  
e^{-i z (k_1+k_2) \pi} \, d_\eps ( \eps \pi k_1 , \eps \pi k_2 ,s) ,
 \end{align*} 
with 
\begin{equation}\label{disappearsd}  
 \begin{aligned}
 d_\eps (\sigma_1,\sigma_2,s) := &  e^{- i \frac{\ell}{6} (\frac{1}{\sigma_1}-\frac{s}{\sigma_1^2} )
(1+o(1))} e^{- i \frac{\ell}{6} (\frac{1}{\sigma_2}-\frac{s}{\sigma_2^2} ) (1+o(1))} \\
 & \times e^{-i \frac{\ell}{6} \frac{T-s}{(\sigma_1 + \sigma_2)^2} (1+o(1))} 
\aaa(\sigma_1,0,0) \aaa(\sigma_2,0,0) ,
 \end{aligned} 
 \end{equation}  
where the dependence on $ \eps $ is hidden in the $ o(1) $. Passing to the limit when $ \eps $ goes to $ 0 $, the $ o(1) $
disappears from (\ref{disappearsd}). There remains
\begin{equation}\label{disappearsd0}   
d_0 (\sigma_1,\sigma_2,s) := e^{-i \frac{\ell}{6} \frac{T-s}{(\sigma_1 + \sigma_2)^2}} b(\sigma_1,s)  b(\sigma_2,s) , 
 \end{equation} 
where, in coherence with the introduction, we have introduced
\[ b(\sigma,s) := e^{-i\frac{\ell}{6}\(\frac{1}{\sigma}-\frac{s}{\sigma^2}\)}\aaa(\sigma,0,0).  \]
When $z\not \in 2 \ZZ$, the Abel sum argument can be readily repeated. As before, the terms $ e^{-i z k_1 \pi} $  or 
$ e^{-i z k_2 \pi} $ compensate (locally in $ k_1 $ or in $ k_2 $) after summation. By this way, we can recover (\ref{destrasyana}). 
Otherwise, we can recognize a double Riemann sum with a width of $ \eps \pi $, which is
\[ - \frac{2 i}{\gamma \pi} \int_0^T \chi\(3-2\frac{s}{\cT}\) 
\sum_{\substack{\eps^{-\beta} \leq k_1 \in \cK^c_s \\ k_1 \, \text{is even}}} \ \ 
\sum_{\substack{\eps^{-\beta} \leq k_2 \in \cK^c_s \\ k_2 \, \text{is even}}} (\eps \pi)^2 d_\eps ( \eps \pi k_1 , \eps \pi k_2 ,s) 
 \, ds . \]
 For all $ s \in [0,T] $, the function $ d_\eps (\cdot,s) $ is defined on the quadrant $ \cQ := \RR_+^* \times \RR_+^* $.
 It is smooth and bounded on the open domain $ \cQ $. The singularities of the exponents near $ \sigma_1 = 0 $ or 
 $ \sigma_2 = 0 $ translate only into fast oscillations. Moreover, due to (\ref{suppdea}), the support of the function 
 $ d_\eps  (\cdot,s) $ is uniformly bounded. In particular, the function $ d_0 (\cdot,s) $ is integrable on $ \cQ $. As 
 a consequence, the double Riemann sum does converge (when $ \iota < 1 $) towards 
$$ - \frac{2 i}{\gamma \pi} \int_0^T \chi\(3-2\frac{s}{\cT}\) \( \int_0^{+\infty} \! \int_0^{+\infty} d_0 ( \sigma_1 , \sigma_2 ,s) 
d \sigma_1 d \sigma_2 \) ds . $$
The other choices of $ (\rhd,\lhd) $ combine to form (\ref{desintertheoprinbis}).
\end{proof}

\noindent We conclude with a series of comments.

\begin{rem}[The origin of the correlation coefficient] As is well known, the weak limit of a product is in general different 
from the product of the weak limits. Comparing \eqref{consintertheoprin} and \eqref{desintertheoprinbis}, this principle applies 
in the present context. Indeed, the function $ d_0$ of \eqref{disappearsd0} is not the product of $ b (\sigma_1,s) $ 
and $ b (\sigma_2,s)$. There is a correlation coefficient which is issued from the multiplication inside \eqref{Wepsk12entemps}
by
\[ e^{ i (T-s)(p \circ \mathfrak p^1_{k_1,k_2} (s/\eps) -1)/ \eps^2} . \]
Looking at \eqref{envoila2}, we see that this nonlinear effect depends on dispersive properties through the asymptotic 
behavior of the symbol $ p $ when $ \vert \xi \vert $ goes to $ + \infty $. It measures how the various frequencies
$ k_1 $ and $ k_2 $ interact asymptotically (through their sum) in order to affect the profile.
   \end{rem}

\begin{rem}[About the spatial localization] The limiting case $ \iota = 1 $ could be incorporated just by multiplying 
\eqref{desintertheoprinbis} by $ \chi (r^{-1} z) $. By contrast, the
case $ \iota < \iota_- $ seems more difficult to assess.  
By pushing the Taylor expansion \eqref{taylorexpansion} up to the next order $ 3 $, for $ \iota \in [1/3,1/2[ $, it would 
be still possible to separate some explicit ``oscillating part" from some ``generalized profile". But then, explicit formulas 
are no more available, and the presence of some extra oscillations can really change the asymptotic behavior 
\eqref{desintertheoprinbis}.
   \end{rem}

\begin{rem}[About the critical cubic nonlinearity] \label{criticalcubicrem} Come back to Example~\ref{excaseofcubictronl}. This corresponds 
to the study of
  \begin{align*}
     \cU^{(1)}\(T,z\) & = \cU^{(0)}(T,z) \\
   +\frac{1}{2\pi} &\int_0^T \! \! \int \! \! \int e^{-i(z-y)\xi + i \frac{T-s}{\eps^2}(p(\xi)-1) } \chi\(\frac{y}{r\eps^{\iota-1}}\) 
   | \cU^{(0)}(s,y)|^2\cU^{(0)}(s,y) \, dsdyd\xi .
  \end{align*}
  The above trilinear interaction involves the phase
$$ \begin{array}{rl}
\mathfrak p_{k_1,k_2,k_3} (t,x) \! \! \! & := \bm{\uppsi}_{k_1}  (t,x) -  \bm{\uppsi}_{k_2}  (t,x) + \bm{\uppsi}_{k_3}  (t,x)  \\
\ & = \mathfrak p^0_{k_1,k_2,k_3} (t,x) - (k_1-k_2+k_3) \pi + \cdots 
\end{array} $$
and thereby, the Dirac mass argument should select the position $ (k_1-k_2+k_3) \pi $. It could be expected to obtain 
triple integrals of the form
  \[
    \int_0^T \chi\(3-2\frac{s}{\cT}\)\!\!\!\int_0^{+\infty}\!\!\!\! \int_0^{+\infty} \!\!\!\!
    \int_0^{+\infty} e^{-i\frac{\ell}{6} 
\frac{T-s}{(\sigma_1\pm \sigma_2 \pm \sigma_3)^2}} b(\sigma_1,s)\bar b(\sigma_2,s)b(\sigma_3,s)
dsd\sigma_1d\sigma_2d\sigma_3.
\]
But the presence of two wave-numbers ($ k_1 $ and $ k_2 $) with opposite signs may change the situation. Indeed,
for large values of $ k_1 $ and $ k_2 $ with $ k_1 -k_2 = \cO(1) $, the asymptotic behavior of $ p  $ is no more
involved when computing $ p \circ \mathfrak p $. The analysis is apparently different. It should require further 
development. 
 \end{rem}

\begin{rem}[About the full nonlinear case] \label{nonlieafinrem} The description of the solution $ u$ to \eqref{third-si-gene} 
with $ F_{\! N\! L} $ as in \eqref{eq:NLedp} is a far more complicated task for a number of reasons. Consider 
for instance the non completely resonant case , when $\textfrak {g} \not = 1 $. The linearizability has been 
established by nonstationary phase arguments relying on Lemma~\ref{lem:fine-estimates}. From this perspective, 
the global controls provided by Lemma~\ref{GlobalcontrolintheL2} do not suffice. As a consequence, to prove 
Theorem~\ref{theo:resumeNLbis} in the case of the complete nonlinear equation (as opposed to the first two Picard 
iterates), we would need estimates similar to those from Lemma~\ref{lem:fine-estimates}. However, those do not 
seem to be propagated in an easy way by an iterative scheme. Since the proof of Lemma~\ref{lem:fine-estimates} 
actually relies on the wave packets decomposition of Section~\ref{sec:lineffect}, extending this wave packets 
decomposition to a nonlinear framework (like in e.g. \cite{CoHaLu08,GaHaLu16}, or
\cite{GeMaSh09,GeMaSh12a,GeMaSh12b}, which may be understood as
generalizations of WKB methods) might 
be a way to treat the full nonlinear equation. As evoked in the introduction, we will 
not pursue this question here. 
\end{rem}


\bibliographystyle{abbrv}
\bibliography{biblio-draft}

\end{document}